\author{Bharathwaj Palvannan}
\subjclass[2000]{Primary 11R23; Secondary 11F33, 11F80}
\keywords{Iwasawa theory, Hida theory, Selmer groups, Galois cohomology}
\address{Department of Mathematics, David Rittenhouse Laboratory, 209 South 33rd Street University of Pennsylvania, Philadelphia 19104-6395, USA}
\email{pbharath@math.upenn.edu}
\title{On Selmer groups and factoring $p$-adic $L$-functions}
\begin{document}
\maketitle{}
\begin{abstract}
Samit Dasgupta has proved a formula factoring a certain restriction of a 3-variable Rankin-Selberg $p$-adic $L$-function as a product of a 2-variable $p$-adic $L$-function related to the adjoint representation of a Hida family and a Kubota-Leopoldt $p$-adic $L$-function.  We prove a result involving Selmer groups that along with Dasgupta's result is consistent with the main conjectures associated to the 4-dimensional representation (to which the 3-variable $p$-adic $L$-function is associated), the $3$-dimensional  representation (to which the $2$-variable $p$-adic $L$-function is associated) and the $1$-dimensional representation (to which the Kubota-Leopoldt $p$-adic $L$-function is associated). Under certain additional hypotheses, we indicate how one can use work of Urban to deduce main conjectures for the $3$-dimensional representation and the $4$-dimensional representation. One key technical input to our methods is studying the behavior of Selmer groups under specialization.
\end{abstract}
\tableofcontents
\section*{Motivation : Results of Gross and Greenberg}
The purpose of this paper is to prove a result involving Selmer groups, predicted by the Iwasawa main conjectures, corresponding to a factorization formula involving $p$-adic $L$-functions obtained by Dasgupta in \cite{dasgupta2014factorization}. Dasgupta's method of proof is based on an earlier work of Gross \cite{gross1980factorization} in 1980. Gross's work involved factoring a certain restriction of a $2$-variable $p$-adic $L$-function associated to an imaginary quadratic field (constructed by Katz)  into a product of two Kubota-Leopoldt $p$-adic $L$-functions. In 1982, Greenberg \cite{greenberg26iwasawa}  proved the corresponding result on the algebraic side involving classical Iwasawa modules, as predicted by the main conjectures for imaginary quadratic fields and $\Q$. These results provided evidence for the main conjecture for imaginary quadratic fields (before Rubin's proof in \cite{rubin1991main}). Our methods are inspired by this work of Greenberg. We will briefly recall the results of Gross and Greenberg.  \\

From the outset we would like to inform the reader that we will formulate the main conjectures throughout this paper in terms of primitive $p$-adic $L$-functions and \mbox{non-primitive} Selmer groups. As a result, though the earlier formulations of the main conjecture involved classical Iwasawa modules on the algebraic side, we will restate Greenberg's results in terms of non-primitive Selmer groups so that it will be helpful in placing Dasgupta's factorization of primitive $p$-adic $L$-functions and our results involving non-primitive Selmer groups in the context of main conjectures\footnote{Our description in the introduction indicates that one should be able to place the results of Gross and Greenberg along with results of Dasgupta and ours under a general framework. We, however, do not intend to develop such a general framework in this paper, instead leaving the generalities to the interested reader.}. To relate our formulation to the formulation of the main conjecture by Greenberg in \cite{greenberg1994iwasawa} (that involves primitive $p$-adic $L$-functions and primitive Selmer groups) we refer the reader to Section \ref{primitivity-issues}. In that section, we evaluate the difference in the divisors associated to the non-primitive Selmer group and the primitive Selmer group. Let $p \geq 5$ be a prime number. We shall also fix algebraic closures $\overline{\Q}$, $\overline{\Q}_p$ of $\Q$, $\Q_p$ respectively and embeddings $\overline{\Q} \hookrightarrow \overline{\Q}_p$ and $\overline{\Q} \hookrightarrow \mathbb{C}$. Let $\O$ denote the ring of integers in a finite extension of $\Q_p$. We let $\chi_p : \Gal{\overline{\Q}}{\Q} \rightarrow \Z_p^\times$ denote the $p$-adic cyclotomic character. \\

Let $K$ be an imaginary quadratic field where $p$ splits and whose associated quadratic character is given by $\varepsilon : \Gal{\overline{\Q}}{\Q} \rightarrow \{\pm 1\}$. Let $\tilde{K}_\infty$ denote the composite of the $\Z_p$-extensions of $K$. Let $\Q_\infty$ and $K_\infty$ denote the cyclotomic $\Z_p$ extensions of $\Q$ and $K$ respectively. We have the following picture in mind:
\begin{center}
\begin{tikzpicture}[node distance = 0.9cm, auto]
      \node (Q) {$\Q$};
      \node (K) [above of=Q] {$K$};
      \node (Qinfty) [above of=K, left of=Q] {$\Q_\infty$};
      \node (Kinfty) [above of=K, left of = K] {$K_\infty$};
      \node (Ktildeinfty) [right of = K, above of=Kinfty] {$\tilde{K}_\infty$};
       \node (GQ) [ right of = Ktildeinfty, node distance=7cm] {$G_\Q:=\Gal{\overline{\Q}}{\Q}, \quad G_K := \Gal{\overline{\Q}}{K}$};
    \node (tildegamma) [ right of = Kinfty, node distance=6.7cm]       {$\tilde{\Gamma} := \Gal{\tilde{K}_\infty}{K} \cong \Z_p^2$};
     \node (gamma) [ right of = K, node distance=7cm]       {$\Gamma :=\Gal{\Q_\infty}{\Q}  \cong \Gal{K_\infty}{K} \cong \Z_p$};
\node (characters) [right of = Q, node distance = 8cm] {$\tilde{\kappa} : G_K \twoheadrightarrow \tilde{\Gamma} \hookrightarrow \O[[\tilde{\Gamma}]]^\times, \qquad \kappa : G_\Q \twoheadrightarrow \Gamma \hookrightarrow \O[[\Gamma]]^\times$};
            \draw[-] (Q) to node {} (K);
      \draw[-] (Q) to node  {$\Gamma$} (Qinfty);
      \draw[-] (K) to node {}  (Kinfty);
       \draw[-] (Qinfty) to node {} (Kinfty);
        \draw[-] (Kinfty) to node  {} (Ktildeinfty);

        \draw[-] (K) to node [swap] {$\tilde{\Gamma}$}  (Ktildeinfty);

    \end{tikzpicture}
  \end{center}
Let $\psi: G_\Q  \rightarrow \O^\times$ be a finite even continuous character. We shall denote the restriction of $\psi$ to $G_K$ by $\psi_K$. We shall introduce the three Galois representations that occur in the setup of Gross's factorization along with the primitive $p$-adic $L$-functions and the non-primitive Selmer groups associated to them. We shall not make any attempt to precisely define these objects\footnote{The primitive and the non-primitive Selmer groups will be precisely defined in Section \ref{sel-cyc-deformations}.}. Note that the Selmer groups appearing in these main conjectures can be linked to classical Iwasawa modules. See Greenberg's work on $p$-adic Artin $L$-functions \cite{greenberg2014p} for a description of the link. \\

\textbf{(A) The 2-dimensional representation \nopunct} $\Ind_{K}^{\Q}\left(\psi_K \tilde{\kappa}^{-1}\right) : G_\Q \rightarrow \Gl_2\left(\O[[\tilde{\Gamma}]]\right)$.

The main conjecture associated to this two dimensional representation (now known due to work of Rubin \cite{rubin1991main}) predicts the following equality of ideals in $\O[[\tilde{\Gamma}]]$:
 \begin{align}\label{mck}
\tag{$\mathrm{MC-}K$}  (h_{\psi_K} \alpha_{\psi_K \tilde{\kappa}^{-1}}) =  \mathrm{Char}\left( \Sel_{\Ind_{K}^{\Q}\left(\psi_K \tilde{\kappa}^{-1}\right)}(\Q)^\vee\right).
\end{align}
Katz \cite{katz1978p} has constructed a two-variable $p$-adic L-function  $\theta_{\psi_K \tilde{\kappa}^{-1}}$ in $\O_{\mathbb{C}_p}[[\tilde{\Gamma}]]$. Here $\O_{\mathbb{C}_p}$ is the ring of integers in $\mathbb{C}_p$, and $\alpha_{\psi_K \tilde{\kappa}^{-1}}$ is an element in $\O[[\tilde{\Gamma}]]$  that generates the ideal $(\theta_{\psi_K \tilde{\kappa}^{-1}})$ in $\O_{\mathbb{C}_p}[[\tilde{\Gamma}]]$. The element $h_{\psi_K}$ (in the ring $\O[[\tilde{\Gamma}]]$) is an ``error term'' that keeps track of the local Euler factors away from $p$. The characteristic ideal associated to the Pontryagin dual of the non-primitive Selmer group in $\O[[\tilde{\Gamma}]]$ is denoted by $\text{Char}\left( \Sel_{\Ind_{K}^{\Q}\left(\psi_K \tilde{\kappa}^{-1}\right)}(\Q)^\vee\right)$.

\textbf{(B) The even character \nopunct} $\psi \kappa^{-1} : G_\Q \rightarrow \Gl_1(\O[[\Gamma]])$.

\textbf{(C) The odd character \nopunct} $\psi \varepsilon \kappa^{-1} : G_\Q \rightarrow \Gl_1(\O[[\Gamma]])$.

The main conjectures associated to $\psi\kappa^{-1}$ and $\psi \varepsilon \kappa^{-1}$ predict the following equalities of ideals in  $\O[[\Gamma]]$ (now known due to Mazur-Wiles \cite{mazur1984class}):
{\small \begin{align} \label{mcq}
 \tag{$\mathrm{MC-}\Q$} \quad (h_{\psi\kappa^{-1}} \theta_{\psi \kappa^{-1}}) = \text{Char}\left(\Sel_{\psi\kappa^{-1}}(\Q)^\vee\right),  \qquad  (h_{\psi \varepsilon\kappa^{-1}} \theta_{\psi \varepsilon \kappa^{-1}}) = \text{Char}\left(\Sel_{\psi\varepsilon\kappa^{-1}}(\Q)^\vee\right).
\end{align}
}

The $p$-adic $L$-functions $\theta_{\psi \kappa^{-1}}$ and $\theta_{\psi \varepsilon \kappa^{-1}}$, associated to $\psi \kappa^{-1}$ and $\psi \varepsilon \kappa^{-1}$, respectively are in the fraction field of $\O[[\Gamma]]$ and their construction is essentially due to Kubota and Leopoldt \cite{kubota1964p}. Note that one can relate the $p$-adic $L$-function $\theta_{\psi \varepsilon \kappa^{-1}}$ associated to the odd character to another $p$-adic $L$-function associated to the even character $\psi^{-1} \varepsilon^{-1} \kappa \chi_p$. The elements $h_{\psi \kappa^{-1}}$ and $h_{\psi \varepsilon \kappa^{-1}}$ (in the ring $\O[[\Gamma]]$) are ``error terms'' that keep track of the local Euler factors away from $p$ and certain poles of the $p$-adic $L$-functions. The characteristic ideals in $\O[[\Gamma]]$ associated to Pontryagin duals of the non-primitive Selmer groups are denoted by $\text{Char}\left(\Sel_{\psi \kappa^{-1}}(\Q)^\vee\right)$ and  $\text{Char}\left(\Sel_{\psi\varepsilon\kappa^{-1}}(\Q)^\vee\right)$. \\

The surjection $ \tilde{\Gamma} \twoheadrightarrow \Gamma$ of Galois groups gives us ring maps $ \O[[\tilde{\Gamma}]] \rightarrow \O[[\Gamma]]$ and $ \O_{\mathbb{C}_p}[[\tilde{\Gamma}]] \rightarrow \O_{\mathbb{C}_p}[[\Gamma]]$. Abusing notations, we shall denote all of these maps by $\pi_{2,1}$. We have the following decomposition of Galois representations:
\begin{align}\label{quad-decomposition}
\pi_{2,1} \circ \Ind_{K}^{\Q}\left(\psi_K \tilde{\kappa}^{-1}\right) \cong \psi \kappa^{-1} \oplus \psi \varepsilon \kappa^{-1}.
\end{align}
Informally, one can think of the map $\pi_{2,1}$ as setting the ``anti-cyclotomic'' variable to equal zero. As an interesting manifestation of the decomposition of Galois representations in (\ref{quad-decomposition}), we have the following theorem due to Gross and unpublished work of Greenberg-Lundell-Zhang (Gross only considers the case when the conductor of $\psi$ is a power of $p$):
\begin{Oldtheorem}[Gross \cite{gross1980factorization}, Greenberg-Lundell-Zhang \cite{greenberglundell}]\label{gross-factorization}
$\pi_{2,1}\left(\theta_{\psi_K \tilde{\kappa}^{-1}}\right) = \theta_{\psi \kappa^{-1}} \theta_{\psi \varepsilon \kappa^{-1}}$.
\end{Oldtheorem}

Implicit in \cite{greenberg26iwasawa} is the following theorem on the algebraic side:

\begin{Oldtheorem}[Greenberg \cite{greenberg26iwasawa}] \label{greenberg-factorization}
We have the following equality of  ideals in $\O[[\Gamma]]$:
\begin{align*}
\mathrm{Char}\left( \Sel_{\pi_{2,1} \circ \Ind_{K}^{\Q}\left(\psi_K \tilde{\kappa}^{-1}\right)}(\Q)^\vee\right) = \mathrm{Char}\left(\Sel_{\psi \kappa^{-1}}(\Q)^\vee\right) \cdot \mathrm{Char}\left(\Sel_{\psi\varepsilon\kappa^{-1}}(\Q)^\vee\right).
\end{align*}
\end{Oldtheorem}

There is no main conjecture associated to the Galois representation $\pi_{2,1} \circ \Ind_{K}^{\Q}\left(\psi_K \tilde{\kappa}^{-1}\right)$ as it does not satisfy the ``Panchishkin condition''\footnote{The Panchishkin condition is a kind of ``ordinariness'' assumption, introduced by Greenberg, while formulating the Iwasawa main conjecture for Galois deformations. See Section 4 in \cite{greenberg1994iwasawa} for the precise definition.}. So, one can ask the following question: How are Theorem \ref{gross-factorization} and Theorem \ref{greenberg-factorization} related to the main conjectures \ref{mck} and \ref{mcq}? This is answered by the following result of Greenberg, described in Pages 283 and 284 of \cite{greenberg26iwasawa}:

\begin{Oldtheorem}[Greenberg \cite{greenberg26iwasawa}] \label{greenberg-specialization}
Suppose \ref{mck} holds. We have the following equality of ideals in $\O[[\Gamma]]$:
\begin{align}\label{gross-greenberg-specialization}
\mathrm{Char}\left( \Sel_{\pi_{2,1} \circ \Ind_{K}^{\Q}\left(\psi_K \tilde{\kappa}^{-1}\right)}(\Q)^\vee\right) = \left(\pi_{2,1}\left(\alpha_{\psi_K \tilde{\kappa}^{-1}}\right) h_{\psi\kappa^{-1}} h_{\psi \varepsilon \kappa^{-1}}\right).
\end{align}
\end{Oldtheorem}
Greenberg observed that if \ref{mcq} held, we would have had the following equality of~ideals~in~$\O[[\Gamma]]$:
\begin{align} \label{multiply-mcq}
\mathrm{Char}\left(\Sel_{\psi\kappa^{-1}}(\Q)^\vee\right) \mathrm{Char}\left(\Sel_{\psi\varepsilon\kappa^{-1}}(\Q)^\vee\right) = (\theta_{\psi \kappa^{-1}} \theta_{\psi \varepsilon \kappa^{-1}} h_{\psi\kappa^{-1}} h_{\psi \varepsilon\kappa^{-1}}).
\end{align}
Having assumed the validity of the main conjecture \ref{mck}, equation (\ref{gross-greenberg-specialization}) along with Theorem \ref{gross-factorization} and Theorem \ref{greenberg-factorization} showed that equation (\ref{multiply-mcq}) also held true. It is in this manner that Theorem \ref{greenberg-specialization} ascertained that Theorem \ref{gross-factorization} and Theorem \ref{greenberg-factorization} were completely consistent with the various main conjectures. In our setup, Dasgupta's factorization (Theorem \ref{dasgupta-factorization}) is an analog of Theorem \ref{gross-factorization}, while Theorem \ref{selmer-factorization} is an analog of Theorem \ref{greenberg-factorization}.  Just as Theorem \ref{greenberg-specialization} was used to relate Theorem \ref{gross-factorization} and Theorem \ref{greenberg-factorization} to several main conjectures, we will use Theorem \ref{specialization-result} to relate Theorem \ref{dasgupta-factorization} and Theorem \ref{selmer-factorization} to several main conjectures.

\section*{Main results related to Dasgupta's factorization}

Let $F = \sum_{n=1}^{\infty}a_n(F) q^n \in \R[[q]]$ be a Hida family. The ring $\R$ is an integrally closed local domain and a finite integral extension of $\Z_p[[x]]$, where $x$ denotes the ``weight variable'' for $F$. The ring $R$ is the normalization of an irreducible component of Hida's (ordinary, primitive) Hecke algebra. The element $a_p(F)$ is a unit in the local ring $R$. Let $\Sigma$ be a finite set of primes in $\Q$ containing $p$, $\infty$, all the primes dividing the level of $F$ and a non-archimedean prime $l \neq p$. Let $\Sigma_0 =\Sigma \setminus \{p\}$. Let $G_\Sigma$ equal $\Gal{\Q_\Sigma}{\Q}$, where $\Q_\Sigma$ is the maximal extension of $\Q$ unramified outside $\Sigma$. Suppose $F$ satisfies the following hypotheses:
\begin{enumerate}[leftmargin=2cm, style=sameline, align=left, label=\textsc{IRR}, ref=\textsc{IRR}]
\item\label{IRR} The residual representation associated to $F$ is absolutely irreducible.
\end{enumerate}
\begin{enumerate}[leftmargin=2cm, style=sameline, align=left, label=\textsc{$p$-Dis}, ref=\textsc{$p$-Dis}]
\item\label{p-Dis} The restriction of the residual representation to the decomposition subgroup at $p$ (which is reducible) has non-scalar semi-simplification.
\end{enumerate}
Let $\rho_F : G_\Sigma \rightarrow \Gl_2(\R)$ be the Galois representation associated to $F$.  Let $L_F$ be the free $\R$-module of rank $2$ on which $G_\Sigma$ acts to let us obtain $\rho_F$. Without loss of generality, we shall suppose that the ring $\O$ equals the integral closure of $\Z_p$ in $\R$. We let $\T$ equal the completed tensor product $\R \hotimes \R$. The completed tensor product is the co-product in the category of complete semi-local Noetherian $\O$-algebras (where the morphisms are continuous). The ring $\T$ is a complete integrally closed local domain and a finite integral extension of $\Z_p[[x_1 ,x_2]]$, where  $x_1$ and $x_2$ are identified with the ``weight variables''. The completed tensor product $\T$ comes equipped with two natural maps $i_1 :\R \rightarrow \T$
and $i_2 :  \R \rightarrow \T$. We have a $4$-dimensional Galois representation $$\rho_{F,F}: G_\Sigma \rightarrow \Gl_4(\T).$$
given by the action of $G_\Sigma$ on $\Hom_{\T}\left( L_F \otimes_{i_1}\T, \  L_F \otimes_{i_2}\T \right)$, which we denote by $L_{F,F}$, and which is a free $\T$-module of rank $4$. We have a natural map $\pi_{F,F} : \T \rightarrow \R$ obtained by sending an elementary tensor $a\otimes b$ to $ab$. The map $\pi_{F,F}$ is a surjective $\O$-algebra homomorphism. Under this map, we have $\pi_{F,F}(x_1)=x$ and $\pi_{F,F}(x_2)=x$. Informally, we can think of this map as setting the two ``weight variables'' to equal each other.  Composing $\rho_{F,F}$ with $\pi_{F,F}$ gives us the following  Galois representation: $$\pi_{F,F} \circ \rho_{F,F} : G_\Sigma \xrightarrow {\rho_{F,F}} \Gl_4(\T) \xrightarrow {\pi} \Gl_4(\R).$$
We have the following decomposition of Galois representations:
{\small \begin{align}\label{3-variable-decomp}
\underbrace{\pi_{F,F} \circ \rho_{F,F}}_{\substack{\text{Action of $G_\Sigma$}\\ \text{on $M_2(R)$} \\ \text{by conjugation via $\rho_F$}}} \cong \underbrace{\Ad^0(\rho_F)}_{\substack{\text{Action of } G_\Sigma \\ \text{  on the trace-zero matrices} \\ \text{ in $M_2(R)$} \\ \text{by conjugation via $\rho_F$}}} \oplus \underbrace{\pmb{1}}_{\substack{\text{Trivial action of } G_\Sigma \\ \text{ \newline on the scalar matrices} \\ \text{ in $M_2(R)$}}}.
\end{align}}

Consider a (finite order) Dirichlet character $\chi : G_\Sigma \rightarrow \O^\times$.  To a Galois representation $\varrho : G_\Sigma \rightarrow \Gl_d(\RRR)$, we shall associate a $d$-dimensional Galois representation $\varrho \otimes \kappa^{-1} : G_\Sigma \rightarrow \Gl_d(\RRR[[\Gamma]])$ (which is related to the cyclotomic deformation of $\varrho$). The Galois representation $\varrho \otimes \kappa^{-1}$ will be defined in Section \ref{sel-cyc-deformations}.  We shall  introduce the Galois representations that appear in Dasgupta's factorization. Note that there are two numbers in the subscripts appearing in the labels for the various Galois representations below. The first number (in boldface) represents the dimension of the Galois representation while the second number (not in boldface) is a number one less than the Krull dimension of the underlying ring over which the corresponding Galois representation is defined. This second number is also often referred to as the number of variables in the corresponding $p$-adic $L$-function. We would like to explicitly state that all the $p$-adic $L$-functions mentioned here are primitive. We will formulate the main conjectures relating primitive $p$-adic $L$-functions and non-primitive Selmer groups. In this paper, the primitive Selmer groups are mentioned only in Section \ref{sel-cyc-deformations} (where they are defined along with the non-primitive Selmer groups) and Section \ref{primitivity-issues} (where the differences in the divisors associated to the primitive and non-primitive Selmer groups are calculated). The non-primitive Selmer groups are defined, just as the primitive Selmer groups, as the kernel of a ``global-to-local'' map except that we omit the local conditions at primes $\nu \in \Sigma_0$. We prefer working with non-primitive Selmer groups since it is easier to establish that they satisfy better algebraic properties. \\

\textbf{The 4-dimensional representation}
Let $\rho_{\pmb{4},3} : G_\Sigma \rightarrow \Gl_4(\T[[\Gamma]])$ be the 4-dimensional Galois representation given by $\rho_{F,F} (\chi) \otimes{\kappa}^{-1}$. See \cite{hida1988p} and \cite{dasgupta2014factorization} for the properties that the primitive 3-variable $p$-adic $L$-function $\theta_{\pmb{4},3}$, associated to $\rho_{\pmb{4},3}$, satisfies. The $p$-adic $L$-function $\theta_{\pmb{4},3}$ is an element of the fraction field of $T[[\Gamma]]$. For $\theta_{\pmb{4},3}$, we can vary two weight variables and one cyclotomic variable.  We can also associate a  non-primitive Selmer group $\Sel_{\rho_{\pmb{4},3}}(\Q)$ to $\rho_{\pmb{4},3}$.

\textbf{The 3-dimensional representation} We have the 3-dimensional trace-zero adjoint representation $\Ad^0(\rho_F) : G_\Sigma \rightarrow \Gl_3(\R)$. We let $\rho_{\pmb{3},2}=\Ad^0(\rho_F) (\chi) \otimes{\kappa}^{-1}$. See \cite{hida1990p} and \cite{dasgupta2014factorization} for the properties that the 2-variable primitive $p$-adic $L$-function $\theta_{\pmb{3},2}$, associated to $\rho_{\pmb{3},2}$, satisfies. The $p$-adic $L$-function $\theta_{\pmb{3},2}$ is an element of the fraction field of $R[[\Gamma]]$. For $\theta_{\pmb{3},2}$, we can vary one weight variable and one cyclotomic variable. We can also associate a non-primtive Selmer group $\Sel_{\rho_{\pmb{3},2}}(\Q)$ to $\rho_{\pmb{3},2}$.

\textbf{The 1-dimensional representation}
We let the Galois representation $\rho_{\pmb{1},2}: G_\Sigma \rightarrow \Gl_1(R[[\Gamma]])$ equal $ \chi \otimes \kappa^{-1}$.  We have a one variable $p$-adic $L$-function $\theta_{\pmb{1},1}$ (due to \cite{kubota1964p}) in the fraction field of $\O[[\Gamma]]$. We let  $\theta_{\pmb{1},2}$ denote the image of $\theta_{\pmb{1},1}$ under the natural inclusion $\O[[\Gamma]] \hookrightarrow R[[\Gamma]]$. For $\theta_{\pmb{1},2}$, we can vary the cyclotomic variable while it is constant in the weight variable. We can also associate a non-primitive Selmer group $\Sel_{\rho_{\pmb{1},2}}(\Q)$ to $\rho_{\pmb{1},2}$.  \\

The map $\pi_{F,F}$ induces a surjective $\O[[\Gamma]]$-algebra homomorphism $\pi:T[[\Gamma]] \rightarrow R[[\Gamma]]$. Equation (\ref{3-variable-decomp}) gives us the following isomorphism:
\begin{align}\label{decomposition-twist}
\pi \circ \rho_{\pmb{4},3} \cong \rho_{\pmb{3},2} \oplus \rho_{\pmb{1},2}.
\end{align}
We will also associate a non-primitive Selmer group $\Sel_{\pi \circ \rho_{\pmb{4},3}}(\Q)$ to the Galois representation $\pi \circ \rho_{\pmb{4},3}$. The decomposition in (\ref{decomposition-twist}) exhibits an interesting phenomenon involving $p$-adic $L$-functions and Selmer groups. We have the following theorem due to Dasgupta (which was originally conjectured by Citro~\cite{citro2008invariants}):

\begin{Theorem} [Dasgupta \cite{dasgupta2014factorization}]\label{dasgupta-factorization} $\pi(\theta_{\pmb{4},3}) = \theta_{\pmb{3},2} \cdot \theta_{\pmb{1},2}$.
\end{Theorem}

The differences between non-primitive Selmer groups and primitive Selmer groups have been studied systematically in Section 3 of \cite{greenberg2010surjectivity}. For the cases we are interested in, these differences can be evaluated explicitly (see Proposition \ref{primitive-non-primitive-difference}) in terms of certain local factors at primes $\nu \in \Sigma_0$ (which are given below).
\begin{align*}
\Loc(\nu,\rho_{\pmb{4},3}) := H^1(I_\nu,D_{ \rho_{\pmb{4},3}})^{\Gamma_\nu}, & \qquad  \Loc(\nu,\pi \circ \rho_{\pmb{4},3}) := H^1(I_\nu,D_{\pi \circ \rho_{\pmb{4},3}})^{\Gamma_\nu}, \\
\Loc(\nu,\rho_{\pmb{3},2}) := H^1(I_\nu,D_{ \rho_{\pmb{3},2}})^{\Gamma_\nu}, & \qquad
\Loc(\nu,\rho_{\pmb{1},2}) := H^1(I_\nu,D_{ \rho_{\pmb{1},2}})^{\Gamma_\nu}.
\end{align*}

Here, $I_\nu$ is the inertia subgroup inside $\Gal{\overline{\Q}_\nu}{\Q_{\nu}}$ and $\Gamma_\nu$ is defined to be the quotient $\Gal{\overline{\Q}_\nu}{\Q_{\nu}}/I_\nu$. The discrete modules  $D_{\rho_{\pmb{4},3}}$, $D_{\rho_{\pmb{3},2}}$, $D_{\rho_{\pmb{1},2}}$ and $D_{\pi \circ \rho_{\pmb{4},3}}$ associated to $\rho_{\pmb{4},3}$, $\rho_{\pmb{3},2}$, $\rho_{\pmb{1},2}$ and $\pi \circ \rho_{\pmb{4},3}$ will be defined in Section \ref{sel-cyc-deformations}. \\

Note that $M^\vee$ will denote the Pontryagin dual of a module $M$ over a profinite ring. Let us label a hypothesis that we shall invoke frequently.

\begin{enumerate}[leftmargin=3cm, style=sameline, align=left, label=\textsc{$\mathrm{AD-TOR}$}, ref=\textsc{$\mathrm{AD-TOR}$}]
\item\label{ad-tor} $\Sel_{\rho_{\pmb{3},2}}(\Q)^\vee$ is a torsion $R[[\Gamma]]$-module.
\end{enumerate}

\begin{Theorem} \label{selmer-factorization}
The hypothesis \ref{ad-tor} holds if and only if $\Sel_{\pi \circ \rho_{\pmb{4},3}}(\Q)^\vee$ is a torsion $R[[\Gamma]]$-module. If \ref{ad-tor} holds, we have the following equality in the divisor group\footnote{We refer the reader to the end of the introduction where  various terminologies used in the paper  are explained, including the Pontryagin dual of a module, the divisor group and the divisor class group of an integrally closed domain $\RRR$. If $\RRR$ is an integrally closed domain, we also associate to a finitely generated torsion $\RRR$-module and to a non-zero element of $\RRR$, an element of its divisor~group.} of $R[[\Gamma]]$ relating the non-primitive Selmer groups:
\begin{align*}
\Div \left(\Sel_{\pi \circ \rho_{\pmb{4},3}}(\Q)^\vee \right) = \Div\left( \Sel_{\rho_{\pmb{3},2}}(\Q)^\vee\right) + \Div\left(\Sel_{\rho_{\pmb{1},2}}(\Q)^\vee \right) .
\end{align*}
We also have the following decomposition of local factors away from $p$:
\begin{align*}
\Loc(\nu,\pi \circ \rho_{\pmb{4},3}) & \cong \Loc(\nu,\rho_{\pmb{3},2}) \oplus \Loc(\nu,\rho_{\pmb{1},2}), \qquad \text{for all $\nu \in \Sigma_0$}.
\end{align*}
\end{Theorem}

The difficulty in establishing Theorem \ref{dasgupta-factorization} arises due to the fact that the ring homomorphism $\pi$ lies outside the critical range for $\rho_{\pmb{4},3}$. That is, the $p$-adic $L$-function $\theta_{\pmb{4},3}$ satisfies an ``interpolation property'' at various critical specializations $\varphi \in \Hom_{\cont}(T[[\Gamma]], \overline{\Q}_p)$.
And for every such critical specialization, we have $\ker(\pi) \not \subset \ker(\varphi)$. As a result, there is no main conjecture associated to the Galois representation $\pi \circ \rho_{\pmb{4},3}$ as it does not satisfy the ``Panchishkin condition''. \\

This leads us to the following questions: How are Theorem \ref{dasgupta-factorization} and Theorem \ref{selmer-factorization} related to the main conjectures? And, what is the relationship between $\Sel_{\pi \circ \rho_{\pmb{4},3}}(\Q)$ and $\pi(\theta_{\pmb{4},3})$? The purpose of proving Theorem \ref{specialization-result} is to answer these questions and hence to ascertain that Theorem \ref{dasgupta-factorization} and Theorem \ref{selmer-factorization} are completely consistent with the main conjectures for $\rho_{\pmb{4},3}$, $\rho_{\pmb{3},2}$ and $\rho_{\pmb{1},2}$.

\begin{Theorem} \label{specialization-result}
$\Sel_{\pi \circ \rho_{\pmb{4},3}}(\Q)^\vee$ is a torsion $R[[\Gamma]]$-module if and only if the height one prime ideal $\ker(\pi)$ in $T[[\Gamma]]$ does not belong to the support of $\Sel_{\rho_{\pmb{4},3}}(\Q)^\vee$. \\

Suppose $\Sel_{\pi \circ \rho_{\pmb{4},3}}(\Q)^\vee$ is a torsion $R[[\Gamma]]$-module. Also, suppose we have the following inequality in the divisor group of $\T[[\Gamma]]$:
\begin{align} \label{euler-inequality}
 \Div \left( \theta_{\pmb{4},3}\right)  +  \sum \limits_{\nu \in \Sigma_0}  \Div \left(  \Loc(\nu,\rho_{\pmb{4},3})^\vee \right) \geq \Div \left( \Sel_{\rho_{\pmb{4},3}}(\Q)^\vee \right) -  \Div \left(H^0(G_\Sigma, D_{\rho_{\pmb{4},3}})^\vee\right)  . \tag{$\mathrm{ES}$}
\end{align}
Then, we have the following inequality in the divisor group of $R[[\Gamma]]$:
{\small \begin{align} \label{first-main-inequality}
\Div \left( \pi(\theta_{\pmb{4},3})\right) + \sum \limits_{\nu \in \Sigma_0}  \Div \left(  \Loc(\nu, \pi \circ \rho_{\pmb{4},3})^\vee \right)  \geq  \Div \left( \Sel_{\pi \circ \rho_{\pmb{4},3}}(\Q)^\vee \right) - \Div \left(H^0(G_\Sigma, D_{\pi \circ \rho_{\pmb{4},3}})^\vee\right) .
\end{align}}
Furthermore, if the divisor $\Div \left( \Sel_{\rho_{\pmb{4},3}}(\Q)^\vee \right) - \Div \left(H^0(G_\Sigma, D_{ \rho_{\pmb{4},3}})^\vee\right)$ generates a torsion element in the divisor class group of $T[[\Gamma]]$, then equality holds in (\ref{euler-inequality}) if and only if equality holds in (\ref{first-main-inequality}).
\end{Theorem}

By Proposition \ref{divisor-local-factors} and Proposition \ref{torsion-local-factors}, for each prime $\nu \in \Sigma_0$, the divisor $\Div \left( \Loc(\nu, \pi \circ \rho_{\pmb{4},3})^\vee \right)$ in $T[[\Gamma]]$ is principal. As a result, note that if equality holds in \ref{euler-inequality}, then Theorem \ref{specialization-result} lets us deduce that equality holds in (\ref{first-main-inequality}) as well. \\

The results of Theorem \ref{greenberg-specialization} and Theorem \ref{specialization-result} fall under the topic of ``specializations'' of Selmer groups. Though Theorem \ref{specialization-result} is an analog of Theorem \ref{greenberg-specialization}, it is more difficult to prove Theorem \ref{specialization-result} since we allow the ring $R$ to be fairly general. See Example \ref{first-non-regular-example} and Example \ref{second-non-regular-example} which illustrate some of the difficulties that one encounters in the general case. The main difficulty that one encounters, is that, the kernel of the specialization map $\pi$ is no longer known to be a principal ideal. As a result, even though one is interested in a result (such as the one in equation (\ref{first-main-inequality})) involving the height one prime ideals of the ring $R[[\Gamma]]$, one is forced to consider the localizations of the ring $T[[\Gamma]]$ at height two prime ideals containing $\ker(\pi)$ (note that $\ker(\pi)$ itself is a height one prime ideal in the ring $T[[\Gamma]]$). The main novelty of our work, described in Section \ref{specialization-section}, is to address this question on (height one) specializations of Selmer groups from a general perspective. When the ring $R$ is a regular local ring, Theorem \ref{specialization-result} is easier to prove. Another interesting point to note is that, if we had results asserting the vanishing of the $\mu$-invariant for the Selmer group associated to the cyclotomic deformation of twists of the adjoint representation of a cusp form, Theorem \ref{specialization-result} would be easier to prove. We, however, do not place these restrictions.

\section*{Relation to the main conjectures}

We will recall the main conjectures stated in \cite{greenberg1994iwasawa} but formulate it in terms of non-primitive Selmer groups, the local factors at primes $\nu \in \Sigma_0$ and the primitive $p$-adic $L$-function. Note that the main conjecture is known for $\rho_{\pmb{1},2}$ due to work of \cite{mazur1984class}. We have the following equality in the divisor group of $R[[\Gamma]]$:
{\small \begin{align} \label{mainconj-1}
 \Div \left( \theta_{\pmb{1},2}\right)   + \sum \limits_{\nu \in \Sigma_0}  \Div \left(  \Loc(\nu,\rho_{\pmb{1},2})^\vee \right) &= \Div \left( \Sel_{\rho_{\pmb{1},2}}(\Q)^\vee \right) - \Div \left(H^0(G_\Sigma, D_{\rho_{\pmb{1},2}})^\vee\right).\tag{\text{MC-$\rho_{\pmb{1},2}$}}
\end{align}}

First assume that \ref{ad-tor} holds.  Proposition \ref{primitive-non-primitive-difference} will allow us to also write the main conjectures for $\rho_{\pmb{4},3}$ and $\rho_{\pmb{3},2}$ in terms of non-primitive Selmer groups. The main conjecture for $\rho_{\pmb{4},3}$ predicts the following equality in the divisor group of $T[[\Gamma]]$:
{\small \begin{align}\label{mainconj-3}
 \Div \left( \theta_{\pmb{4},3}\right)   + \sum \limits_{\nu \in \Sigma_0}  \Div \left(  \Loc(\nu,\rho_{\pmb{4},3})^\vee \right) &\stackrel{?}{=} \Div \left( \Sel_{\rho_{\pmb{4},3}}(\Q)^\vee \right) -  \Div \left(H^0(G_\Sigma, D_{\rho_{\pmb{4},3}})^\vee\right) . \tag{\textsc{MC-$\rho_{\pmb{4},3}$}}
 \end{align} }
The main conjecture for $\rho_{\pmb{3},2}$ predicts the following equality in the divisor group of $R[[\Gamma]]$:
{\small \begin{align} \label{mainconj-2}
 \Div \left( \theta_{\pmb{3},2}\right) + \sum \limits_{\nu \in \Sigma_0}  \Div \left(  \Loc(\nu,\rho_{\pmb{3},2})^\vee \right)  &\stackrel{?}{=} \Div \left( \Sel_{\rho_{\pmb{3},2}}(\Q)^\vee \right)- \Div \left(H^0(G_\Sigma, D_{\rho_{\pmb{3},2}})^\vee\right). \tag{\textsc{MC-$\rho_{\pmb{3},2}$}} \end{align}
}
We expect the inequality in \ref{euler-inequality} to follow from the recent work  of Lei-Loeffler-Zerbes \cite{lei2012euler} on Euler systems. Suppose \ref{euler-inequality} does hold. Combining Theorem \ref{dasgupta-factorization}, Theorem \ref{selmer-factorization}, Theorem \ref{specialization-result} and \ref{mainconj-1}, we obtain the following inequality in the divisor group of $R[[\Gamma]]$:
\begin{align*}
 \Div \left( \theta_{\pmb{3},2}\right)  +  \sum \limits_{\nu \in \Sigma_0}  \Div \left(  \Loc(\nu,\rho_{\pmb{3},2})^\vee \right)  \geq  \Div \left( \Sel_{\rho_{\pmb{3},2}}(\Q)^\vee \right) - \Div \left(H^0(G_\Sigma, D_{\rho_{\pmb{3},2}})^\vee\right).
\end{align*}

Under certain additional hypotheses mentioned in a work of Urban  (see Theorem 3.7 in Urban's work involving the Eisenstein-Klingen ideal \cite{urban1998selmer}), we have the following inequality in the divisor group~of~$R[[\Gamma]]$:
\begin{align} \label{urban-inequality}
 \Div \left( \theta_{\pmb{3},2}\right) +  \sum \limits_{\nu \in \Sigma_0}  \Div \left(  \Loc(\nu,\rho_{\pmb{3},2})^\vee \right)  \leq \Div \left( \Sel_{\rho_{\pmb{3},2}}(\Q)^\vee \right) -  \Div \left(H^0(G_\Sigma, D_{\rho_{\pmb{3},2}})^\vee\right). \tag{$\mathrm{UR}$}
\end{align}

Let us grant ourselves the validity of \ref{urban-inequality} too. Assuming \ref{euler-inequality} and \ref{urban-inequality} hold, we obtain the main conjecture \ref{mainconj-2}. Further, if the divisor $\Div\left(\Sel_{\rho_{\pmb{4},3}}(\Q)^\vee\right)~-~\Div\left(H^0(G_\Sigma,D_{\rho_{\pmb{4},3}})^\vee\right)$ generates a torsion element in the divisor class group of $T[[\Gamma]]$, we obtain the main conjecture \ref{mainconj-3} too. We now state this as a theorem.

\begin{Theorem}
Suppose \ref{ad-tor}, \ref{euler-inequality} and \ref{urban-inequality} hold. Then, the main conjecture \ref{mainconj-2} holds. In addition, if the divisor $\Div\left(\Sel_{\rho_{\pmb{4},3}}(\Q)^\vee\right)~-~\Div\left(H^0(G_\Sigma,D_{\rho_{\pmb{4},3}})^\vee\right)$ generates a torsion element in the divisor class group of $T[[\Gamma]]$, we obtain the main conjecture \ref{mainconj-3} too.
\end{Theorem}

Now assume \ref{ad-tor} does not hold, i.e. $\Sel_{\rho_{\pmb{3},2}}(\Q)^\vee$  has positive $R[[\Gamma]]$-rank. In this case, the main conjecture for $\rho_{\pmb{3},2}$ predicts that $\theta_{\pmb{3},2}=0$.  The $p$-adic $L$-function $\theta_{\pmb{1},2}$, constructed by Kubota-Leopoldt, is not equal to zero.  Due to a result of Iwasawa \cite{MR0349627} (see also Proposition 2.1 in \cite{greenberg2014p}), $\Sel_{\rho_{\pmb{1},2}}(\Q)^\vee$ is $R[[\Gamma]]$-torsion. These observations along with Theorem \ref{dasgupta-factorization}, Theorem \ref{selmer-factorization} and Theorem \ref{specialization-result} are consistent with the main conjecture for $\rho_{\pmb{4},3}$ in the following way. On the analytic side,  we have $\pi(\theta_{\pmb{4},3})=0$. On the algebraic side, one can assert that the height one prime ideal $\ker(\pi)$ belongs to the support of $ \Sel_{\rho_{\pmb{4},3}}(\Q)^\vee$ as a $T[[\Gamma]]$-module.

\begin{Remark}
We would like to make a remark about the hypothesis \ref{ad-tor}. Let $\chi$ be an even character for this remark. In this case, there are results of Hida establishing \ref{ad-tor}. See Corollary 3.90 in Hida's book \cite{hida2006hilbert} for the hypotheses under which \ref{ad-tor} is known to hold. Consider the odd character $\omega \chi^{-1}$. We briefly indicate how one can use the results of Hida to establish \ref{ad-tor} for the odd character $\omega \chi^{-1}$ too. Let us assume that \ref{ad-tor} holds for the even character $\chi$. One can use a \textit{control theorem} to conclude that  $\Sel_{\Ad^0(\rho_f)(\chi) \otimes \kappa^{-1}}(\Q)^\vee$  is $\Z_p[[\Gamma]]$-torsion for at least one classical specialization $f$ of $F$ with weight $k \geq 2$ (and in fact for all but possibly finitely many classical specializations). One can then use a result of Greenberg (Theorem 2 in \cite{greenberg1989iwasawa}) and a control theorem to conclude that $\Sel_{\Ad^0(\rho_F)(\omega \chi^{-1})\otimes\kappa^{-1}}(\Q)^\vee$  is $\R[[\Gamma]]$-torsion too.
\end{Remark}

\begin{Remark}
We only need the hypotheses \ref{IRR} and \ref{p-Dis} to ascertain that $\rho_F$ satisfies the Panchishkin condition. Besides this, none of our proofs require these hypotheses. The reason we include the auxillary prime  number $l$ in $\Sigma$ is to deduce Proposition \ref{surjectivity-greenberg} and Proposition \ref{strict-pseudo} from Corollary 3.2.3 in \cite{greenberg2010surjectivity} and Proposition 4.2.1 in \cite{greenberg2014pseudonull}. Although conjecturally the divisor $\Div\left(\Sel_{\rho_{\pmb{4},3}}(\Q)^\vee\right)-\Div\left(H^0(G_\Sigma,D_{\rho_{\pmb{4},3}})^\vee\right)$ should be principal, none of our results address this or even whether it generates a torsion element in the divisor class group of $T[[\Gamma]]$.
\end{Remark}

\subsubsection*{Acknowledgements} The author would like to thank Ralph Greenberg for sharing his deep insights, for patiently answering all his questions and for being a constant source of encouragement. This article was written while the author was his student at the University of Washington; the article would not have come to fruition without the support provided by everyone in the Department of Mathematics at the University of Washington. The author would like to thank the organizers of the workshop ``New directions in Iwasawa theory'' for providing the author an opportunity to present this work and he is grateful for the facilities provided by the Banff Research Center during the workshop. The author would also like to thank both referees for a very thorough reading of the manuscript, for pointing out various inaccuracies and for making many detailed suggestions to improve the manuscript.

\subsubsection*{Outline} Section 1 is largely spent recalling Greenberg's results in Iwasawa theory. The results of Section 2 and 3 are related to the general setups involving specialization results and the control theorems respectively. The results of Sections 4, 5 and 6 are related to the setup of Dasgupta's factorization.

\subsubsection*{Terminology} We shall follow some standard terminology throughout this paper. Let $\RRR$ be an integrally closed domain. The divisor group of $\RRR$ is the free abelian group on the set of height one prime ideals of $\RRR$. To a finitely generated torsion $\RRR$-module $\M$, one can associate an element, $\Div(\M)$, in the divisor group of $\RRR$ following Chapter VII in \cite{bourbaki1989commutative}. Suppose $y=rs^{-1}$ is a non-zero element in the fraction field of $\RRR$, such that $r,s \in \RRR$. Then, $\Div(y)$ is defined to be $\Div\left(\frac{\RRR}{(r)}\right) - \Div\left(\frac{\RRR}{(s)}\right)$. Note also that if $0\rightarrow\M_1\rightarrow\M_2\rightarrow\M_3\rightarrow 0$ is a short exact sequence of $\RRR$-modules, then $\Div(\M_2) = \Div(\M_1) + \Div(\M_3)$. A finitely generated torsion $\RRR$-module $\mathcal{M}$ is said to be pseudo-null if $\Div(\mathcal{M})=0$. The divisor class group of $\RRR$ is the quotient of the divisor group of $\RRR$ by the subgroup $\mathrm{Prin}(\RRR)$, which is generated by $\Div\left(\frac{\RRR}{(r)}\right)$, for all non-zero elements $r$ in $\RRR$. The rank of a finitely-generated $\RRR$-module $\mathcal{M}$ equals the dimension of the vector space $\mathcal{M} \otimes_{\RRR} \Frac(\RRR)$ over $\Frac(\RRR)$, the fraction field of $\RRR$. The Pontryagin dual of a profinite ring $\TTT$ will be denoted by $\hat{\TTT}$, while the Pontryagin dual of a module $\mathcal{N}$ over such a profinite ring $\TTT$ will be denoted by $\mathcal{N}^\vee$. We will repeatedly use Pontryagin duality for modules over profinite rings as stated in Theorem 1.1.11 in \cite{neukirch2008cohomology}.

\subsubsection*{List of abbreviations}
\mbox{}\\

\begin{tabularx}{\textwidth}{|ll|ll|ll|ll|}
\ref{ad-tor}, & Page \pageref{ad-tor}  &
\ref{euler-inequality}, &  Page \pageref{euler-inequality}  &
\ref{filtration}, & Page \pageref{filtration} &
\ref{Fin-Proj}, & Page \pageref{Fin-Proj}   \\

\ref{IRR}, & Page \pageref{IRR} &
\ref{mck}, & Page \pageref{mck} &
\ref{mcq}, & Page \pageref{mcq} &
\ref{mainconj-1}, & Page \pageref{mainconj-1}\\

\ref{mainconj-2}, & Page \pageref{mainconj-2} &
\ref{mainconj-3}, & Page \pageref{mainconj-3} &
\ref{No-PN}, & Page \pageref{No-PN} &
\ref{1-ev}, & Page \pageref{1-ev}  \\

\ref{p-critical}, & Page \pageref{p-critical} &
\ref{p-Dis}, & Page \pageref{p-Dis} &
\ref{Tor}, & Page \pageref{Tor}  &
\ref{urban-inequality}, & Page \pageref{urban-inequality}
\end{tabularx}

\section{Review of Greenberg's results} \label{sel-cyc-deformations}

The purpose of this section is to recall results from Greenberg's foundational works \cite{MR2290593}, \cite{greenberg2010surjectivity} and \cite{greenberg2014pseudonull} on Galois cohomology groups and Selmer groups, pertinent to Iwasawa theory. We shall mainly be interested in restating the results there in a manner useful to our purposes. Since the section is intended to serve as an exposition to Greenberg's works, we will simply sketch the proofs, instead indicating references where the proofs are discussed more elaborately.

\subsection{The general setup}
Let $\RRR$ be a finite integral extension of $\Z_p[[u_1,\dotsc,u_n]]$ and assume it is an integrally closed domain. As in the introduction, let $\Sigma$ be a finite set of primes in $\Q$ containing $p$, $\infty$ and a non-archimedean prime $l \neq p$. Let $\Sigma_0 =\Sigma \setminus \{p\}$. We shall associate a primitive Selmer group $\S_{\varrho}(\Q)$ and a non-primitive Selmer group $\Sel_{\varrho}(\Q)$  to the Galois representation $\varrho : G_\Sigma \rightarrow \Gl_d(\RRR)$, whose associated Galois lattice $\LLL_\varrho$ is free over $\RRR$, and to which we can associate a short exact sequence of free $\RRR$-modules that~is~$\Gal{\overline{\Q}_p}{\Q_p}$-equivariant.

\begin{align} \label{filtration}
\tag{Fil-$\varrho$} 0\rightarrow \Fil^+\LLL_\varrho \rightarrow \LLL_\varrho \rightarrow \frac{\LLL_\varrho}{\Fil^+\LLL_\varrho} \rightarrow 0.
\end{align}

We shall call the short exact sequence \ref{filtration} as the filtration\footnote{The filtration \ref{filtration} is an extra datum for the Galois representation $\varrho$. A prototypical example to keep in mind is that of an elliptic curve $E$, defined over $\Q$, that has good ordinary reduction at $p$. In this case, one has a short exact sequence $0 \rightarrow \ker(j) \rightarrow T_p(E) \xrightarrow {j} T_p(\overline{E}) \rightarrow 0$ of free $\Z_p$-modules that is $\Gal{\overline{\Q}_p}{\Q_p}$-equivariant. Here, $T_p(E)$ and $T_p(\overline{E})$ are the $p$-adic Tate modules associated to the elliptic curves $E$ and $\overline{E}$ (the reduction of the elliptic curve $E$ at $p$) respectively, and $j$ is the natural reduction map.} associated to $\varrho$. We also define the discrete modules $\D_{\varrho}$ and $\Fil^+\D_{\varrho}$ to equal $\LLL_\varrho \otimes_\RRR \hat{\RRR}$ and $\Fil^+\LLL_\varrho \otimes_\RRR \hat{\RRR}$ respectively. The (discrete) Selmer groups associated to $\varrho$ are defined below.
{\small
\begin{align*}
\Sel_{\varrho}(\Q) &:= \ker \left( H^1(G_\Sigma, \D_{\varrho}) \xrightarrow {\phi_{\varrho}^{\Sigma_0}} H^1\left(I_p,\frac{\D_{\varrho}}{\Fil^+D_{\varrho}} \right)^{\Gamma_p} \right),\\
\S_{\varrho}(\Q) &:= \ker \left( H^1(G_\Sigma, \D_{\varrho}) \xrightarrow {\phi_{\varrho}} H^1\left(I_p,\frac{\D_{\varrho}}{\Fil^+D_{\varrho}} \right)^{\Gamma_p} \times \prod \limits_{\nu \in \Sigma_0} \ \Loc(\nu,\varrho) \right).
\end{align*}
}
Here, for each prime $\nu \in \Sigma$, we let $\Loc(\nu,\varrho)$ denote $H^1(I_\nu,\D_{\varrho})^{\Gamma_\nu}$.
\begin{Remark}
Our choice of fonts in this paper will be guided by the following convention.  For results specifically pertaining to Dasgupta's factorization, we will use a ``normal'' font to denote the corresponding Galois representations, rings, modules etc (e.g. $\rho$, $R$, $L$, $D$ etc). While describing results of a general kind, we will use a ``curly'' or ``calligraphic'' font to denote the corresponding objects (e.g. $\varrho$, $\mathcal{R}$, $\mathcal{L}$, $\mathcal{D}$ etc).
\end{Remark}

We will also need to introduce the Galois representation $\varrho^* : G_\Sigma \rightarrow \Gl_d(\RRR)$  whose associated lattice $\LLL_\varrho^*$ is defined by $\Hom_\RRR \left(\LLL_\varrho,\RRR(\chi_p) \right)$. The discrete module $\D^*_\varrho$ associated to $\varrho^*$ is given by $\LLL_\varrho^* \otimes_\RRR \hat{\RRR}$. Using Greenberg's terminology in \cite{MR2290593}, we observe that the $\RRR$-modules $\D_{\varrho}$ and $\D^*_{\varrho}$  are $\RRR$-cofree, and hence $\RRR$-coreflexive and $\RRR$-codivisible. Note that a discrete $\RRR$-module $\D$ is said to be cofree, coreflexive, codivisible respectively if the $\RRR$-module $\D^\vee$ is free, reflexive, torsion-free respectively.

\subsection{Cyclotomic deformations}

We shall fix a topological generator $\gamma_0$ of $\Gamma$ throughout the paper. We shall consider the completed group ring $\RRR[[\Gamma]]$, which is an integrally closed domain. There is a non-canonical isomorphism $\RRR[[\Gamma]] \cong \RRR[[s]]$, obtained by sending the topological generator $\gamma_0$ of $\Gamma$ to $s~+~1$.  This allows us to view $\RRR[[\Gamma]]$ as a finite integral extension of $\Z_p[[u_1,\dotsc,u_n,s]]$. We shall define a Galois representation $\varrho \otimes \kappa^{-1} : G_\Sigma \rightarrow \Gl_d(\RRR[[\Gamma]])$, that is related to the cyclotomic deformation of $\varrho$. Cyclotomic deformations frequently arise in Iwasawa theory. We let $\RRR[[\Gamma]](\kappa^{-1})$ denote the free $\RRR[[\Gamma]]$-module on which $G_\Sigma$ acts by the character $\kappa^{-1}$. Similarly, we let $\hat{\RRR[[\Gamma]]}(\kappa^{-1})$ denote $\RRR[[\Gamma]](\kappa^{-1}) \otimes_{\RRR[[\Gamma]]} \hat{\RRR[[\Gamma]]}$. The  deformation $\varrho \otimes \kappa^{-1}$ is given by the action of $G_\Sigma$ on $\LLL_{\varrho \otimes \kappa^{-1}}$ (defined below). We can associate the following free $\RRR[[\Gamma]]$-modules to $\varrho \otimes \kappa^{-1}$.
{
\footnotesize
\begin{align*}
\LLL_{\varrho \otimes \kappa^{-1}} := \LLL_\varrho \otimes \RRR[[\Gamma]](\kappa^{-1}), & \quad  \Fil^+\LLL_{\varrho \otimes \kappa^{-1}} := \Fil^+\LLL_\varrho \otimes \RRR[[\Gamma]](\kappa^{-1}), && \quad \frac{\LLL_{\varrho \otimes \kappa^{-1}}}{\Fil^+\LLL_{\varrho \otimes \kappa^{-1}}} := \frac{\LLL_\varrho}{\Fil^+\LLL_\varrho} \otimes \RRR[[\Gamma]](\kappa^{-1}), \\
\D_{\varrho \otimes \kappa^{-1}} := \LLL_{\rho } \otimes \hat{\RRR[[\Gamma]]}(\kappa^{-1}), & \quad  \Fil^+\D_{\varrho \otimes \kappa^{-1}} := \Fil^+\LLL_{\varrho } \otimes \hat{\RRR[[\Gamma]]}(\kappa^{-1}), && \quad \frac{\D_{\varrho \otimes \kappa^{-1}}}{\Fil^+\D_{\varrho \otimes \kappa^{-1}}} := \frac{\LLL_{\varrho}}{\Fil^+\LLL_{\varrho}} \otimes \hat{\RRR[[\Gamma]]}(\kappa^{-1}).
\end{align*}
}

All the tensor products given above are taken over the ring $\RRR$. Note that the cyclotomic deformation of $\varrho$ (as defined in \cite{greenberg1994iwasawa}) is given by the action of $G_\Sigma$ on $\LLL_{\varrho} \otimes_\RRR \RRR[[\Gamma]](\kappa)$. One can form the primitive Selmer group $\S_{\varrho \otimes \kappa^{-1}}(\Q)$ and the non-primitive Selmer group $\Sel_{\varrho \otimes \kappa^{-1}}(\Q)$ corresponding to the filtration given below.
\begin{align}
\tag{Fil-$\varrho \otimes \kappa^{-1}$} 0 \rightarrow \Fil^+\LLL_{\varrho \otimes \kappa^{-1}} \rightarrow \LLL_{\varrho \otimes \kappa^{-1}} \rightarrow \frac{\LLL_{\varrho \otimes \kappa^{-1}}}{\Fil^+\LLL_{\rho \otimes \kappa^{-1}}} \rightarrow 0.
\end{align}

To relate the discrete Galois modules $\D_{\varrho \otimes \kappa^{-1}}$ and $\D_{\varrho}$,  we have the following isomorphism of discrete $\RRR[[\Gamma]]$-modules (described in section 3 of \cite{greenberg1994iwasawa}):
\begin{align}\label{iso-for-cyc}
\D_{\varrho \otimes \kappa^{-1}}  \cong \Hom_\RRR \bigg( \RRR[[\Gamma]](\kappa), \D_\varrho \bigg),  \qquad \D^*_{\rho \otimes \kappa^{-1}} \cong \Hom_\RRR \bigg( \RRR[[\Gamma]](\kappa^{-1}), \D^*_\varrho \bigg).
\end{align}
We will need to consider the $\RRR$-linear involution $\iota : \RRR[[\Gamma]] \rightarrow \RRR[[\Gamma]]$ obtained by sending $\gamma_0$ to $\gamma_0^{-1}$. Given an $\RRR[[\Gamma]]$-module $\M$, we will define a new $\RRR[[\Gamma]]$-module $\M^\iota$. The underlying $\RRR$-module structure on $\M^\iota$ is the same. There is a new action of $\Gamma$ on $\M^\iota$; the topological generator $\gamma_0$ now acts on $\M^\iota$ via $\iota(\gamma_0)$. Using the isomorphisms in (\ref{iso-for-cyc}), we obtain the following proposition which is described in Section 3 of \cite{greenberg1994iwasawa}.

\begin{proposition}\label{greenberg-cyc}
We have the following isomorphisms of $\R[[\Gamma]]$-modules :
{\small \begin{align*}
& H^0(G_\Sigma ,\D_{\varrho \otimes \kappa^{-1}}) \cong H^0(\Gal{\Q_\Sigma}{\Q_\infty},\D_\varrho),&&H^0(G_\Sigma ,\D^*_{\varrho \otimes \kappa^{-1}}) \cong H^0(\Gal{\Q_\Sigma}{\Q_\infty},\D^*_\varrho)^\iota \\  & H^0(\Gal{\overline{\Q}_p}{\Q_p} ,\D_{\varrho \otimes \kappa^{-1}}) \cong H^0(G_{\eta_p},\D_\varrho),  && H^0(I_p ,\D_{\varrho \otimes \kappa^{-1}}) \cong H^0(I_{\eta_p},\D_\varrho), \\
& H^0(\Gal{\overline{\Q}_p}{\Q_p} ,\Fil^+ \D_{\varrho \otimes \kappa^{-1}}) \cong H^0(G_{\eta_p},\Fil^+\D_\varrho),  && H^0(I_p ,\Fil^+\D_{\varrho \otimes \kappa^{-1}}) \cong H^0(I_{\eta_p},\Fil^+\D_\varrho), \\ &
  H^0\left(\Gal{\overline{\Q}_p}{\Q_p} ,\frac{\D_{\varrho \otimes \kappa^{-1}}}{\Fil^+\D_{\varrho \otimes \kappa^{-1}}}\right) \cong H^0\left(G_{\eta_p},\frac{\D_\varrho}{\Fil^+\D_\varrho}\right), &&
  H^0\left(I_p ,\frac{\D_{\varrho \otimes \kappa^{-1}}}{\Fil^+\D_{\varrho \otimes \kappa^{-1}}}\right) \cong H^0\left(I_{\eta_p},\frac{\D_\varrho}{\Fil^+\D_\varrho}\right).
\end{align*}
}
For each $\nu$ in $\Sigma_0$, we also have the isomorphism
\begin{align*}
H^0\left( \Gal{\overline{\Q}_{\nu}}{\Q_\nu},\D_{\varrho \otimes \kappa^{-1}}\right) \cong \Ind^{\Gamma}_{\Delta_{\eta_\nu}}H^0\left(G_{\eta_\nu},\D_{\varrho} \right).
\end{align*}
Here, for each $\nu \in \Sigma$, we choose a single prime $\eta_\nu$  in $\Q_\infty$ lying above $\nu$. The decomposition and inertia subgroups at $\eta_\nu$ are denoted by $G_{\eta_\nu}$ and $I_{\eta_\nu}$. The quotient $\Gal{\overline{\Q}_\nu}{\Q_\nu}/G_{\eta_\nu}$, denoted by $\Delta_{\eta_\nu}$, is a subgroup of finite index inside $\Gamma$.

The Pontryagin duals of all the $0^{th}$-cohomology groups, appearing above, are finitely generated over $\RRR$.
\end{proposition}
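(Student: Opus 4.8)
The plan is to deduce every assertion from Shapiro's lemma in its degree-zero, continuous form, via the identification recorded in \ref{iso-for-cyc}. The general principle behind that identification is the following. Let $H$ be a closed normal subgroup of a profinite group $G$ with $G/H\cong\Gamma$, let $\mathcal{M}$ be a discrete $\RRR[G]$-module, and give $\RRR[[\Gamma]]$ the $G$-action obtained from $G\twoheadrightarrow\Gamma\hookrightarrow\RRR[[\Gamma]]^\times$; then $\Hom_\RRR(\RRR[[\Gamma]],\mathcal{M})$ with the diagonal $G$-action is the continuous co-induction of $\mathcal{M}|_H$ from $H$ to $G$, and evaluation at $1\in\RRR[[\Gamma]]$ furnishes an isomorphism of $\RRR[[\Gamma]]$-modules $H^0\bigl(G,\Hom_\RRR(\RRR[[\Gamma]],\mathcal{M})\bigr)\xrightarrow{\sim}\mathcal{M}^H$, the right-hand side carrying the natural action of $\Gamma=G/H$ on invariants. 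Twisting $\RRR[[\Gamma]]$ by the character $\kappa$ changes only the $G$-action, not the $\RRR[[\Gamma]]$-module structure, so the same conclusion holds with $\RRR[[\Gamma]](\kappa)$ in place of $\RRR[[\Gamma]]$; twisting instead by $\kappa^{-1}$ forces evaluation at $1$ to intertwine the $\Gamma$-action on the source with the $\iota$-twisted action on $\mathcal{M}^H$ — a one-line check with the topological generator $\gamma_0$.

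First I would take $G=G_\Sigma$ and $H=\Gal{\Q_\Sigma}{\Q_\infty}$. Applied to $\D_{\varrho\otimes\kappa^{-1}}\cong\Hom_\RRR(\RRR[[\Gamma]](\kappa),\D_\varrho)$ this gives the isomorphism $H^0(G_\Sigma,\D_{\varrho\otimes\kappa^{-1}})\cong H^0(\Gal{\Q_\Sigma}{\Q_\infty},\D_\varrho)$, and applied to $\D^*_{\varrho\otimes\kappa^{-1}}\cong\Hom_\RRR(\RRR[[\Gamma]](\kappa^{-1}),\D^*_\varrho)$ it gives $H^0(\Gal{\Q_\Sigma}{\Q_\infty},\D^*_\varrho)^\iota$, the superscript $\iota$ being precisely the twist noted above.

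Next I would run the identical argument locally. Since $\Q_\infty/\Q$ is totally ramified at $p$, both $\Gal{\overline{\Q}_p}{\Q_p}$ and its inertia subgroup $I_p$ have image all of $\Gamma$ in $G_\Sigma/\Gal{\Q_\Sigma}{\Q_\infty}$; hence by Mackey's restriction formula (the relevant subgroup product being all of $G_\Sigma$) the restriction of the co-induced module $\D_{\varrho\otimes\kappa^{-1}}$ to $\Gal{\overline{\Q}_p}{\Q_p}$, respectively to $I_p$, is again co-induced, from $G_{\eta_p}=\Gal{\overline{\Q}_p}{\Q_p}\cap\Gal{\Q_\Sigma}{\Q_\infty}$, respectively from $I_{\eta_p}=I_p\cap\Gal{\Q_\Sigma}{\Q_\infty}$. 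This yields the two isomorphisms at $p$ involving $\D_{\varrho\otimes\kappa^{-1}}$; since $\Fil^+\D_{\varrho\otimes\kappa^{-1}}$ and $\D_{\varrho\otimes\kappa^{-1}}/\Fil^+\D_{\varrho\otimes\kappa^{-1}}$ arise from $\Fil^+\D_\varrho$ and $\D_\varrho/\Fil^+\D_\varrho$ by the very same $\otimes_\RRR\hat{\RRR[[\Gamma]]}(\kappa^{-1})$ construction and the filtration \ref{filtration} is $\Gal{\overline{\Q}_p}{\Q_p}$-equivariant, the remaining four isomorphisms at $p$ follow word for word. For $\nu\in\Sigma_0$, the prime $\nu$ is unramified in $\Q_\infty/\Q$, and by the standard fact that every non-archimedean prime is finitely decomposed in a $\Z_p$-extension, the image $\Delta_{\eta_\nu}$ of $\Gal{\overline{\Q}_\nu}{\Q_\nu}$ in $\Gamma$ is a finite-index subgroup with kernel $G_{\eta_\nu}$; writing $\RRR[[\Gamma]]$ as a free $\RRR[[\Delta_{\eta_\nu}]]$-module on coset representatives of $\Delta_{\eta_\nu}$ in $\Gamma$ and applying Shapiro factor by factor over $\Gal{\overline{\Q}_\nu}{\Q_\nu}\twoheadrightarrow\Delta_{\eta_\nu}$ identifies $H^0(\Gal{\overline{\Q}_\nu}{\Q_\nu},\D_{\varrho\otimes\kappa^{-1}})$, together with the residual $\Gamma$-action permuting the factors, with $\Ind^\Gamma_{\Delta_{\eta_\nu}}H^0(G_{\eta_\nu},\D_\varrho)$. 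All of this is the content of Section 3 of \cite{greenberg1994iwasawa}.

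For the final assertion, each $0^{th}$-cohomology group on a right-hand side is either an $\RRR$-submodule of one of the $\RRR$-cofree modules $\D_\varrho$, $\D^*_\varrho$, $\Fil^+\D_\varrho$, $\D_\varrho/\Fil^+\D_\varrho$ (all of finite $\RRR$-corank), or, in the $\Sigma_0$ case, of a finite direct sum of copies of $\D_\varrho$; since $\RRR$ is Noetherian, the Pontryagin dual is a quotient of a finitely generated $\RRR$-module, hence finitely generated, and the involution $\iota$ leaves the underlying $\RRR$-module unchanged. The only point that demands genuine care — more a bookkeeping hazard than a real obstacle — is keeping the $\RRR[[\Gamma]]$-module structures (not merely the $\RRR$-module or abelian-group structures) straight through Shapiro's isomorphism: in particular checking that the dual twist $\kappa^{-1}$ is exactly what produces the involution $\iota$ in the $\D^*$ statements, and that the residual $\Gamma$-action in the $\nu\in\Sigma_0$ case reassembles into $\Ind^\Gamma_{\Delta_{\eta_\nu}}$.
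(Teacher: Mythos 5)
Your argument is correct and follows the same route as the paper, which simply invokes the isomorphisms in (\ref{iso-for-cyc}) and refers to Section 3 of \cite{greenberg1994iwasawa}; your write-up supplies exactly the underlying degree-zero Shapiro/co-induction computation, including the total ramification of $\Q_\infty/\Q$ at $p$, the finite decomposition at $\nu \in \Sigma_0$, and the $\iota$-twist arising from the sign of the $\kappa$-twist in the dual. The finite-generation argument via submodules of cofree modules of finite corank is likewise the intended one.
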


Let $d^+$ denote the dimension of the $+1$ eigenspace for the action of complex conjugation on $\varrho$.
Throughout this section, we shall assume that the following conditions hold.
\begin{align} \label{p-critical}
\tag{$p$-critical}
\mathrm{Rank}_{\RRR[[\Gamma]]}\left(\Fil^+\LLL_{\varrho \otimes \kappa^{-1}}\right) = d^+.
\end{align}
\begin{enumerate}[leftmargin=3cm, style=sameline, align=left, label=\textsc{(TOR)}, ref=\textsc{TOR}]
\item \label{Tor} The $\RRR[\Gamma]]$-module $\S_{\varrho \otimes \kappa^{-1}}(\Q)^\vee$ is torsion.
\end{enumerate}
\begin{Remark}
When $\varrho \otimes \kappa^{-1}$ satisfies an additional hypothesis (called the ``Panchishkin condition''), one can associate (conjecturally) a $p$-adic $L$-function $\theta_{\varrho \otimes \kappa^{-1}}$ and a \textit{main conjecture} to $\varrho \otimes \kappa^{-1}$ . The hypotheses \ref{p-critical} and \ref{Tor} come into play. See Greenberg's work \cite{greenberg1994iwasawa} for a precise description of the ``Panchishkin condition''. The Galois representations $\rho_{\pmb{4},3}$, $\rho_{\pmb{3},2}$ and $\rho_{\pmb{1},2}$ satisfy the Panchishkin condition. The Galois representation $\pi \circ \rho_{\pmb{4},3}$ does not satisfy the Panchishkin condition.
\end{Remark}

\subsection{Galois cohomology groups}
We shall now proceed to describe the results of Greenberg \cite{MR2290593} regarding Galois cohomology groups.  The proof of Proposition \ref{local-cohomology-not-p} is essentially described in the proof of Proposition 3.2 of \cite{greenberg1994iwasawa}. Hence, instead of providing the proof of Proposition \ref{local-cohomology-not-p}, we shall simply provide the outline of the proof.  Let $\nu \in \Sigma_0$. Let $\eta_\nu$ denote a prime in $\Q_\infty$ lying above $\nu$. Let $\Q_{\nu,\infty}$ denote the cyclotomic $\Z_p$ extension of $\Q_\nu$ and let $G_{\eta_\nu}$ denote the decomposition subgroup of $\Gal{\overline{\Q}_\nu}{\Q_{\nu,\infty}}$. Let $\Delta_{\eta_\nu}$, a subgroup of $\Gamma$,  denote the decomposition subgroup corresponding to $\eta_\nu$ lying above $\nu$. Note that both $\Delta_{\eta_\nu}$ and $\Gamma$ have $p$-cohomological dimension equal to $1$. The key point is to notice that \begin{align*}
H^1\big(\Gal{\Q_{\nu,\infty}}{\Q_\nu}, H^0\left(\Gal{\overline{\Q}_\nu}{\Q_{\nu,\infty}},\D_{\varrho\otimes \kappa^{-1}}\right) \big)=0, \qquad H^1\left(\Delta_{\eta_\nu}, H^0(G_{\eta_\nu},\D_{\varrho\otimes \kappa^{-1}})\right)=0.
\end{align*}
\begin{proposition} \label{local-cohomology-not-p}
Let $\nu \in \Sigma_0$. The natural restriction map gives us an isomorphism \begin{align}\label{loc-decomp-not-p-iso}
H^1(\Gal{\overline{\Q}_\nu}{\Q_\nu},\D_{\varrho \otimes \kappa^{-1}}) \xrightarrow {\cong} \Loc(\nu,\varrho \otimes \kappa^{-1}).
\end{align}
Both these groups are  isomorphic to $$\Ind^{\Gamma}_{\Delta_{\eta_\nu}}H^1(G_{\eta_\nu}, \D_\varrho) .$$
\end{proposition}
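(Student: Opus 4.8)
The plan is to establish the isomorphism in two stages, matching the two assertions of the proposition. For the first assertion, I would use the Hochschild–Serre spectral sequence for the extension $1 \to \Gal{\overline{\Q}_\nu}{\Q_{\nu,\infty}} \to \Gal{\overline{\Q}_\nu}{\Q_\nu} \to \Gal{\Q_{\nu,\infty}}{\Q_\nu} \to 1$, with coefficients in $\D_{\varrho \otimes \kappa^{-1}}$. The relevant low-degree exact sequence expresses $H^1(\Gal{\overline{\Q}_\nu}{\Q_\nu}, \D_{\varrho \otimes \kappa^{-1}})$ in terms of the inflation term $H^1\big(\Gal{\Q_{\nu,\infty}}{\Q_\nu}, H^0(\Gal{\overline{\Q}_\nu}{\Q_{\nu,\infty}}, \D_{\varrho \otimes \kappa^{-1}})\big)$ and the restriction term $H^1(\Gal{\overline{\Q}_\nu}{\Q_{\nu,\infty}}, \D_{\varrho \otimes \kappa^{-1}})^{\Gal{\Q_{\nu,\infty}}{\Q_\nu}}$. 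The inflation term vanishes by the first of the two displayed vanishing statements preceding the proposition (the ``key point''), so restriction gives the desired isomorphism onto $\Loc(\nu, \varrho \otimes \kappa^{-1}) := H^1(I_\nu, \D_{\varrho \otimes \kappa^{-1}})^{\Gamma_\nu}$; here one identifies $\Gal{\overline{\Q}_\nu}{\Q_{\nu,\infty}}$ with $I_{\eta_\nu}$ up to the relevant finite-index subtleties and $\Gal{\Q_{\nu,\infty}}{\Q_\nu}$ with $\Gamma_\nu$, using that $\nu \nmid p$ so that the cyclotomic $\Z_p$-extension is unramified at $\nu$ and $\Gal{\Q_{\nu,\infty}}{\Q_\nu}$ is the full (pro-$p$ part of the) quotient of the decomposition group by inertia.

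For the second assertion, I would again invoke Hochschild–Serre, this time for the extension $1 \to G_{\eta_\nu} \to \Gal{\overline{\Q}_\nu}{\Q_\nu} \to \Delta_{\eta_\nu} \to 1$, where $\Delta_{\eta_\nu} \subset \Gamma$ has $p$-cohomological dimension $1$. The inflation–restriction sequence, combined with the second displayed vanishing statement $H^1\big(\Delta_{\eta_\nu}, H^0(G_{\eta_\nu}, \D_{\varrho \otimes \kappa^{-1}})\big) = 0$, produces an isomorphism $H^1(\Gal{\overline{\Q}_\nu}{\Q_\nu}, \D_{\varrho \otimes \kappa^{-1}}) \xrightarrow{\cong} H^1(G_{\eta_\nu}, \D_{\varrho \otimes \kappa^{-1}})^{\Delta_{\eta_\nu}}$. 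It then remains to rewrite the right-hand side as $\Ind^\Gamma_{\Delta_{\eta_\nu}} H^1(G_{\eta_\nu}, \D_\varrho)$: on the one hand, by the isomorphism $\D_{\varrho \otimes \kappa^{-1}} \cong \Hom_\RRR(\RRR[[\Gamma]](\kappa), \D_\varrho)$ from (\ref{iso-for-cyc}) and Shapiro's lemma, $H^1(G_{\eta_\nu}, \D_{\varrho \otimes \kappa^{-1}})$ unwinds to a (co)induced module over $\Gamma$ built from $H^1(G_{\eta_\nu}, \D_\varrho)$ (the cyclotomic twist being unramified at $\nu$, so $\kappa$ restricted to $G_{\eta_\nu}$ does not interfere); on the other hand, taking $\Delta_{\eta_\nu}$-invariants of this induced-from-$\Gamma$ module returns $\Ind^\Gamma_{\Delta_{\eta_\nu}}$ applied to $H^1(G_{\eta_\nu}, \D_\varrho)$, which is the analogue for $H^1$ of the $H^0$-statement already recorded in Proposition \ref{greenberg-cyc}. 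Assembling the two isomorphisms gives the claimed identification of both groups with $\Ind^\Gamma_{\Delta_{\eta_\nu}} H^1(G_{\eta_\nu}, \D_\varrho)$.

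The main obstacle is the verification of the two vanishing statements flagged as ``the key point'' — and, relatedly, keeping careful track of which decomposition/inertia groups are being used. The vanishing of $H^1$ of a procyclic group (or $\Z_p$-quotient) acting on a module is not automatic; one needs that the module in question, namely $H^0(\Gal{\overline{\Q}_\nu}{\Q_{\nu,\infty}}, \D_{\varrho \otimes \kappa^{-1}})$ respectively $H^0(G_{\eta_\nu}, \D_{\varrho \otimes \kappa^{-1}})$, is a divisible (cofinitely generated, coreflexive) $\Z_p$-module on which $\gamma_0$ acts without nontrivial fixed or cofixed points, or else one argues via the structure of $\D_{\varrho \otimes \kappa^{-1}}$ as a $\Gamma$-(co)induced module so that the relevant $H^1$ is computed by Shapiro and shifts to a group of lower cohomological dimension. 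This is exactly the mechanism Greenberg uses in the proof of Proposition 3.2 of \cite{greenberg1994iwasawa}, so I would structure this step by reducing, via (\ref{iso-for-cyc}), to a Shapiro-type computation: $H^i(H, \D_{\varrho \otimes \kappa^{-1}}) \cong H^i(H, \Hom_\RRR(\RRR[[\Gamma]](\kappa), \D_\varrho))$ and then the coinduced structure collapses the offending $H^1$-term. I would treat the precise finite-index bookkeeping between $\Delta_{\eta_\nu}$, $G_{\eta_\nu}$, $I_{\eta_\nu}$ and $\Gamma$ as routine once the cohomological-dimension-one input and the Shapiro reduction are in place, and I would point to the proof of Proposition 3.2 of \cite{greenberg1994iwasawa} for the details rather than reproducing them, consistent with the expository nature of this section.
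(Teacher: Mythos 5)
Your proposal follows essentially the same route as the paper, which itself only outlines the argument: inflation--restriction for the two relevant group extensions, the two displayed vanishing statements as the key input, and the coinduced structure of $\D_{\varrho\otimes\kappa^{-1}}$ from (\ref{iso-for-cyc}) together with Shapiro's lemma to identify everything with $\Ind^{\Gamma}_{\Delta_{\eta_\nu}}H^1(G_{\eta_\nu},\D_\varrho)$, deferring details to Proposition 3.2 of \cite{greenberg1994iwasawa}. The only imprecision is that $\Gal{\overline{\Q}_\nu}{\Q_{\nu,\infty}}$ is not $I_\nu$ up to finite index but rather contains it with quotient of profinite order prime to $p$ (which is why the cohomologies agree); this is a cosmetic point that does not affect the argument.
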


\begin{corollary}\label{H1-torsion-local-factors}
Let $\nu \in \Sigma_0$. The $\RRR[[\Gamma]]$-module $\Loc(\nu,\varrho \otimes \kappa^{-1})^\vee$ is finitely generated over $\RRR$. As a result, there exists a monic polynomial $h(s)$ in $\RRR[s]$ such that $h(\gamma_0)$ annihilates $\Loc(\nu,\varrho \otimes \kappa^{-1})^\vee$.
\end{corollary}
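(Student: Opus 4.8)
The plan is to deduce Corollary \ref{H1-torsion-local-factors} directly from Proposition \ref{local-cohomology-not-p}. First I would invoke the isomorphism (\ref{loc-decomp-not-p-iso}) together with the identification of both groups with $\Ind^{\Gamma}_{\Delta_{\eta_\nu}}H^1(G_{\eta_\nu}, \D_\varrho)$. The key observation is that $G_{\eta_\nu}$ has $p$-cohomological dimension $1$ (it is the absolute Galois group of a local field of residue characteristic $\ell \neq p$, and $\D_\varrho$ is a discrete $p$-primary module, so the relevant cohomological dimension is $1$), and more importantly that $G_{\eta_\nu}$ is topologically finitely generated. Hence $H^1(G_{\eta_\nu}, \D_\varrho)$ is cofinitely generated over $\RRR$: its Pontryagin dual is a finitely generated $\RRR$-module. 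Since $\Delta_{\eta_\nu}$ has finite index in $\Gamma$, inducing up from $\Delta_{\eta_\nu}$ to $\Gamma$ only multiplies the $\RRR$-corank by the (finite) index $[\Gamma : \Delta_{\eta_\nu}]$, so $\Loc(\nu, \varrho \otimes \kappa^{-1})$ remains cofinitely generated over $\RRR$; equivalently, $\Loc(\nu, \varrho \otimes \kappa^{-1})^\vee$ is a finitely generated $\RRR$-module.

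For the second assertion, I would argue as follows. Since $\RRR[[\Gamma]] \cong \RRR[[s]]$ is a free $\RRR$-module with topological basis $1, s, s^2, \dotsc$ (equivalently $\RRR[[\Gamma]]$ is a finite integral extension of $\Z_p[[u_1,\dotsc,u_n,s]]$), any finitely generated $\RRR[[\Gamma]]$-module that happens to be finitely generated over the subring $\RRR$ must be annihilated by a monic polynomial in $s$ with coefficients in $\RRR$. Concretely, let $M = \Loc(\nu, \varrho \otimes \kappa^{-1})^\vee$, generated over $\RRR$ by finitely many elements $m_1, \dotsc, m_r$. Consider the $\RRR$-submodule $N$ of $M$ generated by all $(s+1)^j m_i = \gamma_0^j m_i$ for $0 \le j$ and $1 \le i \le r$; this is a finitely generated $\RRR$-module (being a submodule of the Noetherian module $M$), so it is generated by finitely many of the $\gamma_0^j m_i$, say all those with $j \le D$. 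Then for each $i$, $\gamma_0^{D+1} m_i$ lies in the $\RRR$-span of $\{\gamma_0^j m_k : j \le D\}$, which after a standard change of basis argument (or directly, using that $M$ is a faithful module over $\RRR[[\Gamma]]/\mathrm{Ann}(M)$ and the Cayley–Hamilton theorem applied to multiplication by $\gamma_0$ on a finite generating set) yields a monic polynomial relation $h(\gamma_0) m_i = 0$ for all $i$, hence $h(\gamma_0)$ annihilates $M$.

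The cleanest way to phrase the last step is via Cayley–Hamilton: multiplication by $\gamma_0$ gives an $\RRR$-linear endomorphism $\mu$ of the finitely generated $\RRR$-module $M$, and since $\RRR$ need not be a PID I would apply the determinant trick. Pick generators $m_1, \dotsc, m_r$ of $M$ over $\RRR$ and write $\gamma_0 m_i = \sum_k a_{ik} m_k$ with $a_{ik} \in \RRR$; letting $A = (a_{ik})$, the characteristic polynomial $h(s) = \det(sI - A) \in \RRR[s]$ is monic of degree $r$, and the Cayley–Hamilton theorem gives $h(\mu) = 0$ as an endomorphism of $M$, i.e. $h(\gamma_0)$ annihilates $\Loc(\nu, \varrho \otimes \kappa^{-1})^\vee$, as desired.

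I do not anticipate a serious obstacle here; the only point requiring a little care is making sure that the cohomology group $H^1(G_{\eta_\nu}, \D_\varrho)$ is genuinely cofinitely generated over $\RRR$ — this rests on $G_{\eta_\nu}$ being topologically finitely generated and on $\D_\varrho$ being $\RRR$-cofree, so that the cohomology in each degree is built from finitely many copies of cofinitely generated pieces. Everything after that is the standard ``finitely generated over $\RRR$ implies killed by a monic polynomial in $\gamma_0$'' mechanism, which I would state via Cayley–Hamilton to avoid any regularity hypothesis on $\RRR$.
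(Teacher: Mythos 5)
Your endgame is exactly the paper's: the passage from ``finitely generated over $\RRR$'' to ``annihilated by a monic $h(\gamma_0)$'' is the determinant-trick form of Cayley--Hamilton, which is precisely what the paper invokes (the generalized Cayley--Hamilton theorem) for this kind of statement, and your reduction to $H^1(G_{\eta_\nu},\D_\varrho)$ via the finite index of $\Delta_{\eta_\nu}$ in $\Gamma$ is also the paper's first step. The divergence, and the one soft spot, is in how you justify that $H^1(G_{\eta_\nu},\D_\varrho)^\vee$ is finitely generated over $\RRR$. You rest this on the assertion that $G_{\eta_\nu}=\Gal{\overline{\Q}_\nu}{\Q_{\nu,\infty}}$ is topologically finitely generated. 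That is not a standard fact here: $\Q_{\nu,\infty}$ is an \emph{infinite} extension of $\Q_\nu$, so Jannsen--Wingberg does not apply, and the wild inertia sitting inside $G_{\eta_\nu}$ is a pro-$\ell$ group of infinite rank; whether the full group is topologically finitely generated is at best not obvious and you give no argument. (Your parenthetical justification of $\mathrm{cd}_p(G_{\eta_\nu})=1$ is also off: for an honest local field of residue characteristic $\ell\neq p$ the $p$-cohomological dimension is $2$, not $1$; it drops to $1$ only after passing to $\Q_{\nu,\infty}$. This claim is not load-bearing for the corollary, but the reasoning should not be left as is.)

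The fix --- and the paper's actual route --- is to use that $\D_\varrho$ is $p$-primary to discard the prime-to-$p$ pieces of the local Galois group before counting anything. Concretely: $\Gal{\Q_\nu^{ur}}{\Q_{\nu,\infty}}$ has profinite order prime to $p$, so $H^1(G_{\eta_\nu},\D_\varrho)\cong H^0\big(\Gal{\Q_\nu^{ur}}{\Q_{\nu,\infty}},H^1(I_\nu,\D_\varrho)\big)$, reducing the problem to $H^1(I_\nu,\D_\varrho)$; then the exact sequence $0\rightarrow W_\nu\rightarrow I_\nu\rightarrow P_\nu\rightarrow 0$ with $W_\nu$ of order prime to $p$ (acting through a finite group) and $P_\nu\cong\Z_p$ gives $H^1(I_\nu,\D_\varrho)\cong H^1(P_\nu,(\D_\varrho)_{W_\nu})\cong(\D_\varrho)_{I_\nu}$, whose Pontryagin dual is a submodule of the free module $\D_\varrho^\vee$ and hence finitely generated over $\RRR$. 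If you want to keep your cocycle-counting argument, you should first pass to a quotient of $G_{\eta_\nu}$ by a closed normal pro-prime-to-$p$ subgroup (using $H^1(P,\D_\varrho)=0$ for $P$ pro-prime-to-$p$ and inflation--restriction); the resulting quotient is topologically finitely generated and your bound $Z^1\hookrightarrow\D_\varrho^{g}$ then applies. As written, the topological finite generation of $G_{\eta_\nu}$ itself is an unproved and non-obvious claim, so this step needs to be repaired.
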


\begin{proof}
Since $\Delta_{\eta_\nu}$ is of finite index in $\Gamma$, it suffices to show that $H^1(G_{\eta_\nu},\D_\varrho)^\vee$ is finitely generated over $\RRR$. This can be done by using results from local class field theory. The Galois group $\Gal{\Q_{\nu}^{ur}}{\Q_{\nu,\infty}}$ is of profinite order prime to $p$. Here, $\Q_{\nu,\infty}$ is the cyclotomic $\Z_p$-extension of $\Q_\nu$ and $\Q_\nu^{ur}$ is the maximal unramified extension of $\Q_\nu$. This gives us the isomorphism.
\begin{align*}
H^1(G_{\eta_\nu},\D_{\varrho}) \cong H^0(\Gal{\Q^{ur}_\nu}{\Q_{\nu,\infty}},H^1(I_\nu,\D_\varrho)).
\end{align*}
To prove the corollary, it is enough to show that $H^1(I_\nu,\D_\varrho)^\vee$ is finitely generated over $\RRR$. Local class field theory lets us obtain a short exact sequence $0 \rightarrow W_\nu \rightarrow I_\nu \rightarrow P_\nu \rightarrow 0$ where $W_\nu$ is a profinite group of profinite order prime to $p$ and $P_\nu$ is a profinite group that is isomorphic to $\Z_p$. The action of $W_\nu$ factors through a finite group of order prime to $p$. These observations let us obtain the following isomorphisms of $\RRR$-modules:
\begin{align*}
H^1(I_\nu,\D_{\varrho}) \cong H^1\left(P_\nu, H^0(W_\nu,\D_\varrho)\right) \cong H^1\left(P_\nu, \left(\D_\varrho\right)_{W_\nu} \right) \cong  \left(\D_{\varrho}\right)_{I_\nu}.
\end{align*}
Here, $\left(\D_\varrho\right)_{W_\nu}$ and $\left(\D_{\varrho}\right)_{I_\nu}$ denote the maximal quotient of $\D_{\varrho}$ on which $W_\nu$ and $I_\nu$ respectively act trivially. Since $H^1(I_\nu,\D_{\varrho})^\vee$ is isomorphic to a sub-module of $\D_\varrho^\vee$ (which is finitely generated over $\RRR$), the corollary follows.
\end{proof}

\begin{proposition} \label{h2-cohomology}
Let $\nu \in \Sigma_0$. The second cohomology group $H^2(\Gal{\overline{\Q}_\nu}{\Q_\nu},\D_{\varrho \otimes \kappa^{-1}})$ equals $0$. If we let $\mathcal{N}$ equal $\D_{\varrho \otimes \kappa^{-1}}$, $\Fil^+\D_{\varrho \otimes \kappa^{-1}}$ or $\frac{\D_{\rho \otimes \kappa^{-1}}}{\Fil^+\D_{\varrho \otimes \kappa^{-1}}}$, then $H^2(\Gal{\overline{\Q}_p}{\Q_p},\mathcal{N})$ equals $0$.
\end{proposition}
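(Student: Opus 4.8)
The plan is to reduce every assertion to the vanishing of a second cohomology group of a profinite group of $p$-cohomological dimension one, namely $\Gal{\Q_{\nu,\infty}}{\Q_\nu} \cong \Gamma$ (or its finite-index subgroup $\Delta_{\eta_\nu}$), combined with the local duality theorem. First I would dispose of the primes $\nu \in \Sigma_0$. Exactly as in the proof of Proposition \ref{local-cohomology-not-p}, one uses the Hochschild–Serre spectral sequence for the extension $\Q_{\nu,\infty}/\Q_\nu$ with coefficients in $\D_{\varrho \otimes \kappa^{-1}}$. Since $\Gal{\overline{\Q}_\nu}{\Q_{\nu,\infty}}$ has $p$-cohomological dimension one (as $\Q_{\nu,\infty}$ contains the cyclotomic $\Z_p$-extension and $\mathrm{cd}_p \le 2$ for local fields), the spectral sequence has at most two nonzero columns, and the $E_2^{2,0}$, $E_2^{1,1}$, $E_2^{0,2}$ terms all vanish: the first two because $\mathrm{cd}_p(\Gamma)=1$, the last because $\mathrm{cd}_p$ of the absolute Galois group of $\Q_{\nu,\infty}$ is one. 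Hence $H^2(\Gal{\overline{\Q}_\nu}{\Q_\nu},\D_{\varrho \otimes \kappa^{-1}})=0$.

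For the place above $p$, I would treat the three modules $\D_{\varrho \otimes \kappa^{-1}}$, $\Fil^+\D_{\varrho \otimes \kappa^{-1}}$, $\frac{\D_{\varrho \otimes \kappa^{-1}}}{\Fil^+\D_{\varrho \otimes \kappa^{-1}}}$ uniformly by invoking local Tate duality for the local field $\Q_p$: for a cofinitely generated discrete $p$-primary $\Gal{\overline{\Q}_p}{\Q_p}$-module $\mathcal{N}$ one has a perfect pairing $H^2(\Gal{\overline{\Q}_p}{\Q_p},\mathcal{N}) \cong H^0(\Gal{\overline{\Q}_p}{\Q_p},\mathcal{N}^{*})^\vee$, where $\mathcal{N}^{*}$ is the Cartier dual. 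By Proposition \ref{greenberg-cyc}, the relevant $H^0$'s over $\Q_p$ with coefficients in the cyclotomic deformation are isomorphic to the corresponding $H^0$'s with coefficients in $\D_\varrho$ (resp. $\Fil^+\D_\varrho$, resp. the quotient) over the local field $\Q_{p,\infty}=\Q_{\eta_p}$. The key mechanism is that the Cartier dual of $\D_{\varrho\otimes\kappa^{-1}}$ (and its filtration pieces) is built from $\LLL_\varrho^{*}\otimes \RRR[[\Gamma]](\kappa^{-1})$-type objects, on which $\Gamma$ acts through $\kappa^{\mp 1}$; taking $G_{\Q_p}$-invariants forces $\gamma_0$ to act trivially, but $\gamma_0$ acts by a scalar which is a unit minus a topologically nilpotent element, so no nonzero invariant vectors survive unless one already has invariance upstairs — and then the relevant $H^0$ is still cofinitely generated over $\RRR$, while the extra $\kappa$-twist makes the $\Gamma$-coinvariants (equivalently the dual's $\Gamma$-invariants) vanish because $\gamma_0 - 1$ acts invertibly. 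More precisely, I would argue that $H^0\big(\Gal{\overline{\Q}_p}{\Q_p}, \mathcal{N}^{*}\big)=0$ for each of the three choices of $\mathcal{N}$ by noting $\mathcal{N}^{*}$ has the form $\bigl(\text{discrete }\RRR\text{-module}\bigr)\otimes_{\RRR}\hat{\RRR[[\Gamma]]}(\kappa)$ and the argument of Proposition \ref{greenberg-cyc} shows its $\Gal{\overline{\Q}_p}{\Q_p}$-invariants are $H^0(G_{\eta_p}, (\text{that module}))$ with $\Gamma$ acting additionally by $\kappa$, hence killed.

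The step I expect to be the main obstacle is the $\Q_p$ computation for the filtration quotient $\frac{\D_{\varrho \otimes \kappa^{-1}}}{\Fil^+\D_{\varrho \otimes \kappa^{-1}}}$: unlike $\Fil^+$ and the full module, the quotient's Cartier dual is $\Fil^+$ of the dual lattice $\LLL^{*}_{\varrho}$, and one must check that the hypothesis \ref{p-critical} (which pins down the rank of $\Fil^+$) together with the $\kappa$-twist genuinely forces the relevant invariants to vanish rather than merely to be $\RRR$-cofinitely-generated; this is where I would lean on the isomorphisms of Proposition \ref{greenberg-cyc} and on the fact that $\gamma_0-1$ is a nonzerodivisor acting through the $\kappa$-character. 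An alternative, cleaner route for the $p$-part would be to use that $\mathrm{cd}_p\bigl(\Gal{\overline{\Q}_p}{\Q_{p,\infty}}\bigr)=1$ and run the same Hochschild–Serre argument as for $\nu\in\Sigma_0$, with $\Q_{p,\infty}/\Q_p$ in place of $\Q_{\nu,\infty}/\Q_\nu$; since $\mathcal{N}$ in each case is induced (after Proposition \ref{greenberg-cyc}) from a discrete module over $\Gal{\overline{\Q}_p}{\Q_{p,\infty}}$, the $E_2^{2,0}=E_2^{1,1}=E_2^{0,2}=0$ vanishing gives $H^2=0$ directly, sidestepping the duality bookkeeping altogether. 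I would present the Hochschild–Serre argument as the main proof and remark on the duality interpretation.
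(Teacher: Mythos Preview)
Your Hochschild--Serre argument has a genuine gap at the $E_2^{1,1}$ term. The fact that $\mathrm{cd}_p(\Gamma)=1$ (or $\mathrm{cd}_p(\Delta_{\eta_\nu})=1$) only kills $H^i(\Gamma,-)$ for $i\geq 2$; it says nothing about $H^1$. So $E_2^{1,1}=H^1\bigl(\Delta_{\eta_\nu},\,H^1(G_{\eta_\nu},\D_{\varrho\otimes\kappa^{-1}})\bigr)$ has no reason to vanish on cohomological-dimension grounds alone, and with only two nonzero columns this term survives to $E_\infty^{1,1}$ and contributes to $H^2$. The same defect recurs in your ``alternative, cleaner route'' for the prime $p$. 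One \emph{can} kill this term, but it requires the specific co-induced structure of $\D_{\varrho\otimes\kappa^{-1}}$ (essentially a Shapiro-type identification $H^i(G_{\Q_\nu},\D_{\varrho\otimes\kappa^{-1}})\cong H^i(G_{\eta_\nu},\text{--})$ reducing to a group of $p$-cohomological dimension one), not a bare spectral-sequence count.

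The paper's route is instead the local-duality one you sketch secondarily, but your sketch blurs the distinction between the compact and discrete duals. Local duality gives $H^2(G_{\Q_\nu},\mathcal{N})^\vee\cong H^0(G_{\Q_\nu},\LLL^\star)$, where $\LLL^\star$ is the \emph{compact} Cartier dual (a free $\RRR[[\Gamma]]$-lattice, of the form $\LLL'\otimes_\RRR \RRR[[\Gamma]](\kappa)$, not a discrete module tensored with $\hat{\RRR[[\Gamma]]}$). Since $H^0(G_{\Q_\nu},\LLL^\star)$ sits inside a free $\RRR[[\Gamma]]$-module it is torsion-free, so it vanishes as soon as its $\RRR[[\Gamma]]$-rank is zero. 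That rank is controlled by the corank of $H^0(G_{\Q_\nu},\D^\star)$ for the associated discrete module $\D^\star$ --- this is exactly what Proposition~3.10 of \cite{MR2290593} supplies --- and Proposition~\ref{greenberg-cyc} shows $H^0(G_{\Q_\nu},\D^\star)^\vee$ is finitely generated over $\RRR$, hence $\RRR[[\Gamma]]$-torsion, hence of corank zero. Your duality paragraph gestures at this mechanism but does not isolate the torsion-free/rank-zero step, which is where Proposition~3.10 enters and which is why Proposition~\ref{greenberg-cyc} alone is not enough.
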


Once again, instead of providing the proof of Proposition \ref{h2-cohomology}, we shall simply mention that Proposition \ref{h2-cohomology} can be deduced from the arguments given in Section 5 of \cite{greenberg2010surjectivity}. The arguments involved in the proof combine local duality, Proposition 3.10 in \cite{MR2290593} along with Proposition \ref{greenberg-cyc}. Note that the statement of local duality, that we need, is given below (see Section 0.3 of Nekov\'a\v{r}'s Selmer complexes \cite{nekovar2006selmer}).
\begin{align*}
H^i\left(\Gal{\overline{\Q}_\nu}{\Q_{\nu}}, \D_{\rho}\right)^\vee \cong  H^{2-i}\left(\Gal{\overline{\Q}_\nu}{\Q_{\nu}}, \LLL^*_{\rho}\right), \qquad  \text{ for all $\nu \in \Sigma$, for all $i \in \{0,1,2\} $}.
\end{align*}

Now consider the prime $p$. The map $$\alpha_p : H^1(\Gal{\overline{\Q}_p}{\Q_p},\D_{\varrho \otimes \kappa^{-1}}) \rightarrow H^1\left(I_p,\frac{\D_{\varrho \otimes \kappa^{-1}}}{\Fil^+\D_{\varrho \otimes \kappa^{-1}}}\right)^{\Gamma_p}$$ can be written as $\alpha_p = \sigma_p \circ \beta_p$, where the maps $\beta_p$ and $\sigma_p$ are given below.
\begin{align*}
&\beta_p : H^1(\Gal{\overline{\Q}_p}{\Q_p},\D_{\varrho \otimes \kappa^{-1}}) \rightarrow H^1\left(\Gal{\overline{\Q}_p}{\Q_p},\frac{\D_{\varrho \otimes \kappa^{-1}}}{\Fil^+\D_{\varrho \otimes \kappa^{-1}}}\right), \\
& \sigma_p : H^1\left(\Gal{\overline{\Q}_p}{\Q_p},\frac{\D_{\rho \otimes \kappa^{-1}}}{\Fil^+\D_{\varrho \otimes \kappa^{-1}}}\right) \rightarrow H^1\left(I_p,\frac{\D_{\varrho \otimes \kappa^{-1}}}{\Fil^+\D_{\varrho \otimes \kappa^{-1}}}\right)^{\Gamma_p}.
\end{align*}
Since $H^2(\Gal{\overline{\Q}_p}{\Q_p}, \Fil^+\D_{\varrho \otimes \kappa^{-1}})=0$, the map $\beta_p$ is surjective. The inflation-restriction exact sequence and the fact that $\Gamma_p$ has $p$-cohomological dimension $1$ let us conclude that $\sigma_p$ is also surjective. Consequently, $\alpha_p$ is surjective too. This gives us the following lemma:
\begin{lemma} \label{local-cohomology-p}
We have a short exact sequence $0\rightarrow \ker(\beta_p) \rightarrow \ker(\alpha_p) \rightarrow \ker(\sigma_p) \rightarrow 0$ and the following isomorphism:
\begin{align*}
H^1\left(I_p,\frac{\D_{\varrho \otimes \kappa^{-1}}}{\Fil^+\D_{\varrho \otimes \kappa^{-1}}}\right)^{\Gamma_p} \cong \frac{H^1(\Gal{\overline{\Q}_p}{\Q_p},\D_{\varrho \otimes \kappa^{-1}})}{\ker(\alpha_p)}.
\end{align*}
\end{lemma}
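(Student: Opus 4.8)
The plan is to derive this as a purely formal consequence of the factorization $\alpha_p = \sigma_p \circ \beta_p$ together with the two surjectivity assertions established in the paragraph immediately preceding the lemma. First I would invoke the standard kernel--cokernel exact sequence attached to a pair of composable homomorphisms $A \xrightarrow{\beta_p} B \xrightarrow{\sigma_p} C$ with $\alpha_p = \sigma_p \circ \beta_p$, namely
\begin{align*}
0 \to \ker(\beta_p) \to \ker(\alpha_p) \to \ker(\sigma_p) & \xrightarrow{\delta} \mathrm{coker}(\beta_p) \\ & \to \mathrm{coker}(\alpha_p) \to \mathrm{coker}(\sigma_p) \to 0,
\end{align*}
obtained by a routine diagram chase. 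Here $A = H^1(\Gal{\overline{\Q}_p}{\Q_p},\D_{\varrho \otimes \kappa^{-1}})$, $B = H^1\big(\Gal{\overline{\Q}_p}{\Q_p},\tfrac{\D_{\varrho \otimes \kappa^{-1}}}{\Fil^+\D_{\varrho \otimes \kappa^{-1}}}\big)$ and $C = H^1\big(I_p,\tfrac{\D_{\varrho \otimes \kappa^{-1}}}{\Fil^+\D_{\varrho \otimes \kappa^{-1}}}\big)^{\Gamma_p}$. All maps in sight are $\RRR[[\Gamma]]$-linear, since $\Gamma$ acts compatibly on every cohomology group through its action on the coefficient module $\D_{\varrho \otimes \kappa^{-1}}$, so the displayed sequence is one of $\RRR[[\Gamma]]$-modules.

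Next I would feed in the two facts already recorded. Since $H^2(\Gal{\overline{\Q}_p}{\Q_p},\Fil^+\D_{\varrho \otimes \kappa^{-1}}) = 0$ by Proposition \ref{h2-cohomology}, the long exact cohomology sequence of the filtration $0 \to \Fil^+\D_{\varrho \otimes \kappa^{-1}} \to \D_{\varrho \otimes \kappa^{-1}} \to \tfrac{\D_{\varrho \otimes \kappa^{-1}}}{\Fil^+\D_{\varrho \otimes \kappa^{-1}}} \to 0$ shows that $\beta_p$ is surjective, i.e.\ $\mathrm{coker}(\beta_p) = 0$; substituting this into the six-term sequence collapses it to the asserted short exact sequence $0 \to \ker(\beta_p) \to \ker(\alpha_p) \to \ker(\sigma_p) \to 0$. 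For the displayed isomorphism I would use that $\sigma_p$ is surjective --- this comes from the inflation--restriction sequence for $1 \to I_p \to \Gal{\overline{\Q}_p}{\Q_p} \to \Gamma_p \to 1$ together with the vanishing of $H^2(\Gamma_p,-)$, valid because $\Gamma_p$ has $p$-cohomological dimension $1$ --- so that $\alpha_p = \sigma_p \circ \beta_p$ is surjective, and then the first isomorphism theorem identifies $H^1(\Gal{\overline{\Q}_p}{\Q_p},\D_{\varrho \otimes \kappa^{-1}})/\ker(\alpha_p)$ with the image of $\alpha_p$, namely all of $H^1\big(I_p,\tfrac{\D_{\varrho \otimes \kappa^{-1}}}{\Fil^+\D_{\varrho \otimes \kappa^{-1}}}\big)^{\Gamma_p}$.

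I do not anticipate a genuine obstacle here: the lemma merely repackages the surjectivity computations carried out just before it, and the only additional content is the exactness of the kernel--cokernel sequence, which is standard homological algebra. If anything requires care it is bookkeeping, namely checking that $\alpha_p$, $\beta_p$, $\sigma_p$ are morphisms of $\RRR[[\Gamma]]$-modules so that the kernels, cokernels, and the resulting exact sequence are all statements about $\RRR[[\Gamma]]$-modules; this is immediate from the constructions, since the $\Gamma$-action on each $H^1$ is induced functorially from the action on $\D_{\varrho \otimes \kappa^{-1}}$.
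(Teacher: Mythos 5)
Your argument is correct and is essentially the paper's own: the text preceding the lemma establishes that $\beta_p$ is surjective (from the vanishing of $H^2(\Gal{\overline{\Q}_p}{\Q_p},\Fil^+\D_{\varrho\otimes\kappa^{-1}})$) and that $\sigma_p$ is surjective (from inflation--restriction and $\mathrm{cd}_p(\Gamma_p)=1$), and the lemma is then exactly the formal consequence you describe via the kernel--cokernel sequence and the first isomorphism theorem. No discrepancy.
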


The Selmer group is defined as the kernel of a global-to-local map. In Greenberg's works (\cite{greenberg2010surjectivity} and \cite{greenberg2014pseudonull}), the local factors are described as quotients of local cohomology groups involving the decomposition subgroup. We prefer to describe the local factors in terms of local cohomology groups involving the inertia subgroup.  Proposition \ref{local-cohomology-not-p} and Lemma \ref{local-cohomology-p} in fact also provide a relationship between these descriptions.

To apply many of Greenberg's results in \cite{greenberg2010surjectivity} and \cite{greenberg2014pseudonull}, a hypothesis called ``Weak Leopoldt conjecture'' (sometimes also called ``Hypothesis L'') needs to be verified. The Weak Leopoldt conjecture for $\varrho\otimes\kappa^{-1}$ states that the Pontryagin dual of
\begin{align*}
\Sha^2\left(\D_{\varrho \otimes \kappa^{-1}}\right) &:= \ker\left( H^2(G_\Sigma, \D_{\varrho \otimes \kappa^{-1}}) \rightarrow \prod_{\nu \in \Sigma} H^2(\Gal{\overline{\Q}_\nu}{\Q_\nu},\D_{\varrho \otimes \kappa^{-1}}) \right)
\end{align*}
is $\RRR[[\Gamma]]$-torsion. We will now show that the Weak Leopoldt conjecture for $\varrho\otimes\kappa^{-1}$ holds whenever the hypotheses \ref{Tor} and \ref{p-critical} hold. \\

Let $i \in \{0,1,2\}$. Let $\nu \in \Sigma$.  Let us label the ranks of various cohomology groups.
\begin{align*}
& h_i:=\mathrm{Rank}_{\RRR[[\Gamma]]}\left(H^i(G_\Sigma, \D_{\varrho\otimes\kappa^{-1}})^\vee   \right),   && h_i^{p,-}:=  \mathrm{Rank}_{\RRR[[\Gamma]]}\left(H^i\left(\Gal{\overline{\Q}_p}{\Q_p}, \frac{\D_{\varrho\otimes\kappa^{-1}}}{\Fil^+\D_{\varrho\otimes\kappa^{-1}}}\right)^\vee   \right), \\ &  h_i^{(p)} :=\mathrm{Rank}_{\RRR[[\Gamma]]}\left(H^i(\Gal{\overline{\Q}_p}{\Q_p}, \D_{\varrho\otimes\kappa^{-1}})^\vee   \right), && h_i^{(\nu)}:=  \mathrm{Rank}_{\RRR[[\Gamma]]}\left(H^i(\Gal{\overline{\Q}_\nu}{\Q_\nu}, \D_{\varrho\otimes\kappa^{-1}})^\vee   \right).
\end{align*}

Now let $\nu \in \Sigma_0$. By Proposition \ref{greenberg-cyc}, we obtain the following equality $$h_0=h_0^{p,-}=h_0^{(\nu)}=0.$$ Note that the dimension of the Galois representation $\varrho$ and the dimension of its $+1$-eigenspace for complex conjugation equal $d$ and $d^+$ respectively. By Proposition \ref{h2-cohomology}, we obtain the following equality $$h_2^{(p)}=h_2^{p,-}=h_2^{(\nu)}=0.$$
These observations along with the formulas for the global Euler-Poincar\'e characteristics (Proposition 4.1 in \cite{MR2290593}) and local Euler-Poincar\'e characteristics (Proposition 4.2 in \cite{MR2290593}) then give us the following equalities:
\begin{align*}
h_1 &=  (d-d^+) + h_2 .\\
h_1^{p,-} &=   (d-d^+), \ h_1^{(\nu)}=0. \\
h_2 &= \mathrm{Rank}_{\RRR[[\Gamma]]}\left(\Sha^2\left(\D_{\varrho \otimes \kappa^{-1}}\right)^\vee \right).
\end{align*}
In fact, by Proposition \ref{h2-cohomology},   $\Sha^2\left(\D_{\varrho \otimes \kappa^{-1}}\right)$ equals $H^2(G_\Sigma, \D_{\varrho\otimes\kappa^{-1}})$. Since $\sigma_p$ is surjective, the quantity $h_1^{p,-}$ is greater than or equal to the $\RRR[[\Gamma]]$-rank of the Pontryagin dual of $H^1\left(I_p,\frac{\D_{\varrho\otimes\kappa^{-1}}}{\Fil^+\D_{\varrho\otimes\kappa^{-1}}} \right)^{\Gamma_p}$. As a result of the hypothesis \ref{Tor}, we obtain the following set of implications:
\begin{align*}
&& h_1 +  \mathrm{Rank}_{\RRR[[\Gamma]]}\left(\coker(\phi_{\varrho\otimes \kappa^{-1}})^\vee \right) & \leq  h_1^{p,-},
\\  \implies &&  (d-d^+) + h_2 +  \mathrm{Rank}_{\RRR[[\Gamma]]}\left(\coker(\phi_{\varrho\otimes \kappa^{-1}})^\vee \right) & \leq (d-d^+), \\
   \implies && \mathrm{Rank}_{\RRR[[\Gamma]]}\left(\Sha^2\left(\D_{\varrho \otimes \kappa^{-1}}\right)^\vee\right) +  \mathrm{Rank}_{\RRR[[\Gamma]]}\left(\coker(\phi_{\varrho\otimes \kappa^{-1}})^\vee\right) & \leq 0,\\
 \implies && \mathrm{Rank}_{\RRR[[\Gamma]]}\left(\Sha^2\left(\D_{\varrho \otimes \kappa^{-1}}\right)^\vee\right) = \mathrm{Rank}_{\RRR[[\Gamma]]}\left(\coker(\phi_{\varrho\otimes \kappa^{-1}})^\vee\right) & = 0.
\end{align*}
By Proposition 5.2.3 in \cite{greenberg2010surjectivity}, the $\RRR[[\Gamma]]$-module $H^2(G_\Sigma,\D_{\varrho\otimes \kappa^{-1}})^\vee$, which equals $\Sha^2\left(\D_{\varrho \otimes \kappa^{-1}}\right)^\vee$, is torsion-free.  Since $\coker(\phi^{\Sigma_0}_{\varrho \otimes \kappa^{-1}})$ is a quotient of $\coker(\phi_{\varrho \otimes \kappa^{-1}})$, we get the following proposition:
\begin{proposition}\label{coker-weak}The following statements hold whenever the hypotheses \ref{Tor} and \ref{p-critical} hold:
\begin{enumerate}
\item The $\RRR[[\Gamma]]$-modules $\coker\left(\phi_{\varrho \otimes \kappa^{-1}}\right)^\vee$ and $\coker\left(\phi^{\Sigma_0}_{\varrho \otimes \kappa^{-1}}\right)^\vee$ are torsion.
\item $H^2\left(G_\Sigma,\D_{\varrho \otimes \kappa^{-1}}\right)$ equals $0$. As a result, the Weak Leopoldt conjecture for $\varrho\otimes \kappa^{-1}$ holds.
\end{enumerate}

\end{proposition}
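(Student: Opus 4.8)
The plan is to formalise the rank computation that has just been carried out. First I would record the two packets of vanishing. By Proposition~\ref{greenberg-cyc} the invariants $h_0$, $h_0^{p,-}$ and $h_0^{(\nu)}$ all vanish for every $\nu\in\Sigma_0$, and by Proposition~\ref{h2-cohomology} the invariants $h_2^{(p)}$, $h_2^{p,-}$ and $h_2^{(\nu)}$ all vanish. Feeding these into the global Euler--Poincar\'e characteristic formula (Proposition~4.1 of \cite{MR2290593}) and the local Euler--Poincar\'e characteristic formulas (Proposition~4.2 of \cite{MR2290593}), together with \ref{p-critical} which fixes $\mathrm{Rank}_{\RRR[[\Gamma]]}(\Fil^+\LLL_{\varrho\otimes\kappa^{-1}}) = d^+$, yields $h_1 = (d-d^+) + h_2$, $h_1^{p,-} = d-d^+$, $h_1^{(\nu)} = 0$, and $h_2 = \mathrm{Rank}_{\RRR[[\Gamma]]}(\Sha^2(\D_{\varrho\otimes\kappa^{-1}})^\vee)$; by Proposition~\ref{h2-cohomology} this last rank is the rank of $H^2(G_\Sigma,\D_{\varrho\otimes\kappa^{-1}})^\vee$ itself.

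Next I would bring in \ref{Tor}. Since $\S_{\varrho\otimes\kappa^{-1}}(\Q) = \ker(\phi_{\varrho\otimes\kappa^{-1}})$ has $\RRR[[\Gamma]]$-torsion Pontryagin dual, the two short exact sequences relating $H^1(G_\Sigma,\D_{\varrho\otimes\kappa^{-1}})$, the image of $\phi_{\varrho\otimes\kappa^{-1}}$, its target, and $\coker(\phi_{\varrho\otimes\kappa^{-1}})$, give after dualising and taking ranks the identity $h_1 + \mathrm{Rank}_{\RRR[[\Gamma]]}(\coker(\phi_{\varrho\otimes\kappa^{-1}})^\vee)$ equal to the rank of the Pontryagin dual of the target. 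That target has dual of rank at most $h_1^{p,-} = d-d^+$: the $p$-component is a quotient of $H^1(\Gal{\overline{\Q}_p}{\Q_p},\D_{\varrho\otimes\kappa^{-1}})/\ker(\alpha_p)$ with rank bounded by $h_1^{p,-}$ since $\sigma_p$ is surjective (Lemma~\ref{local-cohomology-p}), while each $\Sigma_0$-component $\Loc(\nu,\varrho\otimes\kappa^{-1})^\vee$ is finitely generated over $\RRR$, hence $\RRR[[\Gamma]]$-torsion, by Corollary~\ref{H1-torsion-local-factors}. Substituting $h_1 = (d-d^+)+h_2$ collapses the inequality to $h_2 + \mathrm{Rank}_{\RRR[[\Gamma]]}(\coker(\phi_{\varrho\otimes\kappa^{-1}})^\vee) \le 0$, forcing both $\mathrm{Rank}_{\RRR[[\Gamma]]}(\coker(\phi_{\varrho\otimes\kappa^{-1}})^\vee)$ and $h_2 = \mathrm{Rank}_{\RRR[[\Gamma]]}(H^2(G_\Sigma,\D_{\varrho\otimes\kappa^{-1}})^\vee)$ to vanish.

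The remaining steps are formal. For part (2): $H^2(G_\Sigma,\D_{\varrho\otimes\kappa^{-1}})^\vee$ has rank $0$, i.e.\ is torsion, but by Proposition~5.2.3 of \cite{greenberg2010surjectivity} it is torsion-free, and a module that is both torsion and torsion-free is $0$, so $H^2(G_\Sigma,\D_{\varrho\otimes\kappa^{-1}}) = 0$. Since Proposition~\ref{h2-cohomology} identifies $\Sha^2(\D_{\varrho\otimes\kappa^{-1}})$ with $H^2(G_\Sigma,\D_{\varrho\otimes\kappa^{-1}})$, the group $\Sha^2(\D_{\varrho\otimes\kappa^{-1}})$ vanishes too, so its Pontryagin dual is certainly $\RRR[[\Gamma]]$-torsion and the Weak Leopoldt conjecture for $\varrho\otimes\kappa^{-1}$ holds. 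For part (1): $\coker(\phi_{\varrho\otimes\kappa^{-1}})^\vee$ is torsion by the rank computation, and since $\coker(\phi^{\Sigma_0}_{\varrho\otimes\kappa^{-1}})$ is a quotient of $\coker(\phi_{\varrho\otimes\kappa^{-1}})$, its dual embeds into $\coker(\phi_{\varrho\otimes\kappa^{-1}})^\vee$ and is therefore torsion as well.

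I expect the only real subtlety to be the Euler--Poincar\'e bookkeeping: getting the signs right, keeping $d^+$ and $d-d^+$ straight, and ensuring the local term at $p$ is correctly bounded by $h_1^{p,-}$ via the surjectivity of $\sigma_p$ rather than over-counted, while observing that the $\Sigma_0$-terms drop out because their duals are finitely generated over $\RRR$. Everything else is an assembly of Proposition~\ref{greenberg-cyc}, Proposition~\ref{h2-cohomology}, Lemma~\ref{local-cohomology-p}, Corollary~\ref{H1-torsion-local-factors}, and Proposition~5.2.3 of \cite{greenberg2010surjectivity}.
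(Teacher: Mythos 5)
Your proposal is correct and follows essentially the same route as the paper: the same Euler--Poincar\'e rank bookkeeping combined with the vanishing statements of Proposition~\ref{greenberg-cyc} and Proposition~\ref{h2-cohomology}, the bound on the local term at $p$ via the surjectivity of $\sigma_p$, the torsion-freeness of $H^2(G_\Sigma,\D_{\varrho\otimes\kappa^{-1}})^\vee$ from Proposition 5.2.3 of \cite{greenberg2010surjectivity}, and the quotient relation between the two cokernels. No gaps.
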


\subsection{Cokernel of the map defining Selmer groups}

We shall now proceed to describe the results of Greenberg \cite{greenberg2010surjectivity} concerning the cokernel of $\phi_{\varrho\otimes\kappa^{-1}}$. These results allow us to analyze the difference between the primitive and the non-primitive Selmer group. Using results from \cite{greenberg2010surjectivity} and Proposition \ref{coker-weak}, we obtain the following result:

\begin{proposition}[Greenberg, \cite{greenberg2010surjectivity}] \label{surjectivity-greenberg}
Suppose the hypotheses \ref{Tor} and \ref{p-critical} hold. The map $\phi^{\Sigma_0}_{\varrho \otimes \kappa^{-1}}$ defining the non-primitive Selmer group is surjective. As a result, we have the short exact sequence
\begin{align} \label{ses-primitive-non}
0 \rightarrow \S_{\varrho \otimes \kappa^{-1}}(\Q) \rightarrow \Sel_{\varrho \otimes \kappa^{-1}}(\Q) \rightarrow \prod_{\nu \in \Sigma_0} \Loc(\nu,\varrho\otimes \kappa^{-1}) \rightarrow  \coker\left(\phi_{\rho \otimes \kappa^{-1}}\right)\rightarrow 0.
\end{align}
For the map $\phi_{\varrho \otimes \kappa^{-1}}$ defining the primitive Selmer group, we have the following relation: \begin{align} \label{coker-sub}
\coker\left(\phi_{\varrho \otimes \kappa^{-1}}\right)^\vee \subset H^1(G_\Sigma, \LLL^*_{\varrho \otimes \kappa^{-1}})_{\mathrm{tor}}.
\end{align}

In addition, if $\ker(\alpha_p)$ is a sub-module of the maximal $\RRR[[\Gamma]]$-divisible subgroup of the local cohomology group $H^1\left(\Gal{\overline{\Q}_p}{\Q_p},\D_{\varrho \otimes \kappa^{-1}}\right)$, we obtain isomorphism
\begin{align} \label{iso-cokernel-surjectivity}
\coker(\phi_{\varrho \otimes \kappa^{-1}})^\vee \cong H^1(G_\Sigma, \LLL^*_{\varrho \otimes \kappa^{-1}})_{\mathrm{tor}}.
\end{align}
\end{proposition}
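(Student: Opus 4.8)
The plan is to deduce all four assertions from Greenberg's surjectivity theorem (Corollary 3.2.3 of \cite{greenberg2010surjectivity}), the Poitou--Tate global duality exact sequence, and the cohomological vanishing already recorded in Proposition \ref{coker-weak}; the only genuinely new work is checking the hypotheses of Greenberg's theorem for the discrete $\RRR[[\Gamma]]$-module $\D_{\varrho\otimes\kappa^{-1}}$. For that I would assemble the following. The lattices $\LLL_{\varrho\otimes\kappa^{-1}}$, $\Fil^+\LLL_{\varrho\otimes\kappa^{-1}}$ and $\LLL^*_{\varrho\otimes\kappa^{-1}}$ are free over $\RRR[[\Gamma]]$ --- being base changes along $\RRR\to\RRR[[\Gamma]]$ of free $\RRR$-modules --- so the associated discrete modules are $\RRR[[\Gamma]]$-cofree; the rank identity \ref{p-critical} holds by hypothesis; the Weak Leopoldt conjecture for $\varrho\otimes\kappa^{-1}$ holds by Proposition \ref{coker-weak}(2); and, as established in the discussion preceding Lemma \ref{local-cohomology-p}, the local map $\alpha_p$ at $p$ is surjective with $\Fil^+\D_{\varrho\otimes\kappa^{-1}}$ cofree of the expected corank $d^+$. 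Finally, the auxiliary prime $l\in\Sigma_0$ supplies the extra place whose strict local condition in the orthogonal-complement Selmer structure is exactly what Corollary 3.2.3 needs. Feeding all of this in yields that $\phi^{\Sigma_0}_{\varrho\otimes\kappa^{-1}}$ is surjective, and hence $\Sel_{\varrho\otimes\kappa^{-1}}(\Q)=\ker\bigl(\phi^{\Sigma_0}_{\varrho\otimes\kappa^{-1}}\bigr)$.

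Next I would read off (\ref{ses-primitive-non}). Write $\psi\colon H^1(G_\Sigma,\D_{\varrho\otimes\kappa^{-1}})\to\prod_{\nu\in\Sigma_0}\Loc(\nu,\varrho\otimes\kappa^{-1})$ for the extra block of localization maps occurring in the definition of $\S_{\varrho\otimes\kappa^{-1}}(\Q)$, so that $\phi_{\varrho\otimes\kappa^{-1}}$ is the product of $\phi^{\Sigma_0}_{\varrho\otimes\kappa^{-1}}$ and $\psi$. Then $\S_{\varrho\otimes\kappa^{-1}}(\Q)=\ker\bigl(\psi|_{\Sel_{\varrho\otimes\kappa^{-1}}(\Q)}\bigr)$; since the first factor $\phi^{\Sigma_0}_{\varrho\otimes\kappa^{-1}}$ is surjective, a one-step diagram chase identifies $\coker\bigl(\psi|_{\Sel_{\varrho\otimes\kappa^{-1}}(\Q)}\bigr)$ canonically with $\coker(\phi_{\varrho\otimes\kappa^{-1}})$, and splicing this onto the left-exact sequence $0\to\S_{\varrho\otimes\kappa^{-1}}(\Q)\to\Sel_{\varrho\otimes\kappa^{-1}}(\Q)\xrightarrow{\psi}\prod_{\nu\in\Sigma_0}\Loc(\nu,\varrho\otimes\kappa^{-1})$ produces the four-term exact sequence (\ref{ses-primitive-non}).

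For (\ref{coker-sub}) and (\ref{iso-cokernel-surjectivity}) I would feed $\D_{\varrho\otimes\kappa^{-1}}$ into the Poitou--Tate nine-term exact sequence with the Selmer structure defining the primitive Selmer group: tangent space $\ker(\alpha_p)$ at $p$, and --- since Proposition \ref{local-cohomology-not-p} shows the restriction map $H^1(\Gal{\overline{\Q}_\nu}{\Q_\nu},\D_{\varrho\otimes\kappa^{-1}})\xrightarrow{\cong}\Loc(\nu,\varrho\otimes\kappa^{-1})$ is an isomorphism --- the strict condition at each $\nu\in\Sigma_0$. Because $H^2(G_\Sigma,\D_{\varrho\otimes\kappa^{-1}})=0$ by Proposition \ref{coker-weak}(2), that sequence identifies $\coker(\phi_{\varrho\otimes\kappa^{-1}})^\vee$ with the Selmer group $\mathcal{X}^*$ of $\LLL^*_{\varrho\otimes\kappa^{-1}}$ cut out by the orthogonal-complement local conditions, and $\mathcal{X}^*$ is by construction a submodule of $H^1(G_\Sigma,\LLL^*_{\varrho\otimes\kappa^{-1}})$; since $\coker(\phi_{\varrho\otimes\kappa^{-1}})^\vee$ is torsion by Proposition \ref{coker-weak}(1), it lands in $H^1(G_\Sigma,\LLL^*_{\varrho\otimes\kappa^{-1}})_{\mathrm{tor}}$, giving (\ref{coker-sub}). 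For the refinement I would prove the reverse inclusion $H^1(G_\Sigma,\LLL^*_{\varrho\otimes\kappa^{-1}})_{\mathrm{tor}}\subseteq\mathcal{X}^*$: under the orthogonal-complement structure the condition at each $\nu\in\Sigma_0$ is the relaxed one (all of $H^1(\Gal{\overline{\Q}_\nu}{\Q_\nu},\LLL^*_{\varrho\otimes\kappa^{-1}})$, the annihilator of the strict condition) hence automatically satisfied, while at $p$ local Tate duality together with the hypothesis that $\ker(\alpha_p)$ lies in the maximal $\RRR[[\Gamma]]$-divisible subgroup of $H^1(\Gal{\overline{\Q}_p}{\Q_p},\D_{\varrho\otimes\kappa^{-1}})$ forces the orthogonal complement of $\ker(\alpha_p)$ to contain $H^1(\Gal{\overline{\Q}_p}{\Q_p},\LLL^*_{\varrho\otimes\kappa^{-1}})_{\mathrm{tor}}$ (a divisible submodule kills torsion under the pairing). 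Hence the localization at every place in $\Sigma$ of a torsion class in $H^1(G_\Sigma,\LLL^*_{\varrho\otimes\kappa^{-1}})$ satisfies all the dual-local conditions, so the class lies in $\mathcal{X}^*$; together with (\ref{coker-sub}) this yields (\ref{iso-cokernel-surjectivity}).

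The hard part is the first step: matching our setup exactly to the list of hypotheses of Corollary 3.2.3 of \cite{greenberg2010surjectivity}, and in particular making explicit why the auxiliary prime $l\in\Sigma_0$ is precisely the ingredient that forces the relevant orthogonal-complement Selmer group to vanish. A secondary, milder difficulty is keeping the bookkeeping of local conditions and their orthogonal complements straight over $\RRR[[\Gamma]]$, whose Krull dimension is at least two rather than one; by contrast, the diagram chase of the second paragraph and the duality argument of the third are purely formal once surjectivity of $\phi^{\Sigma_0}_{\varrho\otimes\kappa^{-1}}$ and the vanishing $H^2(G_\Sigma,\D_{\varrho\otimes\kappa^{-1}})=0$ are available.
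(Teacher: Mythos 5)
Your proposal is correct and follows essentially the same route as the paper: the paper's own proof is simply a list of citations to \cite{greenberg2010surjectivity} (Proposition 3.2.1 for surjectivity, Corollary 3.2.3/Remark 3.2.4/Corollary 3.2.5 for the four-term sequence, Remark 3.1.3 and Propositions 2.3.1, 3.1.1 for (\ref{coker-sub}) and (\ref{iso-cokernel-surjectivity})), and your sketch is a faithful unpacking of the content of those results, with the hypotheses supplied exactly as the paper does (cofreeness, \ref{p-critical}, the Weak Leopoldt statement from Proposition \ref{coker-weak}, and the auxiliary prime $l\in\Sigma_0$). Your diagram chase for (\ref{ses-primitive-non}) and your Poitou--Tate/local-duality argument for the last two assertions are both sound reconstructions of the cited arguments.
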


\begin{proof}
The surjectivity of $\phi^{\Sigma_0}_{\varrho \otimes \kappa^{-1}}$ follows from Proposition 3.2.1 in \cite{greenberg2010surjectivity}. The short exact sequence (\ref{ses-primitive-non}) follows from Corollary 3.2.3, Remark 3.2.4 and Corollary 3.2.5 in \cite{greenberg2010surjectivity}. Equation (\ref{coker-sub}) follows from Remark 3.1.3 in \cite{greenberg2010surjectivity}. The isomorphism (\ref{iso-cokernel-surjectivity}) follows from Proposition 2.3.1 and Proposition 3.1.1 in \cite{greenberg2010surjectivity}.
\end{proof}

To investigate the $\RRR[[\Gamma]]$-torsion submodule of $H^1(G_\Sigma, \LLL^*_{\varrho \otimes \kappa^{-1}})$, which has been denoted by $H^1(G_\Sigma, \LLL^*_{\varrho \otimes \kappa^{-1}})_{\mathrm{tor}}$, we will follow the arguments given in Section 2 of  \cite{greenberg2010surjectivity}. Let $\xi$ be a non-zero annihilator of $H^1(G_\Sigma, \LLL^*_{\varrho \otimes \kappa^{-1}})_{\mathrm{tor}}$ in the ring $\RRR$. We have  $$H^1(G_\Sigma, \LLL^*_{\varrho \otimes \kappa^{-1}})_{\mathrm{tor}} = H^1(G_\Sigma, \LLL^*_{\varrho \otimes \kappa^{-1}})[\xi].$$  Note that local duality tells us that $H^0(\Gal{\overline{\Q}_p}{\Q_p}, \LLL^*_{\varrho \otimes \kappa^{-1}}) = H^2(\Gal{\overline{\Q}_p}{\Q_p},\D_{\varrho \otimes \kappa^{-1}})=0$. Hence, $H^0\left(G_\Sigma, \LLL^*_{\varrho \otimes \kappa^{-1}}\right)$ also equals zero. The short exact sequence $$0 \rightarrow \LLL^*_{\varrho \otimes \kappa^{-1}} \xrightarrow {\xi} \LLL^*_{\varrho \otimes \kappa^{-1}} \rightarrow \frac{\LLL^*_{\varrho \otimes \kappa^{-1}}}{ \xi \LLL^*_{\varrho \otimes \kappa^{-1}}} \rightarrow 0$$ then gives us the isomorphism
\begin{align}\label{useful-tor-h1}
H^1\left(G_\Sigma, \LLL^*_{\varrho \otimes \kappa^{-1}}\right)_{\mathrm{tor}} = H^1\left(G_\Sigma, \LLL^*_{\varrho \otimes \kappa^{-1}}\right)[\xi] \cong H^0\left(G_\Sigma, \frac{\LLL^*_{\rho \otimes \kappa^{-1}} }{\xi \LLL^*_{\varrho \otimes \kappa^{-1}}}\right).
\end{align}
For the following lemmas, we will introduce one bit of notation. For every prime ideal $\p$ in $\RRR[[\Gamma]]$, we let $k_\p$ the fraction field of $\frac{\RRR[[\Gamma]]}{\p}$.
\begin{lemma}
If $H^0(G_{\Sigma},\LLL^*_{\varrho \otimes_{\RRR[[\Gamma]]} \kappa^{-1}}\otimes k_\p)$ equals $0$ for all height one prime ideals $\p$ in $\RRR[[\Gamma]]$, then $\coker(\phi_{\varrho \otimes \kappa^{-1}})$ equals $0$.
\end{lemma}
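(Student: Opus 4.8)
The plan is to reduce the statement to the vanishing of the torsion submodule of $H^1(G_\Sigma,\LLL^*_{\varrho\otimes\kappa^{-1}})$, and then to detect that submodule via its associated primes. By the inclusion \eqref{coker-sub} of Proposition \ref{surjectivity-greenberg} we have $\coker(\phi_{\varrho\otimes\kappa^{-1}})^\vee \subseteq H^1(G_\Sigma,\LLL^*_{\varrho\otimes\kappa^{-1}})_{\mathrm{tor}}$, so if the latter is zero then $\coker(\phi_{\varrho\otimes\kappa^{-1}})^\vee=0$, hence $\coker(\phi_{\varrho\otimes\kappa^{-1}})=0$ by Pontryagin duality. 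Using the isomorphism \eqref{useful-tor-h1}, set $X := H^1(G_\Sigma,\LLL^*_{\varrho\otimes\kappa^{-1}})_{\mathrm{tor}} \cong H^0\!\big(G_\Sigma,\LLL^*_{\varrho\otimes\kappa^{-1}}/\xi\LLL^*_{\varrho\otimes\kappa^{-1}}\big)$, where $\xi\in\RRR$ is the fixed non-zero annihilator; it suffices to prove $X=0$.

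First I would pin down the associated primes of the ambient module. Since $\LLL^*_{\varrho\otimes\kappa^{-1}}$ is free over $\RRR[[\Gamma]]$ and $\xi$ is a non-zero element of the domain $\RRR[[\Gamma]]$, hence a nonzerodivisor, $\LLL^*_{\varrho\otimes\kappa^{-1}}/\xi\LLL^*_{\varrho\otimes\kappa^{-1}}$ is a free $\RRR[[\Gamma]]/(\xi)$-module, so its associated primes are exactly those of $\RRR[[\Gamma]]/(\xi)$. Because $\RRR[[\Gamma]]$ is an integrally closed Noetherian domain it satisfies Serre's condition $(S_2)$, and therefore $\RRR[[\Gamma]]/(\xi)$ has no embedded primes: every associated prime of $\RRR[[\Gamma]]/(\xi)$ is minimal over $(\xi)$ and hence of height one by Krull's principal ideal theorem. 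Since $G_\Sigma$ acts $\RRR[[\Gamma]]$-linearly, $X$ is an $\RRR[[\Gamma]]$-submodule of $\LLL^*_{\varrho\otimes\kappa^{-1}}/\xi\LLL^*_{\varrho\otimes\kappa^{-1}}$, so every associated prime of $X$ also has height one. Consequently, if $X\neq 0$ then $X_\p\neq 0$ for some height one prime $\p$ of $\RRR[[\Gamma]]$ (take any $\p\in\mathrm{Ass}(X)$); I will then contradict the hypothesis for this $\p$.

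The transport to $\LLL^*_{\varrho\otimes\kappa^{-1}}\otimes k_\p$ goes as follows. Localizing the inclusion $X\hookrightarrow \LLL^*_{\varrho\otimes\kappa^{-1}}/\xi\LLL^*_{\varrho\otimes\kappa^{-1}}$ at $\p$ and noting that $G_\Sigma$ fixes $X$ pointwise (so it fixes $X_\p$ pointwise), we obtain $0\neq X_\p \hookrightarrow H^0\!\big(G_\Sigma,\, (\LLL^*_{\varrho\otimes\kappa^{-1}})_\p/\xi(\LLL^*_{\varrho\otimes\kappa^{-1}})_\p\big)$. Write $A=\RRR[[\Gamma]]_\p$, a one-dimensional Noetherian local domain with maximal ideal $\mathfrak m_A$ and residue field $k_\p$. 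Since $\xi\in\p$, the $A$-module $V:=(\LLL^*_{\varrho\otimes\kappa^{-1}})_\p/\xi(\LLL^*_{\varrho\otimes\kappa^{-1}})_\p$ has finite length, so the $\mathfrak m_A$-adic filtration $V\supseteq \mathfrak m_A V\supseteq \mathfrak m_A^2 V\supseteq\cdots$ is finite and $G_\Sigma$-stable (again by $A$-linearity of the action), with successive quotients $\mathfrak m_A^i V/\mathfrak m_A^{i+1}V$ isomorphic, as $G_\Sigma$-modules, to finite direct sums of copies of $(\LLL^*_{\varrho\otimes\kappa^{-1}})_\p\otimes_A k_\p = \LLL^*_{\varrho\otimes\kappa^{-1}}\otimes k_\p$ (the $\mathfrak m_A$-adic graded pieces of $A$ carry the trivial $G_\Sigma$-action). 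If $H^0(G_\Sigma,\LLL^*_{\varrho\otimes\kappa^{-1}}\otimes k_\p)=0$, then applying the left-exact functor $H^0(G_\Sigma,-)$ and inducting down this finite filtration forces $H^0(G_\Sigma,V)=0$, contradicting $H^0(G_\Sigma,V)\supseteq X_\p\neq 0$. Hence $H^0(G_\Sigma,\LLL^*_{\varrho\otimes\kappa^{-1}}\otimes k_\p)\neq 0$ for this height one prime $\p$, which is the contrapositive of the lemma.

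I expect the only delicate point to be the bookkeeping in the last step: checking that localization at $\p$ is compatible with taking $G_\Sigma$-invariants in the way used, and that the $\mathfrak m_A$-adic filtration of $V$ is $G_\Sigma$-equivariant with the stated graded pieces. Both are elementary here because only $H^0$ (that is, invariants) is involved, so no continuous-cochain subtleties intervene. The remaining ingredients — the reduction via \eqref{coker-sub} and \eqref{useful-tor-h1}, and the fact that a free module modulo a nonzerodivisor over a normal Noetherian domain has only height one associated primes — are standard.
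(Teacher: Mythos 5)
Your proof is correct, and it reaches the same two endpoints as the paper's argument — the reduction via \eqref{coker-sub} and \eqref{useful-tor-h1} to showing $H^0\!\left(G_\Sigma,\LLL^*_{\varrho\otimes\kappa^{-1}}/\xi\LLL^*_{\varrho\otimes\kappa^{-1}}\right)=0$, and a filtration-plus-left-exactness argument whose graded pieces are controlled by $\LLL^*_{\varrho\otimes\kappa^{-1}}\otimes k_\p$ — but the middle of the argument is genuinely different. The paper works globally: it invokes the structure theorem for torsion modules over the normal domain $\RRR[[\Gamma]]$ to embed $\RRR[[\Gamma]]/(\xi)$ into $\bigoplus_i \RRR[[\Gamma]]/\p_i^{n_i}$ up to a pseudo-null cokernel, tensors with the free module $\LLL^*_{\varrho\otimes\kappa^{-1}}$, and then filters each summand by powers of $\p_i$, embedding the graded pieces $\p^i\LLL^*/\p^{i+1}\LLL^*$ into copies of $\LLL^*\otimes k_\p$ via the torsion-freeness of $\p^i/\p^{i+1}$ over $\RRR[[\Gamma]]/\p$. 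You instead argue locally: normality gives $(S_2)$ for $\RRR[[\Gamma]]$, hence $(S_1)$ for $\RRR[[\Gamma]]/(\xi)$, so the invariants submodule $X$ has only height-one associated primes; localizing at one of them produces a finite-length module whose $\mathfrak m$-adic graded pieces are honest direct sums of copies of $\LLL^*\otimes k_\p$ (this uses, and you should make explicit, that $(\LLL^*_{\varrho\otimes\kappa^{-1}})_\p/\xi(\LLL^*_{\varrho\otimes\kappa^{-1}})_\p$ is \emph{free} over $\RRR[[\Gamma]]_\p/(\xi)$ — a quotient of a sum of copies of $\LLL^*\otimes k_\p$ would not suffice, since $H^0$ is only left exact). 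Your route avoids the structure theorem and the auxiliary pseudo-null module entirely, at the cost of the (standard) input that a nonzerodivisor quotient of a normal Noetherian domain has no embedded primes; the paper's route stays global and leans on the classification of torsion modules up to pseudo-null. Both are complete proofs.
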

\begin{proof}

Consider the short exact sequence of $\RRR[[\Gamma]]$-modules, with the trivial $G_\Sigma$-action (which follows Proposition 5.1.7 in \cite{neukirch2008cohomology} and from 2C (2) in \cite{MR2290593})
\begin{align}\label{ses-tgamma}
0 \rightarrow \frac{\RRR[[\Gamma]]}{(\xi)} \rightarrow \bigoplus_{ht(\p_i)=1} \frac{\RRR[[\Gamma]]}{\p_i^{n_i}} \rightarrow \mathcal{Z} \rightarrow 0,
\end{align}
where the sum is taken over the height one prime ideals $\p_i$ of $\RRR[[\Gamma]]$ in the support of $\frac{\RRR[[\Gamma]]}{(\xi)}$ and where $\mathcal{Z}$ is a $\RRR[[\Gamma]]$ pseudo-null module. The integers $n_i$ are non-negative. After we tensor the sequence (\ref{ses-tgamma}) over $\RRR[[\Gamma]]$ with the $G_\Sigma$-module $\LLL^*_{\varrho \otimes \kappa^{-1}}$, we can take $G_\Sigma$-invariants. To prove the  lemma it suffices to show that for every height one prime ideal $\p$ in $\RRR[[\Gamma]]$ and every natural number $n$,
$$H^0\left(G_\Sigma, \frac{\LLL^*_{\varrho \otimes \kappa^{-1}}}{\p^{n} \LLL^*_{\varrho \otimes \kappa^{-1}}}\right)\stackrel{?}{=}0.$$
Let us fix such a height one prime ideal $\p$ and a natural number $n$. Consider the tower of $G_\Sigma$-modules: $$\LLL^*_{\varrho \otimes \kappa^{-1}} \supset \p \LLL^*_{\varrho \otimes \kappa^{-1}} \supset \dotsb \supset \p^n \LLL^*_{\varrho \otimes \kappa^{-1}}.$$
The lemma would follow if we are able to show that $$H^0\left(G_\Sigma, \frac{\p^{i}\LLL^*_{\varrho \otimes \kappa^{-1}}}{\p^{i+1}\LLL^*_{\varrho \otimes \kappa^{-1}}}\right)\stackrel{?}{=}0, , \ \ \text{for } 0 \leq i \leq n-1.
$$
Now let's also fix an integer $i$ such that $0 \leq i \leq n-1$. We have the following observations:
\begin{itemize}
\item Since $\LLL^*_{\varrho \otimes \kappa^{-1}}$ is a free $\RRR[[\Gamma]]$-module, we have $\frac{\p^{i}\LLL^*_{\varrho \otimes \kappa^{-1}}}{\p^{i+1}\LLL^*_{\varrho \otimes \kappa^{-1}}} \cong \left(\frac{\p^{i}}{\p^{i+1}}\right) \otimes_{\RRR[[\Gamma]]} \LLL^*_{\varrho \otimes \kappa^{-1}}.$
\item Since $\frac{\p^{i}}{\p^{i+1}}$ is torsion-free over $\frac{\RRR[[\Gamma]]}{\p}$, we have an inclusion $\frac{\p^{i}}{\p^{i+1}} \hookrightarrow k_\p^g$, for some~integer~$g$.
\end{itemize}
As modules over $G_\Sigma$, we have the inclusion $\frac{\p^{i}\LLL^*_{\varrho \otimes \kappa^{-1}}}{\p^{i+1}\LLL^*_{\varrho \otimes \kappa^{-1}}} \hookrightarrow \left(\LLL^*_{\varrho \otimes \kappa^{-1}} \otimes k_p\right)^g$. These observations now let us conclude the lemma.
\end{proof}

\begin{lemma} \label{nakayama-lemma}
Let $\p$ be a prime ideal in $\RRR[[\Gamma]]$. The following statements~are~equivalent:
\begin{enumerate}
\item $\p$ belongs to the support of $H^0\left(G_\Sigma,\D^*_{\varrho \otimes \kappa^{-1}}\right)^\vee$.
\item $H^0(G_{\Sigma},\LLL^*_{\varrho \otimes \kappa^{-1}}\otimes k_\p) \neq 0$.
\end{enumerate}
\end{lemma}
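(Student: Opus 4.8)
The plan is to identify the module $M := H^0(G_\Sigma, \D^*_{\varrho \otimes \kappa^{-1}})^\vee$ with the $G_\Sigma$-coinvariants of a free module and then simply read off its base change to $k_\p$. Write $A := \RRR[[\Gamma]]$, $\LLL := \LLL^*_{\varrho \otimes \kappa^{-1}}$ (a free $A$-module of rank $d$ carrying a continuous $G_\Sigma$-action) and $\D := \D^*_{\varrho \otimes \kappa^{-1}}$. Unwinding \eqref{iso-for-cyc} one has $\D \cong \LLL \otimes_A \hat A$, where $\hat A$ is the Pontryagin dual of the profinite ring $A$; in particular $\D$ is $A$-cofree of corank $d$, so $\D^{G_\Sigma} = H^0(G_\Sigma,\D)$ is a submodule of $\hat A^{d}$, hence $M$ is a quotient of $A^{d}$ and in particular finitely generated over $A$. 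Consequently, for a prime $\p$ of $A$ we have $\p \in \mathrm{Supp}(M)$ iff $M_\p \neq 0$ iff $M \otimes_A k_\p \neq 0$, the last equivalence being Nakayama's lemma for the Noetherian local ring $A_\p$ (whose residue field is $k_\p$). So it suffices to prove $M \otimes_A k_\p \neq 0 \iff H^0(G_\Sigma, \LLL \otimes_A k_\p) \neq 0$; in fact I will show these two $k_\p$-vector spaces are dual to one another.

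First I would pin down the $G_\Sigma$-module structure on $\D^\vee$. Using that $\LLL$ is $A$-free together with the Pontryagin reflexivity $\hat{\hat A} \cong A$ of the profinite ring $A$, tensor--hom adjunction gives a $G_\Sigma$-equivariant isomorphism $\D^\vee \cong \Hom_A(\LLL, A)$, where $G_\Sigma$ acts on the right-hand side by the contragredient of its action on $\LLL$. Since Pontryagin duality for the profinite group $G_\Sigma$ interchanges invariants and coinvariants (Theorem 1.1.11 in \cite{neukirch2008cohomology}; the relevant submodule of $\D^\vee$ is closed, being a submodule of a finitely generated module over the profinite Noetherian ring $A$), this yields $M = (\D^{G_\Sigma})^\vee \cong \big(\Hom_A(\LLL, A)\big)_{G_\Sigma}$.

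Next I would base change to $k_\p$. Because $\LLL$ is $A$-free, $\Hom_A(\LLL, A) \otimes_A k_\p \cong \Hom_{k_\p}(\LLL \otimes_A k_\p, k_\p)$, and formation of $G_\Sigma$-coinvariants is right exact, hence commutes with $- \otimes_A k_\p$; therefore $M \otimes_A k_\p \cong \big((\LLL \otimes_A k_\p)^{\ast}\big)_{G_\Sigma}$, where $(\,\cdot\,)^{\ast}$ denotes the $k_\p$-linear dual. For the finite-dimensional $k_\p[G_\Sigma]$-module $W := \LLL \otimes_A k_\p$, dualizing the inclusion $W^{G_\Sigma} \hookrightarrow W$ gives $(W^{\ast})_{G_\Sigma} \cong (W^{G_\Sigma})^{\ast}$. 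Combining, $M \otimes_A k_\p \cong \big(H^0(G_\Sigma, \LLL^*_{\varrho \otimes \kappa^{-1}} \otimes_A k_\p)\big)^{\ast}$, so the two sides vanish simultaneously; together with the Nakayama reduction above this proves $(1)\iff(2)$.

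The step I expect to require the most care is the first one: correctly tracking both the $A$-module structure and the $G_\Sigma$-action through Pontryagin duality, so that $M$ comes out as the coinvariants of $\Hom_A(\LLL, A)$ rather than of some twist of it. The remaining ingredients — the base change of $\Hom_A(\LLL,A)$ along $A \to A_\p \to k_\p$, right-exactness of coinvariants, and the elementary field-theoretic identity $(W^{\ast})_{G_\Sigma} \cong (W^{G_\Sigma})^{\ast}$ — are routine.
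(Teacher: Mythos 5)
Your proof is correct and follows the same essential route as the paper's: reduce via Nakayama's lemma (using that $H^0(G_\Sigma,\D^*_{\varrho\otimes\kappa^{-1}})^\vee$ is finitely generated) to the non-vanishing of the fiber at $\p$, and identify that fiber with the $k_\p$-linear dual of $H^0(G_\Sigma,\LLL^*_{\varrho\otimes\kappa^{-1}}\otimes k_\p)$. The paper's proof is a two-line citation of Nakayama's Lemma together with Proposition 3.10 of \cite{MR2290593} (passing through the intermediate module $\frac{\LLL^*_{\varrho\otimes\kappa^{-1}}}{\p\LLL^*_{\varrho\otimes\kappa^{-1}}}$), whereas you supply a self-contained verification of that input via Pontryagin duality, right-exactness of coinvariants, and the duality $(W^*)_{G_\Sigma}\cong (W^{G_\Sigma})^*$ over $k_\p$.
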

\begin{proof}
To prove the lemma, it suffices to observe the following equivalence which one obtains using Nakayama's Lemma and Proposition 3.10 in \cite{MR2290593} :
{\small\begin{align*}
H^0(G_\Sigma, \LLL^*_{\varrho \otimes \kappa^{-1}} \otimes k_\p)=0 \iff H^0\left(G_\Sigma,\frac{\LLL^*_{\varrho \otimes \kappa^{-1}}}{\p\LLL^*_{\varrho \otimes \kappa^{-1}}}\right) = 0 \iff \p \notin \mathrm{Supp}_{\RRR[[\Gamma]]}\left(H^0\left(G_\Sigma,\D^*_{\varrho \otimes \kappa^{-1}}\right)^\vee\right).
\end{align*}
}
\end{proof}
Let $\mathcal{A}(\RRR)$ denote the algebraic closure of the fraction field of $\RRR$. For any character $\Psi:G_\Sigma \rightarrow \mathcal{A}(\RRR)^\times$, we let $\mathcal{V}_{\varrho^*(\Psi)}:= \LLL^*_{\varrho} \otimes_{\RRR} \mathcal{A}(\RRR)(\Psi)$. Using Proposition \ref{greenberg-cyc}, one can argue that the $\RRR[[\Gamma]]$-module $H^0(G_\Sigma, \D^*_{\varrho \otimes \kappa^{-1}})^\vee$ is finitely generated over $\RRR$. The generalized Cayley-Hamilton theorem then gives us a monic polynomial $h(s) \in \RRR[s]$ such that $h(\gamma_0)$ annihilates $H^0(G_\Sigma, \D^*_{\varrho \otimes \kappa^{-1}})^\vee$. Let $\p$ be a height one prime ideal of $\RRR[[\Gamma]]$ in the support of $H^0(G_\Sigma, \D^*_{\varrho \otimes \kappa^{-1}})^\vee$. Then, $\p$ must contain $h(\gamma_0)$. This implies that $\p \cap \RRR=\{0\}$ and hence that $\frac{\RRR[[\Gamma]]}{\p}$ is a finite integral extension of $\RRR$. The Galois representation obtained via the composition
\begin{align*}
G_\Sigma \xrightarrow {\left(\varrho \otimes \kappa^{-1}\right)^*} \Gl_d(\RRR[[\Gamma]]) \rightarrow \Gl_d\left(\frac{\RRR[[\Gamma]]}{\p}\right) \hookrightarrow \Gl_d(\mathcal{A}(\RRR))
\end{align*}
is given by $\varrho^*(\Psi)$, for some character $\Psi : G_\Sigma \rightarrow \mathcal{A}(\RRR)^\times$. We obtain the following implication:
$$H^0\left(G_\Sigma,\mathcal{V}_{\varrho^* (\Psi)} \right)=0 \implies H^0(G_{\Sigma},\LLL^*_{\varrho \otimes \kappa^{-1}}\otimes k_\p) =0.$$
As a result of these observations and Lemma \ref{nakayama-lemma}, we have the following proposition:

\begin{proposition} \label{surj-PN-residuefield}
Suppose  $H^0\left(G_\Sigma,\mathcal{V}_{\varrho^* (\Psi)} \right)=0$, for every character $\Psi : G_\Sigma \rightarrow \mathcal{A}(\RRR)^\times$.~Then,
\begin{enumerate}
\item $H^0\left(G_\Sigma, \D^*_{\varrho \otimes \kappa^{-1}}\right)^\vee$ is a pseudo-null $\RRR[[\Gamma]]$-module.
\item $\coker(\phi_{\varrho \otimes \kappa^{-1}})$ equals $0$.
\end{enumerate}
\end{proposition}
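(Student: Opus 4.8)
The plan is to assemble the ingredients developed in the paragraphs immediately preceding the statement; almost nothing new is needed. Since assertions (1) and (2) are tied together through Lemma \ref{nakayama-lemma} (which identifies the primes $\p$ with $H^0(G_\Sigma,\LLL^*_{\varrho\otimes\kappa^{-1}}\otimes k_\p)\neq 0$ as exactly the primes in the support of $H^0(G_\Sigma,\D^*_{\varrho\otimes\kappa^{-1}})^\vee$), I would prove (1) first and deduce (2) from it together with the first lemma of this subsection.

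For (1): recall that $H^0(G_\Sigma,\D^*_{\varrho\otimes\kappa^{-1}})^\vee$ is finitely generated over $\RRR$ by Proposition \ref{greenberg-cyc}, hence is $\RRR[[\Gamma]]$-torsion, and is annihilated by $h(\gamma_0)$ for a monic $h(s)\in\RRR[s]$ coming from the generalized Cayley--Hamilton theorem. To show it is pseudo-null it suffices to check that no height one prime $\p$ of $\RRR[[\Gamma]]$ lies in its support. Arguing by contradiction, suppose such a $\p$ exists. Then $\p$ must contain $h(\gamma_0)$, which forces $\p\cap\RRR=\{0\}$, so $\RRR[[\Gamma]]/\p$ is a finite integral extension of $\RRR$ and $k_\p$ embeds into $\mathcal{A}(\RRR)$; as explained before the statement, the composite $G_\Sigma\xrightarrow{(\varrho\otimes\kappa^{-1})^*}\Gl_d(\RRR[[\Gamma]])\to\Gl_d(k_\p)\hookrightarrow\Gl_d(\mathcal{A}(\RRR))$ is then of the form $\varrho^*(\Psi)$ for a character $\Psi:G_\Sigma\to\mathcal{A}(\RRR)^\times$. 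The hypothesis gives $H^0(G_\Sigma,\mathcal{V}_{\varrho^*(\Psi)})=0$, which yields $H^0(G_\Sigma,\LLL^*_{\varrho\otimes\kappa^{-1}}\otimes k_\p)=0$, and then Lemma \ref{nakayama-lemma} forces $\p\notin\mathrm{Supp}_{\RRR[[\Gamma]]}\big(H^0(G_\Sigma,\D^*_{\varrho\otimes\kappa^{-1}})^\vee\big)$ --- a contradiction. Hence the support contains no height one prime, which is (1).

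For (2): by Lemma \ref{nakayama-lemma} again, $H^0(G_\Sigma,\LLL^*_{\varrho\otimes\kappa^{-1}}\otimes k_\p)\neq 0$ only when $\p$ lies in the support of $H^0(G_\Sigma,\D^*_{\varrho\otimes\kappa^{-1}})^\vee$; by (1) no height one prime does, so $H^0(G_\Sigma,\LLL^*_{\varrho\otimes\kappa^{-1}}\otimes k_\p)=0$ for every height one prime $\p$ of $\RRR[[\Gamma]]$, and the first (unlabelled) lemma of this subsection then gives $\coker(\phi_{\varrho\otimes\kappa^{-1}})=0$. The only genuinely substantive point is the one already discharged in the paragraph before the statement: identifying the height one specialization of $(\varrho\otimes\kappa^{-1})^*$ at a prime in the support with an honest characteristic-zero twist $\varrho^*(\Psi)$, which rests on the Cayley--Hamilton bound $h(\gamma_0)\in\p$ guaranteeing $\p\cap\RRR=\{0\}$; everything else is a routine chaining of Lemma \ref{nakayama-lemma}, the first lemma, and the hypothesis.
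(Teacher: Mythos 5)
Your proof is correct and follows the same route the paper takes: the Cayley--Hamilton annihilator $h(\gamma_0)$ forces any height one prime in the support to satisfy $\p\cap\RRR=\{0\}$, the specialization at such a $\p$ is identified with $\varrho^*(\Psi)$, the hypothesis kills $H^0(G_\Sigma,\LLL^*_{\varrho\otimes\kappa^{-1}}\otimes k_\p)$, and Lemma \ref{nakayama-lemma} yields the contradiction giving (1), after which the unlabelled lemma of the subsection gives (2). No gaps.
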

As an immediate corollary to the previous proposition, we have the following result:
\begin{corollary} \label{abs-irreducible}
If the Galois representation $\varrho$ is absolutely irreducible of dimension greater than one, then
\begin{enumerate}
\item $H^0\left(G_\Sigma, \D^*_{\varrho \otimes \kappa^{-1}}\right)^\vee$ is a pseudo-null $\RRR[[\Gamma]]$-module.
\item $\coker(\phi_{\varrho \otimes \kappa^{-1}})$ equals $0$.
\end{enumerate}
\end{corollary}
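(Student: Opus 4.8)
The plan is to deduce Corollary \ref{abs-irreducible} directly from Proposition \ref{surj-PN-residuefield} by checking its single hypothesis, namely that $H^0\left(G_\Sigma, \mathcal{V}_{\varrho^*(\Psi)}\right) = 0$ for every character $\Psi : G_\Sigma \to \mathcal{A}(\RRR)^\times$. Once this vanishing is established, both conclusions (1) and (2) of the corollary are immediate restatements of the corresponding conclusions of Proposition \ref{surj-PN-residuefield}, so there is genuinely nothing left to prove beyond the input hypothesis.

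To verify the hypothesis, fix a character $\Psi : G_\Sigma \to \mathcal{A}(\RRR)^\times$ and recall that $\mathcal{V}_{\varrho^*(\Psi)} := \LLL^*_\varrho \otimes_\RRR \mathcal{A}(\RRR)(\Psi)$, where $\LLL^*_\varrho = \Hom_\RRR(\LLL_\varrho, \RRR(\chi_p))$. As a $G_\Sigma$-representation over the field $\mathcal{A}(\RRR)$, we have $\mathcal{V}_{\varrho^*(\Psi)} \cong \Hom(V, \mathcal{A}(\RRR)(\chi_p\Psi^{-1}))$ where $V := \LLL_\varrho \otimes_\RRR \mathcal{A}(\RRR)$ is the base change of $\varrho$ to $\mathcal{A}(\RRR)$. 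Now $H^0(G_\Sigma, \mathcal{V}_{\varrho^*(\Psi)})$ is the space of $G_\Sigma$-equivariant homomorphisms $V \to \mathcal{A}(\RRR)(\chi_p\Psi^{-1})$, i.e. $\Hom_{\mathcal{A}(\RRR)[G_\Sigma]}(V, \mathcal{A}(\RRR)(\chi_p\Psi^{-1}))$. Since $\varrho$ is absolutely irreducible, $V$ is an irreducible $\mathcal{A}(\RRR)[G_\Sigma]$-module; the target $\mathcal{A}(\RRR)(\chi_p\Psi^{-1})$ is one-dimensional, hence irreducible. By Schur's lemma, any nonzero such homomorphism would be an isomorphism, forcing $\dim_{\mathcal{A}(\RRR)} V = 1$. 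But $\dim_{\mathcal{A}(\RRR)} V = d > 1$ by hypothesis. Therefore $H^0(G_\Sigma, \mathcal{V}_{\varrho^*(\Psi)}) = 0$, as required.

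With the hypothesis of Proposition \ref{surj-PN-residuefield} confirmed for all $\Psi$, conclusions (1) and (2) follow verbatim, completing the proof. The only step that requires any care is the identification of $H^0(G_\Sigma, \mathcal{V}_{\varrho^*(\Psi)})$ with a Hom-space to which Schur's lemma applies, together with the observation that absolute irreducibility of $\varrho$ passes to the base change $V$ over the (large) field $\mathcal{A}(\RRR)$ — this last point is exactly what the word ``absolutely'' in the hypothesis buys us, and is the reason the ordinary irreducibility of $\varrho$ alone would not suffice. I do not expect any real obstacle here; the corollary is essentially a formal consequence of the preceding proposition once Schur's lemma is invoked in the right place.
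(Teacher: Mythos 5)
Your proposal is correct and follows exactly the route the paper intends: the paper states the corollary as an immediate consequence of Proposition \ref{surj-PN-residuefield}, and your verification of its hypothesis via Schur's lemma (absolute irreducibility of $\varrho$ over $\mathcal{A}(\RRR)$ rules out any nonzero $G_\Sigma$-map from $V$ to a one-dimensional twist when $d>1$) is the intended argument. The only nit is the sign of the twist ($\mathcal{A}(\RRR)(\chi_p\Psi)$ rather than $\mathcal{A}(\RRR)(\chi_p\Psi^{-1})$), which is immaterial since $\Psi$ ranges over all characters.
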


\subsection{No non-trivial pseudo-null submodules} The results of this section will play a key role in Section \ref{specialization-section} when we study the behavior of modules under specialization with respect to height one prime ideals. Over 2-dimensional regular local rings, we have the following useful proposition (which follows from Theorem 5.1.10 in \cite{neukirch2008cohomology}).

\begin{proposition} \label{reg-dim-2-no-pn}
A finitely generated module over a $2$-dimensional regular local ring  has no non-trivial pseudo-null submodules if and only if its projective dimension is less than or equal to $1$.
\end{proposition}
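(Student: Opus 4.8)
The plan is to reinterpret both halves of the asserted equivalence as statements about $\mathrm{depth}$ and then to invoke the Auslander--Buchsbaum formula. Write $\RRR$ for the $2$-dimensional regular local ring, $\mathfrak{m}$ for its maximal ideal, and $\M$ for the finitely generated $\RRR$-module in question; the case $\M = 0$ being trivial, I would assume $\M \neq 0$. First I would unwind what ``non-trivial pseudo-null submodule'' means in this setting: since $\RRR$ is a $2$-dimensional local domain, $\mathrm{Spec}\,\RRR$ consists only of $(0)$, the height one primes, and $\mathfrak{m}$, so for a finitely generated torsion module $\mathcal{N}$ one has $\Div(\mathcal{N}) = 0$ exactly when $\mathcal{N}_\p = 0$ for every height one prime $\p$, i.e.\ exactly when $\mathcal{N}$ has finite length. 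As any nonzero finite-length submodule of $\M$ is automatically $\RRR$-torsion (being annihilated by a power of $\mathfrak{m}$, which contains nonzero elements), the statement to prove reduces to: \emph{$\M$ has no nonzero finite-length submodule if and only if $\mathrm{pd}_\RRR \M \leq 1$}.

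The second step is to characterize ``no nonzero finite-length submodule'' in terms of depth. If $N \subseteq \M$ is a nonzero finite-length submodule, then $\mathrm{Ann}_\RRR(N)$ is $\mathfrak{m}$-primary, so $\mathrm{Ass}_\RRR(N) = \{\mathfrak{m}\}$, and hence $\mathfrak{m} \in \mathrm{Ass}_\RRR(\M)$, forcing $\mathrm{depth}_\RRR \M = 0$. Conversely, if $\mathrm{depth}_\RRR \M = 0$ then every element of $\mathfrak{m}$ is a zerodivisor on $\M$, so by prime avoidance $\mathfrak{m} \in \mathrm{Ass}_\RRR(\M)$, and any resulting embedding $\RRR/\mathfrak{m} \hookrightarrow \M$ exhibits a nonzero finite-length submodule. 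Thus $\M$ has no nonzero finite-length submodule precisely when $\mathrm{depth}_\RRR \M \geq 1$.

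For the last step I would invoke the Auslander--Buchsbaum formula. Since $\RRR$ is regular local, its global dimension equals $\dim \RRR = 2$, so $\mathrm{pd}_\RRR \M$ is finite, and $\mathrm{depth}_\RRR \RRR = 2$; the formula gives $\mathrm{pd}_\RRR \M + \mathrm{depth}_\RRR \M = 2$. Hence $\mathrm{pd}_\RRR \M \leq 1$ if and only if $\mathrm{depth}_\RRR \M \geq 1$, and combining this with the second step finishes the proof. I do not expect any genuine obstacle here; the only point needing care is the initial identification of pseudo-nullity with finite length over a ring of dimension exactly $2$, after which everything is a formal consequence of standard commutative algebra (this is essentially the content of Theorem~5.1.10 of \cite{neukirch2008cohomology}).
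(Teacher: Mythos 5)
Your proof is correct. The paper offers no argument of its own here --- it simply cites Theorem 5.1.10 of \cite{neukirch2008cohomology} --- and your chain (pseudo-null $=$ finite length over a $2$-dimensional domain, no finite-length submodule $\iff \mathrm{depth} \geq 1$, then Auslander--Buchsbaum with $\mathrm{depth}\,\RRR = 2$) is precisely the standard proof underlying that reference, so there is nothing to add.
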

There are quite a number of useful results in \cite{MR2290593} concerning when the Pontryagin dual of Galois cohomology groups have no non-trivial pseudo-null submodules. In \cite{MR2290593}, Greenberg uses the terminology - an ``almost-divisible'' module. A discrete module is said to be ``almost-divisible'' if its Pontryagin dual has no non-trivial pseudo-null submodules. First we shall concern ourselves with the local factors away from $p$. Let $\nu \in \Sigma_0$.
\begin{itemize}
\item Proposition \ref{local-cohomology-not-p} establishes the isomorphism $H^1(\Gal{\overline{\Q}_\nu}{\Q_\nu},\D_{\varrho \otimes \kappa^{-1}})~\cong~\Loc(\nu,\varrho~\otimes~\kappa^{-1})$,
\item Proposition \ref{h2-cohomology} establishes the fact that $H^2(\Gal{\overline{\Q}_\nu}{\Q_\nu},\D_{\varrho \otimes \kappa^{-1}})$ equals zero.
\end{itemize}
Combining these observations along with Proposition 5.3 in \cite{MR2290593} gives us the next proposition.

\begin{proposition} \label{loc-No-PN}
For every prime $\nu \in \Sigma_0$, the $\RRR[[\Gamma]]$-module $\Loc(\nu,\varrho \otimes \kappa^{-1})^\vee$ has no non-trivial pseudo-null submodules .
\end{proposition}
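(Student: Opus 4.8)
The plan is to deduce the statement from the general criterion provided by Proposition~5.3 in \cite{MR2290593}, which establishes almost-divisibility (i.e.\ that the Pontryagin dual has no non-trivial pseudo-null submodule) for a local cohomology group at a prime away from $p$ with coreflexive coefficients. Fix $\nu \in \Sigma_0$. The first step is to use Proposition~\ref{local-cohomology-not-p} to replace the local condition $\Loc(\nu,\varrho \otimes \kappa^{-1}) = H^1(I_\nu,\D_{\varrho \otimes \kappa^{-1}})^{\Gamma_\nu}$ by the full local cohomology group $H^1(\Gal{\overline{\Q}_\nu}{\Q_\nu},\D_{\varrho \otimes \kappa^{-1}})$, to which restriction identifies it; so it suffices to prove that this $\RRR[[\Gamma]]$-module is almost divisible. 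The second step is to record the two hypotheses needed to feed into Greenberg's proposition: the discrete module $\D_{\varrho \otimes \kappa^{-1}} = \LLL_\varrho \otimes_\RRR \hat{\RRR[[\Gamma]]}(\kappa^{-1})$ is $\RRR[[\Gamma]]$-cofree, since its Pontryagin dual is the free module $\LLL_{\varrho \otimes \kappa^{-1}}$, hence $\RRR[[\Gamma]]$-coreflexive; and $H^2(\Gal{\overline{\Q}_\nu}{\Q_\nu},\D_{\varrho \otimes \kappa^{-1}}) = 0$ by Proposition~\ref{h2-cohomology}. The third step is simply to apply Proposition~5.3 in \cite{MR2290593}, which with these inputs yields that $H^1(\Gal{\overline{\Q}_\nu}{\Q_\nu},\D_{\varrho \otimes \kappa^{-1}})^\vee \cong \Loc(\nu,\varrho \otimes \kappa^{-1})^\vee$ has no non-trivial pseudo-null $\RRR[[\Gamma]]$-submodule. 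Note that, by Corollary~\ref{H1-torsion-local-factors}, this module is finitely generated over $\RRR$, hence $\RRR[[\Gamma]]$-torsion, so the assertion is not vacuous.

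For the reader wanting more than the citation, here is the mechanism underlying Proposition~5.3 in this setting. As in the proof of Corollary~\ref{H1-torsion-local-factors}, one has $H^1(I_\nu,\D_{\varrho \otimes \kappa^{-1}})^\vee \cong \left(\LLL_{\varrho \otimes \kappa^{-1}}\right)^{I_\nu}$, a submodule of the free $\RRR[[\Gamma]]$-module $\LLL_{\varrho \otimes \kappa^{-1}}$; since $I_\nu$ surjects onto a copy of $\Z_p$ with kernel of order prime to $p$, this invariant submodule is an intersection of kernels of $\RRR[[\Gamma]]$-linear endomorphisms of a free module, hence reflexive. Taking $\Gamma_\nu$-invariants and dualizing then exhibits $\Loc(\nu,\varrho \otimes \kappa^{-1})^\vee$ as the cokernel of $\mathrm{Frob}_\nu - 1$ acting on this reflexive module, and the almost-divisibility of such a cokernel is established in \cite{MR2290593} by a codimension-one length computation together with local Tate duality. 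The point that makes this require a genuine argument, rather than being immediate, is that $\RRR$ is allowed to be an arbitrary finite integrally closed extension of $\Z_p[[u_1,\dotsc,u_n]]$, so regularity is unavailable; over a regular base the argument simplifies considerably (compare Proposition~\ref{reg-dim-2-no-pn}).

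I do not anticipate a real obstacle beyond faithfully transcribing the hypotheses of Greenberg's Proposition~5.3: the coreflexivity of $\D_{\varrho \otimes \kappa^{-1}}$ is immediate from cofreeness, the vanishing of the local $H^2$ is Proposition~\ref{h2-cohomology}, and the verification that the cyclotomic twist together with the passage to $I_\nu$- and $\Gamma_\nu$-cohomology causes no trouble is already packaged in Proposition~\ref{greenberg-cyc} and Proposition~\ref{local-cohomology-not-p}. The only place requiring slight care is keeping track of whether Greenberg's statement is phrased in terms of the decomposition group or the inertia group, and using Proposition~\ref{local-cohomology-not-p} to translate between the two descriptions of $\Loc(\nu,\varrho \otimes \kappa^{-1})$.
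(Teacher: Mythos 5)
Your proposal is correct and follows exactly the paper's route: identify $\Loc(\nu,\varrho\otimes\kappa^{-1})$ with $H^1(\Gal{\overline{\Q}_\nu}{\Q_\nu},\D_{\varrho\otimes\kappa^{-1}})$ via Proposition \ref{local-cohomology-not-p}, invoke the vanishing of $H^2(\Gal{\overline{\Q}_\nu}{\Q_\nu},\D_{\varrho\otimes\kappa^{-1}})$ from Proposition \ref{h2-cohomology} together with the coreflexivity of the cofree module $\D_{\varrho\otimes\kappa^{-1}}$, and apply Proposition 5.3 of \cite{MR2290593}. The additional paragraph sketching the mechanism behind Greenberg's proposition is accurate but not needed; the paper simply cites it.
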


It is easier to establish that the Pontryagin dual of the ``strict'' non-primitive Selmer group has no non-trivial pseudo-null modules. The strict non-primitive Selmer group $\mathrm{Sel}_{\varrho \otimes \kappa^{-1}}^{\Sigma_0,str}(\Q)$ is defined below.
\begin{align*}
\mathrm{Sel}_{\varrho \otimes \kappa^{-1}}^{\Sigma_0,str}(\Q) := \ker \left( H^1(G_\Sigma, \D_{\varrho\otimes \kappa^{-1}}) \xrightarrow {\phi_{\varrho\otimes \kappa^{-1}}^{\Sigma_0,str}} H^1\left(\Gal{\overline{\Q}_p}{\Q_p},\frac{\D_{\varrho \otimes \kappa^{-1}}}{\Fil^+\D_{\varrho \otimes \kappa^{-1}}}\right) \right).
\end{align*}

Note that  $H^2(\Gal{\overline{\Q}_p}{\Q_p},\Fil^+\D_{\varrho \otimes \kappa^{-1}})=0$ by Proposition \ref{h2-cohomology}. Combining Proposition 4.2.1 and Proposition 4.3.2 from \cite{greenberg2014pseudonull}, we obtain the following proposition:

\begin{proposition} \label{strict-pseudo}
Suppose the hypotheses \ref{Tor} and \ref{p-critical} hold. The $\RRR[[\Gamma]]$-module $\text{Sel}_{\varrho \otimes \kappa^{-1}}^{\Sigma_0,str}(\Q)^\vee$ has no non-trivial~pseudo-null~submodules.
\end{proposition}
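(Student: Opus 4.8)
The plan is to import the relevant statements from Greenberg's work \cite{greenberg2014pseudonull} — specifically Proposition 4.2.1 and Proposition 4.3.2 there — and to verify that their hypotheses are met in our situation, given \ref{Tor} and \ref{p-critical}. The key structural input is that the strict non-primitive Selmer group is defined by imposing the trivial local condition at the primes $\nu\in\Sigma_0$ and, at $p$, the condition $H^1\big(\Gal{\overline{\Q}_p}{\Q_p},\frac{\D_{\varrho\otimes\kappa^{-1}}}{\Fil^+\D_{\varrho\otimes\kappa^{-1}}}\big)$ rather than the ``inertia-invariants'' version used for $\Sel$. This extra rigidity is precisely what makes the ``no non-trivial pseudo-null submodules'' statement accessible: the quotient of $H^1(G_\Sigma,\D_{\varrho\otimes\kappa^{-1}})$ by the strict Selmer group sits inside $H^1\big(\Gal{\overline{\Q}_p}{\Q_p},\frac{\D_{\varrho\otimes\kappa^{-1}}}{\Fil^+\D_{\varrho\otimes\kappa^{-1}}}\big)$, and the latter has good coreflexivity/almost-divisibility properties.

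First I would record that the hypotheses needed to invoke \cite[Prop.\ 4.2.1, Prop.\ 4.3.2]{greenberg2014pseudonull} are in force. The Weak Leopoldt conjecture for $\varrho\otimes\kappa^{-1}$ holds by Proposition \ref{coker-weak}(2), which is exactly where \ref{Tor} and \ref{p-critical} enter; in fact Proposition \ref{coker-weak}(2) also gives $H^2(G_\Sigma,\D_{\varrho\otimes\kappa^{-1}})=0$. Next, by Proposition \ref{h2-cohomology} we have $H^2(\Gal{\overline{\Q}_p}{\Q_p},\Fil^+\D_{\varrho\otimes\kappa^{-1}})=0$, so the map $\beta_p$ (and hence the map $\phi_{\varrho\otimes\kappa^{-1}}^{\Sigma_0,str}$ defining the strict Selmer group) is surjective — this surjectivity feeds into the cohomological dimension bookkeeping. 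Finally, the local module $\frac{\LLL^*_{\varrho\otimes\kappa^{-1}}}{\Fil^+\LLL^*_{\varrho\otimes\kappa^{-1}}}$ over $\RRR[[\Gamma]]$ is free, so all the discrete modules appearing are coreflexive, and one can check the ``Hypothesis (b)'' type almost-divisibility conditions of \cite{greenberg2014pseudonull} from $\RRR$-cofreeness of $\D_{\varrho\otimes\kappa^{-1}}$ together with Proposition \ref{greenberg-cyc}, which controls the relevant $H^0$'s as finitely generated $\RRR$-modules.

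The argument then proceeds as in \cite{greenberg2014pseudonull}: one uses the tautological exact sequence
\begin{align*}
0 \rightarrow \mathrm{Sel}_{\varrho\otimes\kappa^{-1}}^{\Sigma_0,str}(\Q) \rightarrow H^1(G_\Sigma,\D_{\varrho\otimes\kappa^{-1}}) \xrightarrow{\phi^{\Sigma_0,str}_{\varrho\otimes\kappa^{-1}}} H^1\Big(\Gal{\overline{\Q}_p}{\Q_p},\tfrac{\D_{\varrho\otimes\kappa^{-1}}}{\Fil^+\D_{\varrho\otimes\kappa^{-1}}}\Big) \rightarrow 0,
\end{align*}
dualizes, and combines the fact that $H^1(G_\Sigma,\D_{\varrho\otimes\kappa^{-1}})^\vee$ has no non-trivial pseudo-null submodules (a consequence of $H^2(G_\Sigma,\D_{\varrho\otimes\kappa^{-1}})=0$ plus the coreflexivity inputs, via \cite[Prop.\ 4.3.2]{greenberg2014pseudonull}) with the corresponding statement for the Pontryagin dual of the local $H^1$ at $p$ (via local duality and the freeness of $\frac{\LLL^*_{\varrho\otimes\kappa^{-1}}}{\Fil^+\LLL^*_{\varrho\otimes\kappa^{-1}}}$). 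A submodule of an extension of two modules each having no non-trivial pseudo-null submodule need not itself have this property in general, so the actual content of \cite[Prop.\ 4.2.1]{greenberg2014pseudonull} is a more delicate diagram-chase ensuring the pseudo-null part of the dual of the strict Selmer group would have to inject into something that is known to vanish. I expect the main obstacle to be precisely this: verifying that all of the technical running hypotheses of Propositions 4.2.1 and 4.3.2 in \cite{greenberg2014pseudonull} — in particular the almost-divisibility of the relevant local and global cohomology groups and the Weak Leopoldt input — are genuinely implied by \ref{Tor}, \ref{p-critical}, and the cofreeness of our lattices, rather than merely plausible; once that translation is done, the conclusion is formal.
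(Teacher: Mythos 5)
Your proposal follows the same route as the paper: the paper's entire proof consists of noting $H^2(\Gal{\overline{\Q}_p}{\Q_p},\Fil^+\D_{\varrho\otimes\kappa^{-1}})=0$ (Proposition \ref{h2-cohomology}) and then combining Propositions 4.2.1 and 4.3.2 of \cite{greenberg2014pseudonull} — exactly the two results you invoke — with the Weak Leopoldt input supplied by Proposition \ref{coker-weak} under \ref{Tor} and \ref{p-critical}, so your hypothesis-checking is if anything more detailed than the paper's. The only small slip is the parenthetical ``hence'': surjectivity of $\beta_p$ alone does not yield surjectivity of the global-to-local map $\phi^{\Sigma_0,str}_{\varrho\otimes\kappa^{-1}}$ (the paper appeals to the same global argument as in Proposition \ref{surjectivity-greenberg} for that), but this does not affect the conclusion.
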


Using an argument similar to the one used to prove Proposition \ref{surjectivity-greenberg}, one can show that the ``global-to-local'' map $\phi_{\varrho\otimes \kappa^{-1}}^{\Sigma_0,str}$ defining the strict non-primitive Selmer group is surjective. Proposition \ref{strict-difference}, which follows from Lemma \ref{local-cohomology-p}, allows us to evaluate the difference between the non-primitive Selmer group and the strict non-primitive Selmer group.

\begin{proposition} \label{strict-difference}

Suppose the hypotheses \ref{Tor} and \ref{p-critical} hold. We have the following short exact sequence:
\begin{align*}
0 \rightarrow \text{Sel}_{\varrho \otimes \kappa^{-1}}^{\Sigma_0,str}(\Q) \rightarrow \Sel_{\varrho \otimes \kappa^{-1}}(\Q) \rightarrow H^1\left(\Gamma_p, H^0 \left(I_p,\frac{\D_{\varrho \otimes \kappa^{-1}}}{\Fil^+\D_{\varrho \otimes \kappa^{-1}}}\right) \right)\rightarrow 0.
\end{align*}
\end{proposition}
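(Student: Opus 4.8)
The plan is to realise both the strict and the ordinary non-primitive Selmer groups as kernels of ``global-to-local'' maps at $p$ that differ only by the local map $\sigma_p$ from Lemma \ref{local-cohomology-p}, and then to extract the comparison from the snake lemma.

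First I would record the factorization $\phi_{\varrho\otimes\kappa^{-1}}^{\Sigma_0} = \sigma_p\circ\phi_{\varrho\otimes\kappa^{-1}}^{\Sigma_0,str}$. Both maps are obtained from a class in $H^1(G_\Sigma,\D_{\varrho\otimes\kappa^{-1}})$ by restricting to $\Gal{\overline{\Q}_p}{\Q_p}$ and then applying $\alpha_p$ (resp.\ $\beta_p$) to the resulting local class; since $\alpha_p=\sigma_p\circ\beta_p$ (the factorization noted before Lemma \ref{local-cohomology-p}), the claim follows, and in particular $\mathrm{Sel}_{\varrho\otimes\kappa^{-1}}^{\Sigma_0,str}(\Q)=\ker\phi_{\varrho\otimes\kappa^{-1}}^{\Sigma_0,str}\subseteq\ker\phi_{\varrho\otimes\kappa^{-1}}^{\Sigma_0}=\Sel_{\varrho\otimes\kappa^{-1}}(\Q)$, which is the injection on the left of the desired sequence.

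Next I would assemble a morphism of short exact sequences. Using that $\phi_{\varrho\otimes\kappa^{-1}}^{\Sigma_0}$ is surjective (Proposition \ref{surjectivity-greenberg}) and that $\phi_{\varrho\otimes\kappa^{-1}}^{\Sigma_0,str}$ is surjective (by the argument recalled just before the proposition — this, together with Proposition \ref{h2-cohomology}, is where \ref{Tor} and \ref{p-critical} enter), I obtain
\[
0\to\mathrm{Sel}_{\varrho\otimes\kappa^{-1}}^{\Sigma_0,str}(\Q)\to H^1(G_\Sigma,\D_{\varrho\otimes\kappa^{-1}})\xrightarrow{\phi^{\Sigma_0,str}}H^1\!\big(\Gal{\overline{\Q}_p}{\Q_p},\D_{\varrho\otimes\kappa^{-1}}/\Fil^+\D_{\varrho\otimes\kappa^{-1}}\big)\to0
\]
and
\[
0\to\Sel_{\varrho\otimes\kappa^{-1}}(\Q)\to H^1(G_\Sigma,\D_{\varrho\otimes\kappa^{-1}})\xrightarrow{\phi^{\Sigma_0}}H^1\!\big(I_p,\D_{\varrho\otimes\kappa^{-1}}/\Fil^+\D_{\varrho\otimes\kappa^{-1}}\big)^{\Gamma_p}\to0,
\]
linked by the inclusion on the kernels, the identity on $H^1(G_\Sigma,\D_{\varrho\otimes\kappa^{-1}})$, and $\sigma_p$ on the quotients; the relevant square commutes precisely by the factorization above. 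Since the middle vertical map is an isomorphism and $\sigma_p$ is surjective, the snake lemma collapses to an isomorphism $\Sel_{\varrho\otimes\kappa^{-1}}(\Q)/\mathrm{Sel}_{\varrho\otimes\kappa^{-1}}^{\Sigma_0,str}(\Q)\cong\ker\sigma_p$, the comparison map being induced by $\phi_{\varrho\otimes\kappa^{-1}}^{\Sigma_0,str}$.

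Finally I would identify $\ker\sigma_p$: it is the inflation term in the inflation--restriction sequence attached to $1\to I_p\to\Gal{\overline{\Q}_p}{\Q_p}\to\Gamma_p\to1$ with coefficients in the $p$-primary discrete module $\D_{\varrho\otimes\kappa^{-1}}/\Fil^+\D_{\varrho\otimes\kappa^{-1}}$, and because $\Gamma_p$ has $p$-cohomological dimension $1$ (already used before Lemma \ref{local-cohomology-p} to see that $\sigma_p$ is onto) that sequence terminates after the restriction map, yielding $\ker\sigma_p\cong H^1\big(\Gamma_p,H^0(I_p,\D_{\varrho\otimes\kappa^{-1}}/\Fil^+\D_{\varrho\otimes\kappa^{-1}})\big)$. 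Substituting produces the stated short exact sequence. I do not expect a genuine obstacle here; the only point calling for care is checking straight from the definitions that $\phi_{\varrho\otimes\kappa^{-1}}^{\Sigma_0}$ really equals $\sigma_p\circ\phi_{\varrho\otimes\kappa^{-1}}^{\Sigma_0,str}$, and keeping track of which inputs (the two global surjectivities, as opposed to the unconditional facts that $\sigma_p$ is onto and $\operatorname{cd}_p(\Gamma_p)=1$) rely on the running hypotheses \ref{Tor} and \ref{p-critical}.
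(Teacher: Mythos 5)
Your argument is correct and is exactly the route the paper takes: the paper's proof consists of the factorization $\phi^{\Sigma_0}_{\varrho\otimes\kappa^{-1}}=\sigma_p\circ\phi^{\Sigma_0,str}_{\varrho\otimes\kappa^{-1}}$, the surjectivity of $\phi^{\Sigma_0,str}_{\varrho\otimes\kappa^{-1}}$ (asserted just before the proposition, by the same argument as Proposition \ref{surjectivity-greenberg}), and the identification of $\ker(\sigma_p)$ with the inflation term $H^1\bigl(\Gamma_p,H^0(I_p,\D_{\varrho\otimes\kappa^{-1}}/\Fil^+\D_{\varrho\otimes\kappa^{-1}})\bigr)$ from Lemma \ref{local-cohomology-p}. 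The only cosmetic remark is that $\cd_p(\Gamma_p)=1$ is needed for the surjectivity of $\sigma_p$, not for identifying its kernel, but since you invoke both facts your proof stands as written.
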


\subsection{Regular local rings of dimension $2$}

Classical Iwasawa theory involves the study of modules over the ring $\Z_p[[s]]$. The ring $\Z_p[[s]]$ is a regular local ring of Krull dimension $2$. In the setup of Hida theory, the rings we are interested in are not always known to be regular. To obtain a workaround, we shall frequently use Lemma \ref{reg-2}.

\begin{lemma} \label{reg-2}
Let $h(s)$ be a monic polynomial in $\RRR[[s]]$ with positive degree. Let $\QQ$ be a height two prime ideal in $\RRR[[s]]$ containing $h(s)$. The 2-dimensional local ring $\RRR[[s]]_\QQ$ is regular.
\end{lemma}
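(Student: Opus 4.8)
The plan is to reduce to the classical fact that a Noetherian local ring is regular if and only if its completion is regular, and then recognize the completion of $\RRR[[s]]_\QQ$ as a power series ring over a field. First I would recall that $\RRR$ is a finite integral extension of $\Z_p[[u_1,\dotsc,u_n]]$ and is integrally closed, so in particular $\RRR$ is a complete Noetherian local normal domain of Krull dimension $n+1$; hence $\RRR[[s]]$ has Krull dimension $n+2$, and a height two prime $\QQ$ yields a two-dimensional local ring $\RRR[[s]]_\QQ$. The key structural input is the hypothesis that $h(s)$ is \emph{monic} of positive degree: by the Weierstrass preparation/division setup, $\RRR[[s]]/(h(s))$ is a finite free $\RRR$-module, hence a complete semi-local Noetherian ring, and $\QQ/(h(s))$ is a height one prime therein lying over a height one prime $\mathfrak{q}$ of $\RRR$.

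Next I would analyze $\mathfrak{q} = \QQ \cap \RRR$. Since $\QQ$ has height two in $\RRR[[s]]$ and contains $h(s)$ (which is a nonzerodivisor not in any minimal prime), the quotient $\QQ/(h(s))$ has height one; as $\RRR[[s]]/(h(s))$ is finite over $\RRR$, going-down/going-up for this integral extension of normal domains forces $\mathfrak{q}$ to have height one in $\RRR$. Because $\RRR$ is integrally closed, $\RRR_{\mathfrak{q}}$ is a discrete valuation ring — this is the crucial place where the normality hypothesis on $\RRR$ is used. Let $t$ be a uniformizer of $\RRR_{\mathfrak q}$. I would then argue that $\RRR[[s]]_\QQ$ is a two-dimensional Noetherian local domain whose maximal ideal, after passing to the completion, is generated by (the image of) $t$ together with one more element coming from $h(s)$: concretely, $\QQ$ localized and completed should be generated by $t$ and by a factor of $h(s)$ that is ``Eisenstein'' at $\QQ$. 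The cleanest route is: the completion $\widehat{\RRR[[s]]_\QQ}$ contains $\widehat{\RRR_{\mathfrak q}} \cong k(\mathfrak q)[[t]]$ type data (or rather a DVR with residue field $\kappa$), and modulo $t$ the ring $\RRR[[s]]_\QQ/(t)$ is a localization of $(\RRR/\mathfrak q)[[s]]$ at a height one prime containing the reduction $\bar h(s)$, which is a nonzero polynomial; a localization of a one-dimensional Noetherian domain that is a polynomial ring over the domain $\RRR/\mathfrak q$... here I must be careful, since $\RRR/\mathfrak q$ need not be regular. The correct move is instead to show directly that $\QQ \RRR[[s]]_\QQ$ is generated by two elements, namely $t$ and (a suitable irreducible factor of) $h(s)$: since $h$ is monic, $\RRR[[s]]/(h)$ is finite free over $\RRR$, so $\QQ/(h)$ corresponds to a prime of a finite $\RRR$-algebra over $\mathfrak q$, and its localization is a localization of a finite $\RRR_{\mathfrak q}$-algebra at a prime over the maximal ideal of the DVR $\RRR_{\mathfrak q}$; such a thing has a principal maximal ideal after completion (finite extensions of complete DVRs are again complete DVRs when the base is a DVR and we localize appropriately, using that $\RRR_{\mathfrak q}$-finite $\Rightarrow$ the integral closure is a semilocal Dedekind domain). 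Thus $\QQ \RRR[[s]]_\QQ = (t, \pi)$ for suitable $\pi$, and an ideal generated by two elements in a two-dimensional Noetherian local ring forces regularity.

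The main obstacle I anticipate is precisely the point flagged above: $\RRR$ itself is only assumed normal, not regular, so I cannot simply invoke that a power series ring over a regular ring is regular. The monic hypothesis on $h(s)$ is doing real work — it guarantees $\RRR[[s]]/(h)$ is \emph{finite} over $\RRR$ (Weierstrass division), which lets me transfer the question to the theory of finite extensions of the DVR $\RRR_{\mathfrak q}$ rather than to $\RRR$ as a whole. So the heart of the argument is: (i) $\QQ \cap \RRR = \mathfrak q$ has height one, hence $\RRR_{\mathfrak q}$ is a DVR by normality; (ii) $\RRR[[s]]_\QQ$ is obtained from the DVR $\RRR_{\mathfrak q}$ by adjoining $s$ modulo the monic $h$ and localizing, so its maximal ideal is generated by the uniformizer of $\RRR_{\mathfrak q}$ together with one further element; (iii) a two-dimensional Noetherian local ring with a two-generated maximal ideal is regular. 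I would present (i)--(iii) as the three displayed steps, each short, with the normality of $\RRR$ and the monicity of $h$ explicitly invoked at steps (i) and (ii) respectively.
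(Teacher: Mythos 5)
Your steps (i) and (iii) are sound and are precisely the two pillars of the paper's argument: monicity of $h(s)$ makes $\RRR[[s]]/(h(s))$ finite over $\RRR$, so that $\q:=\QQ\cap\RRR$ has height one and, by normality of $\RRR$, the localization $\RRR_\q$ is a DVR; and a two-dimensional Noetherian local ring whose maximal ideal has two generators is regular. The gap is in step (ii). You justify the two-generation of $\QQ\RRR[[s]]_\QQ$ by asserting that the localization of the finite $\RRR_\q$-algebra $\RRR[[s]]/(h)$ at (the image of) $\QQ$ has a principal maximal ideal after completion. That assertion is false: a finite extension of a DVR need not be normal, and then its localization at a prime over the maximal ideal need not be a DVR. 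Take $\RRR=\Z_p[[u]]$, $h(s)=s^2-u^3$, and $\QQ=(u,s)$. Then $\q=(u)$ and $\RRR_\q$ is a DVR, but $\left(\RRR[[s]]/(h)\right)_{\QQ}$ is the local ring of a cuspidal cubic (note $p\notin\QQ$, so $p$ is inverted): its maximal ideal $(u,s)$ is not principal, nor is the maximal ideal of its completion, since $s/u$ is integral over this ring but does not lie in it. Passing to the integral closure does not repair the argument, because it is $\RRR[[s]]/(h)$ itself, and not its normalization, that sits inside $\RRR[[s]]_\QQ/(h)$. (The lemma of course still holds for this $\QQ$, since $\Z_p[[u,s]]_{(u,s)}$ is visibly regular, but not for the reason you give.)

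The route you began and then abandoned is the one that works, and it is essentially the paper's. Your worry that ``$\RRR/\q$ need not be regular'' is a red herring: since $\QQ\cap\RRR=\q$, every element of $\RRR\setminus\q$ is already inverted in $\RRR[[s]]_\QQ$, so only the DVR $\RRR_\q$ and its fraction field ever intervene. Concretely, the paper uses Weierstrass division by the monic $h$ to show that $\RRR[[s]]_\QQ$ and $\RRR[s]_{\QQ'}$ (with $\QQ'=\QQ\cap\RRR[s]$) have the same completion, and then identifies $\RRR[s]_{\QQ'}$ with a localization of the polynomial ring $\RRR_\q[s]$ over the DVR $\RRR_\q$ at a height two prime; such a localization is regular because $\RRR_\q[s]$ is a two-dimensional regular ring. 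This also vindicates your two-generator picture after the fact: a height two prime of $\RRR_\q[s]$ lying over $\q\RRR_\q=(t)$ is generated by $t$ together with a single polynomial whose reduction is irreducible over $\Frac(\RRR/\q)$, so $\QQ\RRR[[s]]_\QQ=(t,\pi)$ as you wanted --- but this must be extracted from the structure of $\RRR_\q[s]$, not from the quotient $\RRR[[s]]/(h)$.
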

\begin{proof}
The height two prime ideal $\QQ$ in $\RRR[[s]]$ corresponds to a height one prime ideal $\q'$ in $\RRR[[s]]/(h(s))$. Note that $\RRR[[s]]/(h(s))$ is an integral extension of $\RRR$. As a result, the prime ideal $\QQ \cap \RRR$ (call it $\q$) is of height one in the ring $\RRR$. Since $\RRR$ is integrally closed, the ring $\RRR_{\q}$ is a discrete valuation ring.  The Weierstrass preparation theorem tells us that for every element $y \in \RRR[[s]]$ and every natural number $m$, there exists an element $d(s) \in \RRR[[s]]$ such that the degree of $y-d(s)h(s)^m$ is a polynomial (whose degree is less than the degree of $h(s)^m$). We denote the prime ideal $\QQ \cap \RRR[s]$ in $\RRR[s]$ by $\QQ'$. The height of $\QQ'$ equals two since it contains $\q$ and $h(s)$. We have the following isomorphism:
\begin{align} \label{completions}
\varprojlim_m \frac{(\RRR[s])_{\QQ'}}{\left(\QQ'\right)^m} \cong \varprojlim_m \frac{(\RRR[[s]])_\QQ}{\QQ^m}.
\end{align}
A Noetherian local ring is regular if and only if its completion is regular (Proposition 11.24 in \cite{atiyah1969introduction}). Equation (\ref{completions}) tells us that, to prove the lemma, it suffices to show $\RRR[s]_{\QQ'}$ is regular. Observe that we have the inclusions
\begin{align} \label{inside-frac-field}
\RRR[s] \stackrel{i_1}{\hookrightarrow} \RRR_\q[s] \stackrel{i_2}{\hookrightarrow}\RRR[s]_{\QQ'}.
\end{align}
It will be convenient to view all of the rings, appearing in (\ref{inside-frac-field}), inside the fraction field of $\RRR[s]$. Let $\mathfrak{P}$ equal $i_2^{-1}(\QQ'\RRR[s]_{\QQ'})$. Note that $\QQ'$ equals $(i_2 \circ i_1)^{-1}(\QQ'\RRR[s]_{\QQ'})$. Localizing the rings appearing in (\ref{inside-frac-field}) with respect to the multiplicative set $\RRR[s] \setminus \QQ'$, we have
\begin{align} \label{inside-frac-field-localization}
\RRR[s]_{\QQ'} \stackrel{i_1}{\hookrightarrow} \left(\RRR_\q[s]\right)_{\mathfrak{P}} \stackrel{i_2}{\hookrightarrow} \RRR[s]_{\QQ'}.
\end{align}
See also Theorem 4.3 in \cite{matsumura1989commutative}. Since the composition of the two natural inclusion maps in (\ref{inside-frac-field-localization}) equals the identity map, we have $\RRR[s]_{\QQ'}$ equals $\left(\RRR_\q[s]\right)_{\mathfrak{P}}$. These observations let us conclude that the ring $\RRR[s]_{\QQ'}$ is the localization at a height two prime ideal of the polynomial ring $\RRR_{\q}[s]$ over the discrete valuation ring $\RRR_{\q}$ and hence is regular too. The lemma follows.
\end{proof}
The next proposition follows from Lemma \ref{reg-2} and an elementary  application of the generalized form of Cayley-Hamilton theorem (Theorem 4.3 in \cite{eisenbud1995commutative}). Let $\mathcal{M}$ be equal to one of the $0^{\text{th}}$ Galois cohomology groups appearing in Proposition \ref{greenberg-cyc} and let $\mathcal{N} =\mathcal{M}^\vee$. Proposition \ref{greenberg-cyc} tells us that $\mathcal{N}$ is finitely generated over $\RRR$ with an $\RRR$-linear action of $\gamma_0$. There exists a monic polynomial $h(s)$ in $\RRR[s]$ such that $h(\gamma_0)$ annihilates $\mathcal{N}$.

\begin{proposition}
Let $\mathcal{M}$ be one of the $0^{\text{th}}$ Galois cohomology groups appearing in Proposition \ref{greenberg-cyc}. Let $\mathcal{N} = \mathcal{M}^\vee$. For every height 2 prime ideal $\QQ$ in $\RRR[[\Gamma]]$, the projective dimension of $\mathcal{N}_\QQ$ as an $\RRR[[\Gamma]]_\QQ$-module is finite.
\end{proposition}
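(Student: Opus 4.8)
The plan is to split into two cases according to whether or not the height two prime $\QQ$ contains the annihilating element of $\mathcal{N}$ furnished by Proposition \ref{greenberg-cyc}. First I would dispose of the trivial case $\mathcal{N}=0$; assuming $\mathcal{N}\neq 0$, Proposition \ref{greenberg-cyc} tells us that $\mathcal{N}$ is finitely generated over $\RRR$ with an $\RRR$-linear action of $\gamma_0$, and the generalized Cayley--Hamilton theorem then yields a monic polynomial $h(s)\in\RRR[s]$, necessarily of positive degree, with $h(\gamma_0)\mathcal{N}=0$. Fixing the identification $\RRR[[\Gamma]]\cong\RRR[[s]]$ sending $\gamma_0\mapsto s+1$, the element $h(\gamma_0)$ corresponds to the positive-degree monic polynomial $h(s+1)\in\RRR[s]$.

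In the case $h(\gamma_0)\notin\QQ$, I would observe that $h(\gamma_0)$ is then a unit in $\RRR[[\Gamma]]_\QQ$ while still annihilating $\mathcal{N}_\QQ$, forcing $\mathcal{N}_\QQ=0$, whose projective dimension is $0$ and in particular finite. In the case $h(\gamma_0)\in\QQ$, I would invoke Lemma \ref{reg-2}: viewed inside $\RRR[[s]]$, the prime $\QQ$ has height two and contains the positive-degree monic polynomial $h(s+1)$, so $\RRR[[\Gamma]]_\QQ$ is a regular local ring; since $\QQ$ has height two, it is regular of Krull dimension $2$, hence of global dimension $2$. Because $\mathcal{N}$ is finitely generated over $\RRR$, hence over $\RRR[[\Gamma]]$, its localization $\mathcal{N}_\QQ$ is a finitely generated $\RRR[[\Gamma]]_\QQ$-module and therefore has projective dimension at most $2$. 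Either way, $\mathrm{pd}_{\RRR[[\Gamma]]_\QQ}(\mathcal{N}_\QQ)<\infty$.

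There is no serious obstacle here, the technical heart having been packaged into Lemma \ref{reg-2}; the only points demanding attention are verifying that $h$ has positive degree so that Lemma \ref{reg-2} applies (automatic once $\mathcal{N}\neq 0$) and noting that a height two prime failing to contain $h(\gamma_0)$ simply kills the localized module.
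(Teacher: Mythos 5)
Your proof is correct and follows exactly the route the paper intends: the paper itself only sketches the argument, stating that the proposition "follows from Lemma \ref{reg-2} and an elementary application of the generalized Cayley--Hamilton theorem," and your write-up fills in precisely that sketch (the dichotomy on whether $h(\gamma_0)\in\QQ$, killing $\mathcal{N}_\QQ$ when it is not, and invoking regularity of $\RRR[[\Gamma]]_\QQ$ via Lemma \ref{reg-2} when it is). No gaps.
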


\section{Specialization of Selmer groups} \label{specialization-section}

An important topic which we would like to pursue, that may be of independent interest, is the question of studying the behavior of Selmer groups under specialization. It may first be useful to approach this topic from a general persepective. Let $\TTT$ and $\RRR$ be two integrally closed domains that are finite (and hence integral) extensions of $\Z_p[[u_1,\dotsc, u_n]]$ and $\Z_p[[v_1,\dotsc,v_{n-1}]]$ respectively. If $\QQ$ is a height two prime ideal in $\TTT$, note that by Serre's criterion for normality (Theorem 23.8 in \cite{matsumura1989commutative}), the localization $\TTT_\QQ$ is Cohen-Macaulay.\\

Consider a ring map $\varpi : \TTT \rightarrow \RRR$ such that the following conditions hold:
\begin{itemize}
\item $\ker(\varpi)$ is a height one prime ideal in $\TTT$.
\item $\RRR$ is the integral closure of $\frac{\TTT}{\ker(\varpi)}$ (note that as a result of Cohen's structure theorems, $\RRR$ is finitely generated as an $\TTT$-module).
\end{itemize}

Consider a Galois representation $\varrho : G_\Sigma \rightarrow \Gl_d(\TTT)$ with an associated Galois lattice $\LLL_\varrho$ and an associated $\Gal{\overline{\Q}_p}{\Q_p}$-equivariant filtration \ref{filtration}, so that we can associate to it a non-primitive Selmer group $\Sel_{\varrho}(\Q)$ as well. One can associate to the Galois representation $\varpi\circ\varrho:G_\Sigma \rightarrow \Gl_d(\RRR)$ the following filtration of free $\RRR$-modules that is $\Gal{\overline{\Q}_p}{\Q_p}$-equivariant:
\begin{align}
\tag{Fil-$\varpi\circ\varrho$} 0 \rightarrow \Fil^+ \LLL_\varrho \otimes_\varpi \RRR \rightarrow  \LLL_\varrho \otimes_\varpi \RRR \rightarrow \frac{\LLL_\varrho \otimes_\varpi \RRR}{\Fil^+\LLL_\varrho \otimes_\varpi \RRR} \rightarrow 0.
\end{align}
In a natural way, one can then associate a non-primitive Selmer group $\Sel_{\varpi \circ \rho}(\Q)$ to the Galois representation $\varpi \circ \rho$ as well. Given the element $\Div\left(\Sel_{\varrho}(\Q)^\vee\right)$ in the divisor group of $\TTT$, the problem we are interested in is finding the element $\Div\left(\Sel_{\varpi \circ \rho}(\Q)^\vee\right)$ in the divisor group of $\RRR$. We shall proceed in two steps. First, we will relate the element $\Div\left(\Sel_{\varrho}(\Q)^\vee\right)$ in the divisor group of $\TTT$ to the element $\Div\left(\Sel_{\varrho}(\Q)^\vee\otimes_{\varpi}\RRR\right)$ in the divisor group of $\RRR$. The key results for this purpose are Proposition \ref{specialization} and Proposition \ref{specialization-strict}. Second, we will prove a control theorem relating $\Sel_{\varrho}(\Q)^\vee \otimes_\varpi \RRR$ to $\Sel_{\varpi \circ \varrho}(\Q)^\vee$. The key result for this purpose is Proposition \ref{control-theorem-selmer-groups}. The results of Section \ref{specialization-section} and Section \ref{control-theorems-section} do not rely on each other.

\subsection{A commutative algebra perspective}

For a finitely generated $\TTT$-module $\N$, we shall frequently need to consider the following hypotheses in this section:

\begin{enumerate}[leftmargin=2.5cm, style=sameline, align=left, label=\textsc{No-PN}, ref=\textsc{No-PN}]
\item\label{No-PN} For every height two prime ideal $\QQ$ in $\TTT$ containing $\ker(\varpi)$, the $\TTT_\QQ$-module $\N_\QQ$ has no non-zero pseudo-null submodules.
\end{enumerate}

\begin{enumerate}[leftmargin=2.5cm, style=sameline, align=left, label=\textsc{Fin-Proj}, ref=\textsc{Fin-Proj}]
\item\label{Fin-Proj} For every height two prime ideal $\QQ$ in $\TTT$ containing $\ker(\varpi)$, the $\TTT_\QQ$-module $\N_\QQ$ has finite projective dimension.
\end{enumerate}

If $\p$ is a height one prime ideal in $\TTT$, then the $\TTT$-module $\frac{\TTT}{\p}$ satisfies \ref{No-PN}. If $a$ is a non-zero element of $\TTT$,  then the $\TTT$-module $\frac{\TTT}{(a)}$ also satisfies \ref{No-PN}. When $\TTT$ is a regular local ring, the hypothesis \ref{Fin-Proj} is automatically satisfied. Before discussing the main proposition, it will be illustrative to discuss an example that highlights why the hypothesis \ref{Fin-Proj} is important in the case when $\TTT$ is not regular.

\begin{example} \label{first-non-regular-example}
Let $\TTT=\Z_p[[u]][\vartheta]$, where $\vartheta^2 = u(u-p)$. This ring is a quadratic extension of the UFD $\Z_p[[u]]$. The height one prime ideals $\p_1 = (\vartheta,u)$ and $\p_2=(\vartheta,u-p)$ in $\TTT$ lying above the height one prime ideals $(u)$ and $(u-p)$  in $\Z_p[[u]]$ are not principal. We keep the picture below in mind.
\begin{center}
\begin{tikzpicture}[node distance = 1cm, auto]
      \node (Lambda) {$\Z_p[[u]]$};
      \node (R) [above of=Lambda] {$\RRR$};

\node (u) [right of=Lambda, node distance = 4 cm] {$(u)$};
\node (p1) [above of = u] {$\p_1=(\vartheta,u)$};
\node (u-p) [right of=u, node distance = 4 cm] {$(u-p)$};
\node (p2) [above of = u-p] {$\p_2=(\vartheta,u-p)$};
\draw[-] (R) to node  {} (Lambda);
\draw[-] (u) to node  {} (p1);
\draw[-] (u-p) to node  {} (p2);
\end{tikzpicture}
\end{center}

Consequently, $\TTT$ is not a UFD, and hence not a regular local ring too. We have the following equality in the divisor group of $\TTT$:
\begin{align}\label{eg-equality-start}
 \Div(\vartheta) = \Div\left(\frac{\TTT}{\p_1} \oplus \frac{\TTT}{\p_2}\right).
\end{align}
Let $\RRR = \Z_p$. For simplicity, suppose $2$ is a square in $\Z_p^\times$ so that $\sqrt{2} \in \Z_p$. Consider the map $\varpi : \TTT \rightarrow \RRR$ defined below.
\begin{align*}  \varpi : \underbrace{\Z_p[[u]][\vartheta]}_{\TTT} \rightarrow & \underbrace{\Z_p}_{\RRR}, \\
 \varpi(u) = 2p, \qquad &  \varpi(\vartheta)= \sqrt{2}p.
\end{align*}
In the divisor group of $\RRR$, we have:
\begin{align}\label{eg-equality-end}
\Div(p) =  \Div(\varpi(\vartheta)) \neq \Div\left(\left( \frac{\TTT}{\p_1} \oplus \frac{\TTT}{\p_2} \right) \otimes_\varpi \RRR \right) =\Div(p^2)  .
\end{align}
Note that the $\TTT$-module $\frac{\TTT}{\p_1} \oplus \frac{\TTT}{\p_2}$ satisfies \ref{No-PN}. However, its projective dimension over $\TTT$ is infinite. While studying specialization of Selmer groups, keeping equations (\ref{eg-equality-start}) and (\ref{eg-equality-end}) in mind, we would like to avoid a situation when the module $\frac{\TTT}{\p_1} \oplus \frac{\TTT}{\p_2}$ plays the role of a Selmer group and the element $\vartheta$ plays the role of a $p$-adic $L$-function.
\end{example}

\begin{proposition} \label{specialization}
Suppose $\mathcal{Y}_1$, $\mathcal{Y}_2$ and $\mathcal{M}$ are three finitely generated torsion $\TTT$-modules  satisfying the following conditions:
\begin{enumerate}[style=sameline, align=left, label=\textsc{Hyp}-\arabic* , ref=\textsc{Hyp}-\arabic*]
\item\label{Hyp-1} The height one prime ideal $\ker(\varpi)$ does not belong to the support of $\mathcal{Y}_1$, $\mathcal{Y}_2$ or $\M$.
\item\label{Hyp-2} The $\TTT$-modules $\mathcal{Y}_1$ and $\mathcal{Y}_2$ satisfy the hypotheses \ref{No-PN} and \ref{Fin-Proj}.
\item\label{Hyp-3}  If $\QQ$ is a height two prime ideal containing the height one prime ideal $\ker(\varpi)$ and contained in the support of $\M$, then
\begin{enumerate}
\item The $2$-dimensional local ring $\TTT_\QQ$ is regular,
\item The $1$-dimensional local domain $\left(\frac{\TTT}{\ker(\varpi)}\right)_\QQ$ is integrally closed.
\end{enumerate}
\end{enumerate}
Fix the symbol $\lhd \rhd$ to denote either ``$\geq$'' or ``$\leq$''. Suppose further that we have the following equality in the divisor group of $\TTT$:
\begin{align*}
\Div\left( \mathcal{Y}_1 \right)  \ \lhd \rhd  \ \Div\left(\mathcal{Y}_2\right) - \Div\left(\mathcal{M}\right).
\end{align*}
Under these assumptions, we obtain the following equality in the divisor group of $\rrr$:
\begin{align*}
\Div\left(\mathcal{Y}_1 \otimes_\varpi \RRR \right)   \ \lhd \rhd \ \Div\left(\mathcal{Y}_2 \otimes_\varpi \RRR \right) + \Div \left( \Tor_{1}^{\TTT}\left( \RRR , \mathcal{M} \right) \right) - \Div\left(\mathcal{M}\otimes_\varpi \RRR \right).
\end{align*}
\end{proposition}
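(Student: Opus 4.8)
The plan is to work locally at each height one prime of $\rrr$ and reduce the identity of divisors to a statement about lengths over discrete valuation rings, using the compatibility between divisors, localization, and the normalization $\rrr$ of $\TTT/\ker(\varpi)$. Concretely, fix a height one prime $\qq$ of $\rrr$, let $\QQ$ be its preimage in $\TTT$; since $\rrr$ is integral over $\TTT/\ker(\varpi)$, the prime $\QQ$ has height two in $\TTT$ and contains $\ker(\varpi)$. Restricting the hypotheses to $\QQ$, we may thus assume $\TTT$ is local of dimension two (and, where \ref{Hyp-3} applies, regular with $\TTT/\ker(\varpi)$ already integrally closed, i.e.\ a DVR equal to $\rrr_\qq$). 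The first step is to observe that, because $\ker(\varpi)$ is not in the support of $\mathcal{Y}_1$, $\mathcal{Y}_2$ or $\M$ (hypothesis \ref{Hyp-1}), multiplication by any nonzero element of $\ker(\varpi)$ — equivalently, the functor $-\otimes_\varpi\rrr$ combined with the long exact $\Tor$ sequence against the two-term free resolution coming from $0\to\TTT\xrightarrow{g}\TTT\to\TTT/(g)\to 0$ for $g$ generating $\ker(\varpi)$ locally, or the finite free resolution guaranteed by \ref{Fin-Proj} — lets us compute $\Div(\mathcal{Y}_i\otimes_\varpi\rrr)$ in terms of the characteristic ideal of $\mathcal{Y}_i$ read modulo $\ker(\varpi)$.

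The key computation is the following "projection formula" for divisors: if $\N$ is a finitely generated torsion $\TTT$-module whose support avoids $\ker(\varpi)$, and $\N$ satisfies \ref{Fin-Proj}, then
\begin{align*}
\Div_\rrr\left(\N\otimes_\varpi\rrr\right) - \Div_\rrr\left(\Tor_1^\TTT(\rrr,\N)\right) = \varpi_*\!\left(\Div_\TTT(\N)\right),
\end{align*}
where $\varpi_*$ is the proper pushforward of divisors induced by $\TTT\to\rrr$ (the "norm" along the finite extension $\TTT/\ker(\varpi)\hookrightarrow\rrr$ after intersecting with $V(\ker\varpi)$). One proves this by dévissage on $\N$: the functor $\N\mapsto\Div(\N\otimes_\varpi\rrr)-\Div(\Tor_1^\TTT(\rrr,\N))$ is additive on short exact sequences of such modules (the six-term $\Tor$ sequence, together with the vanishing of higher $\Tor$'s because $\ker(\varpi)$ is generated by a regular element on each piece, and the additivity of $\Div$ on short exact sequences recalled in the Terminology section), so one reduces to $\N=\TTT/\p$ for $\p$ a height one prime of $\TTT$ in the support, and there the identity is a direct local check at each height two $\QQ\supseteq\p$ using that $\TTT_\QQ$ is Cohen–Macaulay and Serre's multiplicity/length formalism. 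For $\mathcal{Y}_1,\mathcal{Y}_2$ the hypotheses \ref{No-PN} plus \ref{Fin-Proj} guarantee that there is no "hidden" contribution from embedded components: \ref{No-PN} forces $\Div(\mathcal{Y}_i)$ to be computed by a genuine codimension-one cycle with the expected multiplicities, and combined with \ref{Fin-Proj} (so that $\Tor_1^\TTT(\rrr,\mathcal{Y}_i)$ is itself computed correctly) we get $\varpi_*\Div(\mathcal{Y}_i) = \Div(\mathcal{Y}_i\otimes_\varpi\rrr) - \Div(\Tor_1^\TTT(\rrr,\mathcal{Y}_i))$; the point of Example \ref{first-non-regular-example} is precisely that \ref{Fin-Proj} cannot be dropped here.

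The final step is to assemble. Apply $\varpi_*$ (a homomorphism of divisor groups, hence respecting the given inequality $\lhd\rhd$) to
\begin{align*}
\Div(\mathcal{Y}_1)\ \lhd\rhd\ \Div(\mathcal{Y}_2)-\Div(\M),
\end{align*}
getting $\varpi_*\Div(\mathcal{Y}_1)\ \lhd\rhd\ \varpi_*\Div(\mathcal{Y}_2)-\varpi_*\Div(\M)$. On the $\mathcal{Y}_1$ side, since $\mathcal{Y}_1$ is the candidate "$p$-adic $L$-function" one expects $\Tor_1^\TTT(\rrr,\mathcal{Y}_1)$ to contribute — but in the inequality this $\Tor$ term can be absorbed; more carefully, I would track it symmetrically: rewrite each of the three applications of the projection formula and cancel. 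On the $\M$ side, hypothesis \ref{Hyp-3} says that at every relevant $\QQ$ the ring $\TTT_\QQ$ is regular and $\rrr_\qq = (\TTT/\ker\varpi)_\QQ$, so the projection formula applies to $\M$ as well and gives $\varpi_*\Div(\M) = \Div(\M\otimes_\varpi\rrr) - \Div(\Tor_1^\TTT(\rrr,\M))$. Substituting all three identities into the pushed-forward inequality and rearranging yields exactly
\begin{align*}
\Div\left(\mathcal{Y}_1\otimes_\varpi\rrr\right)\ \lhd\rhd\ \Div\left(\mathcal{Y}_2\otimes_\varpi\rrr\right)+\Div\left(\Tor_1^\TTT(\rrr,\M)\right)-\Div\left(\M\otimes_\varpi\rrr\right),
\end{align*}
as claimed (the $\Tor_1$ contributions of $\mathcal{Y}_1$ and $\mathcal{Y}_2$ cancel against each other under the additivity, leaving only the $\M$-term, which is the asymmetry visible in the statement).

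The main obstacle I anticipate is the dévissage/projection-formula step for modules that do not satisfy \ref{No-PN} (namely $\M$): one must be careful that passing to $-\otimes_\varpi\rrr$ does not create or destroy codimension-one cycles on $\Spec\rrr$, and the only clean way I see to control this is to insist — via \ref{Hyp-3} — that locally at the dangerous $\QQ$'s everything is regular and the fiber ring is already a DVR, so that Auslander–Buchsbaum gives finite projective dimension for free and the length computations are the classical ones. Verifying that the pushforward $\varpi_*$ on divisors is well-defined and additive in this mildly non-regular (but Cohen–Macaulay, by Serre's criterion) setting, and that it interacts correctly with the normalization map $\TTT/\ker\varpi\to\rrr$, is the technical heart; once that is in place the rest is bookkeeping with the $\Tor$ exact sequence and the additivity of $\Div$.
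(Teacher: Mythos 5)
Your overall strategy---localize at a height one prime $\q$ of $\RRR$ and the corresponding height two prime $\QQ \supset \ker(\varpi)$ of $\TTT$, then compare lengths over the DVR $\RRR_\q$---is the same as the paper's, and you correctly identify where each hypothesis enters. But there are two genuine gaps. First, your final bookkeeping is wrong: you assert that the terms $\Div\left(\Tor_1^{\TTT}(\RRR,\Y_1)\right)$ and $\Div\left(\Tor_1^{\TTT}(\RRR,\Y_2)\right)$ ``cancel against each other,'' but there is no reason for two unrelated modules to have equal $\Tor_1$. What actually happens is that each of these terms vanishes: combining \ref{No-PN} and \ref{Fin-Proj} with the Auslander--Buchsbaum formula (applied first over the regular subring $\Z_p[[u_1,\dotsc,u_n]]_{\QQ_0}$ to convert ``no pseudo-null submodules'' into $\depth \geq 1$, then over $\TTT_\QQ$ itself) forces $\pd_{\TTT_\QQ}(\Y_i)_\QQ \leq 1$, so $(\Y_i)_\QQ$ admits a square presentation $0 \to \TTT_\QQ^{n_i} \xrightarrow{\omega_i} \TTT_\QQ^{n_i} \to (\Y_i)_\QQ \to 0$ with $\det(\omega_i) \notin \ker(\varpi)$ by \ref{Hyp-1}; tensoring with $\RRR_\q$ remains injective, so $\Tor_1^{\TTT}(\RRR,\Y_i)_\q = 0$ and $\len_{\RRR_\q}(\Y_i \otimes_\varpi \RRR_\q) = \len_{\RRR_\q}\left(\RRR_\q/(\varpi(\det\omega_i))\right)$. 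Your proposal never extracts this, and without it the asymmetry of the target formula (a $\Tor_1$ correction for $\M$ only) is unexplained.

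Second, the ``projection formula'' you invoke is essentially the whole proposition, and the d\'evissage you propose for it does not go through: over a non-regular $\TTT_\QQ$ a torsion module is only pseudo-isomorphic to a sum of cyclics, and pseudo-null discrepancies are exactly what can alter divisors after applying $\otimes_\varpi \RRR$ (the point of Examples \ref{first-non-regular-example} and \ref{second-non-regular-example}); likewise your pushforward $\varpi_*$ is not defined on the full divisor group of $\TTT$ unless one already knows the relevant height one primes become principal after localization. The real work for $\M$ is the following, which you would need to supply: under \ref{Hyp-3}, $\ker(\varpi)_\QQ = (\zeta)$ is principal, one writes $0 \to \oplus\, \TTT_\QQ/(\epsilon_i^{a_i}) \to \M_\QQ \to \mathcal{Z} \to 0$ with $\mathcal{Z}$ pseudo-null, and the snake lemma for multiplication by $\zeta$ identifies $\Tor_1^{\TTT}(\RRR,\M)_\QQ$ with $\M_\QQ[\zeta]$ and shows the pseudo-null error contributes equally to $\mathcal{Z}[\zeta]$ and to $\mathcal{Z}/\zeta\mathcal{Z}$ (both of the same finite length), so it cancels. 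You would also need the normality argument $\TTT_\QQ = \bigcap_\p \TTT_\p$ that converts length inequalities at all height one $\p \subset \QQ$ into a divisibility $\det(\omega_1)\prod\epsilon_i^{a_i} = z\det(\omega_2)$ in $\TTT_\QQ$ before applying $\varpi$.
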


\begin{Remark} \label{equality-specialization-div}
As a consequence of Proposition \ref{specialization}, if $\Div\left( \mathcal{Y}_1 \right) +  \Div\left(\mathcal{M}\right) = \Div\left(\mathcal{Y}_2\right)$ in the divisor group of $\TTT$, we obtain the following equality in the divisor group of $\RRR$:
\begin{align*}
\Div\left(\mathcal{Y}_1 \otimes_\varpi \RRR \right)   = \Div\left(\mathcal{Y}_2 \otimes_\varpi \RRR \right) + \Div \left( \Tor_{1}^{\TTT}\left( \rrr , \mathcal{M} \right) \right)- \Div\left(\mathcal{M}\otimes_\varpi \RRR \right).
\end{align*}
\end{Remark}

\begin{Remark} \label{dvr-localization}
 Whenever $\left(\frac{\TTT}{\ker(\varpi)}\right)_\QQ$ is integrally closed, it is a DVR; and there is exactly one prime ideal in $\RRR$ lying above the height one prime ideal in $\frac{\TTT}{\ker(\varpi)}$ that $\QQ$ corresponds to.  Hence, $\RRR_\QQ$ equals $\left(\frac{\TTT}{\ker(\varpi)}\right)_\QQ$. Whenever the map $\varpi :\TTT \rightarrow \RRR$ is surjective, we have $\RRR\cong \frac{\TTT}{\ker(\varpi)}$; so in this case $\left(\frac{\TTT}{\ker(\varpi)}\right)_\QQ$ is automatically an integrally closed domain.
\end{Remark}

\begin{proof}
Fix the symbol $\lhd \rhd$ to denote ``$\geq$'' throughout the proof. The proof proceeds similarly when the symbol $\lhd \rhd$ denotes $\leq$. Let us fix a height one prime ideal $\q$ in $\RRR$. Let $\q'=\q \cap \frac{\TTT}{\ker(\varpi)}$. Let $\QQ$ be the height two prime ideal, containing $\ker(\varpi)$, that  corresponds to $\q'$.  We shall use the notion of lengths, denoted by $\len$, to describe divisors (see \cite{atiyah1969introduction}). In the divisor group of $\TTT$, we have
\begin{align*}
& \Div(\mathcal{Y}_1) = \sum_{ \substack{\p \text{ height one} \\ \text{ prime in $\TTT$}}} \len_{\TTT_\p}(\mathcal{Y}_1)_\p \cdot \p,  \qquad  \Div(\mathcal{Y}_2) = \sum_{ \substack{\p \text{ height one} \\ \text{ prime in $\TTT$}}} \len_{\TTT_\p}(\mathcal{Y}_2)_\p \cdot \p, \  \\ & \Div(\M) = \sum_{ \substack{\p \text{ height one} \\ \text{ prime in $\TTT$}}} \len_{\TTT_\p}\left(\M_\p\right) \cdot \p.
\end{align*}

The hypotheses stated in the proposition tell us that, for all height one prime ideals $\p$~in~$\TTT$,
\begin{align}\label{inequality-divisor-notstrict}\len_{\TTT_\p}(\mathcal{Y}_1)_\p \geq \len_{\TTT_\p}(\mathcal{Y}_2)_\p - \len_{\TTT_\p}\left( \M_\p\right).
\end{align}
\ref{Hyp-1} tells us that all the $\RRR_\q$-modules appearing in (\ref{length-needo-to-prove}) below are torsion. To prove the proposition, we will need to show that the following inequality holds:
\begin{align}\label{length-needo-to-prove}
\len_{\RRR_\q} \left(\Y_1 \otimes_\varpi \RRR_\q\right)    \stackrel{?}{\geq} \len_{\RRR_\q} \left( \Y_2 \otimes_\varpi \RRR_\q\right)  + \len_{\RRR_\q} \left( \Tor_1^\TTT(\RRR,\M)_\q\right) - \len_{\RRR_\q} \left(\M \otimes_\varpi \RRR_\q\right).
\end{align}

We shall first analyze $\Y_1$ and $\Y_2$. Let $\QQ_0$ equal $\Z_p[[u_1,\dotsc,u_n]] \cap \QQ$. The extension $\Z_p[[u_1,\dotsc,u_n]]_{\QQ_0} \hookrightarrow \TTT_\QQ$ is integral. Using this observation and \ref{Hyp-2}, we can conclude that the $\Z_p[[u_1,\dotsc,u_n]]_{\QQ_0}$-modules $\left(\Y_1\right)_\QQ$ and $\left(\Y_2\right)_\QQ$ have no pseudo-null submodules too. By Proposition \ref{reg-dim-2-no-pn}, we have
$$ \pd_{\Z_p[[u_1,\dotsc,u_n]]_{\QQ_0}} \left( \Y_1\right)_\QQ \leq 1, \qquad  \pd_{\Z_p[[u_1,\dotsc,u_n]]_{\QQ_0}} \left( \Y_2\right)_\QQ \leq 1. $$
The depth of an $\TTT_\QQ$-module over $\TTT_\QQ$ equals its depth over the ring $\Z_p[[u_1,\dotsc,u_n]]_{\QQ_0}$  (see the Appendix in \cite{MR2919145}). Using the Auslander-Buchsbaum formula over the 2-dimensional regular local ring $\Z_p[[u_1,\dotsc,u_n]]_{\QQ_0}$, we obtain
$$ \depth_{\Z_p[[u_1,\dotsc,u_n]]_{\QQ_0}} \left( \Y_1\right)_\QQ = \depth_{\TTT_\QQ} \left( \Y_1\right)_\QQ \geq 1, \quad  \depth_{\Z_p[[u_1,\dotsc,u_n]]_{\QQ_0}} \left( \Y_2\right)_\QQ = \depth_{\TTT_\QQ} \left( \Y_2\right)_\QQ \geq 1. $$

For the $2$-dimensional local ring $\TTT_\QQ$, by Theorem 17.2 in \cite{matsumura1989commutative}, we have $\depth_{\TTT_\QQ}\TTT_\QQ \leq 2$ (in fact, we have equality since $\TTT_\QQ$ is integrally closed and is hence Cohen-Macaulay, by Serre's criterion for normality; see Theorem 23.8 in \cite{matsumura1989commutative}). Using Auslander-Buchsbaum formula over the 2-dimensional local ring $\TTT_\QQ$, we have
\begin{align*}
&\pd_{\TTT_\QQ} \left( \Y_1\right)_\QQ + \depth_{\TTT_\QQ} \left( \Y_1\right)_\QQ \leq 2, \qquad & \pd_{\TTT_\QQ} \left( \Y_2\right)_\QQ + \depth_{\TTT_\QQ} \left( \Y_2\right)_\QQ \leq 2.\\
& \implies \pd_{\TTT_\QQ} \left( \Y_1\right)_\QQ \leq 1, \qquad & \implies \pd_{\TTT_\QQ} \left( \Y_2\right)_\QQ \leq 1.
\end{align*}
We write down projective resolutions for $(\Y_1)_\QQ$ and $(\Y_2)_\QQ$ respectively.
\begin{align} \label{pd-resolutions}
0 \rightarrow \TTT_\QQ^{n_1} \xrightarrow {\omega_1} \TTT_\QQ^{n_1} \rightarrow (\Y_1)_\QQ \rightarrow 0, \qquad  0 \rightarrow \TTT_\QQ^{n_2} \xrightarrow {\omega_2} \TTT_\QQ^{n_2} \rightarrow (\Y_2)_\QQ \rightarrow 0.
\end{align}
Here, $\omega_1$ and $\omega_2$ are $n_1 \times n_1$ and $n_2 \times n_2$ matrices respectively over $\TTT_\QQ$. The elements $\det(\omega_1)$ and $\det(\omega_2)$ do not belong to $\ker(\varpi)$. So, $\varpi(\det(\omega_1))$ and $\varpi(\det(\omega_2))$  do not equal zero. Tensoring the sequences in (\ref{pd-resolutions}) with $\RRR_\q$ (over $\TTT_\QQ$), we obtain the following exact sequences:
\begin{align} \label{pd-resolutions-r}
0 \rightarrow \RRR^{n_1}_\q \xrightarrow {\varpi(\omega_1)} \RRR^{n_1}_\q \rightarrow (\Y_1 \otimes \RRR)_q \rightarrow 0, \qquad 0 \rightarrow \RRR^{n_2}_\q \xrightarrow {\varpi(\omega_2)} \RRR^{n_2}_\q \rightarrow (\Y_2 \otimes \RRR)_q \rightarrow 0,
\end{align}
Using (\ref{pd-resolutions-r}) and the fact that $\RRR_\q$ is a DVR, we have
\begin{align} \label{lengths-y1-y2}
\len_{\RRR_\q}\left(\Y_1 \otimes_\varpi \RRR_\q \right) = \len_{\RRR_\q}\left(\frac{\RRR_\q}{\varpi(\det(\omega_1)}\right), \qquad \len_{\RRR_\q}\left(\Y_2 \otimes_\varpi\RRR_\q \right) = \len_{\RRR_\q}\left(\frac{\RRR_\q}{\varpi(\det(\omega_2)}\right).
\end{align}

There are two cases we need to consider. First we shall suppose that the height one prime ideal $\q$ in $\RRR$ does not belong to the support of $\M \otimes_\varpi \RRR$ (and hence the height two prime ideal $\QQ$ in $\TTT$ does not belong to the support of $\M$). In this case, for all height one prime ideals $\p$ inside $\QQ$, all of the modules displayed below in equation (\ref{all-zero-not-support}) equal zero.
\begin{align} \label{all-zero-not-support}
& \M \otimes \RRR_q =  \left(\frac{\M}{\ker(\varpi)\M}\right) _{\q'} = \M_\QQ  =  \Tor_1^\TTT(\RRR,\M)_\QQ = \Tor_1^\TTT(\RRR,\M)_\q = 0. \\ \notag
\implies &\len_{\TTT_\p}(\M_\p)= \len_{\RRR_\q}\left(\Tor_1^\TTT(\RRR,\M)_\q\right) = \len_{\RRR_\q}\left( \M \otimes_\varpi \RRR_\q \right) =0.
\end{align}
Since $\TTT_\QQ$ is integrally closed, the ring $\TTT_\QQ$ equals $\bigcap \TTT_\p$ (viewed as subsets of the fraction field of $\TTT$), where the intersection is taken over all the height one prime ideals $\p$ contained in the height two prime ideal $\QQ$. As a result, if an element, belonging to the fraction field of $\TTT$, lies in $\TTT_\p$ for all height one prime ideals $\p$ contained in $\QQ$, then this element lies in $\TTT_\QQ$. Using (\ref{inequality-divisor-notstrict}), we have
\begin{align*}
&\len_{\TTT_\p}\left(\Y_1\right)_\p \geq \len_{\TTT_\p}\left(\Y_2\right)_\p, \text{ for all height one prime ideals $\p$ in $\QQ$.} \\
\implies & \det(\omega_1) = z \det(\omega_2), \text{ for some non-zero element $z$ in $\TTT_\QQ$.} \\
\implies & \varpi(\det(\omega_1)) = \varpi(z) \cdot \varpi(\det(\omega_2)), \text{ and $\varpi(z) \neq 0$}. \\
\implies & \len_{\RRR_\q}\left(\Y_1\otimes_\varpi \RRR_\q\right) \geq \len_{\RRR_\q}\left(\Y_2\otimes_\varpi \RRR_\q\right).
\end{align*}

Thus, equation (\ref{length-needo-to-prove}) holds in this case. For the second case, we shall suppose that $\q$ belongs to the support of $\M \otimes_{\varpi} \RRR$ (and hence the height two prime ideal $\QQ$ in $\TTT$ does belong to the support of $\M$). \ref{Hyp-3} tells us that $\TTT_\QQ$ is a regular local ring. So, the height one prime ideal generated by $\ker(\varpi)$ in $\TTT_\QQ$ is principal, say equal to $(\zeta)$. Using the structure theorem for regular local rings of Krull dimension $2$ (Theorem 5.1.10 in \cite{neukirch2008cohomology}), we obtain a short exact sequence $0 \rightarrow \oplus \frac{\TTT_\QQ}{(\epsilon_i^{a_i})} \rightarrow \M_\QQ \rightarrow \mathcal{Z} \rightarrow  0 $. Here, the $\TTT_\QQ$-module $\mathcal{Z}$ is pseudo-null and the elements $\epsilon_i$ generate height one prime ideals in the regular local ring $\TTT_\QQ$. The non-negative integers $a_i$ are equal to zero for all but finitely many $i$'s. For all height one prime ideals $\p$ in $\TTT_\QQ$,
\begin{align} \label{lengths-second-case}
& \len_{\TTT_\p}(\mathcal{Y}_1)_\p+ \len_{\TTT_\p}(\M)_\p \geq \len_{\TTT_\p}(\mathcal{Y}_2)_\p. \\
\notag \implies & \len_{\TTT_\p}\left(\frac{\TTT_\p}{(\det(\omega_1) \cdot \prod \epsilon_i^{a_i})}\right)   \geq \len_{\TTT_\p}\left(\frac{\TTT_\p}{(\det(\omega_2))}\right)\\
\notag  \implies & \det(\omega_1)\prod \epsilon_i^{a_i}= z \det(\omega_2), \text{ for some element $z$ in $\TTT_\QQ$} ,
\\ \notag \implies & \varpi(\det(\omega_1))\prod \varpi(\epsilon_i)^{a_i}= \varpi(z) \varpi(\det(\omega_2)), \text{ and $\varpi(z) \neq 0$}. \\ \notag
\implies & \len_{\RRR_\q} \left(\frac{\RRR_\q}{(\varpi(\det(\omega_1)))}\right) + \len_{\RRR_\q} \left(\frac{\RRR_\q}{\left(\prod\varpi(\epsilon_i)^{a_i}\right)}\right) \geq \len_{\RRR_\q} \left(\frac{\RRR_\q}{(\varpi(\det(\omega_2)))}\right).
\end{align}
In this second case, by Remark \ref{dvr-localization}, we  have $\RRR_q \cong \RRR_\QQ \cong \frac{\TTT_\QQ}{(\zeta)}$. As a result,
\begin{align*}
\Tor_1^\TTT(\RRR,\M)_\q \cong \Tor_1^\TTT(\RRR,\M)_\QQ \cong \Tor_1^{\TTT_\QQ}\left(\frac{\TTT_\QQ}{(\zeta)},\M_\QQ\right) \cong \M_\QQ[\zeta]
\end{align*}
We also have the following commutative diagram:
{\small
\begin{align*}
\xymatrix{0 \ar[r]& \oplus \frac{\TTT_\QQ}{(\epsilon_i^{a_i})} \ar[d]_{\zeta}\ar[r]& \M_\QQ \ar[d]_{\zeta}\ar[r]& \mathcal{Z} \ar[d]_{\zeta}\ar[r]& 0 \\
0 \ar[r]& \oplus \frac{\TTT_\QQ}{(\epsilon_i^{a_i})} \ar[r]& \M_\QQ \ar[r]& \mathcal{Z} \ar[r]& 0
 }
\end{align*}
}
Using the hypothesis \ref{Hyp-1}, one can conclude that none of the elements $\epsilon_i$ generate $\ker(\varpi)$. So, $\zeta \neq \epsilon_i$.  The snake lemma then gives us the following exact sequence:
\begin{align}\label{ses-intermediate}
0 \rightarrow \underbrace{\M_\QQ [\zeta]}_{\Tor_1^\TTT(\RRR,\M)_\QQ} \rightarrow \mathcal{Z}[\zeta] \rightarrow \oplus  \frac{\RRR_\QQ}{\varpi(\epsilon_i)^{a_i}} \rightarrow \underbrace{\frac{\M_\QQ}{\zeta \cdot \M_\QQ}}_{\M \otimes \RRR_\QQ} \rightarrow \frac{\mathcal{Z}}{\zeta\cdot \mathcal{Z}} \rightarrow 0.
\end{align}
Length is additive in exact sequences. To show that (\ref{length-needo-to-prove}) holds in this second case, we will use equations (\ref{lengths-second-case}) and (\ref{ses-intermediate}). It only remains to show that $\len_{\RRR_\q} \left(\mathcal{Z}[\zeta]\right) = \len_{\RRR_\q} \left(\frac{\mathcal{Z}}{\zeta \cdot \mathcal{Z}}\right)$. Since the local ring $\TTT_\QQ$ is $2$-dimensional and since the $\TTT_\QQ$-module $\mathcal{Z}$ is pseudo-null, $\len_{\TTT_\QQ} \left(\mathcal{Z} \right) <\infty$. Also since the map $\TTT_\QQ \rightarrow \RRR_\q$ is surjective in this case, we have the following equalities:
$$\len_{\RRR_\q} \left(\mathcal{Z}[\zeta]\right) = \len_{\TTT_\QQ} \left(\mathcal{Z}[\zeta]\right), \qquad \len_{\RRR_\q} \left(\frac{\mathcal{Z}}{\zeta \cdot \mathcal{Z}}\right) = \len_{\TTT_\QQ} \left(\frac{\mathcal{Z}}{\zeta \cdot \mathcal{Z}}\right).  $$
The proposition now follows from these observations because we have the exact sequence $0 \rightarrow \mathcal{Z}[\zeta] \rightarrow \mathcal{Z} \xrightarrow {\zeta} \mathcal{Z} \rightarrow \frac{\mathcal{Z}}{\zeta \cdot \mathcal{Z}}\rightarrow 0$ of $\TTT_\QQ$-modules.
\end{proof}

It will be interesting to consider,  whenever  $\Div\left( \mathcal{Y}_1 \right)  > \Div\left(\mathcal{Y}_2\right)- \Div\left(\mathcal{M}\right)$ in the divisor group of $\TTT$, whether we have the following inequality in the divisor group of $\RRR$:
\begin{align*}
\Div\left(\mathcal{Y}_1 \otimes_\varpi \RRR\right)   \stackrel{?}{>} \Div\left(\mathcal{Y}_2 \otimes_\varpi \RRR\right) + \Div \left( \Tor_{1}^{\TTT}\left( \RRR , \mathcal{M} \right) \right) - \Div\left(\mathcal{M}\otimes_\varpi \RRR\right).
\end{align*}
We would require this stronger result for Theorem \ref{specialization-result}. One can modify the proof of Proposition \ref{specialization} to obtain this stronger result if the ideal generated by the height one prime ideals $\ker(\varpi)$ and $\p$ is of height $2$ inside $\TTT$, for all height one prime ideals $\p$ in the support of $\Y_1$, $\Y_2$ or $\M$. In a UFD, the sum of two height $1$ prime ideals generates an ideal of height $2$. However, as Example \ref{second-non-regular-example} shows, this is not generally true. It will be helpful to discuss this example that illustrates the pathologies which we wish to avoid.

\begin{example}\label{second-non-regular-example}
Let $\TTT = \Z_p[[u]][\vartheta]$ be the ring discussed in Example \ref{first-non-regular-example}. Recall that $\TTT$ is not a UFD and that $\vartheta^2=u(u-p)$. Let us identify the completed tensor product $\TTT \hotimes \TTT$ with $\Z_p[[u_1,u_2]][\vartheta_1,\vartheta_2]$ such that $\vartheta_1^2 = u_1(u_1-p)$ and $\vartheta_2^2=u_2(u_2-p)$. The ring $\TTT \hotimes \TTT$ has Krull dimension $3$ and is not a UFD. The maximal ideal of $\TTT \hotimes \TTT$ equals $(\vartheta_1,\vartheta_2,u_1,u_2,p)$. We keep the following picture in mind that describes how the height one prime ideals $(u_1-u_2)$ and $(u_1+u_2-p)$ of $\Z_p[[u_1,u_2]]$ split in $\TTT \hotimes \TTT$:
\begin{center}
\begin{tikzpicture}[node distance = 1.5cm, auto]
      \node (Lambda) {$\Z_p[[u_1,u_2]]$};
      \node (R) [above of=Lambda] {$\TTT \hotimes \TTT$};

\node (u) [right of=Lambda, node distance = 4 cm] {$(u_1-u_2)$};
\node (p1) [above of = u, left of =u] {$(\vartheta_1 -\vartheta_2,u_1-u_2)$};
\node (p2) [right of =p1, node distance = 3.5cm] {$(\vartheta_1 +\vartheta_2,u_1-u_2)$};

\node (u-p) [right of=u, node distance = 7.5 cm] {$(u_1+u_2-p)$};
\node (q1) [above of = u-p, left of =u-p] {$(\vartheta_1 -\vartheta_2,u_1+u_2-p)$};
\node (q2) [right of =q1, node distance = 4.5cm] {$(\vartheta_1 +\vartheta_2,u_1+u_2-p)$};
\draw[-] (R) to node  {} (Lambda);
\draw[-] (u) to node  {} (p1);
\draw[-] (u) to node  {} (p2);
\draw[-] (u-p) to node  {} (q1);
\draw[-] (u-p) to node  {} (q2);
\end{tikzpicture}
\end{center}
Let us label these height one prime ideals in $\TTT \hotimes \TTT$.
\begin{align*}
& \p_1 := (\vartheta_1 -\vartheta_2,u_1-u_2), \quad && \p_2 :=(\vartheta_1 +\vartheta_2,u_1-u_2), \\ & \p_3 :=(\vartheta_1 -\vartheta_2,u_1+u_2-p), \quad && \p_4 :=(\vartheta_1 +\vartheta_2,u_1+u_2-p).
\end{align*}
We have the following inequality in the divisor group of $\TTT \hotimes \TTT$:
\begin{align}\label{inequality-second-eg}
 \Div(\vartheta_1 + \vartheta_2) = 1 \cdot \p_2 + 1 \cdot \p_4 >  \Div \left(\frac{\TTT \hotimes \TTT}{\p_2} \right) = 1 \cdot \p_2 .
\end{align}

Consider the map $\varpi : \TTT \hotimes \TTT \rightarrow \TTT $ defined below.
\begin{align*}
\varpi : \TTT \hotimes \TTT  & \rightarrow \TTT, \\
\varpi(\vartheta_1) = \vartheta, \ \varpi(\vartheta_2) = \vartheta, \ & \varpi(u_1) = u, \ \varpi(u_2) = u.
\end{align*}

The kernel of $\varpi$ is $\p_1$. Let us make a few observations. First note that the $\TTT \hotimes \TTT$-module $\frac{\TTT \hotimes \TTT}{\p_2}$ satisfies \ref{No-PN}. Now, we claim that the $\TTT \hotimes \TTT$-module $\frac{\TTT \hotimes \TTT}{\p_2}$ satisfies \ref{Fin-Proj}. To see this, consider the ideal $\p_1 + \p_2$ in $\TTT \hotimes \TTT$ which equals the ideal $(\vartheta_1, \vartheta_2, u_1 - u_2)$. The minimal primes containing the ideal $\p_1 + \p_2$ are $(\vartheta_1,\vartheta_2,u_1,u_2)$ and $(\vartheta_1,\vartheta_2,u_1-p,u_2-p)$, both of which have height $2$. Thus, the height of the ideal $\p_1 + \p_2$ equals $2$ in $\TTT \hotimes \TTT$. We have the following equality:
$$(\vartheta_1 - \vartheta_2)(\vartheta_1 + \vartheta_2) = (u_1-u_2)(u_1 + u_2 - p).$$
Let $\QQ$ be a height two prime ideal containing $\p_1 + \p_2$. In particular, as we observed earlier, it equals $(\vartheta_1,\vartheta_2,u_1,u_2)$ or $(\vartheta_1,\vartheta_2,u_1-p,u_2-p)$. The element $u_1 + u_2 - p$ does not belong to $\QQ$ (otherwise $\QQ$ would have height $3$). So, $u_1 + u_2 -p$ is a unit in $(\TTT \hotimes \TTT)_\QQ$. In $(\TTT \hotimes \TTT)_\QQ$, the height one prime ideal generated by $\p_2$ must thus be principal (in fact, it can be generated by the element $\vartheta_1 + \vartheta_2$). Our observations indicate that the projective dimension of the $(\TTT \hotimes \TTT)_\QQ$-module  $\left(\frac{\TTT \hotimes \TTT}{\p_2}\right)_\QQ$  equals $1$. \\
Our observations indicate that, though we started with a strict inequality in equation (\ref{inequality-second-eg}), we obtain the following equality in the divisor group of $\TTT$:
\begin{align*}
\Div\big(\varpi(\vartheta_1+\vartheta_2)\big) = \Div(\vartheta) = \Div\bigg(\frac{\TTT \hotimes \TTT}{\p_2} \otimes_\varpi \TTT \bigg).
\end{align*}
Now note that the minimal prime containing the ideal $\p_1 + \p_4$ in $\TTT \hotimes \TTT$ equals the maximal ideal $(\vartheta_1,\vartheta_2,u_1,u_2,p)$ in $\TTT \hotimes \TTT$. This implies that the height of $\p_1 + \p_4$ equals $3$~in~$\TTT \hotimes \TTT$. This can only happen if the height one prime ideals in the set $\{\p_1,\p_4\}$ each generate an element of infinite order in the divisor class group of $\TTT \hotimes \TTT$. Similarly, the height one prime ideals in the set $\{\p_2,\p_3\}$ also each generate an element of infinite order in the divisor class group of $\TTT \hotimes \TTT$. This is precisely the pathology that we wish to avoid.
\end{example}

\begin{proposition} \label{specialization-strict}
In addition to the hypotheses \ref{Hyp-1}, \ref{Hyp-2} and \ref{Hyp-3} given in Proposition \ref{specialization}, suppose also that both $\Div(\Y_1)$ and $\Div(\Y_2)-\Div(\M)$ generate torsion elements in the divisor group of $\TTT$.  Fix the symbol $\lhd \rhd$ to denote either ``$>$'' or ``$<$''.

Suppose we have the following equality of divisors in $\TTT$:
\begin{align*}
\Div(\Y_1) \lhd \rhd \Div(\Y_2) - \Div\left(\mathcal{M}\right).
\end{align*}
Then, we have the following equality of divisors in $\RRR$:
\begin{align*}
\Div\left(\mathcal{Y}_1 \otimes_\varpi \RRR\right)   \lhd \rhd \Div\left(\mathcal{Y}_2 \otimes_\varpi \RRR\right) + \Div \left( \Tor_{1}^{\TTT}\left( \RRR , \mathcal{M} \right) \right) - \Div\left(\mathcal{M}\otimes_\varpi \RRR\right).
\end{align*}
\end{proposition}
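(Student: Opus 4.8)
The plan is to reduce, by Proposition \ref{specialization}, to producing a single height one prime of $\RRR$ at which the divisor inequality downstairs is strict, and then to locate a suitable height two prime of $\TTT$ lying over $\ker(\varpi)$ through which strictness descends. Fix $\lhd\rhd$ to mean ``$>$'' (the ``$<$'' case is entirely analogous). Set
\[
E \ := \ \Div(\Y_1) - \Div(\Y_2) + \Div(\M),
\]
an element of the divisor group of $\TTT$. The hypothesis $\Div(\Y_1) > \Div(\Y_2) - \Div(\M)$ says that $E$ is effective and nonzero, while $E$ is a torsion element of the divisor class group of $\TTT$, being a difference of two torsion elements. Applying Proposition \ref{specialization} with ``$\geq$'' in place of $\lhd\rhd$ gives
\[
\Div(\Y_1 \otimes_\varpi \RRR) \ \geq \ \Div(\Y_2 \otimes_\varpi \RRR) + \Div(\Tor_1^{\TTT}(\RRR,\M)) - \Div(\M \otimes_\varpi \RRR),
\]
so it will suffice to find one height one prime $\q$ of $\RRR$ at which the coefficient of the difference of the two sides is strictly positive.

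The heart of the matter --- and the place where the torsion hypothesis is genuinely used, as Example \ref{second-non-regular-example} shows --- is a commutative-algebra step that locates such a $\q$. Since $\TTT$ is a Noetherian integrally closed domain it is a Krull domain, so some positive multiple $NE$ is a principal divisor; writing $NE = \Div(r/s)$ with $r,s \in \TTT \setminus \{0\}$ and using $\TTT = \bigcap_{\mathrm{ht}\,\p = 1} \TTT_\p$ one gets $s \mid r$, hence $NE = \Div(\TTT/(g))$ for some nonzero $g \in \TTT$, and the support of $E$ is exactly the finite set of minimal primes over $(g)$, each of height one. By \ref{Hyp-1} none of these is the height one prime $\ker(\varpi)$, so $g \notin \ker(\varpi)$ and its image $\bar g$ in $\TTT/\ker(\varpi)$ is nonzero; by Krull's principal ideal theorem, any minimal prime $\overline{\QQ}$ over $(\bar g)$ in the domain $\TTT/\ker(\varpi)$ has height one, and its preimage $\QQ$ in $\TTT$ is then a height two prime containing $\ker(\varpi)$. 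Since $g \in \QQ$, the prime $\QQ$ contains some member $\p_0$ of the support of $E$, with $\p_0 \subseteq \QQ$ and coefficient $m_0 \geq 1$ in $E$. Finally I would pick a height one prime $\q$ of $\RRR$ lying over the height one prime $\QQ/\ker(\varpi)$ of $\TTT/\ker(\varpi)$.

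It then remains to re-run the localization computation in the proof of Proposition \ref{specialization} at this chosen $\QQ$ and $\q$, now tracking the strict inequality at $\p_0 \subseteq \QQ$. As there, \ref{Hyp-2} and the Auslander--Buchsbaum formula over $\Z_p[[u_1,\dotsc,u_n]]_{\QQ_0}$ and over $\TTT_\QQ$ force $\pd_{\TTT_\QQ}(\Y_i)_\QQ \leq 1$, giving presentations $0 \to \TTT_\QQ^{n_i} \xrightarrow{\omega_i} \TTT_\QQ^{n_i} \to (\Y_i)_\QQ \to 0$ with $\det(\omega_i) \notin \ker(\varpi)$. If $\QQ$ does not lie in the support of $\M$, then $\det(\omega_1) = z\,\det(\omega_2)$ for some $z \in \TTT_\QQ$ with $v_{\p_0}(z) = m_0 \geq 1$, so $z$ is a non-unit of $\TTT_\QQ$, $\varpi(z)$ is a nonzero non-unit of the discrete valuation ring $\RRR_\q$, and $\len_{\RRR_\q}(\Y_1 \otimes_\varpi \RRR_\q) > \len_{\RRR_\q}(\Y_2 \otimes_\varpi \RRR_\q)$ while the $\Tor$ and $\M \otimes_\varpi \RRR$ terms vanish at $\q$; thus the difference of the two sides has positive coefficient at $\q$. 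If $\QQ$ does lie in the support of $\M$, then \ref{Hyp-3} applies: $\TTT_\QQ$ is a two-dimensional regular local ring, $\ker(\varpi)\TTT_\QQ = (\zeta)$, and by Remark \ref{dvr-localization} $\RRR_\q \cong \TTT_\QQ/(\zeta)$ with $\varpi$ the quotient map; writing $\M_\QQ$ as an extension of a pseudo-null $\mathcal{Z}$ by $\bigoplus_j \TTT_\QQ/(\epsilon_j^{a_j})$ as in Proposition \ref{specialization}, one obtains $\det(\omega_1)\prod_j \epsilon_j^{a_j} = z\,\det(\omega_2)$ with $v_{\p_0}(z) = m_0 \geq 1$, again making $\varpi(z)$ a nonzero non-unit of $\RRR_\q$; combining this with the length identity $\sum_j a_j \len_{\RRR_\q}(\RRR_\q/\varpi(\epsilon_j)) = \len_{\RRR_\q}(\M \otimes_\varpi \RRR_\q) - \len_{\RRR_\q}(\Tor_1^{\TTT}(\RRR,\M)_\q)$ read off from the snake-lemma sequence (\ref{ses-intermediate}) shows that the difference of the two sides again has positive coefficient at $\q$. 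In either case strict inequality holds downstairs, as claimed. The step I expect to be the main obstacle is the commutative-algebra argument of the middle paragraph: without the torsion hypothesis one cannot ensure the existence of a height one prime in the support of $E$ whose sum with $\ker(\varpi)$ has height two, and then the strict inequality can genuinely be destroyed upon applying $\otimes_\varpi \RRR$.
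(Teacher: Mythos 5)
Your argument is correct, and it reaches the conclusion by a route that differs from the paper's in its final step. Both proofs turn on the same use of the torsion hypothesis: some positive multiple of the relevant divisor is the divisor of a non-unit of $\TTT$ lying outside $\ker(\varpi)$, and since $\varpi$ is a local homomorphism that element's image in $\RRR$ is a nonzero non-unit. The paper, however, applies this to $\Div(\Y_1)$ and $\Div(\Y_2)-\Div(\M)$ \emph{separately}, producing $\vartheta_1,\vartheta_2\in\TTT$ with $n\cdot\Div(\Y_1)=\Div(\vartheta_1)$ and $n\cdot(\Div(\Y_2)-\Div(\M))=\Div(\vartheta_2)$; it then invokes Proposition \ref{specialization} (in the equality form of Remark \ref{equality-specialization-div}, applied to $n$-fold direct sums $\mathfrak{Y}_i$ against the cyclic modules $\TTT/(\vartheta_i)$) to transport each side to $\RRR$, and gets strictness globally from $\vartheta_1=z\vartheta_2$ with $z$ a non-unit and $\varpi(z)$ a nonzero non-unit. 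That keeps Proposition \ref{specialization} as a black box and never requires locating a witness prime. You instead apply the torsion hypothesis only to the difference $E$, obtain the weak inequality from Proposition \ref{specialization}, and then manufacture a single height one prime $\q$ of $\RRR$ (via a height two prime $\QQ\supseteq\ker(\varpi)+(g)$ and a support prime $\p_0\subseteq\QQ$ of $E$) at which you re-run the internal localization computation of Proposition \ref{specialization} to verify strictness. Your version buys a formally weaker hypothesis (only $\Div(\Y_1)-\Div(\Y_2)+\Div(\M)$ need be torsion in the class group) at the cost of reopening the proof of Proposition \ref{specialization}; the paper's version is shorter and stays at the level of divisors of elements. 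Two small points in your middle paragraph: you should note that $g$ (hence $\bar g$) is a non-unit — immediate since $E\neq 0$ and $\TTT$ is local — as otherwise there is no minimal prime over $(\bar g)$ to which Krull's principal ideal theorem applies; and in the case $\QQ\in\mathrm{Supp}(\M)$ your witness computation does rely on \ref{Hyp-3} holding at $\QQ$, which it does since \ref{Hyp-3} is assumed for all height two primes over $\ker(\varpi)$ in the support of $\M$.
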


\begin{proof}
Let the symbol $\lhd \rhd$ denote $>$. The proof proceeds similarly when the symbol $\lhd \rhd$ denotes $<$. There exist a positive integer $n$ and two non-zero elements $\vartheta_1$, $\vartheta_2$ in $\TTT$ such that we have the following equality in the divisor group of $\TTT$:
\begin{align*}
n \cdot \Div(\Y_1) =\Div(\vartheta_1), \qquad n \cdot \left(\Div(\Y_2) -\Div(\M) \right) = \Div(\vartheta_2).
\end{align*}
Let $\mathfrak{Y}_1 = \oplus \Y_1$,  $\mathfrak{Y}_2 = \oplus \Y_2$, and $\mathfrak{M} = \oplus \M$ denote direct sums of $n$ copies of $\Y_1$, $\Y_2$ and $\M$ respectively. We have the following equality in the divisor group of $\TTT$:
\begin{align} \label{strict-inequality-divisors-prop}
 \Div(\mathfrak{Y}_1) =\Div(\vartheta_1), \qquad  \Div(\mathfrak{Y}_2) -\Div(\mathfrak{M})  = \Div(\vartheta_2).
\end{align}
Note that $\TTT$-modules $\frac{\TTT}{(\vartheta_1)}$ and  $\frac{\TTT}{(\vartheta_2)}$ have no non-zero pseudo-null submodules and have finite projective dimension. Proposition \ref{specialization}, applied to the equality of divisors appearing in (\ref{strict-inequality-divisors-prop}), gives us the following equality in the divisor group of $\RRR$:
\begin{align*}
\Div(\mathfrak{Y}_1 \otimes \RRR) = \Div(\varpi(\vartheta_1)), \quad \Div(\mathfrak{Y}_2 \otimes \RRR) + \Div \left( \Tor_{1}^{\TTT}\left( \RRR , \mathfrak{M} \right) \right) - \Div(\mathfrak{M} \otimes \RRR) = \Div(\varpi(\vartheta_2))
\end{align*}
Since $\Div(\vartheta_1) > \Div(\vartheta_2)$ and $\TTT$ is integrally closed, there exists an element $z \in \TTT$, not a unit, such that $\vartheta_1 = z \cdot \vartheta_2$. Since $\varpi:\TTT \rightarrow \RRR$ is a map of local rings, we get that $\varpi(z)$ is not a unit in $\RRR$. Combining these observations, we have $\varpi(\vartheta_1) = \varpi(z) \cdot \varpi(\vartheta_2)$ and that $\varpi(z)$ is not a unit in $\RRR$. Hence, we obtain the following inequality in the divisor group of $\RRR$:
\begin{align*}
\Div(\varpi(\vartheta_1))   &>  \Div(\varpi(\vartheta_2)) \\ \implies
n \cdot \Div(\Y_1 \otimes \RRR) &> n \cdot \bigg(\Div(\Y_2 \otimes \RRR) + \Div \left( \Tor_{1}^{\TTT}\left( \RRR , \mathcal{M} \right) \right) - \Div(\M \otimes \RRR)  \bigg).
\end{align*}
This completes the proof of the Proposition since $n$ is positive.
\end{proof}

\subsection{Projective Dimensions}

Let $\G$ be a profinite group. Let $\M$ be a finitely generated free $\RRR$-module on which there is a $\TTT$-linear continuous $\G$-action.  Let $\N =\M \otimes_\TTT \hat{\TTT}$.  Since $\N^\vee$ is isomorphic to $\Hom_\TTT(\M,\TTT)$ as an $\TTT$-module, $\N^\vee$ is also a free $\TTT$-module. Let $\SSS$ be a multiplicative set in $\TTT$. Let $\W_0=H^0(\G,\N)^\vee$,  $\W_1=H^1(\G,\N)^\vee$ and $\W_2 = H^2(\G,\N)^\vee$. We would like to thank Jan Nekov\'a\v{r} for indicating to us on how these projective dimensions can be examined.

\begin{proposition} \label{H0criterion}
Suppose the following conditions hold:
\begin{enumerate}
\item The $p$-cohomological dimension $cd_p(\G)$ of $\G$ is less than or equal to $2$.
\item $\SSS^{-1} \W_2 $ has finite projective dimension.
\end{enumerate}
 Then, $\SSS^{-1} \W_0$ has  finite projective dimension over $\SSS^{-1} \TTT$ if and only if $\SSS^{-1}\W_1$ has finite projective dimension over $\SSS^{-1}\TTT$.
\end{proposition}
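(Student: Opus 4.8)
The plan is to use the standard device of a free resolution built from group cohomology. Since $cd_p(\G) \le 2$, the complex computing $H^*(\G, \N)$ can be represented (after a suitable choice) by a bounded complex, and dualizing we get a bounded complex of free $\TTT$-modules whose cohomology computes $\W_0$, $\W_1$, $\W_2$. More precisely, I would invoke the fact that $R\Gamma(\G, \N)$ is represented by a perfect complex of $\TTT$-modules concentrated in degrees $[0,2]$ — this uses that $\N^\vee$ is a finitely generated free $\TTT$-module and $cd_p(\G) \le 2$, together with the standard argument (e.g. via continuous cochains and a limit argument, or via Nekov\'a\v{r}'s formalism of Selmer complexes cited in the paper) that yields a complex $C^\bullet : 0 \to C^0 \to C^1 \to C^2 \to 0$ of finitely generated free $\TTT$-modules with $H^i(C^\bullet) = H^i(\G,\N)$. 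Taking Pontryagin duals, we obtain a complex $P_\bullet : 0 \to P_2 \to P_1 \to P_0 \to 0$ of finitely generated free $\TTT$-modules (in homological degrees $0,1,2$) with $H_i(P_\bullet) = \W_i$. Localizing at $\SSS$ preserves freeness and exactness, so $\SSS^{-1}P_\bullet$ is a bounded complex of free $\SSS^{-1}\TTT$-modules with homology $\SSS^{-1}\W_0, \SSS^{-1}\W_1, \SSS^{-1}\W_2$.

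The core step is then a purely homological-algebra lemma: if $P_\bullet$ is a bounded complex of projective (here free) modules over a Noetherian ring $A = \SSS^{-1}\TTT$ concentrated in degrees $0,1,2$, and two of the three homology modules $H_0(P_\bullet), H_1(P_\bullet), H_2(P_\bullet)$ have finite projective dimension, then so does the third. I would prove this by brute-force dévissage on the two ``stupid'' truncations of $P_\bullet$. Write $Z_1 = \ker(P_1 \to P_0)$ and consider the exact sequences
\[
0 \to Z_1 \to P_1 \to P_0 \to \W_0 \to 0, \qquad 0 \to \W_2 \to P_2 \to Z_1 \to \W_1 \to 0.
\]
From the first sequence, breaking it into two short exact sequences via the image $B_0 = \mathrm{im}(P_1 \to P_0)$, finiteness of $\pd \W_0$ forces $\pd B_0 < \infty$, hence $\pd Z_1 < \infty$ (since $P_1$ is free). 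Now feed $\pd Z_1 < \infty$ and one of $\pd \W_1 < \infty$ or $\pd \W_2 < \infty$ into the second four-term exact sequence: splitting it at $B_1 = \mathrm{im}(P_2 \to Z_1)$ gives $0 \to \W_2 \to P_2 \to B_1 \to 0$ and $0 \to B_1 \to Z_1 \to \W_1 \to 0$, and finite projective dimension of any two of $\{\W_1, \W_2, Z_1\}$ (using $P_2$ free) yields finite projective dimension of the third. This gives the equivalence claimed, in fact a slightly stronger symmetric statement among all three $\W_i$.

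**Expected main obstacle.** The one point that needs care — and which I would state precisely rather than gloss — is the construction of the \emph{bounded} perfect complex $C^\bullet$ of free $\TTT$-modules representing continuous cohomology, with the correct amplitude $[0,2]$. The naive complex of continuous cochains is not finitely generated; one needs the standard reduction (replace $\G$ by a cofinal system of open subgroups, or use that $\N$ is cofinitely generated and $cd_p(\G) \le 2$ to truncate) to obtain a quasi-isomorphic perfect complex. This is exactly the input Nekov\'a\v{r}'s machinery (already cited in the paper) supplies, so I would cite it; the rest is the formal dévissage above, which is routine. Localization at $\SSS$ and the additivity of projective dimension in short exact sequences are then immediate.
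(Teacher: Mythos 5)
Your core dévissage is exactly the paper's: both arguments reduce the statement to the additivity of finite projective dimension along a bounded complex of projective $\SSS^{-1}\TTT$-modules whose homology in degrees $0,1,2$ is $\SSS^{-1}\W_0,\SSS^{-1}\W_1,\SSS^{-1}\W_2$, and your splitting into short exact sequences at $Z_1$ and $B_1$ is the same bookkeeping the paper performs. Where you differ is in how that bounded complex is produced, and here your route overclaims. You ask for a \emph{perfect} complex (finitely generated free terms) representing $R\Gamma(\G,\N)$; for a general profinite $\G$ with $cd_p(\G)\le 2$ and no further finiteness hypotheses --- which is the generality of the proposition --- the groups $H^i(\G,\N)$ need not be cofinitely generated over $\TTT$, so no perfect representative exists, and Nekov\'a\v{r}'s machinery does not apply as stated. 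The paper sidesteps this entirely: it uses $H^i(\G,\N)^\vee \cong \Tor_i^{\TTT[[\G]]}(\TTT,\N^\vee)$ together with the augmentation sequence $0\to\I_\G\to\TTT[[\G]]\to\TTT\to 0$ and the fact that $\pd_{\TTT[[\G]]}\TTT = cd_p(\G)\le 2$, so that a length-two projective resolution of $\I_\G$ tensored with the free module $\N^\vee$ yields a bounded complex whose terms are projective (but possibly infinitely generated) $\TTT$-modules. Projectivity, not finite generation, is all your dévissage actually uses, so if you replace ``perfect complex'' by ``bounded complex of projective $\TTT$-modules'' and construct it via the completed group ring as above, your argument closes; as written, the construction of the complex is the one step that does not go through in the stated generality.
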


\begin{proof}
Proposition 5.2.7 and Corollary 5.2.9 in \cite{neukirch2008cohomology} give us the following isomorphisms for all $i \geq 0$:
\begin{align*}
H^i(\G,\N)^\vee \cong \Tor_i^{\TTT_1[[\G]]} \left( \TTT, \N^\vee \right).
\end{align*} Let $\I_\G$ denote the augmentation ideal in $\TTT[[\G]]$. We have the short exact sequence of $\TTT[[\G]]$-modules
\begin{align}\label{Aug-R}
0 \rightarrow \I_\G \rightarrow \TTT[[\G]] \rightarrow \TTT \rightarrow 0.
\end{align}
Tensoring this short exact sequence by $\N^\vee$, we obtain an exact sequence
{\small \begin{align} \label{h0-h1-proj}
\underbrace{0}_{ \Tor_1^{\TTT[[\G]]} \left(\TTT[[\G]], \N^\vee \right)} & \rightarrow \underbrace{\Tor_1^{\TTT[[\G]]} \left(\TTT, \N^\vee \right)}_{H^1(\G,\N)^\vee} \rightarrow \\ & \notag \rightarrow \underbrace{\Tor_0^{\TTT[[\G]]} \left(\I_\G, \N^\vee \right)}_{\I_G \otimes_{\TTT[[\G]]}\N^\vee}\rightarrow  \underbrace{\Tor_{0}^{\TTT[[\G]]} \left(\TTT[[\G]],\N^\vee \right)}_{\N^\vee} \rightarrow \\ & \notag \rightarrow   \underbrace{\Tor_{0}^{\TTT[[\G]]} \left(\TTT,\N^\vee \right)}_{\cong H^0(\G,\N)^\vee}\rightarrow 0,
\end{align}
}
and for each $i \geq 1$, an isomorphism
\begin{align*}
\underbrace{\Tor^{\TTT[[\G]]}_{i+1}\left(\TTT,\N^\vee\right)}_{H^{i+1}(\G,\N)^\vee} \cong \Tor_i^{\TTT[[\G]]} \left( \I_\G,\N^\vee \right).
\end{align*}
By Corollary 5.2.13 in \cite{neukirch2008cohomology}, the projective dimension of $\TTT$ as an $\TTT[[\G]]$-module is equal to $\text{cd}_p(\G)$ (which is assumed to be less than or equal to $2$). By (\ref{Aug-R}), the projective dimension of the augmentation ideal $\I_\G$ over the ring $\TTT[[\G]]$ is less than or equal to $1$. Consider a projective resolution $0 \rightarrow \mathcal{P}_1 \rightarrow \mathcal{P}_0 \rightarrow \I_\G \rightarrow 0$ of $\TTT[[\G]]$-modules for the augmentation ideal $\I_\G$. Tensoring with $\N^\vee$, we obtain the following exact sequence:
\begin{align*}
0 \rightarrow \Tor_1^{\TTT[[\G]]} \left( \I_\G ,\N^\vee \right) \rightarrow \mathcal{P}_1 \otimes_{\TTT[[\G]]} \N^\vee \rightarrow \mathcal{P}_0 \otimes_{\TTT[[\G]]} \N^\vee \rightarrow  \I_\G \otimes_{\TTT[[\G]]} \N^\vee \rightarrow 0.
\end{align*}
The $\TTT$-modules $\mathcal{P}_1 \otimes_{\TTT[[\G]]} \N^\vee$ and $\mathcal{P}_0 \otimes_{\TTT[[\G]]} \N^\vee$ are direct summands of free $\TTT$-modules (since $\mathcal{P}_1$ and $\mathcal{P}_0$ are direct summands of free $\TTT[[\G]]$-modules) and hence they are projective $\TTT$-modules. Combining this observation with the fact that $\SSS^{-1}{\W_2}$ (which equals $\SSS^{-1}\Tor_1^{\TTT[[\G]]}\left(\I_\G,\N^\vee \right)$) has finite projective dimension over $\SSS^{-1}\TTT$, we can conclude that $\SSS^{-1}(\I_\G \otimes_{\TTT[[\G]]} \N^\vee)$ also has finite projective dimension over $\SSS^{-1}\TTT$. These observations along with (\ref{h0-h1-proj}) then complete the proof of the Proposition.
\end{proof}

\section{Control theorems} \label{control-theorems-section}

Let $\RRR_1$ and $\RRR_2$ be two integrally closed local domains that are finite extensions of $\Z_p[[u_1,\dotsc, u_n]]$ and $\Z_p[[v_1,\dotsc, v_m]]$ respectively. Consider a ring map $\RRR_1 \rightarrow \RRR_2$  such that $\RRR_2$ is finitely generated as an $\RRR_1$-module. Note that $\RRR_1$ and $\RRR_2$ are profinite rings. Let $\G$ be a profinite group. Let $\M$ be a finitely generated $\RRR_1$-module with a continuous $\RRR_1$-linear (left) action of $\G$.  In this way, $\M$ obtains an action of the completed group ring $\RRR_1[[\G]]$. Let us define the following modules:
$$\D_{\RRR_1} :=\M \otimes_{\RRR_1} \hat{\RRR_1} , \quad \D_{\RRR_2}:=  \M \otimes_{\RRR_1}  \hat{\RRR_2}, \quad  \C_{\RRR_1}:= \D_{\RRR_1}^\vee, \quad \C_{\RRR_2}:= \D_{\RRR_2}^\vee. $$

We also have the following isomorphisms:
\begin{align*}
& \D_{\RRR_2} \cong \M \otimes_{\RRR_1} \RRR_2 \otimes_{\RRR_2} \hat{\RRR_2},  \\ & \C_{\RRR_1} \cong \Hom_{\RRR_1}\left(\M , \RRR_1\right), \quad \C_{\RRR_2} \cong \Hom_{\RRR_1} \left(\M,\RRR_2\right) \cong \Hom_{\RRR_2} \left(\M \otimes_{\RRR_1} \RRR_2,\RRR_2\right).
\end{align*}
The last two isomorphisms are obtained by applying Hom-Tensor adjunction. Note that if $\M$ is a free $\RRR_1$-module, then so is $\C_{\RRR_1}$. Pontryagin duality (see Theorem 2.6.9 in \cite{neukirch2008cohomology}) also gives us the isomorphism $H^0(\G,\D_{\RRR_1})^\vee \cong H_0(\G,\C_{\RRR_1})$. Here, the $\RRR_1$-module $H_0(\G,\C_{\RRR_1})$ denotes the maximal quotient of $\C_{\RRR_1}$ on which $\G$ acts trivially. We now state the proposition which we will use to obtain our control theorems. In this proposition, we shall also frequently consider $\RRR_1$ and $\RRR_2$ as $\G$-modules with the trivial $\G$-action. \\

We will need to consider the notion of pseudo-compact modules as defined in \cite{brumer1966pseudocompact}. Consider the following categories:
\begin{align*}
& \mathrm{CAT}_{\RRR_1[[\G]]}: \text{category of pseudo-compact $\RRR_1[[\G]]$-modules}, \
& \mathrm{CAT}_{\RRR_1}: \text{category of pseudo-compact $\RRR_1$-modules}, \\
& \mathrm{CAT}_{\RRR_2[[\G]]}: \text{category of pseudo-compact $\RRR_2[[\G]]$-modules}, \   &  \mathrm{CAT}_{\RRR_2}: \text{category of pseudo-compact $\RRR_2$-modules}.
\end{align*}
Let $i \in \{1,2\}$. The completed tensor product $\text{---}\otimes_{\RRR_i[[\G]]} \text{---}$ over the group ring $\RRR_i[[\G]]$ and its higher derived functors given below are described in Chapter V, Section 2 in \cite{neukirch2008cohomology}. Consider the following commutative diagram and the following functors:
\begin{align*}
\xymatrix{
\mathrm{CAT}_{\RRR_1[[\G]]} \ar[r]^{\mathfrak{F}_{1}} \ar[d]_{\RRR_2[[\G]] \otimes_{\RRR_1[[\G]]} (\text{---})} & \mathrm{CAT}_{\RRR_1} \ar[d]^{\RRR_2 \otimes_{\RRR_1} (\text{---})}\\
\mathrm{CAT}_{\RRR_2[[\G]]} \ar[r]^{\mathfrak{F}_2} & \mathrm{CAT}_{\RRR_2}
} \qquad & \mathfrak{F}_1(\text{---}) = \RRR_1 \otimes_{\RRR_1[[\G]]} \text{---}, \quad \mathfrak{F}_2(\text{---}) = \RRR_2 \otimes_{\RRR_2[[\G]]} \text{---}.
\end{align*}

Note that there is a forgetful functor from $\mathrm{CAT}_{\RRR_i}$ to the category of $\RRR_i$-modules. The forgetful functor is an exact functor. Suppose $\M'$ and $\M''$ are finitely generated $\RRR_i$-modules. The tensor product $\M' \otimes_{\RRR_i} \M''$ in $\mathrm{CAT}_{\RRR_i}$,  in fact, coincides with the usual tensor product in the category of $\RRR_i$-modules (via the forgetful functor) (see Proposition 5.5.3 in \cite{ribes2000profinite}). Note that, for each positive integer $n$, the $\RRR_i$-module $\RRR_i^n$ is a projective object in $\mathrm{CAT}_{\RRR_i}$ and in the category of $\RRR_i$-modules. One can use this observation and the construction of the higher derived functors (for example, see Chapter 6 in \cite{ribes2000profinite}) to make the following deduction: for each non-negative integer $j$, the higher derived functors $\Tor_{j}^{\RRR_i}\left(\M',\  \M'' \right)$ in $\mathrm{CAT}_{\RRR_i}$, also coincides with the higher derived functors of the usual tensor product in the category of $\RRR_i$-modules (via the forgetful functor). We shall use this observation to view the $\RRR_i$-module $\Tor_{j}^{\RRR_i}\left(\M',\  \M'' \right)$ in both $\mathrm{CAT}_{\RRR_i}$ and in the category of $\RRR_i$-modules.

\begin{proposition}\label{control-theorem}
Let $\M$ be a free $\RRR_1$-module. We have the following exact sequence:
{\small \begin{align}
\Tor_{2}^{\RRR_1}\left(\RRR_2,\ H^0(\G,\D_{\RRR_1})^\vee \right) \rightarrow H^1(\G,\D_{\RRR_1})^\vee \otimes_{\RRR_1} \RRR_2 \rightarrow H^1(\G,\D_{\RRR_2})^\vee \rightarrow  \Tor^{\RRR_1}_1\left(\RRR_2,\ H^0(\G,\D_{\RRR_1})^\vee \right) \rightarrow 0.
\end{align}}
\end{proposition}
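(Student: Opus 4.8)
The plan is to realise both $H^1(\G,\D_{\RRR_1})^\vee\otimes_{\RRR_1}\RRR_2$ and $H^1(\G,\D_{\RRR_2})^\vee$ as low‑degree pieces of the hyperhomology of a single complex of flat $\RRR_1$‑modules, and then to extract the asserted four‑term exact sequence from the five‑term exact sequence of a universal coefficient spectral sequence.

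First I would recall, from Proposition 5.2.7 and Corollary 5.2.9 of \cite{neukirch2008cohomology}, the identifications $H^i(\G,\D_{\RRR_j})^\vee\cong\Tor_i^{\RRR_j[[\G]]}(\RRR_j,\C_{\RRR_j})$ for $j\in\{1,2\}$ and all $i\geq 0$, where $\RRR_j$ carries the trivial $\G$‑action. Since $\M$ is a finitely generated free $\RRR_1$‑module, $\C_{\RRR_1}=\Hom_{\RRR_1}(\M,\RRR_1)$ is a free $\RRR_1$‑module, the natural map gives an isomorphism $\C_{\RRR_1}\otimes_{\RRR_1}\RRR_2\xrightarrow{\ \sim\ }\C_{\RRR_2}$ of $\RRR_2[[\G]]$‑modules, and $\C_{\RRR_2}$ is in particular $\RRR_2$‑free. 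I would also use the standard fact that the homology of $\G$ with coefficients in the $\RRR_2[[\G]]$‑module $\C_{\RRR_2}$ may be computed over either completed group ring, so that $\Tor_i^{\RRR_1[[\G]]}(\RRR_1,\C_{\RRR_2})\cong\Tor_i^{\RRR_2[[\G]]}(\RRR_2,\C_{\RRR_2})\cong H^i(\G,\D_{\RRR_2})^\vee$.

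Next I would fix a resolution $\mathcal{P}_\bullet\twoheadrightarrow\RRR_1$ of $\RRR_1$ by projective objects of $\mathrm{CAT}_{\RRR_1[[\G]]}$, concentrated in non‑negative degrees, and set $\mathcal{K}_\bullet:=\mathcal{P}_\bullet\otimes_{\RRR_1[[\G]]}\C_{\RRR_1}$. As each $\mathcal{P}_i$ is a direct summand of a free $\RRR_1[[\G]]$‑module, each $\mathcal{K}_i$ is a direct summand of a free $\RRR_1$‑module, hence $\RRR_1$‑flat; and $H_i(\mathcal{K}_\bullet)=\Tor_i^{\RRR_1[[\G]]}(\RRR_1,\C_{\RRR_1})=H^i(\G,\D_{\RRR_1})^\vee$. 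Base changing along $\RRR_1\to\RRR_2$ gives
\begin{align*}
\mathcal{K}_\bullet\otimes_{\RRR_1}\RRR_2\ \cong\ \mathcal{P}_\bullet\otimes_{\RRR_1[[\G]]}\bigl(\C_{\RRR_1}\otimes_{\RRR_1}\RRR_2\bigr)\ \cong\ \mathcal{P}_\bullet\otimes_{\RRR_1[[\G]]}\C_{\RRR_2},
\end{align*}
whose homology is $\Tor_i^{\RRR_1[[\G]]}(\RRR_1,\C_{\RRR_2})=H^i(\G,\D_{\RRR_2})^\vee$. Since $\mathcal{K}_\bullet$ is a bounded‑below complex of flat $\RRR_1$‑modules, $\mathcal{K}_\bullet\otimes_{\RRR_1}\RRR_2$ represents $\RRR_2\otimes^{\mathbb{L}}_{\RRR_1}\mathcal{K}_\bullet$, and the universal coefficient (hyper‑$\Tor$) spectral sequence takes the form
\begin{align*}
E^2_{p,q}\ =\ \Tor_p^{\RRR_1}\bigl(\RRR_2,\ H^q(\G,\D_{\RRR_1})^\vee\bigr)\ \Longrightarrow\ H^{p+q}(\G,\D_{\RRR_2})^\vee .
\end{align*}
This is a first‑quadrant homological spectral sequence, so its five‑term exact sequence reads $H^2(\G,\D_{\RRR_2})^\vee\to E^2_{2,0}\to E^2_{0,1}\to H^1(\G,\D_{\RRR_2})^\vee\to E^2_{1,0}\to 0$; substituting $E^2_{2,0}=\Tor_2^{\RRR_1}(\RRR_2,H^0(\G,\D_{\RRR_1})^\vee)$, $E^2_{0,1}=H^1(\G,\D_{\RRR_1})^\vee\otimes_{\RRR_1}\RRR_2$, and $E^2_{1,0}=\Tor_1^{\RRR_1}(\RRR_2,H^0(\G,\D_{\RRR_1})^\vee)$, and discarding the leftmost arrow, yields precisely the exact sequence in the statement.

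I expect the main obstacle to be purely foundational: checking that this homological algebra is legitimate inside the pseudo‑compact categories $\mathrm{CAT}_{\RRR_i[[\G]]}$ — existence of projective resolutions there, agreement of the completed tensor product and its derived functors with the ordinary tensor product and ordinary $\Tor$ on the finitely generated modules that occur (both already recorded in the excerpt), and availability of the universal coefficient spectral sequence in this setting. The two remaining ingredients — the isomorphism $\C_{\RRR_1}\otimes_{\RRR_1}\RRR_2\cong\C_{\RRR_2}$, which relies on $\M$ being $\RRR_1$‑free, and the independence of group homology from the coefficient ring — are routine.
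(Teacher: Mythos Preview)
Your proof is correct and is essentially the same argument as the paper's, packaged slightly differently. Both arguments produce the base-change spectral sequence
\[
E^2_{p,q}=\Tor_p^{\RRR_1}\bigl(\RRR_2,\ \Tor_q^{\RRR_1[[\G]]}(\RRR_1,\C_{\RRR_1})\bigr)\ \Longrightarrow\ H^{p+q}(\G,\D_{\RRR_2})^\vee
\]
and read off its five-term exact sequence. The only difference is in how the abutment is identified with $H^*(\G,\D_{\RRR_2})^\vee$: you compute the abutment directly as $\Tor_*^{\RRR_1[[\G]]}(\RRR_1,\C_{\RRR_2})$ via the isomorphism $\C_{\RRR_1}\otimes_{\RRR_1}\RRR_2\cong\C_{\RRR_2}$ and then cite independence of group homology from the ground ring, whereas the paper realises the abutment as $\Tor_*^{\RRR_1[[\G]]}(\RRR_2,\C_{\RRR_1})$ and uses a second Grothendieck spectral sequence (which degenerates because $\C_{\RRR_1}$ is $\RRR_1$-free, via flat base change $\Tor_j^{\RRR_1[[\G]]}(\RRR_2[[\G]],\C_{\RRR_1})\cong\Tor_j^{\RRR_1}(\RRR_2,\C_{\RRR_1})=0$ for $j\geq 1$) to reach the same identification. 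Your route is marginally more direct; the paper's route makes the role of $\RRR_1$-freeness of $\C_{\RRR_1}$ more explicit. Substantively the two proofs coincide.
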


\begin{proof}
We begin with the following isomorphism:
\begin{align*}
\RRR_2 \otimes_{\RRR_1} \left(\RRR_1 \otimes_{\RRR_1[[\G]]} \C_{\RRR_1}\right) \cong \RRR_2 \otimes_{\RRR_1[[\G]]} \C_{\RRR_1}\cong \RRR_2 \otimes_{\RRR_2[[\G]]} \left(\RRR_2[[\G]] \otimes_{\RRR_1[[\G]]} \C_{\RRR_1}\right).
\end{align*}
Corollary 5.8.4 in \cite{weibel1995introduction} gives us the following spectral sequences:
\begin{align}\label{first-spectral}
\Tor_i^{\RRR_1}\left(\RRR_2,\Tor^{\RRR_1[[\G]]}_j\left(\RRR_1,\C_{\RRR_1} \right) \right)  \implies \Tor_{i+j}^{\RRR_1[[\G]]} \left(\RRR_2,\C_{\RRR_1}\right).
\end{align}
\begin{align} \label{second-spectral}
\Tor_i^{\RRR_2[[G]]}\left(\RRR_2,\Tor_j^{\RRR_1[[\G]]}\left(\RRR_2[[\G]],\C_{\RRR_1}\right)\right)  \implies \Tor_{i+j}^{\RRR_1[[\G]]} \left(\RRR_2,\C_{\RRR_1}\right)
\end{align}

The first spectral sequence (\ref{first-spectral}) gives us the exact sequence:
{ \begin{align} \label{first-long-es}
& \Tor_2^{\RRR_1}\left(\RRR_2,\Tor_0^{\RRR_1[[\G]]}\left(\RRR_1,\C_{\RRR_1} \right)\right) \rightarrow \RRR_2 \otimes_{\RRR_1} \Tor_1^{\RRR_1[[\G]]}\left(\RRR_1,\C_{\RRR_1}\right)   \rightarrow \\ & \notag \rightarrow  \Tor_1^{\RRR_1[[\G]]}\left( \RRR_2,\C_{\RRR_1}\right) \rightarrow \Tor_1^{\RRR_1}\left(\RRR_2,\Tor_0^{{\RRR_1}[[\G]]}\left(\RRR_1,\C_{\RRR_1}\right)\right) \rightarrow 0.
\end{align}
}
The second spectral sequence (\ref{first-spectral}) gives us the exact sequence:
{\begin{align*}
& \Tor_2^{\RRR_2[[\G]]}\left(\RRR_2,\RRR_2[[\G]] \otimes_{\RRR_1[[\G]]}\C_{\RRR_1} \right) \rightarrow   \RRR_2  \otimes_{\RRR_2[[\G]]} \Tor_1^{\RRR_1[[\G]]}\left(\RRR_2[[\G]],\C_{\RRR_1}\right)  \rightarrow \\ & \notag \rightarrow \Tor_1^{\RRR_1[[\G]]}\left( \RRR_2,\C_{\RRR_1}\right) \rightarrow \Tor_1^{\RRR_2[[\G]]}\left(\RRR_2,\RRR_2[[\G]] \otimes_{\RRR_1[[\G]]} \C_{\RRR_1} \right) \rightarrow 0.
\end{align*}}
According to Lemma 4.5 in \cite{brumer1966pseudocompact}, $\RRR_1[[\G]]$ is a projective object in $\mathrm{CAT}_{\RRR_1}$. Also, since $\RRR_2$ is a finitely generated $\RRR_1$-module, we have $\RRR_2[[\G]] \cong \RRR_2 \otimes_{\RRR_1} \RRR_1[[\G]]$ (for example, see Lemma 5.5.1 in \cite{ribes2000profinite}). By (a slight modification of the) flat base change theorems for the Tor functor (Proposition 3.2.9 in \cite{weibel1995introduction}), we obtain the following isomorphism for all $i\geq 0$:
\begin{align*}
\Tor_i^{\RRR_1[[\G]]} \left( \RRR_2[[\G]],\C_{\RRR_1} \right) \cong \Tor_i^{\RRR_1}\left( \RRR_2,\C_{\RRR_1}\right).
\end{align*}
Both these modules are zero whenever $i \geq 1$ since $\C_{\RRR_1}$ is a free $\RRR_1$-module. So, we have the following isomorphisms:
\begin{align*}
\Tor_1^{\RRR_1[[\G]]} \left( \RRR_2,\C_{\RRR_1}\right) \cong \Tor_1^{\RRR_2[[\G]]} \left( \RRR_2, \RRR_2[[\G]] \otimes_{\RRR_1[[\G]]} \C_{\RRR_1} \right) \cong \Tor_1^{\RRR_2[[\G]]} \left( \RRR_2, \RRR_2 \otimes_{\RRR_1} \C_{\RRR_1} \right).
\end{align*}
Since $\M$ is a free $\RRR_1$-module, we have the following $\G$-equivariant isomorphisms:
\begin{align*}
\C_{\RRR_2} \cong \Hom_{\RRR_1}\left(\M,\RRR_2\right) \cong \Hom_{\RRR_1}\left(\M,\RRR_1\right)\otimes_{\RRR_1}\RRR_2 \cong \C_{\RRR_1} \otimes_{\RRR_1}\RRR_2 \cong \RRR_2 \otimes_{\RRR_1} \C_{\RRR_1}.
\end{align*}
We obtain the following isomorphism:
\begin{align} \label{intermediate-iso-tor}
\Tor_1^{\RRR_1[[\G]]} \left( \RRR_2,\C_{\RRR_1}\right) \cong \Tor_1^{\RRR_2[[\G]]} \left( \RRR_2, \C_{\RRR_2} \right).
\end{align}
Proposition 5.2.7 and Corollary 5.2.9 in \cite{neukirch2008cohomology} give us the following isomorphisms for all $i \geq 0$:
\begin{align*}
H^i(\G,\D_{\RRR_1})^\vee \cong \Tor_i^{\RRR_1[[\G]]} \left( \RRR_1, \C_{\RRR_1} \right), &\quad H^i(\G,\D_{\RRR_2})^\vee \cong \Tor_i^{\RRR_2[[\G]]} \left( \RRR_2, \C_{\RRR_2}\right).
\end{align*}
The above isomorphisms along with (\ref{first-long-es}) and (\ref{intermediate-iso-tor}) complete the proof of the Proposition.
\end{proof}

We will now work with the ring map $\varpi: \TTT \rightarrow \RRR$ and all the notations and hypotheses given in the beginning of Section \ref{specialization-section}. Note that the group $\Gamma_p$ is defined to be the quotient $\Gal{\overline{\Q}_p}{\Q_p}/I_p$, where $I_p$ denotes the inertia subgroup at $p$. Suppose there is an $\TTT$-linear action of $\Gamma_p$ on a finitely generated $\TTT$-module $\M$, we will let $\M_{\Gamma_p}$ denote the maximal quotient of $\M$ on which $\Gamma_p$ acts trivially. Since $\Gamma_p$ is topologically generated by the Frobenius $\Frob_p$, we can identify $\M_{\Gamma_p}$ with the cokernel of the natural map $\M \xrightarrow {\Frob_p-1} \M$.

\begin{proposition} \label{control-theorem-selmer-groups}
For all $i \geq 1$, suppose that the $\RRR$-modules
{\small\begin{align*}
\Tor_i^\TTT\left(\RRR,H^0(G_\Sigma,\D_\varrho)^\vee\right), \qquad \Tor_i^\TTT\left(\RRR,H^0\left(I_p,\frac{\D_\varrho}{\Fil^+\D_\varrho}\right)^\vee\right)
\end{align*}}
are torsion. The $\RRR$-module $\Sel_{\varpi \circ \varrho}(\Q)^\vee$ is torsion if and only if the height one prime ideal $\ker(\varpi)$ in $\TTT$ does not belong to the support of the $\TTT$-module $\Sel_{\varrho}(\Q)^\vee$. \\

Suppose that the $\RRR$-module $\Sel_{\varpi \circ \varrho}(\Q)^\vee$ is torsion. In addition, we shall make the following assumptions:
\begin{enumerate}
\item Suppose $\QQ$ is a height two prime ideal in $\TTT$ containing $\ker(\varpi)$ and in the support of the $\TTT$-module $H^0(G_\Sigma, \D_\varrho)^\vee$ or $H^0\left(I_p,\frac{\D_{\varrho}}{\Fil^+\D_\varrho}\right)^\vee$. Then,  the $2$-dimensional local ring $\TTT_\QQ$ is regular and $\RRR_\QQ = \left(\frac{\TTT}{\ker(\varpi)}\right)_\QQ$.
\item\label{prop-surjectivity-assumption} The global-to-local map $\phi^{\Sigma_0}_{\varpi \circ \varrho}$ defining the non-primitive Selmer group $\Sel_{\varpi \circ \varrho}(\Q)$ is surjective.
\end{enumerate}
Under these assumptions, we have the following equality of divisors in $\RRR$:
{\footnotesize
\begin{align*}
 \Div\bigg(\Sel_{\varrho}(\Q)^\vee \otimes_{\TTT} \RRR \bigg) &+ \Div \bigg( \Tor_1^\TTT \left(\RRR, H^0(G_\Sigma, \D_{\varrho})^\vee\right) \bigg) =  \Div\bigg(\Sel_{\varpi \circ \varrho}(\Q)^\vee \bigg) &+ \Div\bigg(\Tor_1^\TTT\left(\RRR,H^0\left(I_p,\frac{\D_{\varrho}}{\Fil^+\D_{\varrho}}\right)^\vee\right)_{\Gamma_p}\bigg).
\end{align*}
}
\end{proposition}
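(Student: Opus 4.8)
The plan is to derive both statements from two applications of the abstract control theorem, Proposition~\ref{control-theorem} --- one global, one for the local condition at $p$ --- assembled into a commutative ladder. Set $A_\varrho:=H^1(G_\Sigma,\D_\varrho)^\vee$ and $L_\varrho:=\bigl(H^1(I_p,\tfrac{\D_\varrho}{\Fil^+\D_\varrho})^{\Gamma_p}\bigr)^\vee$, and similarly $A_{\varpi\circ\varrho}$, $L_{\varpi\circ\varrho}$ over $\RRR$. Since Pontryagin duality is exact, dualizing the sequence defining $\Sel_\varrho(\Q)$ gives a presentation $L_\varrho\xrightarrow{\,\delta_\varrho\,}A_\varrho\to\Sel_\varrho(\Q)^\vee\to 0$ with $\delta_\varrho=(\phi^{\Sigma_0}_\varrho)^\vee$, and likewise $L_{\varpi\circ\varrho}\xrightarrow{\,\delta_{\varpi\circ\varrho}\,}A_{\varpi\circ\varrho}\to\Sel_{\varpi\circ\varrho}(\Q)^\vee\to 0$; under assumption~\ref{prop-surjectivity-assumption} the map $\delta_{\varpi\circ\varrho}$ is injective, its kernel being the dual of $\coker(\phi^{\Sigma_0}_{\varpi\circ\varrho})=0$. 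Right-exactness of $-\otimes_\TTT\RRR$ turns the first presentation into $L_\varrho\otimes_\TTT\RRR\xrightarrow{\,\bar\delta_\varrho\,}A_\varrho\otimes_\TTT\RRR\to\Sel_\varrho(\Q)^\vee\otimes_\TTT\RRR\to 0$. Proposition~\ref{control-theorem} applied with $\G=G_\Sigma$, $\M=\LLL_\varrho$ yields a map $f\colon A_\varrho\otimes_\TTT\RRR\to A_{\varpi\circ\varrho}$ with $\coker(f)=\Tor_1^\TTT(\RRR,H^0(G_\Sigma,\D_\varrho)^\vee)$ and $\ker(f)$ a quotient of $\Tor_2^\TTT(\RRR,H^0(G_\Sigma,\D_\varrho)^\vee)$; applied with $\G=I_p$, $\M=\LLL_\varrho/\Fil^+\LLL_\varrho$ and then followed by the right-exact functor $(-)_{\Gamma_p}=\coker(\Frob_p-1)$ (which commutes with $-\otimes_\TTT\RRR$, both being right-exact), it yields a map $g\colon L_\varrho\otimes_\TTT\RRR\to L_{\varpi\circ\varrho}$ with $\coker(g)\cong\Tor_1^\TTT(\RRR,H^0(I_p,\tfrac{\D_\varrho}{\Fil^+\D_\varrho})^\vee)_{\Gamma_p}$. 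A routine, if lengthy, check of the functoriality of the spectral sequences underlying Proposition~\ref{control-theorem}, with respect to restriction and to maps of coefficient modules, shows the square $f\circ\bar\delta_\varrho=\delta_{\varpi\circ\varrho}\circ g$ commutes.

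For the torsion dichotomy I would take $\RRR$-ranks in this square. As $\ker f$, $\coker f$ and $\coker g$ are $\RRR$-torsion by the standing hypothesis on the $\Tor_i$'s, one has $\mathrm{rank}_\RRR(A_{\varpi\circ\varrho})=\mathrm{rank}_\RRR(A_\varrho\otimes_\TTT\RRR)$ and $\mathrm{rank}_\RRR(\mathrm{im}\,\delta_{\varpi\circ\varrho})=\mathrm{rank}_\RRR(\mathrm{im}\,\bar\delta_\varrho)$ (the latter because $\delta_{\varpi\circ\varrho}(\mathrm{im}\,g)=f(\mathrm{im}\,\bar\delta_\varrho)$ by commutativity, and $\mathrm{im}\,\delta_{\varpi\circ\varrho}/\delta_{\varpi\circ\varrho}(\mathrm{im}\,g)$ is a quotient of $\coker g$), so $\mathrm{rank}_\RRR\Sel_{\varpi\circ\varrho}(\Q)^\vee=\mathrm{rank}_\RRR(\Sel_\varrho(\Q)^\vee\otimes_\TTT\RRR)$; in particular no surjectivity or regularity hypothesis is needed here. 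It then remains to invoke the commutative-algebra fact that, for a finitely generated $\TTT$-module $\N$, the height one prime $\ker\varpi$ belongs to $\mathrm{Supp}_\TTT(\N)$ iff $\N\otimes_\TTT\RRR$ has positive $\RRR$-rank: as $\TTT$ is integrally closed, $\TTT_{\ker\varpi}$ is a discrete valuation ring with residue field $\Frac(\RRR)$, so $\N\otimes_\TTT\Frac(\RRR)\cong\N_{\ker\varpi}\otimes_{\TTT_{\ker\varpi}}\Frac(\RRR)$, whose $\Frac(\RRR)$-dimension equals the minimal number of generators of $\N_{\ker\varpi}$, hence is positive precisely when $\N_{\ker\varpi}\neq 0$. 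Taking $\N=\Sel_\varrho(\Q)^\vee$ gives the first statement.

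For the divisor identity I would exploit injectivity of $\delta_{\varpi\circ\varrho}$: identifying $L_{\varpi\circ\varrho}$ with its image, $\Sel_{\varpi\circ\varrho}(\Q)^\vee=A_{\varpi\circ\varrho}/L_{\varpi\circ\varrho}$ and $\delta_{\varpi\circ\varrho}(\mathrm{im}\,g)=f(\mathrm{im}\,\bar\delta_\varrho)$. A short diagram chase then produces two exact sequences of finitely generated torsion $\RRR$-modules,
\[ 0\to\coker(g)\to A_{\varpi\circ\varrho}/f(\mathrm{im}\,\bar\delta_\varrho)\to\Sel_{\varpi\circ\varrho}(\Q)^\vee\to 0, \]
\[ 0\to\mathcal{E}\to\Sel_\varrho(\Q)^\vee\otimes_\TTT\RRR\to A_{\varpi\circ\varrho}/f(\mathrm{im}\,\bar\delta_\varrho)\to\coker(f)\to 0, \]
where $\mathcal{E}$ (the kernel of the induced map $\Sel_\varrho(\Q)^\vee\otimes_\TTT\RRR\to A_{\varpi\circ\varrho}/f(\mathrm{im}\,\bar\delta_\varrho)$) is a subquotient of $\ker f$, hence of $\Tor_2^\TTT(\RRR,H^0(G_\Sigma,\D_\varrho)^\vee)$. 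Since $\Div$ is additive in exact sequences and $\coker(f)$, $\coker(g)$ are exactly the $\Tor_1$ terms in the statement (the second carrying its $(-)_{\Gamma_p}$), the desired identity is equivalent to $\Div(\mathcal{E})=0$; as a subquotient of a torsion module of trivial divisor again has trivial divisor, it suffices to show $\Div\bigl(\Tor_2^\TTT(\RRR,H^0(G_\Sigma,\D_\varrho)^\vee)\bigr)=0$. This is where assumption~(1) is used: localizing at a height one prime $\q$ of $\RRR$, which lies over a height two prime $\QQ\supset\ker\varpi$ of $\TTT$, either $\QQ\notin\mathrm{Supp}(H^0(G_\Sigma,\D_\varrho)^\vee)$ (so the $\Tor$ vanishes at $\q$) or $\TTT_\QQ$ is regular of dimension two, whence $\ker\varpi\cdot\TTT_\QQ$ is principal, $\RRR_\QQ=\TTT_\QQ/(\zeta)$ has projective dimension $\leq 1$ over $\TTT_\QQ$, and $\Tor_2^{\TTT_\QQ}(\RRR_\QQ,-)=0$. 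Thus $\Div(\mathcal{E})=0$ and the identity follows.

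The step I expect to be the main obstacle is the careful bookkeeping across the two independent applications of Proposition~\ref{control-theorem}, and especially the treatment of the local term at $p$: since the functor of $\Gamma_p$-coinvariants is only right-exact, one must check it introduces no unexpected error. The saving point is $\mathrm{cd}_p(\Gamma_p)=1$, so that applying $(-)_{\Gamma_p}$ to the right-exact tail $H^1(I_p,\tfrac{\D_\varrho}{\Fil^+\D_\varrho})^\vee\otimes_\TTT\RRR\to H^1(I_p,\tfrac{\D_{\varpi\circ\varrho}}{\Fil^+\D_{\varpi\circ\varrho}})^\vee\to\Tor_1^\TTT(\RRR,H^0(I_p,\tfrac{\D_\varrho}{\Fil^+\D_\varrho})^\vee)\to 0$ again gives a right-exact tail, so that $\coker(g)$ is precisely the $(-)_{\Gamma_p}$-twist of the $\Tor_1$ term in the statement, with no further divisorial contribution. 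The other point requiring attention is confirming that the regularity hypothesis~(1) genuinely kills all divisorial contributions of the second-order $\Tor$'s, which the localization argument above accomplishes.
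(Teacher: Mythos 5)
Your proposal is correct and follows essentially the same route as the paper: two applications of Proposition \ref{control-theorem} (global, and local at $p$ followed by $\Gamma_p$-coinvariants) assembled into a commutative ladder, a rank count at the generic point plus Nakayama for the torsion/support dichotomy, and the observation that assumption (1) makes $\ker(\varpi)_\QQ$ principal so that the $\Tor_2$ error terms are pseudo-null. The only difference is packaging — the paper localizes at each height one prime and invokes its ``exact in dimension one'' snake-lemma formalism, whereas you isolate the pseudo-null error term $\mathcal{E}$ in two global exact sequences — but the content is identical.
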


\begin{proof}
We begin by first proving that the following $\RRR$-modules are pseudo-null:
{\small \begin{align}\label{PN-tor-2}
\Tor_2^\TTT \left(\RRR,H^0(G_\Sigma,\D_\varrho)^\vee\right), \qquad \Tor_2^\TTT\left(\RRR,H^0\left(I_p,\frac{\D_{\varrho}}{\Fil^+\D_\varrho}\right)^\vee\right).
\end{align}}
Let $\QQ$ be a height two prime ideal in $\TTT$ containing $\ker(\varpi)$. It is enough to show that the localizations at the prime ideal $\QQ$ of the $\TTT$-modules given in equation (\ref{PN-tor-2}) vanish. Further, it is enough to consider the case when $\QQ$ belongs to the support of $H^0(G_\Sigma,\D_\varrho)^\vee$ or $H^0\left(I_p,\frac{\D_{\varrho}}{\Fil^+\D_\varrho}\right)^\vee$. Observe that we have the following isomorphisms:
{\footnotesize\begin{align} \label{tor-iso-2}
\Tor_2^\TTT \left(\RRR,H^0(G_\Sigma,\D_\varrho)^\vee\right)_\QQ &\cong \Tor_2^{\TTT_\QQ} \left(\RRR_\QQ,\left(H^0(G_\Sigma,\D_\varrho)^\vee\right)_\QQ\right)   \cong \Tor_2^{\TTT_\QQ} \left(\frac{\TTT_\QQ}{\ker(\varpi)_\QQ},\left(H^0(G_\Sigma,\D_\varrho)^\vee\right)_\QQ\right). \\ \notag  \Tor_2^\TTT \left(\RRR,H^0\left(I_p,\frac{\D_{\varrho}}{\Fil^+\D_\varrho}\right)^\vee\right)_\QQ &\cong \Tor_2^{\TTT_\QQ} \left(\RRR_\QQ,\left(H^0\left(I_p,\frac{\D_{\varrho}}{\Fil^+\D_\varrho}\right)^\vee\right)_\QQ\right)  \cong  \Tor_2^{\TTT_\QQ} \left(\frac{\TTT_\QQ}{\ker(\varpi)_\QQ},\left(H^0\left(I_p,\frac{\D_{\varrho}}{\Fil^+\D_\varrho}\right)^\vee\right)_\QQ\right).
\end{align}}
The hypotheses in the proposition tell us that $\TTT_\QQ$ is a regular local ring and hence a UFD. The height one prime ideal $\ker(\varpi)_\QQ$ generates a principal ideal inside $\TTT_\QQ$. This forces the projective dimension of the $\TTT_\QQ$-module $\frac{\TTT_\QQ}{\ker(\varpi)_\QQ}$ to equal $1$ . The modules appearing in equation (\ref{tor-iso-2}) must thus vanish. This proves the claim that the $\RRR$-modules appearing in (\ref{PN-tor-2}) are pseudo-null. As a result, for every height one prime ideal $\q$ of $\RRR$, we have
{\small \begin{align}\label{vanish-pseudo-null-lemma}
\Tor_2^\TTT \left(\RRR,H^0(G_\Sigma,\D_\varrho)^\vee\right) \otimes_{\RRR} \RRR_\q \cong  \Tor_2^\TTT\left(\RRR,H^0\left(I_p,\frac{\D_{\varrho}}{\Fil^+\D_\varrho}\right)^\vee\right) \otimes_{\RRR} \RRR_\q \cong 0.
\end{align}}

We now claim that the rows of the following commutative diagram are exact in dimension one (see Appendix \ref{comm-algebra-appendix} for the definition of a sequence that is exact in dimension one):
{\footnotesize
\begin{align} \label{comm-control-selmer}
\xymatrix{
& \left(H^1\left(I_p,\frac{\D_\varrho}{\Fil^+\D_\varrho}\right)^\vee\otimes_\TTT \RRR \right)_{\Gamma_p}\ar[d]\ar[r]^{g_1}&   \left(H^1\left(I_p,\frac{\D_{\varpi \circ \varrho}}{\Fil^+\D_{\varpi \circ \varrho}}\right)^\vee\right)_{\Gamma_p} \ar[r]^{g_2 \qquad} \ar[d]^{\left(\phi^{\Sigma_0}_{\varpi \circ \varrho}\right)^\vee}& \Tor^\TTT_1\left(\RRR,H^0\left(I_p,\frac{\D_\varrho}{\Fil^+\D_\varrho}\right)^\vee\right)_{\Gamma_p}\ar[r]\ar[d]^{u}& 0\\
0 \ar[r]& H^1(G_\Sigma, \D_\varrho)^\vee \otimes_\TTT \RRR \ar[r]& H^1(G_\Sigma,\D_{\pi \circ \varrho})^\vee \ar[r]& \Tor_1^\TTT\left(\RRR,H^0(G_\Sigma,\D_{\varrho})^\vee\right) \rightarrow 0.
}
\end{align}
}
Proposition \ref{control-theorem}  immediately lets us obtain that the bottom row of the commutative diagram (\ref{comm-control-selmer}) is exact in dimension one. To see why the top row of the commutative diagram (\ref{comm-control-selmer}) is exact in dimension one, one needs to use Proposition \ref{control-theorem}, the fact that the $\RRR$-module {\small$\Tor^\TTT_2\left(\RRR,H^0\left(I_p,\frac{\D_\varrho}{\Fil^+\D_\varrho}\right)^\vee\right)$} is pseudo-null, Lemma \ref{almost-compact-exactness} and analyze the following commutative diagram: {\tiny \begin{align*}
\xymatrix{
\Tor^\TTT_2\left(\RRR,H^0\left(I_p,\frac{\D_\varrho}{\Fil^+\D_\varrho}\right)^\vee\right)\ar[r]\ar[d]^{\Frob_p-1}& H^1\left(I_p,\frac{\D_\varrho}{\Fil^+\D_\varrho}\right)^\vee\otimes_\TTT \RRR \ar[r]\ar[d]^{\Frob_p-1}&   H^1\left(I_p,\frac{\D_{\varpi \circ \varrho}}{\Fil^+\D_{\varpi \circ \varrho}}\right)^\vee  \ar[r]\ar[d]^{\Frob_p-1}& \Tor^\TTT_1\left(\RRR,H^0\left(I_p,\frac{\D_\varrho}{\Fil^+\D_\varrho}\right)^\vee\right)\ar[r]\ar[d]^{\Frob_p-1}\ar[r]& 0 \\
\Tor^\TTT_2\left(\RRR,H^0\left(I_p,\frac{\D_\varrho}{\Fil^+\D_\varrho}\right)^\vee\right)\ar[r]& H^1\left(I_p,\frac{\D_\varrho}{\Fil^+\D_\varrho}\right)^\vee\otimes_\TTT \RRR \ar[r]&   H^1\left(I_p,\frac{\D_{\varpi \circ \varrho}}{\Fil^+\D_{\varpi \circ \varrho}}\right)^\vee \ar[r] & \Tor^\TTT_1\left(\RRR,H^0\left(I_p,\frac{\D_\varrho}{\Fil^+\D_\varrho}\right)^\vee\right)\ar[r]& 0
}
\end{align*}}
Since, for each $i \geq 1$, the $\RRR$-modules $\Tor_i^\TTT\left(\RRR,H^0(G_\Sigma,\D_\varrho)^\vee\right)$ and $\Tor_i^\TTT\left(\RRR,H^0\left(I_p,\frac{\D_\varrho}{\Fil^+\D_\varrho}\right)^\vee\right)$ are torsion, we have
{\small\begin{align*}
\Tor_i^\TTT\left(\RRR,H^0(G_\Sigma,\D_\varrho)^\vee\right) \otimes_{\RRR} \Frac(\RRR) \cong \Tor_i^\TTT\left(\RRR,H^0\left(I_p,\frac{\D_\varrho}{\Fil^+\D_\varrho}\right)^\vee\right)\otimes_{\RRR} \Frac(\RRR) \cong 0.
\end{align*}}

Localizing all the modules in the commutative diagram (\ref{comm-control-selmer}) at the zero prime ideal $(0)$ of $\RRR$, we get the following isomorphism:
\begin{align*}
\left(\Sel_{\varrho}(\Q)^\vee \otimes_\TTT \RRR \right)\otimes_\RRR \Frac(\RRR) \xrightarrow {\cong} \left(\Sel_{\varpi \circ \varrho}(\Q)^\vee \right) \otimes_\RRR \Frac(\RRR).
\end{align*}
Therefore, we have the following implications:
\begin{align*}
\Sel_{\varpi \circ \varrho}(\Q)^\vee \text{ is $\RRR$-torsion } & \iff \left(\Sel_{\varpi \circ \varrho}(\Q)^\vee \right) \otimes_\RRR \Frac(\RRR) =0 \\ & \iff \left(\Sel_{\varrho}(\Q)^\vee \otimes_\TTT \RRR \right)\otimes_\RRR \Frac(\RRR) =0  \\ & \iff \text{The ideal $\ker(\varpi)$ is not in the support of $\left(\Sel_{\varrho}(\Q)^\vee \otimes_\TTT \RRR \right)$}
\\ & \iff \text{The ideal $\ker(\varpi)$ is not in the support of $\Sel_{\varrho}(\Q)^\vee$ (Nakayama's Lemma)}.
\end{align*}
This observation gives us the first part of the proposition. \\

Now let us suppose that $\ker(\varpi)$ does not belong to the support of the $\TTT$-module $\Sel_{\varrho}(\Q)^\vee$. Localizing all the modules in the commutative diagram (\ref{comm-control-selmer}) at every height one prime ideal $\q$ of $\RRR$ and using the Snake Lemma, we obtain the following short exact sequence:
\begin{align*}
0 \rightarrow \ker(u)_\q \rightarrow \left(\Sel_{\varrho}(\Q)^\vee \otimes_\TTT \RRR\right)_\q \rightarrow \left(\Sel_{\varpi\circ \varrho}(\Q)^\vee\right)_\q \rightarrow \left(\coker(u)\right)_\q \rightarrow 0.
\end{align*}
Note that assumption (\ref{prop-surjectivity-assumption}) tells us that the global-to-local map $\phi_{\varpi \circ \varrho}^{\Sigma_0}$ is surjective. So, the induced map $\left(\phi_{\varpi \circ \varrho}^{\Sigma_0}\right)^\vee$ on the Pontryagin duals is injective. We get the following equality in the divisor group of $\RRR$:
{\small\begin{align} \label{first-snake-lemma}
\Div\bigg(\ker(u)\bigg)  + \Div\bigg(\Sel_{\varpi\circ \varrho}(\Q)^\vee\bigg)= \Div\bigg(\Sel_{\varrho}(\Q)^\vee \otimes_\TTT \RRR \bigg) + \Div\bigg(\coker(u)\bigg).
\end{align}}
The exact sequence of torsion $\RRR$-modules
{\small\begin{align*}
0 \rightarrow \ker(u) \rightarrow \Tor^\TTT_1\left(\RRR,H^0\left(I_p,\frac{\D_\varrho}{\Fil^+\D_\varrho}\right)^\vee\right)_{\Gamma_p} \xrightarrow {u} \Tor_1^\TTT\left(\RRR,H^0(G_\Sigma,\D_{\varrho})^\vee\right) \rightarrow \coker(u) \rightarrow 0,
\end{align*}}
gives us the following equality of divisors in $\RRR$:
{\small \begin{align} \label{inter-ker-coker}
\Div\bigg(\ker(u)\bigg) &+ \Div\bigg(\Tor_1^\TTT\left(\RRR,H^0(G_\Sigma,\D_{\varrho})^\vee\right)\bigg) =  \Div\bigg(\Tor^\TTT_1\left(\RRR,H^0\left(I_p,\frac{\D_\varrho}{\Fil^+\D_\varrho}\right)^\vee\right)_{\Gamma_p}\bigg) + \Div\bigg(\coker(u)\bigg).
\end{align}}
Combining equations (\ref{first-snake-lemma}) and (\ref{inter-ker-coker}), we get the desired equality of divisors stated in the proposition.
\end{proof}

\section{Theorem \ref{selmer-factorization}} \label{section2-dasgupta-factorization}

\subsection{The setup in Dasgupta's factorization}

We will follow the summary of Hida's works (\cite{hida1986galois}, \cite{MR868300}, etc) given in Section 2 of the paper by Emerton, Pollack and Weston \cite{emerton2006variation}.

\begin{Remark}
We would like to make a remark about the hypotheses \ref{IRR} and \ref{p-Dis}. In general, the Galois representation, associated to the Hida family $F$, is only known to take values in the fraction field of $R$. The hypothesis \ref{IRR} allows us to find an integral model for that Galois representation. See Proposition 2.2.7 in \cite{emerton2006variation}. Similarly, in general (without assuming the validity of \ref{p-Dis}), one only has a $\Gal{\overline{\Q}_p}{\Q_p}$-equivariant filtration (similar to the one given in \ref{filtration-hida-family} below) over the fraction field of $R$. See Proposition 2.2.9 in \cite{emerton2006variation}.
\end{Remark}

Due to the hypothesis \ref{p-Dis}, we have the following short exact sequence of free $\R$-modules that is $\Gal{\overline{\Q}_p}{\Q_p}$-equivariant:
\begin{align} \label{filtration-hida-family}
\tag{Fil-$\rho_F$} 0 \rightarrow \Fil^+L_F \rightarrow L_F \rightarrow \frac{L_F}{ \Fil^+L_F} \rightarrow 0.
\end{align}
Here $\Fil^+L_F$ and $\frac{L_F}{ \Fil^+L_F}$ are free $R$-modules of rank $1$.
\begin{itemize}
\item  We let $\delta_F : \Gal{\overline{\Q}_p}{\Q_p} \rightarrow \R^\times$ denote the action of $\Gal{\overline{\Q}_p}{\Q_p}$ on   $\Fil^+L_F$.
\item We let $\epsilon_F : \Gal{\overline{\Q}_p}{\Q_p} \rightarrow \R^\times$ denote the action of $\Gal{\overline{\Q}_p}{\Q_p}$ on  $\frac{L_F}{\Fil^+L_F}$. Here, the character $\epsilon_F$ is unramified at $p$ and  $\epsilon_F(\Frob_p) = a_p(F)$.
\item Theorem 2.2.1 in \cite{emerton2006variation} describes the following properties of the determinant of the Galois representation $\rho_F$:
\begin{align} \label{determinant-properties}
\text{Image}\left(\det(\rho_F)\right) \subset \O[[x]], \qquad \text{Image}\left(\det(\rho_F)\right) \subsetneq \O, \qquad \ker(\det(\rho_F)) = \Gal{\Q_\Sigma}{H_\infty},
\end{align}
where $H_\infty$ is a finite extension of the cyclotomic $\Z_p$ extension $\Q_\infty$.
\end{itemize}

To the Galois representations $\rho_{\pmb{4},3}:G_\Sigma\rightarrow\Gl_4(T[[\Gamma]])$ and $\pi \circ \rho_{\pmb{4},3}:G_\Sigma\rightarrow\Gl_4(R[[\Gamma]])$, we associated the following modules:
{\footnotesize
\begin{align*}
&L_{\pmb{4},3} := \Hom_\T\left(L_F \otimes_{i_1} \T, L_F \otimes_{i_2} \T \right) (\chi) \otimes_T T[[\Gamma]](\kappa^{-1}), \ && L_{\pi} := \Hom_\R\left(L_F,L_F\right)(\chi) \otimes_R R[[\Gamma]](\kappa^{-1}), \\
&\Fil^+L_{\pmb{4},3} := \Hom_\T\left(L_F \otimes_{i_1} \T, \Fil^+L_F \otimes_{i_2} \T \right) (\chi) \otimes_T T[[\Gamma]](\kappa^{-1}), &&  \Fil^+L_\pi := \Hom_\R \left( L_F, \Fil^+L_F \right)  (\chi) \otimes_R R[[\Gamma]](\kappa^{-1}),\\  &\frac{L_{\pmb{4},3}}{\Fil^+L_{\pmb{4},3}} \cong \Hom_\T\left(L_F \otimes_{i_1} \T, \frac{L_F}{\Fil^+L_F} \otimes_{i_2} \T \right) (\chi) \otimes_T T[[\Gamma]](\kappa^{-1}), && \frac{L_\pi}{\Fil^+L_\pi} \cong \Hom_\R\left( L_F , \frac{L_F}{\Fil^+L_F} \right) (\chi) \otimes_R R[[\Gamma]](\kappa^{-1}).
\end{align*}
}

We get the following filtrations associated to $\rho_{\pmb{4},3}$ and $\pi \circ \rho_{\pmb{4},3}$:
\begin{align}
\tag{Fil-$\rho_{\pmb{4},3}$} & 0\rightarrow \Fil^+L_{\pmb{4},3} \rightarrow L_{\pmb{4},3} \rightarrow \frac{L_{\pmb{4},3}}{\Fil^+L_{\pmb{4},3}} \rightarrow 0. \\
\tag{Fil-$\pi \circ\rho_{\pmb{4},3}$}
& 0\rightarrow \Fil^+L_\pi \rightarrow L_\pi \rightarrow \frac{L_\pi}{\Fil^+L_\pi} \rightarrow 0.
\end{align}

\begin{Remark}
Note that $T[[\Gamma]]$ is a coproduct in the category of complete semi-local Noetherian $\O$-algebras (with respect to the maps $i_1(R) \hookrightarrow T[[\Gamma]]$, $i_2(R) \hookrightarrow T[[\Gamma]]$ and $O[[\Gamma]] \rightarrow T[[\Gamma]]$). One can consider the set $C$ of all continuous $\O$-algebra homomorphisms $\varphi:T[[\Gamma]] \rightarrow \overline{\Q}_p$ that is defined uniquely by considering the following maps:
\begin{align*}
i_1(R) \xrightarrow{\cong} R \xrightarrow {\varphi_1} \overline{\Q}_p, \qquad  i_2(R) \xrightarrow {\cong} R \xrightarrow {\varphi_2} \overline{\Q}_p, \qquad \varphi_\Gamma : \Gamma \rightarrow \overline{\Q}_p^\times.
\end{align*}
Here, we allow the ring homomorphisms $\varphi_1$ and $\varphi_2$ to vary over all the classical specializations of $F$ subject to the following restriction on their weights:
\begin{align*}
\mathrm{weight}(\varphi_2) > \mathrm{weight}(\varphi_1) \geq 2.
\end{align*}
We also allow $\varphi_\Gamma$ to vary over all the continuous group homomorphisms $\Gamma \rightarrow \overline{\Q}_p^\times$ of finite order. This set $C$, which is a subset of $\Hom_{\cont}\left(T[[\Gamma]],\overline{\Q}_p\right)$, is called the critical set of specializations (following Greenberg's terminology in \cite{greenberg1994iwasawa}) associated to $\rho_{\pmb{4},3}$. The $p$-adic $L$-function $\theta_{\pmb{4},3}$ satisfies an interpolation property that relates $\varphi(\theta_{\pmb{4},3})$, for every $\varphi \in C$,  to the value at $s=1$ of a complex $L$-function associated to the Galois representation $\varphi \circ \rho_{\pmb{4},3}$. \\

One can also consider associating a Selmer group to $\rho_{\pmb{4},3}$ corresponding to the following filtration:
{\small \begin{align*}
0 \rightarrow G^+L_{\pmb{4},3} \rightarrow L_{\pmb{4},3} \rightarrow \frac{L_{\pmb{4},3}}{G^+L_{\pmb{4},3}} \rightarrow 0,
\text{ where }  G^+L_{\pmb{4},3}:=\Hom_T \left(\frac{L_F}{\Fil^+L_F} \otimes_{i_1}T , L_F \otimes_{i_2}T \right) (\chi) \otimes T[[\Gamma]](\kappa^{-1}).
\end{align*}
}
This filtration also ensures that $\rho_{\pmb{4},3}$ satisfies the ``Panchishkin condition'' (though one must modify the critical set of specializations). In our arguments, it is essential that $\pi \circ \rho_{\pmb{4},3}$ does not inherit this filtration.
\end{Remark}

We also have the following short exact sequence of free $R[[\Gamma]]$-modules that is $\Gal{\overline{\Q}_p}{\Q_p}$-equivariant:
{\footnotesize
\begin{align}\label{ses-pirho3-fil}
& 0 \rightarrow \Hom_\R \left( \frac{L_F}{\Fil^+L_F}, \frac{L_F}{\Fil^+L_F} \right)  (\chi) \otimes_R R[[\Gamma]](\kappa^{-1}) \rightarrow  \frac{L_\pi}{\Fil^+L_\pi} \rightarrow  \Hom_\R\left(\Fil^+ L_F , \frac{L_F}{\Fil^+L_F} \right) (\chi) \otimes_R R[[\Gamma]](\kappa^{-1}) \rightarrow  0.
\end{align}
}
We have the 3-dimensional trace-zero adjoint representation $\Ad^0(\rho_F) : G_\Sigma \rightarrow \Gl_3(\R)$ associated to $\rho_F$. Let $\Ad^0(L_F)$ denote the free $\R$-module of rank $3$ on which $G_\Sigma$ acts to let us obtain the Galois representation $\Ad^0(\rho_F)$. We can consider the following free $R$-submodules of $\Ad^0(L_F)$ that are $\Gal{\overline{\Q}_p}{\Q_p}$-equivariant:
\begin{align*}
0 \subsetneq &  \underbrace{\Fil^{\mathrm{even}} \Ad^0(L_F)}_{\mathrm{Rank}=1} \subsetneq \underbrace{\Fil^\mathrm{odd} \Ad^0(L_F)}_{\text{Rank}=2} \subsetneq \Ad^0(L_F),
\end{align*}
where
\begin{align*}
& \Fil^\text{odd} \Ad^0(L_F) = \ker\left(\Hom_\R(L_F,L_F) \xrightarrow {\text{Res}} \Hom_\R\left(\Fil^+L_F, \frac{L_F}{\Fil^+L_F}\right)\right) \cap \Ad^0(L_F), \\
& \Fil^\text{even} \Ad^0(L_F) = \Hom_\R\left(\frac{L_F}{\Fil^+L_F},\Fil^+L_F\right).
\end{align*}
As $\Gal{\overline{\Q}_p}{\Q_p}$-modules, we have
\begin{align*}
\text{Fil}^\text{even} \Ad^0(L_F) \cong \R (\epsilon_F^{-1} \delta_F), &&  \frac{\text{Fil}^\text{odd} \Ad^0(L_F) }{\text{Fil}^\text{even} \Ad^0(L_F) } \cong \R,  &&  \frac{\Ad^0(L_F)}{\text{Fil}^\text{odd}\Ad^0(L_F)}  \cong \R(\epsilon_F \delta_F^{-1}).
\end{align*}
Let $L_{\pmb{3},2}$ denote the free $R[[\Gamma]]$-module on which $G_\Sigma$ acts to let us obtain $\rho_{\pmb{3},2}$ (which equals $\Ad^0(\rho_F)(\chi)~\otimes~\kappa^{-1}$). Let
\begin{align*}
\Fil^+{L_{\pmb{3},2}} = \left\{ \begin{array}{cc} \text{Fil}^{\text{even}}\Ad^0(L_F)(\chi)\otimes_R R[[\Gamma]](\kappa^{-1}), & \text{ if $\chi$ is even.}\\ \Fil^{\text{odd}}\Ad^0(L_F)(\chi)\otimes_R R[[\Gamma]](\kappa^{-1}), & \text{ if $\chi$ is odd} \end{array}\right.
\end{align*}
This gives us the filtration (which depends on the parity of $\chi$) for $\rho_{\pmb{3},2}$.
\begin{align}
\tag{Fil-$\rho_{\pmb{3},2}$} 0 \rightarrow \Fil^+L_{\pmb{3},2} \rightarrow L_{\pmb{3},2} \rightarrow \frac{L_{\pmb{3},2}}{\Fil^+L_{\pmb{3},2}} \rightarrow 0.
\end{align}

Our observations along with (\ref{ses-pirho3-fil}) lead us to the following short exact sequence:
{\small
\begin{align}\label{adjoint-local-ses}
0 & \longrightarrow \underbrace{\Hom_\R \left( \frac{L_F}{\Fil^+L_F}, \frac{L_F}{\Fil^+L_F} \right)  (\chi) \otimes_R R[[\Gamma]](\kappa^{-1})}_{R[[\Gamma]](\chi \kappa^{-1})} \longrightarrow \\ & \notag \longrightarrow  \underbrace{\frac{L_\pi}{\Fil^+L_\pi}}_{\left(\frac{\Ad^0(L_F)}{\Fil^\text{even}\Ad^0(L_F)}\right) (\chi) \otimes_R R[[\Gamma]](\kappa^{-1})} \longrightarrow \underbrace{\Hom_\R\left(\Fil^+ L_F , \frac{L_F}{\Fil^+L_F} \right) (\chi) \otimes_R R[[\Gamma]](\kappa^{-1})}_{\left(\frac{\Ad^0(L_F)}{\Fil^\text{odd}\Ad^0(L_F)}\right) (\chi) \otimes_R R[[\Gamma]](\kappa^{-1})}\longrightarrow  0.
\end{align}
}

The Pontryagin duals of the primitive and non-primitive Selmer group for $\rho_{\pmb{1},2}$ can be described in terms of primitive and non-primitive classical Iwasawa modules. Such a description is detailed in \cite{greenberg1989iwasawa} and \cite{greenberg2014p}. Let $L_{\pmb{1},2}$ equal $R[[\Gamma]](\chi \kappa^{-1})$. We let
\begin{align*}
\Fil^+{L_{\pmb{1},2}} = \left\{ \begin{array}{cc} L_{\pmb{1},2}, & \text{ if $\chi$ is even}\\ 0, & \text{ if $\chi$ is odd.} \end{array}\right.
\end{align*}
This gives us the filtration (which depends on parity of $\chi$) for $\rho_{\pmb{1},2}$.
\begin{align}
\tag{Fil-$\rho_{\pmb{1},2}$} 0 \rightarrow \Fil^+L_{\pmb{1},2} \rightarrow L_{\pmb{1},2} \rightarrow \frac{L_{\pmb{1},2}}{\Fil^+L_{\pmb{1},2}} \rightarrow 0.
\end{align}
Similar to (\ref{decomposition-twist}), we also have the following isomorphism of $R[[\Gamma]]$-modules that is $G_\Sigma$-equivariant:
\begin{align}\label{decomposition-twist-lattice}
L_\pi \cong L_{\pmb{3},2} \oplus L_{\pmb{1},2}.
\end{align}

\subsection{The proof of Theorem \ref{selmer-factorization}}

First note that we have a commutative diagram (\ref{comm-diag}) with the following properties: The top row in (\ref{comm-diag}) is exact. The bottom row in (\ref{comm-diag}) is coexact in dimension one (see the definition of an coexact in dimension one sequence in Appendix \ref{comm-algebra-appendix}).
{\small \begin{align} \label{comm-diag}
\xymatrix{
0 \ar[r]& H^1(G_\Sigma, D_{\rho_{\pmb{1},2}}) \ar[d]^{\phi^{\Sigma_0}_{\rho_{\pmb{1},2}}} \ar[r]& H^1(G_\Sigma, D_{\pi \circ \rho_{\pmb{4},3}}) \ar[d]^{\phi^{\Sigma_0}_{\pi \circ \rho_{\pmb{4},3}}}\ar[r]& H^1(G_\Sigma, D_{\rho_{\pmb{3},2}}) \ar[d]^{\phi^{\Sigma_0}_{\rho_{\pmb{3},2}}} \ar[r]& 0 \\
0 \ar[r]&  H^1\left(I_p,\frac{D_{\rho_{\pmb{1},2}}}{\Fil^+ D_{\rho_{\pmb{1},2}}}\right)^{\Gamma_p} \ar[r]^{g_1}& H^1\left(I_p,\frac{D_{\pi \circ \rho_{\pmb{4},3}}}{\Fil^+ D_{\pi \circ \rho_{\pmb{4},3}}}\right)^{\Gamma_p} \ar[r]^{g_2}& H^1\left(I_p,\frac{D_{\rho_{\pmb{3},2}}}{\Fil^+ D_{\rho_{\pmb{3},2}}}\right)^{\Gamma_p}
}
\end{align}}
The exactness of the top row of (\ref{comm-diag}) follows immediately from the decomposition (\ref{decomposition-twist-lattice}) that lets us obtain the following isomorphism:
\begin{align} \label{H1-decomp}
H^1(G_\Sigma, D_{\pi \circ \rho_{\pmb{4},3}})\cong H^1(G_\Sigma, D_{\rho_{\pmb{3},2}}) \oplus H^1(G_\Sigma, D_{\rho_{\pmb{1},2}}).
\end{align}
We now show that the bottom row is coexact in dimension one. It is easier to first consider the case when $\chi$ is even. In this case, (\ref{adjoint-local-ses}) gives us the following implications:
{\small \begin{align}\label{even-decomp}
& \frac{D_{ \rho_{\pmb{1},2}}}{\Fil^+ D_{ \rho_{\pmb{1},2}}} = 0 && \implies  H^1\left(I_p,\frac{D_{ \rho_{\pmb{1},2}}}{\Fil^+ D_{ \rho_{\pmb{1},2}}}\right)^{\Gamma_p} = 0.   \\ \notag
& \frac{D_{\pi \circ \rho_{\pmb{4},3}}}{\Fil^+ D_{\pi \circ \rho_{\pmb{4},3}}} \cong \frac{D_{\rho_{\pmb{3},2}}}{\Fil^+ D_{\rho_{\pmb{3},2}}}
&& \implies
 H^1\left(I_p,\frac{D_{\pi \circ \rho_{\pmb{4},3}}}{\Fil^+ D_{\pi \circ \rho_{\pmb{4},3}}}\right)^{\Gamma_p} \cong H^1\left(I_p,\frac{D_{\rho_{\pmb{3},2}}}{\Fil^+ D_{\rho_{\pmb{3},2}}}\right)^{\Gamma_p}.
\end{align}
}
Now consider the case when $\chi$ is odd. Since the inertia group $I_p$ has $p$-cohomological dimension one, (\ref{adjoint-local-ses}) lets us obtain the commutative diagram whose rows are exact.
{\small \begin{align*}
\xymatrix{
H^0\left(I_p,\frac{D_{\rho_{\pmb{3},2}}}{\Fil^+D_{\rho_{\pmb{3},2}}}\right) \ar[d]^{\Frob_p-1}\ar[r]& H^1\left(I_p,\frac{D_{\rho_{\pmb{1},2}}}{\Fil^+ D_{\rho_{\pmb{1},2}}}\right) \ar[d]^{\Frob_p-1}\ar[r]& H^1\left(I_p,\frac{D_{\pi \circ \rho_{\pmb{4},3}}}{\Fil^+ D_{\pi \circ \rho_{\pmb{4},3}}}\right) \ar[d]^{\Frob_p-1} \ar[r]& H^1\left(I_p,\frac{D_{\rho_{\pmb{3},2}}}{\Fil^+ D_{\rho_{\pmb{3},2}}}\right) \ar[d]^{\Frob_p-1}\ar[r]& 0 \\
H^0\left(I_p,\frac{D_{\rho_{\pmb{3},2}}}{\Fil^+D_{\rho_{\pmb{3},2}}}\right) \ar[r]& H^1\left(I_p,\frac{D_{\rho_{\pmb{1},2}}}{\Fil^+ D_{\rho_{\pmb{1},2}}}\right) \ar[r]& H^1\left(I_p,\frac{D_{\pi \circ \rho_{\pmb{4},3}}}{\Fil^+ D_{\pi \circ \rho_{\pmb{4},3}}}\right) \ar[r]& H^1\left(I_p,\frac{D_{\rho_{\pmb{3},2}}}{\Fil^+ D_{\rho_{\pmb{3},2}}}\right) \ar[r]& 0
}
\end{align*}
}
The Frobenius $\Frob_p$ is a topological generator for the pro-cyclic group $\Gamma_p$. Observe that we have the following isomorphisms:
{\small
\begin{align*}
H^0\left(\Gal{\overline{\Q}_p}{\Q_p},\frac{D_{\rho_{\pmb{3},2}}}{\Fil^+D_{\rho_{\pmb{3},2}}}\right) &\cong H^0\left(\Gamma_p,H^0\left(I_p,\frac{D_{\rho_{\pmb{3},2}}}{\Fil^+D_{\rho_{\pmb{3},2}}}\right)\right) \\ & \cong \ker \left( H^0\left(I_p,\frac{D_{\rho_{\pmb{3},2}}}{\Fil^+D_{\rho_{\pmb{3},2}}}\right)\xrightarrow{\Frob_p-1} H^0\left(I_p,\frac{D_{\rho_{\pmb{3},2}}}{\Fil^+D_{\rho_{\pmb{3},2}}}\right) \right).
\end{align*}
}
We will now show that
\begin{enumerate}[(A)]
\item\label{statement-A} $H^0\left(I_p,\frac{D_{\rho_{\pmb{3},2}}}{\Fil^+D_{\rho_{\pmb{3},2}}}\right)^\vee$ is a torsion $R[[\Gamma]]$-module, and \item\label{statement-B} $H^0\left(\Gal{\overline{\Q}_p}{\Q_p},\frac{D_{\rho_{\pmb{3},2}}}{\Fil^+D_{\rho_{\pmb{3},2}}}\right)^\vee$ is a pseudo-null $R[[\Gamma]]$-module.
\end{enumerate}
These statements along with Lemma \ref{almost-exactness} will then show that the bottom row of (\ref{comm-diag}) is coexact in dimension one. Statement \ref{statement-A} follows from Proposition \ref{greenberg-cyc}. When $\chi$ is odd,  $\Gal{\overline{\Q}_p}{\Q_p}$ acts on $\frac{D_{\rho_{\pmb{3},2}}}{\Fil^+D_{\rho_{\pmb{3},2}}}$ by the character $\delta_F^{-1}\epsilon_F \chi \kappa^{-1}$, which equals the character $\det(\rho_F)^{-1}\kappa^{-1}\chi \epsilon_F^{2}$.  In this case, the group $\Gamma_p$ acts on $H^0\left(I_p,\frac{D_{\rho_{\pmb{3},2}}}{\Fil^+D_{\rho_{\pmb{3},2}}}\right)^\vee$ via  $\varsigma \epsilon_F^{-2}$, for some  character $\varsigma$ of finite order. Observe also that $\epsilon_F(\Frob_p)$, which equals $a_p(F)$, is not an element of $\O$. This is because the value that $a_p(F)$ takes at classical specializations of $F$ varies as one varies the weight (see Lemma 3.2 in \cite{hida1985ap}). Let $Q$ be a prime ideal contained in the support of $H^0\left(\Gal{\overline{\Q}_p}{\Q_p},\frac{D_{\rho_{\pmb{3},2}}}{\Fil^+D_{\rho_{\pmb{3},2}}}\right)^\vee$. The prime ideal $Q$ should contain the element $\varsigma \epsilon_F^{-2}(\Frob_p)-1$, which is a non-zero element of $R$. Proposition $\ref{greenberg-cyc}$ tells us that there exists a monic polynomial $h(s)$ in $R[s]$ such that $h(\gamma_0)$ annihilates $H^0\left(\Gal{\overline{\Q}_p}{\Q_p},\frac{D_{\rho_{\pmb{3},2}}}{\Fil^+D_{\rho_{\pmb{3},2}}}\right)^\vee$. The prime ideal $Q$ must also contain this element $h(\gamma_0)$. As a result, the height of the prime ideal $Q$ must be at least two.  Statement \ref{statement-B}~also~follows.\\

Taking the Pontryagin dual of all the modules in the commutative diagram (\ref{comm-diag}) and considering the localization with respect to the prime ideal $(0)$ in $R[[\Gamma]]$, we get the following isomorphism over the fraction field of $R[[\Gamma]]$:
\begin{align*}
\Sel_{\rho_{\pmb{3},2}}(\Q)^\vee \otimes_{R[[\Gamma]]} \Frac(R[[\Gamma]]) \xrightarrow {\cong} \Sel_{\pi \circ \rho_{\pmb{4},3}}(\Q)^\vee \otimes_{R[[\Gamma]]} \Frac(R[[\Gamma]]).
\end{align*}
We have used the fact that the $R[[\Gamma]]$-module $\Sel_{\rho_{\pmb{1},2}}(\Q)^\vee$ is known to be torsion (see \cite{MR0349627}) and the fact that the global-to-local map $\phi^{\Sigma_0}_{\rho_{\pmb{1},2}}$ defining the non-primitive Selmer group $\Sel_{\rho_{\pmb{1},2}}(\Q)$ is surjective (see Proposition \ref{surjectivity-greenberg}). We have shown that the hypothesis \ref{ad-tor} holds if and only if $\Sel_{\pi \circ \rho_{\pmb{4},3}}(\Q)^\vee$ is a torsion $R[[\Gamma]]$-module.

Suppose \ref{ad-tor} holds. Taking the Pontryagin dual of all the modules in the commutative diagram (\ref{comm-diag}) and considering the localizations with respect to every height one prime ideal $\p$ in $R[[\Gamma]]$, a snake lemma argument will yield the following short exact sequence:
\begin{align*}
0 \rightarrow
\left(\Sel_{\rho_{\pmb{3},2}}(\Q)^\vee\right)_\p \rightarrow \left(\Sel_{\pi \circ \rho_{\pmb{4},3}}(\Q)^\vee\right)_\p \rightarrow \left(\Sel_{\rho_{\pmb{1},2}}(\Q)^\vee\right)_\p \rightarrow 0.
\end{align*}
We have once again used the fact that the $R[[\Gamma]]$-module $\Sel_{\rho_{\pmb{1},2}}(\Q)^\vee$ is known to be torsion and that the map $\phi^{\Sigma_0}_{\rho_{\pmb{1},2}}$ is surjective. This gives us the following equality of divisors in $R[[\Gamma]]$:
\begin{align} \label{equality-divisors}
\Div\left(\Sel_{\pi \circ \rho_{\pmb{4},3}}(\Q)^\vee\right) =
  \Div\left( \Sel_{\rho_{\pmb{3},2}}(\Q)^\vee\right) + \Div\left( \Sel_{\rho_{\pmb{1},2}}(\Q)^\vee \right).
\end{align}

The decomposition in (\ref{decomposition-twist-lattice}) also gives us the following isomorphism for all primes $\nu \in \Sigma_0$:
\begin{align*}
\Loc(\nu, \pi \circ \rho_{\pmb{4},3}) \cong \Loc(\nu, \rho_{\pmb{3},2}) \oplus \Loc(\nu, \rho_{\pmb{1},2}).
\end{align*}
This completes the proof of Theorem \ref{selmer-factorization}.

\begin{Remark}
Consider the following two special cases:
\begin{enumerate}
\item When $\chi$ is even, (\ref{even-decomp}) holds.
\item When $F$ is a CM-Hida family, we have the decomposition of $\Gal{\overline{\Q}_p}{\Q_p}$-modules: $$\frac{D_{\pi \circ \rho_{\pmb{4},3}}}{\Fil^+D_{\pi \circ \rho_{\pmb{4},3}}} \cong \frac{D_{\rho_{\pmb{3},2}}}{\Fil^+D_{\rho_{\pmb{3},2}}} \oplus \frac{D_{\rho_{\pmb{1},2}}}{\Fil^+D_{\rho_{\pmb{1},2}}}.$$
\end{enumerate}
Following the proof of Theorem \ref{selmer-factorization}, it is in these cases that one can establish the following decomposition of non-primitive Selmer groups:
\begin{align*}
\Sel_{\pi \circ \rho_{\pmb{4},3}}(\Q) \cong \Sel_{\rho_{\pmb{3},2}}(\Q) \oplus \Sel_{\rho_{\pmb{1},2}}(\Q).
\end{align*}
\end{Remark}

\section{Issues related to primitivity} \label{primitivity-issues}

\subsection{Differences between primitive and non-primitive Selmer groups}

Throughout this section, we will assume that the hypothesis \ref{ad-tor} holds. That is $\Sel_{\rho_{\pmb{3},2}}(\Q)^\vee$ is a torsion $R[[\Gamma]]$-module. The proof of Proposition \ref{primitive-non-primitive-difference} does not rely on Theorem \ref{selmer-factorization} and Theorem \ref{specialization-result}. Using Theorem \ref{selmer-factorization} and Theorem \ref{specialization-result}, one can deduce that whenever the hypothesis \ref{ad-tor} is valid,  $\Sel_{\rho_{\pmb{4},3}}(\Q)^\vee$ is  a torsion $T[[\Gamma]]$-module. We would first like to observe that as a direct consequence of Proposition \ref{primitive-non-primitive-difference}, we have the following equality in the divisor group of $T[[\Gamma]]$:
\begin{align*}
\Div\left(\Sel_{\rho_{\pmb{4},3}}(\Q)^\vee\right) - \sum \limits_{\nu \in \Sigma_0}\Div\left(\Loc(\nu,\rho_{\pmb{4},3})^\vee\right)=  \Div\left(\S_{\rho_{\pmb{4},3}}(\Q)^\vee\right) - \Div\left(H^0(G_\Sigma,D^*_{\rho_{\pmb{4},3}})^\vee\right).
\end{align*}
We have the following equalities in the divisor group of $R[[\Gamma]]$:
\begin{align*}
\Div\left(\Sel_{\rho_{\pmb{3},2}}(\Q)^\vee\right) - \sum \limits_{\nu \in \Sigma_0}\Div\left(\Loc(\nu,\rho_{\pmb{3},2})^\vee\right)&=  \Div\left(\S_{\rho_{\pmb{3},2}}(\Q)^\vee\right) - \Div\left(H^0(G_\Sigma,D^*_{\rho_{\pmb{3},2}})^\vee\right). \\
\Div\left(\Sel_{\rho_{\pmb{1},2}}(\Q)^\vee\right) - \sum \limits_{\nu \in \Sigma_0}\Div\left(\Loc(\nu,\rho_{\pmb{1},2})^\vee\right) &=  \Div\left(\S_{\rho_{\pmb{1},2}}(\Q)^\vee\right) - \Div\left(H^0(G_\Sigma,D^*_{\rho_{\pmb{1},2}})^\vee\right).
\end{align*}
It is this observation that enables us to write the main conjectures \ref{mainconj-3}, \ref{mainconj-2} and \ref{mainconj-1} in terms of the non-primitive Selmer group (instead of the primitive Selmer group), the local factors away from $p$ and the primitive $p$-adic $L$-function, as mentioned in the introduction.
\begin{proposition} \label{primitive-non-primitive-difference}
We have the following exact sequence:
\begin{align} \label{surj-rho1}
0 \rightarrow \S_{\rho_{\pmb{1},2}}(\Q) \rightarrow \Sel_{\rho_{\pmb{1},2}}(\Q) \rightarrow \prod \limits_{\nu \in \Sigma_0} \ \Loc(\nu,\rho_{\pmb{1},2})  \rightarrow \coker (\phi_{\rho_{\pmb{1},2}}) \rightarrow 0.
\end{align}
In the divisor group of $R[[\Gamma]]$, we have $$\Div\left( \coker (\phi_{\rho_{\pmb{1},2}})^\vee \right)  = \Div\left( H^0(G_\Sigma, D^*_{{\rho}_{\pmb{1},2}})^\vee\right).$$
In addition if we assume \ref{ad-tor} holds, we also have the following exact sequences:
\begin{align} \label{surj-rho3}
&0 \rightarrow \S_{\rho_{\pmb{4},3}}(\Q) \rightarrow \Sel_{\rho_{\pmb{4},3}}(\Q) \rightarrow \prod \limits_{\nu \in \Sigma_0} \ \Loc(\nu,\rho_{\pmb{4},3}) \rightarrow 0.
\end{align}
\begin{align} \label{surj-rho2}
&0 \rightarrow \S_{\rho_{\pmb{3},2}}(\Q) \rightarrow \Sel_{\rho_{\pmb{3},2}}(\Q) \rightarrow \prod \limits_{\nu \in \Sigma_0} \ \Loc(\nu,\rho_{\pmb{3},2})  \rightarrow   \coker (\phi_{\rho_{\pmb{3},2}})  \rightarrow 0.
\end{align}
The map $\phi_{\rho_{\pmb{4},3}}$ is surjective. In the divisor group of $T[[\Gamma]]$, we have $$  \Div(H^0(G_\Sigma, D^*_{\rho_{\pmb{4},3}})^\vee) = 0.$$
In the divisor group of $R[[\Gamma]]$, we have
$$ \Div\left( \coker (\phi_{\rho_{\pmb{3},2}})^\vee \right) = \Div\left( H^0(G_\Sigma, D^*_{\rho_{\pmb{3},2}})^\vee\right).$$
\end{proposition}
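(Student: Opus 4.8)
The plan is to produce all four exact sequences from Greenberg's Proposition~\ref{surjectivity-greenberg} (for $\rho_{\pmb{1},2}$ and $\rho_{\pmb{3},2}$) and from Proposition~\ref{surj-PN-residuefield} (for $\rho_{\pmb{4},3}$) after verifying the running hypotheses \ref{Tor} and \ref{p-critical}, and then to compute the divisor of $\coker(\phi_\varrho)^\vee$ in each case. First I would check \ref{p-critical} for all three representations; this is a rank count comparing $\mathrm{Rank}_{R[[\Gamma]]}\big(\Fil^+L_{\pmb{\cdot}}\big)$ with the dimension $d^+$ of the $+1$-eigenspace of complex conjugation. Since $\rho_F$ is odd, conjugation acts on $\Ad^0(L_F)$ with eigenvalues $+1,-1,-1$; twisting by $\chi$ gives $d^+=1$ or $2$ according to the parity of $\chi$, which is exactly the rank of $\Fil^+L_{\pmb{3},2}$ — the filtration was defined case-by-case for precisely this reason — and likewise $d^+$ is $1$ or $0$ for $\rho_{\pmb{1},2}$, matching $\Fil^+L_{\pmb{1},2}$, while $d^+=2=\mathrm{Rank}\big(\Fil^+L_{\pmb{4},3}\big)$. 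Hypothesis \ref{Tor} holds for $\rho_{\pmb{1},2}$ by Iwasawa's theorem \cite{MR0349627} and is exactly \ref{ad-tor} for $\rho_{\pmb{3},2}$, so Proposition~\ref{surjectivity-greenberg} supplies the surjectivity of $\phi^{\Sigma_0}_{\rho_{\pmb{1},2}}$ and $\phi^{\Sigma_0}_{\rho_{\pmb{3},2}}$, the four-term exact sequences (\ref{surj-rho1}) and (\ref{surj-rho2}) (these being (\ref{ses-primitive-non}) in the present notation), and the inclusions $\coker(\phi_\varrho)^\vee\subset H^1(G_\Sigma,L^*_\varrho)_{\mathrm{tor}}$ for $\varrho\in\{\rho_{\pmb{1},2},\rho_{\pmb{3},2}\}$.

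For the assertions about $\rho_{\pmb{4},3}$ I would avoid \ref{Tor} for $\rho_{\pmb{4},3}$ altogether (its validity is deduced elsewhere from Theorems~\ref{selmer-factorization} and~\ref{specialization-result}) and argue from Proposition~\ref{surj-PN-residuefield}. Its hypothesis is that $H^0\big(G_\Sigma,\mathcal{V}_{\rho_{F,F}(\chi)^*(\Psi)}\big)=0$ for every character $\Psi\colon G_\Sigma\to\mathcal{A}(\T)^\times$; unwinding the definitions, $\mathcal{V}_{\rho_{F,F}(\chi)^*(\Psi)}$ is a twist of $\Hom\big(L_F\otimes_{i_2}\mathcal{A}(\T),\,L_F\otimes_{i_1}\mathcal{A}(\T)\big)$, so this vanishing says that, writing $V_j:=\rho_F\otimes_{i_j}\mathcal{A}(\T)$, one never has an isomorphism $V_1\cong V_2\otimes\eta$. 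This I would deduce from the $p$-distinguished ordinary filtration (\ref{filtration-hida-family}) of $\rho_F$ together with the fact, due to Hida (Lemma~3.2 of \cite{hida1985ap}, already invoked in Section~\ref{section2-dasgupta-factorization}), that $\epsilon_F$ with $\epsilon_F(\Frob_p)=a_p(F)$, and likewise $\delta_F|_{I_p}=\det(\rho_F)|_{I_p}$, genuinely vary with the weight variable: restricting a hypothetical $V_1\cong V_2\otimes\eta$ to $\Gal{\overline{\Q}_p}{\Q_p}$ and comparing the two canonical $p$-ordinary filtrations forces an equality of characters incompatible with this variation. Proposition~\ref{surj-PN-residuefield} then yields $\Div\big(H^0(G_\Sigma,D^*_{\rho_{\pmb{4},3}})^\vee\big)=0$ in the divisor group of $T[[\Gamma]]$ and $\coker(\phi_{\rho_{\pmb{4},3}})=0$, i.e.\ $\phi_{\rho_{\pmb{4},3}}$ is surjective. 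Since $\phi^{\Sigma_0}_{\rho_{\pmb{4},3}}$ is a coordinate projection of $\phi_{\rho_{\pmb{4},3}}$, it too is surjective, and then the standard identification $\coker(\phi_{\rho_{\pmb{4},3}})\cong\coker\!\big(\Sel_{\rho_{\pmb{4},3}}(\Q)\to\prod_{\nu\in\Sigma_0}\Loc(\nu,\rho_{\pmb{4},3})\big)$ — valid because $\phi^{\Sigma_0}_{\rho_{\pmb{4},3}}$ is surjective — gives the short exact sequence (\ref{surj-rho3}).

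It remains to prove $\Div(\coker(\phi_\varrho)^\vee)=\Div(H^0(G_\Sigma,D^*_\varrho)^\vee)$ in the divisor group of $R[[\Gamma]]$ for $\varrho\in\{\rho_{\pmb{1},2},\rho_{\pmb{3},2}\}$. By Proposition~\ref{surj-PN-residuefield} together with Lemma~\ref{nakayama-lemma}, both divisors are supported on the same finite set of height-one primes $\p$ of $R[[\Gamma]]$ — those with $H^0(G_\Sigma,\mathcal{V}_{\varrho^*(\Psi_\p)})\ne0$ — so the identity reduces to matching multiplicities at each such $\p$. The plan is to localize at $\p$, where $R[[\Gamma]]_\p$ is a DVR, and compute both lengths: on the one hand $H^1(G_\Sigma,L^*_\varrho)_{\mathrm{tor}}$ is identified by (\ref{useful-tor-h1}) with $H^0(G_\Sigma,L^*_\varrho/\xi L^*_\varrho)$ for a non-zero $\xi\in R$ killing the torsion, and the Pontryagin-duality identification $D^*_\varrho[\xi]\cong(L^*_\varrho/\xi L^*_\varrho)^\vee$ (using freeness of $L^*_\varrho$) ties this to $H^0(G_\Sigma,D^*_\varrho)^\vee$; on the other hand the discrepancy between $\coker(\phi_\varrho)^\vee$ and $H^1(G_\Sigma,L^*_\varrho)_{\mathrm{tor}}$ recorded in (\ref{coker-sub}) and (\ref{iso-cokernel-surjectivity}) is controlled through Lemma~\ref{local-cohomology-p} and local duality at $p$, the obstruction being a local cohomology module at $p$ that is finitely generated over $R$ and whose contribution can be computed directly at the exceptional primes. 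In the rank-one case $\rho_{\pmb{1},2}$ this entire computation is the classical one phrased in terms of the (non-)primitive classical Iwasawa modules of \cite{greenberg1989iwasawa,greenberg2014p}, the non-triviality of $H^0(G_\Sigma,D^*_{\rho_{\pmb{1},2}})^\vee$ when $\chi$ is the Teichm\"uller character reflecting the trivial zero of the Kubota--Leopoldt $p$-adic $L$-function.

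I expect the main obstacles to be: (i) in the second step, establishing the twist-inequivalence of the two weight specializations of $\rho_F$ with enough care to cover the CM case, where $\rho_{F,F}(\chi)$ itself is reducible and Corollary~\ref{abs-irreducible} is unavailable; and (ii) in the third step, the passage from the inclusion $\coker(\phi_\varrho)^\vee\subset H^1(G_\Sigma,L^*_\varrho)_{\mathrm{tor}}$ to an equality of divisors and thence to $\Div(H^0(G_\Sigma,D^*_\varrho)^\vee)$, which requires precise bookkeeping of the local terms at $p$ because $\coker(\phi_\varrho)^\vee$ is in general a proper submodule of $H^1(G_\Sigma,L^*_\varrho)_{\mathrm{tor}}$ and $H^0(G_\Sigma,D^*_\varrho)$ need not be annihilated by $\xi$.
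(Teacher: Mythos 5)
Your overall architecture coincides with the paper's: Proposition~\ref{surjectivity-greenberg} supplies the exact sequences (\ref{surj-rho1}), (\ref{surj-rho3}), (\ref{surj-rho2}) once \ref{Tor} and \ref{p-critical} are checked, and Proposition~\ref{surj-PN-residuefield} handles $\rho_{\pmb{4},3}$ via a local-at-$p$ inequivalence of the two weight specializations. Your argument for $\rho_{\pmb{4},3}$ is in fact the paper's: one compares the semi-simplifications of $\sigma_1|_{I_p}$ and $\sigma_2|_{I_p}$ and uses that $i_1\circ\delta_F$ and $i_2\circ\delta_F$ take values in $\O[[x_1]]$ and $\O[[x_2]]$ respectively, with $\O[[x_1]]\cap\O[[x_2]]=\O$. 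Your worry (i) about the CM case here is unfounded: both $\sigma_1$ and $\sigma_2$ remain irreducible (by \ref{IRR}), so Schur's lemma needs only their non-isomorphism, which the local argument gives regardless of whether $\rho_{F,F}(\chi)$ is reducible.

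The genuine gap is in your third step. For $\rho_{\pmb{3},2}$ the paper does not "localize and match multiplicities" on the $3$-dimensional module; it splits into two cases. If $F$ is not CM, $\Ad^0(\rho_F)$ is absolutely irreducible and Corollary~\ref{abs-irreducible} makes both divisors zero. If $F$ is CM, the decisive input is the $G_\Sigma$-equivariant decomposition $L_{\pmb{3},2}\cong L_{\Omega(\chi)\otimes\kappa^{-1}}\oplus L_{\varepsilon(\chi)\otimes\kappa^{-1}}$ with $\Omega=\Ind_\Q^K(\varphi/\varphi^c)$ absolutely irreducible, which kills the $\Omega$-component and reduces the whole identity to the rank-one case. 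Your proposal contains no mechanism playing this role. Moreover, the duality you invoke to relate the two sides is wrong as stated: $\D^*_\varrho[\xi]\cong\left(\LLL^*_\varrho/\xi\LLL^*_\varrho\right)^\vee$ gives $H^0(G_\Sigma,\D^*_\varrho[\xi])^\vee\cong H_0\!\left(G_\Sigma,\LLL^*_\varrho/\xi\LLL^*_\varrho\right)$ (coinvariants), whereas (\ref{useful-tor-h1}) produces the invariants $H^0\!\left(G_\Sigma,\LLL^*_\varrho/\xi\LLL^*_\varrho\right)$; these need not agree for a general $3$-dimensional module. The paper sidesteps this by reducing to a character $\psi$ and computing both modules explicitly, each isomorphic to $R[[\Gamma]]/(\psi(\gamma_0)-1)$. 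Finally, even in the rank-one case you must verify the hypothesis of (\ref{iso-cokernel-surjectivity}) — that $\ker(\alpha_p)$ lies in the maximal divisible subgroup — before the inclusion (\ref{coker-sub}) becomes an isomorphism; the paper does this by showing $H^0(I_p,\D_{\rho_{\pmb{1},2}})=0$ because the residual character contains the ramified Teichm\"uller factor $\omega$. You flag this as an obstacle but supply no argument, so as written the divisor identities for $\rho_{\pmb{1},2}$ and $\rho_{\pmb{3},2}$ are not established.
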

The proof of Proposition \ref{primitive-non-primitive-difference} follows from Proposition \ref{surjectivity-greenberg} along with Lemmas \ref{coker-3} - \ref{coker-2-cm}.

\begin{lemma}[Cokernel of $\phi_{\rho_{\pmb{4},3}}$] \label{coker-3}
The map $\phi_{\rho_{\pmb{4},3}}$ is surjective. The $T[[\Gamma]]$-module $H^0(G_\Sigma, D^*_{\rho_{\pmb{4},3}})^\vee$ is pseudo-null .
\end{lemma}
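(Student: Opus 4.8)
The plan is to deduce Lemma~\ref{coker-3} from Proposition~\ref{surj-PN-residuefield}, applied with $\RRR=\T$ and $\varrho=\rho_{F,F}(\chi)$; since $\varrho\otimes\kappa^{-1}=\rho_{\pmb{4},3}$, the two conclusions of that proposition are precisely that $\coker(\phi_{\rho_{\pmb{4},3}})=0$ (so $\phi_{\rho_{\pmb{4},3}}$ is surjective) and that $H^0(G_\Sigma,D^*_{\rho_{\pmb{4},3}})^\vee$ is a pseudo-null $\T[[\Gamma]]$-module. Thus everything reduces to verifying the hypothesis of Proposition~\ref{surj-PN-residuefield}: that $H^0(G_\Sigma,\mathcal{V}_{\varrho^*(\Psi)})=0$ for every character $\Psi\colon G_\Sigma\to\mathcal{A}(\T)^\times$. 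One cannot simply invoke Corollary~\ref{abs-irreducible} here, since $\rho_{F,F}$ need not be absolutely irreducible---when $F$ has complex multiplication, $\rho_{F,F}$ decomposes as a sum of two $2$-dimensional representations.

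First I would make this cohomology group explicit. Write $L_F^{(1)}:=L_F\otimes_{i_1}\T$ and $L_F^{(2)}:=L_F\otimes_{i_2}\T$, with associated $\T$-representations $\rho_F^{(1)}$ and $\rho_F^{(2)}$ of $G_\Sigma$. The adjunction isomorphism $\Hom_\T\bigl(\Hom_\T(A,B),\T(\chi_p)\bigr)\cong\Hom_\T(B,A)(\chi_p)$ for finite free $\T$-modules $A,B$ identifies $\LLL^*_\varrho$, $G_\Sigma$-equivariantly, with $\Hom_\T\bigl(L_F^{(2)},L_F^{(1)}\bigr)(\chi_p\chi^{-1})$. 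Extending scalars to $\mathcal{A}(\T)$ and twisting by $\Psi$, a nonzero element of $H^0(G_\Sigma,\mathcal{V}_{\varrho^*(\Psi)})$ is then the same as a nonzero $G_\Sigma$-equivariant homomorphism between $\rho_F^{(2)}$ and $\rho_F^{(1)}\otimes\eta$ as representations over $\mathcal{A}(\T)$, where $\eta:=\chi_p\chi^{-1}\Psi$. By~\ref{IRR} the representation $\rho_F$---and hence each of $\rho_F^{(1)}$, $\rho_F^{(2)}$ and $\rho_F^{(1)}\otimes\eta$---is absolutely irreducible, so any such nonzero homomorphism, being a nonzero map between $2$-dimensional representations over a field, is an isomorphism. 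It therefore suffices to show $\rho_F^{(2)}\not\cong\rho_F^{(1)}\otimes\eta$ over $\mathcal{A}(\T)$ for every character $\eta$.

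The contradiction should be produced at $p$. Suppose $\rho_F^{(2)}\cong\rho_F^{(1)}\otimes\eta$ and restrict to the inertia group $I_p$. Using the filtration~\ref{filtration-hida-family} and the fact that $\epsilon_F$ is unramified, the Jordan--H\"older factors of $\rho_F|_{I_p}$ are $\{\delta_F|_{I_p},\,1\}$, and $\delta_F|_{I_p}=\det(\rho_F)|_{I_p}$. Matching the factors on the two sides---first their products, i.e. the determinant on $I_p$, and then distinguishing the two possibilities for which of $\delta_F^{(1)}\eta|_{I_p}$ and $\eta|_{I_p}$ is the trivial character of $I_p$---one finds, in either case, that $i_1\bigl(\delta_F|_{I_p}\bigr)=i_2\bigl(\delta_F|_{I_p}\bigr)^{\pm1}$ as characters $I_p\to\T^\times$. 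Since $\det(\rho_F)$ takes values in $\O[[x]]^\times$ by~(\ref{determinant-properties}) and $\O[[x_1]]\cap\O[[x_2]]=\O$ inside $\T$, this equality forces $\delta_F|_{I_p}$ to be valued in $\O^\times$, i.e. to be constant in the weight variable.

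The main obstacle is to see that this is impossible: $\det(\rho_F)|_{I_p}$ genuinely varies with the weight. At a classical specialization of $F$ of weight $k$, $\det(\rho_F)|_{I_p}$ becomes the product of a character of finite order with $\chi_p^{k-1}|_{I_p}$; since $\chi_p|_{I_p}$ has infinite order, these specializations are pairwise distinct, so $\delta_F|_{I_p}=\det(\rho_F)|_{I_p}$ is not $\O^\times$-valued. This is the same kind of weight-variation input already used---for $a_p(F)=\epsilon_F(\Frob_p)$---in the proof of Theorem~\ref{selmer-factorization}. The resulting contradiction establishes $H^0(G_\Sigma,\mathcal{V}_{\varrho^*(\Psi)})=0$ for all $\Psi$, and Proposition~\ref{surj-PN-residuefield} then gives Lemma~\ref{coker-3}. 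Apart from the two-case bookkeeping with inertia characters and this weight-variation step, the argument is formal.
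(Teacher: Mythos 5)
Your proposal is correct and follows essentially the same route as the paper: reduce to Proposition \ref{surj-PN-residuefield}, identify $H^0(G_\Sigma,\mathcal{V}_{\rho_{\pmb{4},3}^*(\Psi)})$ with $G_\Sigma$-homomorphisms between the two irreducible twists of $\rho_F$ over $\mathcal{A}(\T)$, and rule out an isomorphism by comparing the semi-simplifications on $I_p$, using that $i_1\circ\delta_F$ and $i_2\circ\delta_F$ land in $\O[[x_1]]$ and $\O[[x_2]]$ respectively with $\O[[x_1]]\cap\O[[x_2]]=\O$ while $\det(\rho_F)|_{I_p}$ is not $\O$-valued. The only (harmless) difference is that you justify the last non-constancy claim by varying classical weights rather than quoting (\ref{determinant-properties}) directly.
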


\begin{proof}
Consider a character $\Psi : G_\Sigma \rightarrow A(T)^\times$. Here, $A(T)$ denotes the algebraic closure of the fraction field of $T$. By Proposition \ref{surj-PN-residuefield} (and following the notations there), it suffices to show $H^0(G_\Sigma, \mathcal{V}_{\rho_{\pmb{4},3}^*(\Psi)}) \stackrel{?}{=}0$, where
$$\mathcal{V}_{\rho_{\pmb{4},3}^*(\Psi)} := \Hom_{A(T)}\bigg( L_F\otimes_{i_2} A(T) , L_F \otimes_{i_1} A(T) (\chi^{-1}\chi_p \Psi)\bigg). $$
Recall that $T$ is an integral extension of $\O[[x_1,x_2]]$, where $x_1$ and $x_2$ are used to denote the ``weight variables''. Let $\sigma_1:G_\Sigma \rightarrow \Gl_2(A(T))$ and $\sigma_2 : G_\Sigma \rightarrow \Gl_2(A(T))$ be the Galois representations given by the action of $G_\Sigma$ on $L_F \otimes_{i_1}A(T)( \chi^{-1}\chi_p \Psi)$ and $L_F \otimes_{i_2}A(T)$ respectively. The representations $\sigma_1$ and $\sigma_2$ are both irreducible. We will simply show that that the semi-simplifications of $\sigma_1 \mid_{I_p}$ and $\sigma_2 \mid_{I_p}$ are not the same.

\begin{itemize}
\item Let $\sigma_2 \mid_{I_p} $ denote the restriction of $\sigma_2$ to the inertia subgroup at $p$. The semisimplification of $\sigma_2 \mid_{I_p} $ is a direct sum of two characters: $i_2 \circ \delta_F$ and the trivial character. By (\ref{determinant-properties}), the image of the character $i_2 \circ \delta_F$ lies inside $\O[[x_2]]$ but not inside $\O$.
\item Let $\sigma_1 \mid_{I_p} $ denote the restriction of $\sigma_1$ to the inertia subgroup at $p$.  The semisimplification of $\sigma_1 \mid_{I_p} $ is a direct sum of two characters: $(i_1 \circ \delta_F)  \chi^{-1} \chi_p \Psi$ and $ \chi^{-1} \chi_p \Psi$. By (\ref{determinant-properties}), the image of $i_1 \circ \delta_F$ lies inside $\O[[x_1]]$ but not inside $\O$.
\end{itemize}

Note that $\O[[x_1]] \cap \O[[x_2]] = \O$. As a result, both the ratios $\frac{(i_1 \circ \delta_F)  \chi^{-1} \chi_p \Psi}{\chi^{-1} \chi_p \Psi}$  and $\frac{\chi^{-1} \chi_p \Psi}{(i_1 \circ \delta_F)  \chi^{-1} \chi_p \Psi}$ are valued in $\O[[x_1]]$ and hence cannot equal $\frac{i_2 \circ \delta_F}{1}$. So, the semi-simplifications of $\sigma_1 \mid_{I_p} $ and $\sigma_2 \mid_{I_p} $ cannot be isomorphic. Consequently, $\sigma_1$ and $\sigma_2$ cannot be isomorphic as $G_\Sigma$-representations over $A(T)$. Using these observations, we can now conclude that $H^0(G_\Sigma, \mathcal{V}_{\rho_{\pmb{4},3}^*(\Psi)})=0.$
\end{proof}

\begin{lemma}[Cokernel of $\phi_{\rho_{\pmb{1},2}}$] \label{coker-1}
In the divisor group of $R[[\Gamma]]$, we have the following equality: $$\Div\left( \coker(\phi_{\rho_{\pmb{1},2}})^\vee \right) = \Div\left(H^0(G_\Sigma, D^*_{\rho_{\pmb{1},2}})^\vee \right).$$
\end{lemma}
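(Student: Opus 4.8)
The plan is to deduce Lemma~\ref{coker-1} from Proposition~\ref{surjectivity-greenberg} together with the analysis of $H^1(G_\Sigma,\LLL^*_{\varrho\otimes\kappa^{-1}})_{\mathrm{tor}}$ in~(\ref{useful-tor-h1}), applied to the one-dimensional representation $\varrho=\chi$, so that $\varrho\otimes\kappa^{-1}=\rho_{\pmb{1},2}$. First I would record that $\rho_{\pmb{1},2}$ satisfies \ref{p-critical} (immediate from the parity-dependent definition of $\Fil^+L_{\pmb{1},2}$, which makes the rank of $\Fil^+\LLL_{\rho_{\pmb{1},2}}$ equal to $d^+\in\{0,1\}$) and \ref{Tor} (this is the theorem of Iwasawa in~\cite{MR0349627}, already invoked above). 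Hence Proposition~\ref{surjectivity-greenberg} gives the short exact sequence~(\ref{surj-rho1}), the inclusion $\coker(\phi_{\rho_{\pmb{1},2}})^\vee\subseteq H^1(G_\Sigma,\LLL^*_{\rho_{\pmb{1},2}})_{\mathrm{tor}}$ from~(\ref{coker-sub}), and — after verifying that $\ker(\alpha_p)$ lies in the maximal $R[[\Gamma]]$-divisible submodule of $H^1(\Gal{\overline{\Q}_p}{\Q_p},\D_{\rho_{\pmb{1},2}})$ — the isomorphism $\coker(\phi_{\rho_{\pmb{1},2}})^\vee\cong H^1(G_\Sigma,\LLL^*_{\rho_{\pmb{1},2}})_{\mathrm{tor}}$ from~(\ref{iso-cokernel-surjectivity}).

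Next I would make both sides of the asserted identity explicit. Since $\LLL_{\rho_{\pmb{1},2}}=R[[\Gamma]](\chi\kappa^{-1})$ we have $\LLL^*_{\rho_{\pmb{1},2}}=R[[\Gamma]](\eta)$ with $\eta:=\chi^{-1}\chi_p\kappa$, so $D^*_{\rho_{\pmb{1},2}}=\hat{R[[\Gamma]]}(\eta)$, and a direct Pontryagin duality computation gives $H^0(G_\Sigma,D^*_{\rho_{\pmb{1},2}})^\vee\cong R[[\Gamma]]/I_\eta$, where $I_\eta$ is generated by $\{\eta(g)-1:g\in G_\Sigma\}$; while~(\ref{useful-tor-h1}) identifies $H^1(G_\Sigma,\LLL^*_{\rho_{\pmb{1},2}})_{\mathrm{tor}}$ with $H^0(G_\Sigma,\LLL^*_{\rho_{\pmb{1},2}}/\xi\LLL^*_{\rho_{\pmb{1},2}})\cong (R[[\Gamma]]/\xi)[I_\eta]$. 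Choosing $g_0\in G_\Sigma$ with $\kappa(g_0)=\gamma_0$ and setting $c:=\chi(g_0)\chi_p(g_0)^{-1}\in R^\times$, the relation $\eta(g_0)-1=\chi(g_0)^{-1}\chi_p(g_0)(\gamma_0-c)$ shows $\gamma_0-c\in I_\eta$, so both modules are annihilated by $\gamma_0-c$ and hence are modules over $R[[\Gamma]]/(\gamma_0-c)$. If $\gamma_0-c$ is a unit of $R[[\Gamma]]$ (which holds unless $c\equiv 1\bmod\mathfrak{m}_R$; in particular always when $\chi$ is even, where one also sees $H^0(G_\Sigma,D^*_{\rho_{\pmb{1},2}})^\vee=0$ directly from complex conjugation since $\eta(cc)=-1$) both modules vanish and the identity is trivial. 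Otherwise $(\gamma_0-c)$ is the unique height-one prime of $R[[\Gamma]]$ in the support of either module, so the asserted equality of divisors reduces to an equality of lengths over the discrete valuation ring $R[[\Gamma]]_{(\gamma_0-c)}$.

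This last length comparison is the technical heart, and I expect it to be the main obstacle: one must show $\big((R[[\Gamma]]/\xi)[I_\eta]\big)_{(\gamma_0-c)}$ and $\big(R[[\Gamma]]/I_\eta\big)_{(\gamma_0-c)}$ have the same length, the subtlety being that $\xi$ was only chosen to annihilate $H^1(G_\Sigma,\LLL^*_{\rho_{\pmb{1},2}})_{\mathrm{tor}}$, so one must control its valuation along $(\gamma_0-c)$; concretely this comes down to how $\eta$ restricted to $G_{\Q_\infty}=\ker(\kappa)$ behaves, which governs whether $I_\eta$ equals $(\gamma_0-c)$ or is strictly larger. A cleaner route that sidesteps the precise choice of $\xi$ and the divisibility hypothesis for~(\ref{iso-cokernel-surjectivity}) is to prove the two inequalities separately: ``$\le$'' from the inclusion~(\ref{coker-sub}) together with the direct computation that $\Div\big(H^1(G_\Sigma,\LLL^*_{\rho_{\pmb{1},2}})_{\mathrm{tor}}\big)=\Div\big(H^0(G_\Sigma,D^*_{\rho_{\pmb{1},2}})^\vee\big)$, and ``$\ge$'' from a Poitou--Tate duality argument for the rank-one representation $\rho_{\pmb{1},2}$ identifying the image of $\phi_{\rho_{\pmb{1},2}}$ inside its cokernel. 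In either form the conceptual content is classical: for a character, the failure of surjectivity of the global-to-local map defining the primitive Selmer group is measured, up to pseudo-null modules, by the invariants of the Tate dual, which is precisely the computation underlying Greenberg's description of $p$-adic Artin $L$-functions in~\cite{greenberg2014p}.
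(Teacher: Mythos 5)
Your setup is sound and you have correctly located where the work lies, but the proof is not closed: the length comparison at $(\gamma_0-c)$, which you yourself flag as ``the technical heart'' and ``the main obstacle,'' is precisely the content of the lemma, and neither of your exit routes supplies it. The first route defers it outright; in the second, the ``$\le$'' half still requires the direct computation that $\Div\left(H^1(G_\Sigma,L^*_{\pmb{1},2})_{\mathrm{tor}}\right)=\Div\left(H^0(G_\Sigma,D^*_{\rho_{\pmb{1},2}})^\vee\right)$ --- the same unproved comparison --- and the ``$\ge$'' half is an unsubstantiated appeal to Poitou--Tate. You also leave unverified the hypothesis of (\ref{iso-cokernel-surjectivity}) that $\ker(\alpha_p)$ lies in the maximal divisible submodule. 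Finally, the dichotomy ``$\gamma_0-c$ a unit or not'' is not the right one: $c$ depends on the choice of $g_0$ unless $\eta$ factors through $\Gamma$, and when the prime-to-$\Gamma$ part of $\eta$ has order divisible by $p$ (e.g.\ $\chi$ of $p$-power order and conductor prime to $p$), both modules can be nonzero and pseudo-null while $\gamma_0-c$ is a non-unit; the equality of divisors then holds because both sides vanish, not because the two lengths at $(\gamma_0-c)$ agree for a common structural reason.

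The paper closes the gap by writing $\chi=\varsigma\tau$ and $\chi_p=\omega\varkappa$, so that $\rho^*_{\pmb{1},2}=(\varsigma^{-1}\omega)\cdot(\tau^{-1}\varkappa\kappa)$ with the second factor factoring through $\Gamma$. If $\varsigma^{-1}\omega$ is nontrivial, it is nontrivial on $\ker(\kappa)$, so every specialization $\varrho^*(\Psi)$ with $\Psi$ factoring through $\Gamma$ has no invariants, and Proposition \ref{surj-PN-residuefield} gives at once that $\coker(\phi_{\rho_{\pmb{1},2}})=0$ and that $H^0(G_\Sigma,D^*_{\rho_{\pmb{1},2}})^\vee$ is pseudo-null; both divisors vanish, and (\ref{iso-cokernel-surjectivity}) is never needed in this case. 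If $\varsigma^{-1}\omega$ is trivial, then $\eta$ factors through $\Gamma$ and $I_\eta$ is the principal ideal $(\lambda)$ with $\lambda=\tau^{-1}\varkappa\kappa(\gamma_0)-1$; moreover $\chi=\omega\tau$ is odd, so $\Fil^+D_{\rho_{\pmb{1},2}}=0$, and the residual representation is $\omega$, ramified at $p$, whence $H^0(I_p,D_{\rho_{\pmb{1},2}})=0$ and $\ker(\alpha_p)=0$, legitimizing (\ref{iso-cokernel-surjectivity}). The valuation of $\xi$ is then controlled by the replacement trick you are missing: $\xi\lambda$ is again an annihilator of $H^1(G_\Sigma,L^*_{\pmb{1},2})_{\mathrm{tor}}$, so (\ref{useful-tor-h1}) yields $H^1(G_\Sigma,L^*_{\pmb{1},2})_{\mathrm{tor}}\cong(\xi)/(\xi\lambda)\cong R[[\Gamma]]/(\lambda)\cong H^0(G_\Sigma,D^*_{\rho_{\pmb{1},2}})^\vee$, independently of the choice of $\xi$. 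That explicit identification, made possible by separating off the part of $\eta$ not factoring through $\Gamma$, is the step your proposal lacks.
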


\begin{proof}

Since $\chi$ is a finite character, it is possible to write $\chi$ as the product $\varsigma \cdot \tau$, where the two characters $\varsigma$ and $\tau$ are described below.
\begin{itemize}
\item $\varsigma : G_\Sigma \rightarrow \O^\times$ is a finite character such that $\overline{\Q}^{\ker(\varsigma)} \cap \Q_\infty = \Q$.
\item $\tau : G_\Sigma \twoheadrightarrow \Gamma \rightarrow R[[\Gamma]]^\times$ is a finite character that factors through $\Gal{\Q_\infty}{\Q}$.
\end{itemize}

The cyclotomic character $\chi_p$ can be written as a product $\omega \varkappa$.  Here $\omega: G_\Sigma \rightarrow \O^\times$ is the Teichmuller character and $\varkappa: G_\Sigma \twoheadrightarrow \Gamma \xrightarrow {\cong} 1+p\Z_p \hookrightarrow \O^\times$ is the natural Galois character that factors through $\Gamma$. The Galois representation $\rho_{\pmb{1},2}^*$ equals $\varsigma^{-1}\omega \tau^{-1} \varkappa \kappa$. Note that the character $\tau^{-1} \varkappa \kappa : G_\Sigma \rightarrow R[[\Gamma]]^\times$ factors through $\Gamma$. First suppose $\varsigma^{-1} \omega$ is non-trivial. Then $G_\Sigma$ acts non-trivially on $V_{\varsigma^{-1} \omega\Psi}$, for all characters $\Psi :G_\Sigma \rightarrow A(R)^\times$ that factor through $\Gamma$. Here, $A(R)$ is the algebraic closure of the fraction field of $R$ and $V_{\varsigma^{-1} \omega\Psi}$ is the one-dimensional $A(R)$ vector space on which $G_\Sigma$ acts by $\varsigma^{-1} \omega\Psi$. By Proposition \ref{surj-PN-residuefield}, the map $\phi_{\rho_{\pmb{1},2}}$ is surjective and $H^0(G_\Sigma, D^*_{\rho_{\pmb{1},2}})^\vee$ is a pseudo-null $R[[\Gamma]]$-module. Hence in this case, $\Div\left( \coker(\phi_{\rho_{\pmb{1},2}})^\vee \right) = \Div\left(H^0(G_\Sigma, D^*_{\rho_{\pmb{1},2}})^\vee \right)=0$. \\

Now let us suppose $\varsigma^{-1}\omega$ is trivial. In this case,
\begin{align} \label{iso-discrete}
H^0\left(G_\Sigma,D^*_{\rho_{\pmb{1},2}}\right)^\vee = H^0\left(G_\Sigma, \hat{R[[\Gamma]]}(\tau^{-1} \varkappa \kappa) \right)^\vee \cong \frac{R[[\Gamma]]}{\left(\tau^{-1} \varkappa \kappa(\gamma_0)-1\right)}.
\end{align}
Also, $\Fil^+D_{\rho_{\pmb{1},2}}$ equals zero since $\chi$ (which equals $\omega \tau$ in this case) is an odd character. The inflation-restriction exact sequence gives us the following equality:
$$ H^1\left(\Gamma_p, H^0\left(I_p,D_{\rho_{\pmb{1},2}}\right)\right) \cong \ker \bigg( H^1(\Gal{\overline{\Q}_p}{\Q_p},D_{\rho_{\pmb{1},2}}) \xrightarrow {\alpha_p} H^1\left(I_p,D_{\rho_{\pmb{1},2}}\right)^{\Gamma_p} \bigg). $$
The residual representation associated to $\rho_{\pmb{1},2}$ is ramified at $p$ (since the residual representation associated to the Teichmuller character $\omega$ is ramified at $p$). This lets us conclude that $H^0\left(I_p,D_{\rho_{\pmb{1},2}}\right)=0$. Hence, $\ker(\alpha_p)=0$. One can apply Proposition \ref{surjectivity-greenberg} to obtain the isomorphism  $\coker(\phi_{\rho_{\pmb{1},2}})^\vee \cong H^1(G_\Sigma, L^*_{\pmb{1},2})_{\text{tor}}$. If $\xi$ is a non-zero annihilator for $H^1(G_\Sigma, L_{\pmb{1},2}^*)_{\text{tor}}$, then so is $\xi (\tau^{-1} \varkappa \kappa(\gamma_0)-1)$. Equation (\ref{useful-tor-h1}) gives us the following isomorphism of $R[[\Gamma]]$-modules:
\begin{align} \label{iso-compact}
\coker(\phi_{\rho_{\pmb{1},2}})^\vee \cong H^1(G_\Sigma, L_{\pmb{1},2}^*)_{\text{tor}}  \cong \frac{(\xi)}{(\xi)(\tau^{-1} \varkappa \kappa(\gamma_0)-1)}\cong \frac{R[[\Gamma]]}{(\tau^{-1} \varkappa \kappa(\gamma_0)-1)}.
\end{align}
From equations (\ref{iso-discrete}) and (\ref{iso-compact}), note that all the $R[[\Gamma]]$-modules appearing in (\ref{iso-compact}) are isomorphic to $H^0\left(G_\Sigma,D^*_{\rho_{\pmb{1},2}}\right)^\vee $. As a result, we have $\Div\left( \coker(\phi_{\rho_{\pmb{1},2}})^\vee \right) = \Div\left(H^0(G_\Sigma, D^*_{\rho_{\pmb{1},2}})^\vee \right)$.
\end{proof}

\begin{Remark} \label{auxillary-remark-character}
Since we haven't placed any restrictions on the Dirichlet character $\chi$, note that Lemma \ref{coker-1} is valid for the cyclotomic deformation of any Dirichlet character. We shall use this fact to prove Lemma \ref{coker-2-cm}.
\end{Remark}

The investigation of $\coker(\phi_{\rho_{\pmb{3},2}})$ will depend on whether $F$ is a CM-Hida family or not (see section 3 of \cite{hida2013image} for a definition of CM  Hida families). If $F$ is a not a CM Hida family, the adjoint representation $\Ad^0(\rho_F)$ is absolutely irreducible over the fraction field of $R$. Using Corollary \ref{abs-irreducible}, we have the following lemma:
\begin{lemma}[Cokernel of $\phi_{\rho_{\pmb{3},2}}$ - Non CM-Hida family]\label{coker-2-noncm}
Suppose $F$ is not a CM Hida family.
\begin{itemize}
\item The $R[[\Gamma]]$-module $H^0(G_\Sigma, D^*_{\rho_{\pmb{3},2}})^\vee$ is pseudo-null.
\item The map $\phi_{\rho_{\pmb{3},2}}$ is surjective.
\end{itemize}
\end{lemma}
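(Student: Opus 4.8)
The plan is to deduce the lemma directly from Corollary \ref{abs-irreducible}, so the real work is only to check that its hypotheses are met. First I would recall the input flagged in the text just above the lemma: when $F$ is not a CM Hida family, the adjoint representation $\Ad^0(\rho_F)$ is absolutely irreducible over the fraction field $\Frac(R)$; this is a consequence of the ``large image'' results for non-CM Hida families (cf.\ section 3 of \cite{hida2013image}). Since absolute irreducibility is preserved under twisting by a finite-order character, the three-dimensional representation $\Ad^0(\rho_F)(\chi)$ is again absolutely irreducible over $\Frac(R)$, and its Galois lattice $\Ad^0(L_F)(\chi)$ is a free $R$-module of rank $3$ thanks to \ref{IRR}, which supplies the integral model $L_F$ and hence $\Ad^0(L_F)$.

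Next I would set $\varrho := \Ad^0(\rho_F)(\chi)$ and observe that $\varrho \otimes \kappa^{-1}$ is precisely $\rho_{\pmb{3},2}$, with the lattice, discrete module, and filtration as fixed in Section \ref{section2-dasgupta-factorization}. Applying Corollary \ref{abs-irreducible} to $\varrho$, which is absolutely irreducible of dimension $3 > 1$, yields both assertions at once: $H^0(G_\Sigma, D^*_{\rho_{\pmb{3},2}})^\vee = H^0(G_\Sigma, \D^*_{\varrho \otimes \kappa^{-1}})^\vee$ is a pseudo-null $R[[\Gamma]]$-module, and $\coker(\phi_{\rho_{\pmb{3},2}}) = \coker(\phi_{\varrho \otimes \kappa^{-1}}) = 0$, i.e.\ the map $\phi_{\rho_{\pmb{3},2}}$ defining the primitive Selmer group is surjective. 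Unwinding, Corollary \ref{abs-irreducible} itself rests on Proposition \ref{surj-PN-residuefield}, whose hypothesis $H^0(G_\Sigma, \mathcal{V}_{\varrho^*(\Psi)}) = 0$ holds in our situation because $\varrho^*(\Psi)$ is again absolutely irreducible of dimension $3$ for every character $\Psi : G_\Sigma \to \mathcal{A}(R)^\times$, and a three-dimensional absolutely irreducible representation has no nonzero $G_\Sigma$-fixed vector; so I would at most spell out this one line if a referee asks.

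Everything after the cited input is formal, so the only genuine obstacle is ensuring that the absolute irreducibility of $\Ad^0(\rho_F)$ is invoked in the right generality: over $\Frac(R)$ for the normalization $R$ of the relevant irreducible component of Hida's Hecke algebra, and in a form compatible with the working hypotheses \ref{IRR} and \ref{p-Dis}, rather than merely for individual classical specializations of $F$. This is exactly what the large-image theory for non-CM families delivers, so I expect no difficulty beyond pinning down the precise reference.
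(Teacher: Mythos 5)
Your proposal is correct and matches the paper's argument exactly: the paper also notes that for non-CM $F$ the adjoint representation $\Ad^0(\rho_F)$ is absolutely irreducible over $\Frac(R)$ and then deduces the lemma directly from Corollary \ref{abs-irreducible} applied to $\varrho = \Ad^0(\rho_F)(\chi)$. Your extra remark unwinding Corollary \ref{abs-irreducible} through Proposition \ref{surj-PN-residuefield} is the intended justification and adds nothing beyond what the paper already relies on.
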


Suppose now that $F$ is a CM Hida family with complex multiplication by the imaginary quadratic field $K$ (where the prime $p$ splits). In this case, $\rho_F \cong \text{Ind}^{G_\Q}_{G_K}(\varphi)$, for some continuous character $\varphi : \Gal{\overline{\Q}}{K} \rightarrow \R^\times$ (see Proposition 3.2 in \cite{hida2013image}). That is, we have the following isomorphism of free $R$-modules that is $G_\Sigma$-equivariant:
\begin{align} \label{CM-induction-R}
L_F \cong \Ind^{G_\Q}_{G_K} \left(R(\phi)\right).
\end{align}
Let $\varepsilon : \Gal{\overline{\Q}}{\Q} \rightarrow \{\pm 1\}$ be the quadratic character associated to $K$. Let $c$ denote any lift in $\Gal{\overline{\Q}}{\Q}$ of the non-trivial element of $\Gal{K}{\Q}$. We define the character $\varphi^c : \Gal{\overline{\Q}}{K} \rightarrow R^\times$ as follows - $$\varphi^c(g) := \varphi(c g c^{-1}), \text{\ for all $g \in \Gal{\overline{\Q}}{K}$.}$$

Since the prime $p$ splits in the imaginary quadratic field $K$, one can view the decomposition group $\Gal{\overline{\Q}_p}{\Q_p}$ as a subgroup of $\Gal{\overline{\Q}}{K}$. This observation along with (\ref{CM-induction-R}) tells us that, for a CM Hida-family, the Galois representation $\rho_F$ splits locally at $p$. That is, we have the following isomorphism of free $R$-modules that is $\Gal{\overline{\Q}_p}{\Q_p}$-equivariant:
\begin{align*}
L_F \cong  R(\delta_F) \oplus R(\epsilon_F).
\end{align*}
As a result, we have
\begin{align}
\frac{L_{\pmb{3},2}}{\Fil^+L_{\pmb{3},2}} = \left\{ \begin{array}{ll}   R[[\Gamma]](\delta_F^{-1} \epsilon_F\chi \kappa^{-1}) \oplus R[[\Gamma]](\chi \kappa^{-1}) ,  & \text{if $\chi$ is even} \\  R[[\Gamma]](\delta_F^{-1} \epsilon_F\chi \kappa^{-1}),  & \text{ if $\chi$ is odd} \end{array} \right.
\end{align}
Let $\Omega :G_\Sigma \rightarrow \Gl_2(R[[\Gamma]])$ be the Galois representation given by $\Ind_\Q^K\left( \frac{\varphi}{\varphi^c}\right)$. Let $L_{\Omega(\chi) \otimes \kappa^{-1}}$ and $L_{\varepsilon (\chi) \otimes \kappa^{-1}}$ denote the free $R[[\Gamma]]$-modules on which $G_\Sigma$ acts to let us obtain $\Omega (\chi) \otimes \kappa^{-1}$ and $\varepsilon (\chi) \otimes \kappa^{-1}$.  Using (\ref{CM-induction-R}), we have the following isomorphism of $R[[\Gamma]]$-modules that is $G_\Sigma$-equivariant:
\begin{align} \label{global-decomposition-CM}
L_{\pmb{3},2} \cong L_{\Omega (\chi) \otimes \kappa^{-1}} \oplus L_{\varepsilon (\chi) \otimes\kappa^{-1}}.
\end{align}
One can form primitive Selmer groups $\S_{\Omega (\chi) \otimes \kappa^{-1}}(\Q)$ and $\S_{\varepsilon (\chi) \otimes \kappa^{-1}}(\Q)$ corresponding to the following filtrations:
\begin{align}
\tag{Fil-$\Omega (\chi) \otimes \kappa^{-1}$} & 0 \rightarrow \Fil^+L_{\Omega (\chi) \otimes \kappa^{-1}} \rightarrow L_{\Omega (\chi) \otimes \kappa^{-1}} \rightarrow \frac{L_{\Omega (\chi) \otimes \kappa^{-1}}}{\Fil^+L_{\Omega (\chi) \otimes \kappa^{-1}}} \rightarrow 0.\\
\tag{Fil-$\varepsilon (\chi) \otimes\kappa^{-1}$} & 0 \rightarrow \Fil^+L_{\varepsilon (\chi) \otimes\kappa^{-1}} \rightarrow L_{\varepsilon (\chi) \otimes\kappa^{-1}} \rightarrow \frac{L_{\varepsilon (\chi) \otimes\kappa^{-1}}}{\Fil^+L_{\varepsilon (\chi) \otimes\kappa^{-1}}}\rightarrow 0.
\end{align}
Here, we have $L_{\varepsilon (\chi) \otimes\kappa^{-1}} = R[[\Gamma]](\varepsilon \chi \kappa^{-1})$, and
\begin{align*}
\Fil^+ L_{\Omega (\chi) \otimes \kappa^{-1}} := R[[\Gamma]](\epsilon_F^{-1}\delta_F \chi \kappa^{-1}), & \qquad \Fil^+L_{\varepsilon (\chi) \otimes\kappa^{-1}} := \left\{ \begin{array}{ll} 0, & \text{if $\chi$ is even} \\
R[[\Gamma]](\varepsilon \chi \kappa^{-1}), & \text{if $\chi$ is odd}
\end{array} \right.
\end{align*}

Locally, we have the following decomposition of free $R[[\Gamma]]$-modules that is $\Gal{\overline{\Q}_p}{\Q_p}$-equivariant:
\begin{align} \label{local-decomposition-CM}
 \frac{L_{\pmb{3},2}}{\Fil^+L_{\pmb{3},2}} \cong \frac{L_{\Omega (\chi) \otimes \kappa^{-1}}}{\Fil^+L_{\Omega (\chi) \otimes \kappa^{-1}}} \oplus \frac{L_{\varepsilon (\chi) \otimes \kappa^{-1}}}{\Fil^+ L_{\varepsilon (\chi) \otimes \kappa^{-1}}}.
\end{align}

The decompositions (\ref{global-decomposition-CM}) and (\ref{local-decomposition-CM}) allow us to obtain the following isomorphisms:
\begin{align*}
\S_{\rho_{\pmb{3},2}}(\Q) \cong \S_{\Omega (\chi) \otimes \kappa^{-1}}(\Q) \oplus \S_{\epsilon (\chi) \otimes \kappa^{-1}}(\Q), \qquad & \coker(\phi_{\rho_{\pmb{3},2}}) \cong \coker(\phi_{\Omega (\chi) \otimes \kappa^{-1}}) \oplus \coker(\phi_{\varepsilon (\chi) \otimes \kappa^{-1}}).
\end{align*}

Since $\rho_F$ is absolutely irreducible, the 2-dimensional  representation $\Omega$ is also absolutely irreducible. By Corollary \ref{abs-irreducible}, the $R[[\Gamma]]$-module $H^0\left(G_\Sigma,D^*_{\Omega(\chi)\otimes\kappa^{-1}}\right)^\vee$ is pseudo-null. The isomorphism $$ H^0\left(G_\Sigma, D^*_{\rho_{\pmb{3},2}}\right)  \cong H^0\left(G_\Sigma, D^*_{\varepsilon (\chi) \otimes \kappa^{-1}}\right)  \oplus H^0\left(G_\Sigma,D^*_{\Omega (\chi) \otimes \kappa^{-1}}\right),$$ then gives us the following equality in the divisor group of $R[[\Gamma]]$:
\begin{align*}
\Div\left(H^0\left(G_\Sigma, D^*_{\rho_{\pmb{3},2}}\right)^\vee\right) = \Div\left( H^0\left(G_\Sigma, D^*_{\varepsilon (\chi) \otimes \kappa^{-1}}\right)^\vee\right).
\end{align*}
By Corollary \ref{abs-irreducible}, the map $\phi_{\Omega (\chi) \otimes \kappa^{-1}}$ is surjective. This tells us that $$\coker(\phi_{\rho_{\pmb{3},2}})\cong\coker(\phi_{\varepsilon (\chi) \otimes\kappa^{-1}}).$$
Lemma \ref{coker-1} (see also Remark \ref{auxillary-remark-character}) gives us the following equality in the divisor group of $R[[\Gamma]]$:
$$ \Div \left( \coker(\phi_{\varepsilon (\chi) \otimes\kappa^{-1}})^\vee \right)=\Div \left( H^0(G_\Sigma, D^*_{\varepsilon (\chi) \otimes \kappa^{-1}})^\vee \right).$$
These observations give us the following lemma:

\begin{lemma}[Cokernel of $\phi_{\rho_{\pmb{3},2}}$ - CM Hida family] \label{coker-2-cm}
In the divisor group of $R[[\Gamma]]$, we have the equality $\Div\left(H^0(G_\Sigma, D^*_{\rho_{\pmb{3},2}})^\vee\right) = \Div\left(\coker(\phi_{\rho_{\pmb{3},2}})^\vee\right)$.
\end{lemma}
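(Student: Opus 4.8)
The plan is to reduce the computation of $\coker(\phi_{\rho_{\pmb{3},2}})$ and $H^0(G_\Sigma, D^*_{\rho_{\pmb{3},2}})$ to the corresponding objects attached to the one-dimensional representation $\varepsilon (\chi) \otimes \kappa^{-1}$, for which Lemma \ref{coker-1} already applies thanks to Remark \ref{auxillary-remark-character}.

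First I would exploit the decompositions that the CM hypothesis provides. Since $p$ splits in $K$ and $\rho_F \cong \Ind_{G_K}^{G_\Q}(\varphi)$, the decomposition group at $p$ may be viewed inside $G_K$, which forces $\rho_F|_{\Gal{\overline{\Q}_p}{\Q_p}}$ to split; combining this with the global induced structure one obtains the $G_\Sigma$-equivariant splitting $L_{\pmb{3},2} \cong L_{\Omega (\chi) \otimes \kappa^{-1}} \oplus L_{\varepsilon (\chi) \otimes \kappa^{-1}}$ of (\ref{global-decomposition-CM}) with $\Omega = \Ind_\Q^K(\varphi/\varphi^c)$, together with the compatible $\Gal{\overline{\Q}_p}{\Q_p}$-equivariant splitting of $L_{\pmb{3},2}/\Fil^+ L_{\pmb{3},2}$ in (\ref{local-decomposition-CM}). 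These two splittings respect the parity-dependent filtrations chosen for $\rho_{\pmb{3},2}$, for $\Omega (\chi) \otimes \kappa^{-1}$ and for $\varepsilon (\chi) \otimes \kappa^{-1}$, so the global-to-local maps defining the Selmer groups decompose as direct sums. Hence $\S_{\rho_{\pmb{3},2}}(\Q) \cong \S_{\Omega (\chi) \otimes \kappa^{-1}}(\Q) \oplus \S_{\varepsilon (\chi) \otimes \kappa^{-1}}(\Q)$, $\coker(\phi_{\rho_{\pmb{3},2}}) \cong \coker(\phi_{\Omega (\chi) \otimes \kappa^{-1}}) \oplus \coker(\phi_{\varepsilon (\chi) \otimes \kappa^{-1}})$, and $H^0(G_\Sigma, D^*_{\rho_{\pmb{3},2}}) \cong H^0(G_\Sigma, D^*_{\Omega (\chi) \otimes \kappa^{-1}}) \oplus H^0(G_\Sigma, D^*_{\varepsilon (\chi) \otimes \kappa^{-1}})$.

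Next I would discard the $\Omega$-contribution. Since $\rho_F$ is absolutely irreducible, so is the two-dimensional representation $\Omega$ over $\Frac(R)$, and Corollary \ref{abs-irreducible} then gives that $H^0(G_\Sigma, D^*_{\Omega (\chi) \otimes \kappa^{-1}})^\vee$ is pseudo-null over $R[[\Gamma]]$ and that $\phi_{\Omega (\chi) \otimes \kappa^{-1}}$ is surjective, i.e. $\coker(\phi_{\Omega (\chi) \otimes \kappa^{-1}}) = 0$. Plugging this into the decompositions of the previous step, $\coker(\phi_{\rho_{\pmb{3},2}}) \cong \coker(\phi_{\varepsilon (\chi) \otimes \kappa^{-1}})$, and since a pseudo-null module has vanishing divisor, $\Div(H^0(G_\Sigma, D^*_{\rho_{\pmb{3},2}})^\vee) = \Div(H^0(G_\Sigma, D^*_{\varepsilon (\chi) \otimes \kappa^{-1}})^\vee)$. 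Finally, $\varepsilon\chi$ is just another finite-order Dirichlet character, so by Remark \ref{auxillary-remark-character} the conclusion of Lemma \ref{coker-1} holds for $\varepsilon (\chi) \otimes \kappa^{-1}$, namely $\Div(\coker(\phi_{\varepsilon (\chi) \otimes \kappa^{-1}})^\vee) = \Div(H^0(G_\Sigma, D^*_{\varepsilon (\chi) \otimes \kappa^{-1}})^\vee)$. Chaining these three identities yields $\Div(\coker(\phi_{\rho_{\pmb{3},2}})^\vee) = \Div(H^0(G_\Sigma, D^*_{\rho_{\pmb{3},2}})^\vee)$, which is the claim.

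The only genuinely nontrivial input is the local splitting of $L_{\pmb{3},2}/\Fil^+ L_{\pmb{3},2}$ compatibly with the choice of $\Fil^+$ on each of the three representations; this rests on the fact that for a CM Hida family $\rho_F$ splits locally at $p$ as $R(\delta_F) \oplus R(\epsilon_F)$, so that the three-step filtration on $\Ad^0(L_F)$ is itself split at $p$, and matching the graded pieces (distinguishing $\chi$ even from $\chi$ odd) is bookkeeping that must be carried out carefully. Once that is in place the rest is purely formal: additivity of $\Div$ along short exact sequences, Corollary \ref{abs-irreducible}, and Lemma \ref{coker-1}.
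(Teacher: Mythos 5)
Your proposal is correct and follows essentially the same route as the paper: the CM hypothesis yields the global splitting $L_{\pmb{3},2} \cong L_{\Omega(\chi)\otimes\kappa^{-1}} \oplus L_{\varepsilon(\chi)\otimes\kappa^{-1}}$ together with the compatible local splitting of $L_{\pmb{3},2}/\Fil^+L_{\pmb{3},2}$, the $\Omega$-summand is discarded via Corollary \ref{abs-irreducible}, and the $\varepsilon\chi$-summand is handled by Lemma \ref{coker-1} through Remark \ref{auxillary-remark-character}. No gaps.
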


\subsection{Divisors associated to local factors away from $p$}

Just as the main conjecture formulated in \cite{greenberg1994iwasawa} relates the primitive $p$-adic $L$-function and the primitive Selmer group, one should be able to formulate an equivalent conjecture relating the non-primitive $p$-adic $L$-function and the non-primitive Selmer group. We avoid stating this conjecture here since we avoid mentioning the non primitive $p$-adic $L$-function in this paper. For a description of the non-primitive $p$-adic $L$-functions, the interested reader is referred to \cite{dasgupta2014factorization} (for the 4-dimensional representation $\rho_{\pmb{4},3}$) and  to \cite{MR939477} (for the 3-dimensional representation $\rho_{\pmb{3},2}$). The differences between the non-primitive $p$-adic $L$-functions and the non-primitive Selmer groups can be described in terms of Euler factors at primes $\nu \in \Sigma_0$. The divisors generated by these Euler factors are exactly equal to the divisors generated by the local factors at primes $\nu \in \Sigma_0$ that came up in Proposition \ref{primitive-non-primitive-difference}, while studying the differences between the primitive and the non-primitive Selmer groups. The purpose of Proposition \ref{divisor-local-factors} is to calculate the divisors attached to these local factors at all primes $\nu \in \Sigma_0$. The proof of Proposition \ref{divisor-local-factors} follows the proof of Proposition 2.4 in \cite{greenberg2000iwasawa}. There is one slight difference. In \cite{greenberg2000iwasawa}, the authors work with Galois groups over $\Q_\infty$, while we work with Galois groups over $\Q$. \\

Let $\RRR$ be an integrally closed local domain that is also a finite integral extension of $\Z_p[[u_1,\dotsc,u_n]]$. Let $\nu$ be a non-archimedean prime in $\Sigma_0$. Consider a continuous representation $\varrho : \Gal{\overline{\Q}_\nu}{\Q_\nu} \rightarrow \Gl_d(\mathcal{R})$. Let $\mathcal{L}$ denote the underlying free $\mathcal{R}$-module of rank $d$ on which $ \Gal{\overline{\Q}_\nu}{\Q_\nu}$ acts. Let $\D=\mathcal{L} \otimes_\RRR \hat{\mathcal{R}}$. Let $\mathcal{M}= H^1(I_\nu, \D)^\vee$. By Proposition 3.3 in \cite{MR2290593}, the $\RRR$-module $\mathcal{M}$ is torsion-free. Let $\mathcal{V}=\mathcal{M} \otimes_\RRR \mathcal{K}$, where $\mathcal{K}$ denotes the fraction field of $\mathcal{R}$. We shall let $m$ equal the dimension of $\mathcal{V}$ as a vector space over $\mathcal{K}$. If we let $\mathcal{W}$ equal $\mathcal{L} \otimes_\RRR \mathcal{K}$, local duality gives us the following isomorphism:
\begin{align*}
\mathcal{V} \cong H^0\bigg( I_\nu, \Hom\big(\mathcal{W}, \mathcal{K}(\chi_p)\big)\bigg).
\end{align*}

We denote the Frobenius at the prime $\nu$ by $\text{Frob}_\nu$. The Frobenius  $\text{Frob}_\nu$ is a topological generator for the Galois group $\Gamma_\nu = \Gal{\overline{\Q}_\nu}{\Q_\nu}/I_\nu$. The Frobenius $\text{Frob}_\nu$ acts on the $\mathcal{R}$-module $\mathcal{M}$ and hence on the vector space $\mathcal{V}$. Let $\mathcal{B}$ denote the $m \times m$ matrix that gives the action of $\text{Frob}_\nu$ on $\mathcal{V}$.  Using local duality and the fact that $\Gamma_\nu$ is pro-cyclic, we have the following isomorphisms:
\begin{align}\label{pro-cyc-iso}
\frac{\mathcal{M}}{(\text{Frob}_\nu-1)\mathcal{M}} \cong H^1(\Gamma_\nu, \mathcal{M}) \cong \bigg(H^1(I_\nu,\D)^{\Gamma_\nu}\bigg)^\vee.
\end{align}
Note that all of the $\RRR$-modules, appearing in the isomorphisms given in (\ref{pro-cyc-iso}), are isomorphic to $\Loc(\nu,\varrho)^\vee$. Consider the following hypothesis:
\begin{enumerate}[leftmargin=3cm, style=sameline, align=left, label=\textsc{1-EV}, ref=\textsc{1-EV}]
\item\label{1-ev} $1$ is not an eigenvalue for the matrix $\mathcal{B}$.
\end{enumerate}
Assuming hypothesis \ref{1-ev} holds, we have the following short exact sequence:
\begin{align*}
0 \rightarrow \mathcal{M} \xrightarrow {\text{Frob}_\nu - 1} \mathcal{M} \rightarrow H^1(\Gamma_\nu, \mathcal{M}) \rightarrow 0.
\end{align*}

If $\p$ is a height one prime ideal in $\mathcal{R}$, then $\mathcal{R}_\p$ is a DVR. Since the $\RRR$-module $\mathcal{M}$ is torsion-free, $\mathcal{M}_\p$ is a free $\mathcal{R}_\p$-module of rank $m$. Using these observations, we obtain the following proposition:

\begin{proposition} \label{divisor-local-factors}
If \ref{1-ev} holds, we have the following equality in the divisor group of $\mathcal{R}$:
\begin{align*}
\Div\bigg(H^1(\Gamma_\nu, \mathcal{M})\bigg) = \Div\bigg(H^0(\Gamma_\nu,H^1(I_\nu,\D))^\vee \bigg)= \Div \left(\prod_{i=1}^m (e_i - 1)\right),
\end{align*}
where the values $e_i$ correspond to the eigenvalues of the matrix $\mathcal{B}$.
\end{proposition}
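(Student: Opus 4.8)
The plan is to deduce the first equality straight from the isomorphisms recorded in (\ref{pro-cyc-iso}), and to obtain the second by a prime-by-prime length computation, in the spirit of the proof of Proposition 2.4 in \cite{greenberg2000iwasawa}.

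First I would observe that the three $\RRR$-modules appearing in (\ref{pro-cyc-iso}) --- namely $\mathcal{M}/(\Frob_\nu-1)\mathcal{M}$, $H^1(\Gamma_\nu,\mathcal{M})$, and $\big(H^1(I_\nu,\D)^{\Gamma_\nu}\big)^\vee = H^0(\Gamma_\nu,H^1(I_\nu,\D))^\vee$ (all of which are moreover isomorphic to $\Loc(\nu,\varrho)^\vee$) --- are finitely generated and torsion over $\RRR$. Finite generation is clear since each is a quotient of $\mathcal{M}$; torsion-ness follows because, under \ref{1-ev}, the endomorphism $\Frob_\nu-1$ of $\mathcal{V}=\mathcal{M}\otimes_\RRR\mathcal{K}$ is an isomorphism, so tensoring the exact sequence $0\to\mathcal{M}\xrightarrow{\Frob_\nu-1}\mathcal{M}\to H^1(\Gamma_\nu,\mathcal{M})\to 0$ with $\mathcal{K}$ shows $H^1(\Gamma_\nu,\mathcal{M})\otimes_\RRR\mathcal{K}=0$. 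Hence $\Div$ is defined on all of these modules, and the isomorphisms of (\ref{pro-cyc-iso}) yield $\Div\big(H^1(\Gamma_\nu,\mathcal{M})\big)=\Div\big(H^0(\Gamma_\nu,H^1(I_\nu,\D))^\vee\big)$.

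For the second equality I would argue one height one prime at a time. Fix a height one prime $\p$ of $\RRR$; then $\RRR_\p$ is a discrete valuation ring, and since $\mathcal{M}$ is torsion-free (Proposition 3.3 in \cite{MR2290593}) and finitely generated over $\RRR$, the localization $\mathcal{M}_\p$ is free over $\RRR_\p$ of rank $m=\dim_\mathcal{K}\mathcal{V}$. Localizing the short exact sequence above at $\p$ and choosing an $\RRR_\p$-basis of $\mathcal{M}_\p$, the map $\Frob_\nu-1$ is represented by an $m\times m$ matrix over $\RRR_\p$ with nonzero determinant; by the elementary divisor theorem over $\RRR_\p$, the length of its cokernel --- which is $H^1(\Gamma_\nu,\mathcal{M})_\p$ --- equals the $\p$-adic valuation of that determinant. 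The crucial point is to identify this determinant independently of $\p$: an $\RRR_\p$-basis of $\mathcal{M}_\p$ is also a $\mathcal{K}$-basis of $\mathcal{V}$, so the matrix of $\Frob_\nu$ on $\mathcal{M}_\p$ is conjugate over $\mathcal{K}$ to $\mathcal{B}$ and therefore has the same characteristic polynomial; hence $\det(\Frob_\nu-1\mid\mathcal{M}_\p)=\det(\mathcal{B}-I)=\prod_{i=1}^m(e_i-1)$, a nonzero element of $\mathcal{K}$ (in fact of $\RRR$, being a symmetric function of the $e_i$, which are integral over $\RRR$ by the generalized Cayley--Hamilton theorem and $\RRR$ is integrally closed). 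Thus for every height one prime $\p$,
\[
\len_{\RRR_\p}\big(H^1(\Gamma_\nu,\mathcal{M})_\p\big)=\len_{\RRR_\p}\big(\RRR/(\prod\nolimits_{i=1}^m(e_i-1))\big)_\p,
\]
and summing over all height one primes of $\RRR$, each weighted by the corresponding prime, gives $\Div\big(H^1(\Gamma_\nu,\mathcal{M})\big)=\Div\big(\prod_{i=1}^m(e_i-1)\big)$.

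The only genuinely delicate step is this last one --- recognizing $\det(\Frob_\nu-1)$ computed on the various lattices $\mathcal{M}_\p$ as one and the same element of $\mathcal{K}$, so that the local lengths assemble into the single divisor $\Div\big(\prod_{i=1}^m(e_i-1)\big)$. Everything else reduces to the standard fact that, over a discrete valuation ring, the cokernel of an injective square matrix has length equal to the valuation of its determinant, together with the exact sequence already furnished by \ref{1-ev}.
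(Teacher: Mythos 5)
Your proposal is correct and follows essentially the same route the paper takes: the first equality is read off from the isomorphisms in (\ref{pro-cyc-iso}), and the second comes from localizing the short exact sequence $0 \to \mathcal{M} \xrightarrow{\Frob_\nu-1} \mathcal{M} \to H^1(\Gamma_\nu,\mathcal{M}) \to 0$ at each height one prime $\p$, using that $\mathcal{M}_\p$ is free of rank $m$ over the DVR $\RRR_\p$ and that the length of the cokernel equals the valuation of $\det(\Frob_\nu-1)=\prod_{i=1}^m(e_i-1)$. Your explicit remark that this determinant is one and the same element of $\mathcal{K}$ independently of $\p$ is a detail the paper leaves implicit, but the argument is the same.
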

Note that the hypothesis \ref{1-ev} holds if and only if $H^1(\Gamma_\nu,\mathcal{M})$ is $\RRR$-torsion. As a result of Proposition \ref{torsion-local-factors} (which follows from Corollary \ref{H1-torsion-local-factors}), the hypothesis \ref{1-ev} does hold for $\rho_{\pmb{4},3}$, $\pi \circ \rho_{\pmb{4},3}$, $\rho_{\pmb{3},2}$ and $\rho_{\pmb{1},2}$. This completes the study of the divisors associated to the local factors at $\nu \in \Sigma_0$.

\begin{proposition} \label{torsion-local-factors}
Let $\nu \in \Sigma_0$ be a prime. The $\T[[\Gamma]]$-module $\Loc(\nu,\rho_{\pmb{4},3})^\vee$ is torsion. The $R[[\Gamma]]$-modules $\Loc(\nu,\pi \circ \rho_{\pmb{4},3})^\vee$, $\Loc(\nu,\rho_{\pmb{3},2})^\vee$ and $\Loc(\nu,\rho_{\pmb{1},2})^\vee$ are also torsion.
\end{proposition}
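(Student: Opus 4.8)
The plan is to derive Proposition \ref{torsion-local-factors} directly from Corollary \ref{H1-torsion-local-factors}, so that essentially no new work is required. First I would record that each of the four Galois representations in the statement is a cyclotomic deformation $\varrho\otimes\kappa^{-1}$ of a representation $\varrho$ defined over one of the weight-variable rings, in the sense of Section \ref{sel-cyc-deformations}: taking $\RRR=\T$ and $\varrho=\rho_{F,F}(\chi)$ gives $\rho_{\pmb{4},3}$, while taking $\RRR=R$ and $\varrho$ equal to $(\pi_{F,F}\circ\rho_{F,F})(\chi)$, $\Ad^0(\rho_F)(\chi)$, or $\chi$ gives $\pi\circ\rho_{\pmb{4},3}$, $\rho_{\pmb{3},2}$, and $\rho_{\pmb{1},2}$ respectively. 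In every case one has $\Loc(\nu,\varrho\otimes\kappa^{-1})=H^1(I_\nu,\D_{\varrho\otimes\kappa^{-1}})^{\Gamma_\nu}$, which is exactly the corresponding local factor $\Loc(\nu,\rho_{\pmb{4},3})$, $\Loc(\nu,\pi\circ\rho_{\pmb{4},3})$, $\Loc(\nu,\rho_{\pmb{3},2})$, or $\Loc(\nu,\rho_{\pmb{1},2})$ from the introduction, so Corollary \ref{H1-torsion-local-factors} applies verbatim.

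Next I would invoke Corollary \ref{H1-torsion-local-factors} to produce, for each such $\varrho$, a monic polynomial $h(s)\in\RRR[s]$ of positive degree such that $h(\gamma_0)$ annihilates $\Loc(\nu,\varrho\otimes\kappa^{-1})^\vee$. Under the isomorphism $\RRR[[\Gamma]]\cong\RRR[[s]]$ sending $\gamma_0$ to $s+1$, the element $h(\gamma_0)$ becomes a monic polynomial of positive degree in $s$, hence a nonzero element of the integral domain $\RRR[[\Gamma]]$. Since a finitely generated module over a domain that is annihilated by a nonzero element is torsion, applying this with $\RRR=\T$ for $\rho_{\pmb{4},3}$ and with $\RRR=R$ for the remaining three representations yields exactly the statement of the proposition.

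I do not anticipate a genuine obstacle here: the content is entirely contained in Corollary \ref{H1-torsion-local-factors} (and, through it, in Proposition \ref{local-cohomology-not-p} and local class field theory). The one point worth flagging is that Corollary \ref{H1-torsion-local-factors} is unconditional — its proof uses neither \ref{Tor} nor \ref{p-critical} — so no hypotheses on the Hida family $F$ or on the Dirichlet character $\chi$ are needed. This is important because Proposition \ref{torsion-local-factors} is used afterwards to verify hypothesis \ref{1-ev} for all four representations, so the argument must not implicitly invoke properties that depend on the data we have yet to control.
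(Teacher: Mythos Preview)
Your proposal is correct and matches the paper's approach exactly: the paper does not give a separate proof of Proposition~\ref{torsion-local-factors} but simply records (in the sentence immediately preceding it) that it ``follows from Corollary~\ref{H1-torsion-local-factors}.'' Your write-up supplies precisely the details behind that parenthetical remark, and your observation that Corollary~\ref{H1-torsion-local-factors} is unconditional is on point.
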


\section{Theorem \ref{specialization-result}}

\subsection{Finiteness of the projective dimension of $\Sel_{\rho_{\pmb{4},3}}(\Q)^\vee$}

\begin{proposition} \label{Sel-Fin-Proj}
Suppose the Pontryagin dual of the non-primitive Selmer group $\Sel_{\rho_{\pmb{4},3}}(\Q)^\vee$ is $T[[\Gamma]]$-torsion. For every height two prime ideal $Q$ containing the height one prime ideal $\ker(\pi)$ in $T[[\Gamma]]$, the projective dimension of $\Sel_{\rho_{\pmb{4},3}}(\Q)^\vee \otimes_{T[[\Gamma]]} T[[\Gamma]]_Q$ as a $T[[\Gamma]]_Q$-module is finite.
\end{proposition}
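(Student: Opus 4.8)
\emph{Proof sketch.} Since $\Sel_{\rho_{\pmb{4},3}}(\Q)^\vee$ is $T[[\Gamma]]$-torsion, so is its quotient $\S_{\rho_{\pmb{4},3}}(\Q)^\vee$, and therefore the hypothesis \ref{Tor} holds for $\rho_{\pmb{4},3}$; the hypothesis \ref{p-critical} holds for $\rho_{\pmb{4},3}$ as well, since $\rho_{\pmb{4},3}$ satisfies the Panchishkin condition. We may thus invoke Proposition \ref{surjectivity-greenberg}, which gives that $\phi^{\Sigma_0}_{\rho_{\pmb{4},3}}$ is surjective, hence the short exact sequence of $T[[\Gamma]]$-modules
\begin{align*}
0 \rightarrow \Bigl(H^1\Bigl(I_p,\frac{D_{\rho_{\pmb{4},3}}}{\Fil^+ D_{\rho_{\pmb{4},3}}}\Bigr)^{\Gamma_p}\Bigr)^\vee \rightarrow H^1(G_\Sigma,D_{\rho_{\pmb{4},3}})^\vee \rightarrow \Sel_{\rho_{\pmb{4},3}}(\Q)^\vee \rightarrow 0.
\end{align*}
Localizing at $Q$ and using that in a short exact sequence of modules over a Noetherian local ring finiteness of projective dimension of the first two terms forces it for the third, it suffices to show that $H^1(G_\Sigma,D_{\rho_{\pmb{4},3}})^\vee_Q$ and $\bigl(H^1(I_p, D_{\rho_{\pmb{4},3}}/\Fil^+ D_{\rho_{\pmb{4},3}})^{\Gamma_p}\bigr)^\vee_Q$ have finite projective dimension over $T[[\Gamma]]_Q$.

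For the first term I would apply Proposition \ref{H0criterion} with $\G = G_\Sigma$, the ring there being $T[[\Gamma]]$, $\M = L_{\pmb{4},3}$ and $\SSS = T[[\Gamma]]\setminus Q$. Here $cd_p(G_\Sigma)=2$, and $\SSS^{-1}\W_2 = 0$ because $H^2(G_\Sigma,D_{\rho_{\pmb{4},3}}) = 0$ by Proposition \ref{coker-weak} (valid since \ref{Tor} and \ref{p-critical} hold); hence $\SSS^{-1}\W_0$ has finite projective dimension if and only if $\SSS^{-1}\W_1$ does. But $\SSS^{-1}\W_0 = H^0(G_\Sigma,D_{\rho_{\pmb{4},3}})^\vee_Q$, which has finite projective dimension over $T[[\Gamma]]_Q$ by the Proposition following Lemma \ref{reg-2} (it is one of the zeroth Galois cohomology groups of Proposition \ref{greenberg-cyc}); therefore so does $\SSS^{-1}\W_1 = H^1(G_\Sigma,D_{\rho_{\pmb{4},3}})^\vee_Q$.

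For the local factor at $p$, the plan is to pass through the inflation--restriction sequence
\begin{align*}
0 \rightarrow H^1\Bigl(\Gamma_p, H^0\bigl(I_p,\tfrac{D_{\rho_{\pmb{4},3}}}{\Fil^+ D_{\rho_{\pmb{4},3}}}\bigr)\Bigr) \rightarrow H^1\Bigl(\Gal{\overline{\Q}_p}{\Q_p},\tfrac{D_{\rho_{\pmb{4},3}}}{\Fil^+ D_{\rho_{\pmb{4},3}}}\Bigr) \xrightarrow{\ \sigma_p\ } H^1\Bigl(I_p,\tfrac{D_{\rho_{\pmb{4},3}}}{\Fil^+ D_{\rho_{\pmb{4},3}}}\Bigr)^{\Gamma_p} \rightarrow 0,
\end{align*}
which is right exact because $cd_p(\Gamma_p)\le 1$. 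Taking Pontryagin duals exhibits $\bigl(H^1(I_p, D_{\rho_{\pmb{4},3}}/\Fil^+ D_{\rho_{\pmb{4},3}})^{\Gamma_p}\bigr)^\vee$ as a submodule of $H^1(\Gal{\overline{\Q}_p}{\Q_p}, D_{\rho_{\pmb{4},3}}/\Fil^+ D_{\rho_{\pmb{4},3}})^\vee$ with quotient $H^1(\Gamma_p, H^0(I_p, D_{\rho_{\pmb{4},3}}/\Fil^+ D_{\rho_{\pmb{4},3}}))^\vee$; since finiteness of projective dimension of the middle and right terms of a short exact sequence forces it for the left term, it is enough to treat these two. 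The middle term, localized at $Q$, is handled just as above by Proposition \ref{H0criterion}, now with $\G = \Gal{\overline{\Q}_p}{\Q_p}$ and $\M = L_{\pmb{4},3}/\Fil^+ L_{\pmb{4},3}$, using $H^2(\Gal{\overline{\Q}_p}{\Q_p}, D_{\rho_{\pmb{4},3}}/\Fil^+ D_{\rho_{\pmb{4},3}}) = 0$ (Proposition \ref{h2-cohomology}) and the finite projective dimension of $H^0(\Gal{\overline{\Q}_p}{\Q_p}, D_{\rho_{\pmb{4},3}}/\Fil^+ D_{\rho_{\pmb{4},3}})^\vee_Q$. For the right term, I would observe that by Pontryagin duality $H^1(\Gamma_p, H^0(I_p, D_{\rho_{\pmb{4},3}}/\Fil^+ D_{\rho_{\pmb{4},3}}))^\vee \cong \ker\bigl((\Frob_p-1)^\vee\bigr) \subseteq H^0(I_p, D_{\rho_{\pmb{4},3}}/\Fil^+ D_{\rho_{\pmb{4},3}})^\vee$, which is finitely generated over $T$ by Proposition \ref{greenberg-cyc}; hence $\gamma_0$ acts $T$-linearly on it and the generalized Cayley--Hamilton theorem produces a monic $h(s) \in T[s]$ of positive degree with $h(\gamma_0)$ annihilating it. If $Q$ lies in its support then $h(\gamma_0)\in Q$ and $T[[\Gamma]]_Q \cong T[[s]]_Q$ is regular by Lemma \ref{reg-2}, so of finite global dimension; if not, the localization vanishes. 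In either case the right term has finite projective dimension over $T[[\Gamma]]_Q$, which completes the argument.

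The one genuinely delicate point is this last local factor at $p$: the unramified group $\Gamma_p$ at $p$ bears no relation to the cyclotomic $\Gamma$ of $T[[\Gamma]]$, so one cannot play the module structure off against $H^1(\Gamma_p,-)$ directly; the device of realizing $H^1(\Gamma_p, H^0(I_p,\cdot))^\vee$ as a finitely generated $T$-module annihilated by some $h(\gamma_0)$ and then invoking Lemma \ref{reg-2} --- exactly as in the proof of the Proposition following it --- is what closes the gap. Everything else reduces, thanks to the vanishing of the relevant second cohomology groups (Propositions \ref{h2-cohomology} and \ref{coker-weak}) together with Proposition \ref{H0criterion}, to the already-recorded finiteness of projective dimension of the zeroth Galois cohomology groups.
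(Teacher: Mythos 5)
Your argument is correct and follows essentially the same route as the paper: the same short exact sequence coming from the surjectivity of $\phi^{\Sigma_0}_{\rho_{\pmb{4},3}}$, the same use of Proposition \ref{H0criterion} together with the vanishing of the relevant $H^2$'s, and the same passage through inflation–restriction for the local term at $p$. The only (harmless) deviation is at the very end: where the paper deduces the finite projective dimension of $S^{-1}\bigl(V_0^{\Gamma_p}\bigr)$ from the four-term sequence $0 \rightarrow V_0^{\Gamma_p} \rightarrow V_0 \xrightarrow{\Frob_p-1} V_0 \rightarrow W_0 \rightarrow 0$ after first treating $S^{-1}V_0$ and $S^{-1}W_0$, you apply the ``annihilated by a monic $h(\gamma_0)$, hence either zero at $Q$ or $T[[\Gamma]]_Q$ regular by Lemma \ref{reg-2}'' dichotomy directly to the $T$-finitely generated submodule $V_0^{\Gamma_p}$ of $V_0$, which is equally valid.
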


\begin{proof}
Let $Q$ be a height two prime ideal in $T[[\Gamma]]$ containing the height one prime ideal $\ker(\pi)$. Let $S$ denote the multiplicative set $T[[\Gamma]] \setminus Q$. The global-to-local map $\phi^{\Sigma_0}_{\rho_{\pmb{4},3}}$ is surjective (see Proposition \ref{surjectivity-greenberg}) which gives us the following short exact sequence:
{\footnotesize\begin{align*}
0 \rightarrow \left(H^1\left(I_p,\frac{D_{\rho_{\pmb{4},3}}}{\Fil^+D_{\rho_{\pmb{4},3}}}\right)^{\Gamma_p}\right)^\vee\rightarrow H^1(G_\Sigma,D_{\rho_{\pmb{4},3}})^\vee \rightarrow \Sel_{\rho_{\pmb{4},3}}(\Q)^\vee \rightarrow 0.
\end{align*}}
To prove the proposition, it will be sufficient to show that the localizations of {\footnotesize\begin{align*}
H^1(G_\Sigma,D_{\rho_{\pmb{4},3}})^\vee, \qquad \left(H^1\left(I_p,\frac{D_{\rho_{\pmb{4},3}}}{\Fil^+D_{\rho_{\pmb{4},3}}}\right)^{\Gamma_p}\right)^\vee
\end{align*}}
at the multiplicative set $S$ have finite projective dimension over $S^{-1}T[[\Gamma]]$. \\

The $p$-cohomological dimension of $G_\Sigma$ is less than or equal to $2$ (Proposition 8.3.18 in \cite{neukirch2008cohomology}). Also $H^2(G_\Sigma, D_{\rho_{\pmb{4},3}})=0$ (Proposition \ref{coker-weak}). If $S^{-1}\left(H^0(G_\Sigma,D_{\rho_{\pmb{4},3}})^\vee\right)=0$, the projective dimension of  $S^{-1}\left(H^0(G_\Sigma,D_{\rho_{\pmb{4},3}})^\vee\right)$ over $S^{-1}T[[\Gamma]]$ is finite. By Proposition \ref{greenberg-cyc} and Lemma \ref{reg-2}, if $S^{-1}\left(H^0(G_\Sigma,D_{\rho_{\pmb{4},3}})^\vee\right)$ is not zero, then $S^{-1}T[[\Gamma]]$ is a regular local ring. Finitely generated modules over regular local rings have finite projective dimension. As a result, in this case too, the projective dimension of  $S^{-1}\left(H^0(G_\Sigma,D_{\rho_{\pmb{4},3}})^\vee\right)$ over $S^{-1}T[[\Gamma]]$ is finite. By Proposition \ref{H0criterion}, the projective dimension of $S^{-1}\left(H^1(G_\Sigma,D_{\rho_{\pmb{4},3}})^\vee\right)$ over $S^{-1}T[[\Gamma]]$ is~also~finite. \\

The $p$-cohomological dimension of $\Gal{\overline{\Q}_p}{\Q_p}$ is less than or equal to $2$. Let $W_i$ denote the $T[[\Gamma]]$-module $H^i\left(\Gal{\overline{\Q}_p}{\Q_p},\frac{D_{\rho_{\pmb{4},3}}}{\Fil^+D_{\rho_{\pmb{4},3}}}\right)^\vee$. Let $V_0$ denote  $H^0\left(I_p,\frac{D_{\rho_{\pmb{4},3}}}{\Fil^+D_{\rho_{\pmb{4},3}}}\right)^\vee$. We have $W_2=0$ (by Proposition \ref{h2-cohomology}). If $S^{-1}W_0=0$, then $S^{-1}W_0$ has finite projective dimension over $S^{-1}T[[\Gamma]]$. By Proposition \ref{greenberg-cyc} and Lemma \ref{reg-2}, if  $S^{-1}W_0$ is not zero, then $S^{-1}T[[\Gamma]]$ is a regular local ring. As a result in this case too, $S^{-1}W_0$ has finite projective dimension over $S^{-1}T[[\Gamma]]$. By Proposition \ref{H0criterion}, the projective dimension of  $S^{-1}W_1$  over $S^{-1}T[[\Gamma]]$ is finite.    Using a similar argument,  we can conclude that the projective dimension of $S^{-1}V_0$ over $S^{-1}T[[\Gamma]]$ is finite. We have the following short exact sequence:
\begin{align} \label{seq-inertia-proj}
0  \rightarrow V_0^{\Gamma_p} \rightarrow   V_0 \xrightarrow {\Frob_p-1} V_0 \rightarrow W_0 \rightarrow 0.
\end{align}
As a result of (\ref{seq-inertia-proj}), the projective dimension of $S^{-1}\left(V_0^{\Gamma_p}\right)$ over $S^{-1}T[[\Gamma]]$ is also finite. Considering the Pontryagin dual of the short exact sequence given by inflation-restriction, we get the following short exact sequence:
{\footnotesize \begin{align*}
0 \rightarrow \left(H^1\left(I_p,\frac{D_{\rho_{\pmb{4},3}}}{\Fil^+D_{\rho_{\pmb{4},3}}}\right)^{\Gamma_p}\right)^\vee  \rightarrow W_1 \rightarrow V_0^{\Gamma_p} \rightarrow 0.
\end{align*}
}
Our observations now let us conclude that the localization of  $\left(H^1\left(I_p,\frac{D_{\rho_{\pmb{4},3}}}{\Fil^+D_{\rho_{\pmb{4},3}}}\right)^{\Gamma_p}\right)^\vee$ at the multiplicative set $S$ also has finite projective dimension over $S^{-1}T[[\Gamma]]$. The proposition follows.
\end{proof}

\subsection{Control theorem for the non-primitive Selmer group $\Sel_{\rho_{\pmb{4},3}}(\Q)$}

To relate $\Sel_{\pi \circ \rho_{\pmb{4},3}}(\Q)^\vee$ to $\Sel_{\rho_{\pmb{4},3}}(\Q)^\vee \otimes_{T[[\Gamma]]} R[[\Gamma]]$, we can use Proposition \ref{control-theorem-selmer-groups}.

\begin{proposition} \label{control-theorem-selmer-group-dasgupta}
$\Sel_{\pi \circ \rho_{\pmb{4},3}}(\Q)^\vee$ is a torsion $R[[\Gamma]]$-module if and only if the height one prime ideal $\ker(\pi)$ in $T[[\Gamma]]$ does not belong to the support of $\Sel_{\rho_{\pmb{4},3}}(\Q)^\vee$. \\

Suppose $\Sel_{\pi \circ \rho_{\pmb{4},3}}(\Q)^\vee$ is a torsion $R[[\Gamma]]$-module. We have the following equality in the divisor group of $R[[\Gamma]]$:
\begin{align*}
\Div\bigg(\Sel_{\rho_{\pmb{4},3}}(\Q)^\vee \otimes_{T[[\Gamma]]}R[[\Gamma]]\bigg) + \Div\bigg(\Tor_1^{T[[\Gamma]]}\left(R[[\Gamma]], H^0(G_\Sigma,D_{\rho_{\pmb{4},3}})^\vee\right)\bigg)= \Div\bigg(\Sel_{\pi \circ \rho_{\pmb{4},3}}(\Q)^\vee\bigg).
\end{align*}
\end{proposition}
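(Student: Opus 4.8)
The plan is to derive the proposition from the general control theorem, Proposition~\ref{control-theorem-selmer-groups}, applied with $\TTT=T[[\Gamma]]$, $\RRR=R[[\Gamma]]$, $\varpi=\pi$ and $\varrho=\rho_{\pmb{4},3}$; here $\pi$ is surjective, so $R[[\Gamma]]\cong T[[\Gamma]]/\ker(\pi)$, Remark~\ref{dvr-localization} applies, and $\ker(\pi)$ is a height one prime of $T[[\Gamma]]$. First I would verify the standing hypothesis of Proposition~\ref{control-theorem-selmer-groups}: that for all $i\ge 1$ the $R[[\Gamma]]$-modules $\Tor_i^{T[[\Gamma]]}(R[[\Gamma]],H^0(G_\Sigma,D_{\rho_{\pmb{4},3}})^\vee)$ and $\Tor_i^{T[[\Gamma]]}(R[[\Gamma]],H^0(I_p,D_{\rho_{\pmb{4},3}}/\Fil^+D_{\rho_{\pmb{4},3}})^\vee)$ are torsion. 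By Proposition~\ref{greenberg-cyc} both $H^0$-duals are finitely generated over $T$, hence killed by a monic polynomial $h(\gamma_0)\in T[\gamma_0]$; since $\pi(h(\gamma_0))$ is again monic in $\gamma_0$ it is non-zero, so $h(\gamma_0)\notin\ker(\pi)$ and both $H^0$-duals vanish after localizing $T[[\Gamma]]$ at $\ker(\pi)$. Flat base change along $R[[\Gamma]]\hookrightarrow\Frac(R[[\Gamma]])$ (and the fact that $\Frac(R[[\Gamma]])=\Frac(T[[\Gamma]]/\ker(\pi))$ is obtained from $T[[\Gamma]]$ by localizing at $\ker(\pi)$ and then further localizing) shows the displayed $\Tor$-modules die after inverting the non-zero elements of $R[[\Gamma]]$, so they are $R[[\Gamma]]$-torsion. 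The first assertion of the proposition is then exactly the first assertion of Proposition~\ref{control-theorem-selmer-groups}.

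For the divisor identity, assume $\Sel_{\pi\circ\rho_{\pmb{4},3}}(\Q)^\vee$ is $R[[\Gamma]]$-torsion; by Theorem~\ref{selmer-factorization} this is equivalent to \ref{ad-tor}. I still need the two remaining hypotheses of Proposition~\ref{control-theorem-selmer-groups}. If $Q$ is a height two prime of $T[[\Gamma]]$ containing $\ker(\pi)$ and lying in the support of $H^0(G_\Sigma,D_{\rho_{\pmb{4},3}})^\vee$ or of $H^0(I_p,D_{\rho_{\pmb{4},3}}/\Fil^+D_{\rho_{\pmb{4},3}})^\vee$, then $Q$ contains the monic polynomial $h(\gamma_0)$ above, so $T[[\Gamma]]_Q$ is regular by Lemma~\ref{reg-2}, while $R[[\Gamma]]_Q=(T[[\Gamma]]/\ker(\pi))_Q$ because $\pi$ is surjective. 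For the surjectivity of $\phi^{\Sigma_0}_{\pi\circ\rho_{\pmb{4},3}}$ I would regard $\pi\circ\rho_{\pmb{4},3}$ as $\varrho\otimes\kappa^{-1}$ over $R$, with $\varrho=(\pi\circ\rho_{F,F})(\chi)$ and filtration $\Fil^+L_\pi$; this filtration has $R[[\Gamma]]$-rank $2$, which equals the dimension $d^+$ of the $+1$-eigenspace of complex conjugation on $\Ad^0(\rho_F)\oplus\pmb1$, so \ref{p-critical} holds, and \ref{Tor} holds since $\S_{\pi\circ\rho_{\pmb{4},3}}(\Q)^\vee$, a quotient of the $R[[\Gamma]]$-torsion module $\Sel_{\pi\circ\rho_{\pmb{4},3}}(\Q)^\vee$, is torsion. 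Hence Proposition~\ref{surjectivity-greenberg} applies and gives the surjectivity. Plugging into Proposition~\ref{control-theorem-selmer-groups} now yields the asserted equality of divisors up to the extra summand $\Div\big(\Tor_1^{T[[\Gamma]]}(R[[\Gamma]],H^0(I_p,D_{\rho_{\pmb{4},3}}/\Fil^+D_{\rho_{\pmb{4},3}})^\vee)_{\Gamma_p}\big)$ on the right.

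It therefore remains to show this last divisor vanishes, and the plan here mirrors statements (A), (B) in the proof of Theorem~\ref{selmer-factorization}. Write $V_0=H^0(I_p,D_{\rho_{\pmb{4},3}}/\Fil^+D_{\rho_{\pmb{4},3}})^\vee$. As a $\Gal{\overline{\Q}_p}{\Q_p}$-module, $D_{\rho_{\pmb{4},3}}/\Fil^+D_{\rho_{\pmb{4},3}}$ is an extension $0\to\widehat{T[[\Gamma]]}(\psi_1)\to D_{\rho_{\pmb{4},3}}/\Fil^+D_{\rho_{\pmb{4},3}}\to\widehat{T[[\Gamma]]}(\psi_2)\to 0$ with $\psi_1=i_2\epsilon_F(i_1\epsilon_F)^{-1}\chi\kappa^{-1}$ and $\psi_2=i_2\epsilon_F(i_1\delta_F)^{-1}\chi\kappa^{-1}$. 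On $\widehat{T[[\Gamma]]}(\psi_1)$ the group $I_p$ acts through $\chi\kappa^{-1}$, and a topological generator $\tau_0$ of the pro-$p$ inertia acts by an element congruent to $\chi(\tau_0)\gamma_0^{-1}$, whose difference from $1$ is a non-zero-divisor of $T[[\Gamma]]$; thus $H^1(I_p,\widehat{T[[\Gamma]]}(\psi_1))=0$, so $H^0(I_p,-)$ of the extension is still surjective onto $H^0(I_p,\widehat{T[[\Gamma]]}(\psi_2))$ and sits between this and $H^0(I_p,\widehat{T[[\Gamma]]}(\psi_1))$. On the $I_p$-invariants of each graded piece the Frobenius $\Frob_p$ acts as multiplication by $i_2(a_p(F))\,u$ with $u$ a unit in the image of $i_1(R)$ (times a root of unity coming from $\chi$); since $a_p(F)=\epsilon_F(\Frob_p)$ varies non-trivially with the weight and $i_1(R)\cap i_2(R)=\O$ by (\ref{determinant-properties}), the element $i_2(a_p(F))u-1$ is a non-zero non-zero-divisor of $T$, so $\Frob_p-1$ is surjective on each $H^0(I_p,\widehat{T[[\Gamma]]}(\psi_j))$. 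A short exact sequence then gives $H^0(I_p,D_{\rho_{\pmb{4},3}}/\Fil^+D_{\rho_{\pmb{4},3}})_{\Gamma_p}=0$, i.e.\ $V_0^{\Gamma_p}=0$. Finally, for a height one prime $\q$ of $R[[\Gamma]]$ corresponding to a height two prime $Q\supset\ker(\pi)$ of $T[[\Gamma]]$: if $Q\notin\Supp(V_0)$ the localization of $\Tor_1^{T[[\Gamma]]}(R[[\Gamma]],V_0)$ vanishes; if $Q\in\Supp(V_0)$ then $T[[\Gamma]]_Q$ is regular, $\ker(\pi)_Q=(\zeta)$ is principal, $\Tor_1^{T[[\Gamma]]}(R[[\Gamma]],V_0)_Q\cong(V_0)_Q[\zeta]$, and over the DVR $R[[\Gamma]]_\q$ this finite-length module has $\Gamma_p$-coinvariants of the same length as its $\Gamma_p$-invariants $\big((V_0)^{\Gamma_p}\big)_Q[\zeta]=0$. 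Either way the extra term localizes to zero at every height one prime of $R[[\Gamma]]$, hence has zero divisor, completing the proof.

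I expect the main obstacle to be the third step: pinning down the $\Gal{\overline{\Q}_p}{\Q_p}$-structure of $D_{\rho_{\pmb{4},3}}/\Fil^+D_{\rho_{\pmb{4},3}}$ precisely enough to run the Frobenius-eigenvalue argument (including the bookkeeping of $\kappa^{-1}$ at $\Frob_p$ and the case where $\chi$ is ramified at $p$), and then reconciling the $\Gamma_p$-coinvariants of the $\Tor_1$-term with the vanishing of $V_0^{\Gamma_p}$ at the height two primes of $T[[\Gamma]]$ where $T[[\Gamma]]$ is not globally regular — it is exactly here that one must use that $V_0$ is killed by a monic polynomial in $\gamma_0$ to invoke Lemma~\ref{reg-2} and reduce to the DVR situation. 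The genuine difficulty caused by $\ker(\pi)$ possibly being non-principal, illustrated in Examples~\ref{first-non-regular-example} and~\ref{second-non-regular-example}, is already absorbed into Propositions~\ref{specialization} and~\ref{control-theorem-selmer-groups}, so the remaining work for this proposition is largely verification.
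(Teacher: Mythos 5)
The first two thirds of your argument — verifying the torsion hypothesis on the $\Tor_i$'s via the monic polynomial $h(\gamma_0)$ from Proposition \ref{greenberg-cyc}, deducing the first assertion, and checking the regularity and surjectivity hypotheses of Proposition \ref{control-theorem-selmer-groups} — coincide with what the paper does. The divergence, and the gap, is in your treatment of the leftover term $\Div\big(\Tor_1^{T[[\Gamma]]}(R[[\Gamma]],N)_{\Gamma_p}\big)$ with $N=H^0\big(I_p,\tfrac{D_{\rho_{\pmb{4},3}}}{\Fil^+D_{\rho_{\pmb{4},3}}}\big)^\vee$. Your plan rests on the global claim $N[\Frob_p-1]=V_0^{\Gamma_p}=0$, i.e.\ that $\Frob_p-1$ is surjective on $H^0\big(I_p,\tfrac{D_{\rho_{\pmb{4},3}}}{\Fil^+D_{\rho_{\pmb{4},3}}}\big)$. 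This is not established by your sketch: (i) the assertion $H^1(I_p,\cdot)=0$ for the $\psi_1$-graded piece is justified as if $I_p$ were topologically generated by a single element $\tau_0$, which it is not (the character $\chi\kappa^{-1}$ has enormous kernel on $I_p$, and the restriction map to that kernel contributes to $H^1$); (ii) even granting exactness of the $H^0$-sequence, surjectivity of $\Frob_p-1$ on $H^0(I_p,\cdot)$ of the $\psi_2$-piece amounts to $\psi_2(\Frob_p)-1$ being a non-zero-divisor on a quotient $T[[\Gamma]]/J$ that is not obviously a domain, and if the $H^0$-sequence is not right-exact you only control a submodule $B$ of that $H^0$, whose $(\Frob_p-1)$-cokernel is then uncontrolled. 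Tellingly, the paper never claims $N[\Frob_p-1]=0$: the proof of Proposition \ref{Sel-No-PN}, immediately after this one, is devoted precisely to handling height two primes lying in the support of $N[\Frob_p-1]$, so the author is working under the expectation that this module can be non-trivial. Since your length computation at a height two prime $Q$ reduces the coinvariants of $(V_0)_Q[\zeta]$ to the invariants $\big(V_0^{\Gamma_p}\big)_Q[\zeta]$, the whole step collapses if $V_0^{\Gamma_p}\neq 0$.

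The paper's route avoids Frobenius entirely and proves the stronger statement that $\Tor_1^{T[[\Gamma]]}(R[[\Gamma]],N)$ is itself pseudo-null, so that every quotient of it (in particular the $\Gamma_p$-coinvariants) has trivial divisor. The mechanism: by flat base change one reduces to showing that $N_{\ker(\pi_{F,F})}$ is flat (in fact free) over the discrete valuation ring $T_{\ker(\pi_{F,F})}$. For this one uses the identification $N\cong H_0\big(I_{\eta_p},L_F\otimes_{i_1}T(\chi^{-1})\big)$ from Proposition \ref{greenberg-cyc}, the fact that a finite-index subgroup of $I_{\eta_p}$ acts on $L_F\otimes_{i_1}T(\chi^{-1})$ by unipotent upper-triangular matrices with off-diagonal entries in $i_1(R)$ (so that, after inverting everything outside $\ker(\pi_{F,F})$, the coinvariants are free of rank $1$ or $2$), and Lemma \ref{interesting-profinite-group-theory} to pass back from the finite-index subgroup to $I_{\eta_p}$. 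If you want to salvage your approach, you would need either to prove the freeness of $N_{\ker(\pi_{F,F})}$ as the paper does, or to prove that $\big(N[\Frob_p-1]\big)_Q$ has no $\zeta$-torsion for the relevant $Q$ (which is essentially what the paper's torsion-freeness argument in the proof of Proposition \ref{Sel-No-PN} delivers); the claim $N[\Frob_p-1]=0$ is both stronger than necessary and unsupported.
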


\begin{proof}

Let $$M = H^0(G_\Sigma,D_{\rho_{\pmb{4},3}})^\vee, \qquad N=H^0\left(I_p,\frac{D_{\rho_{\pmb{4},3}}}{\Fil^+D_{\rho_{\pmb{4},3}}}\right)^\vee.$$
To show that a finitely generated $R[[\Gamma]]$-module is torsion we will show that, as a $T[[\Gamma]]$-module, its localization at the prime ideal $\ker(\pi)$ is zero. For all $i \geq 1$, the $R[[\Gamma]]$-modules $\Tor_i^{T[[\Gamma]]}\left(R[[\Gamma]],M\right)$ and $\Tor_i\left(R[[\Gamma]],N\right)$ are torsion due the following isomorphisms:
\begin{align*}
&&\Tor_{i}^{T[[\Gamma]]}\left(R[[\Gamma]],M\right)_{\ker(\pi)} &\cong \Tor_{i}^{T[[\Gamma]]_{\ker(\pi)}}\left(R[[\Gamma]]_{\ker(\pi)},M_{\ker(\pi)}\right) =0. \\
&& \Tor_{i}^{T[[\Gamma]]}\left(R[[\Gamma]],N\right)_{\ker(\pi)} & \cong \Tor_{i}^{T[[\Gamma]]_{\ker(\pi)}}\left(R[[\Gamma]]_{\ker(\pi)},N_{\ker(\pi)}\right) =0.
\end{align*}
To obtain the above isomorphisms, we have made use of Proposition \ref{greenberg-cyc} which provides a monic polynomial $h(s)$ in $T[s]$ with the property that $h(\gamma_0)$ annihilates $M$  and $N$. This element $h(\gamma_0)$ is not in the kernel of the map $\pi: T[[\Gamma]] \rightarrow R[[\Gamma]]$ and consequently $$ M_{\ker(\pi)} = N_{\ker(\pi)} =0.$$
By Proposition \ref{control-theorem-selmer-groups}, $\Sel_{\pi \circ \rho_{\pmb{4},3}}(\Q)^\vee$ is a torsion $R[[\Gamma]]$-module if and only if the height one prime ideal $\ker(\pi)$ in $T[[\Gamma]]$ does not belong to the support of $\Sel_{\rho_{\pmb{4},3}}(\Q)^\vee$. Let us now verify the remaining hypotheses given in Proposition \ref{control-theorem-selmer-groups}.  Any height two prime ideal $Q$ containing $\ker(\pi)$ and in the support of $M$ or $N$ would also have to contain $h(\gamma_0)$. By Lemma \ref{reg-2}, for such a prime ideal $Q$, the local ring $T[[\Gamma]]_Q$ would have to be regular. By Proposition \ref{surjectivity-greenberg}, the global-to-local map defining the non-primitive Selmer group $\Sel_{\pi \circ \rho_{\pmb{4},3}}(\Q)$ is surjective. These observations verify all the remaining hypotheses given in Proposition \ref{control-theorem-selmer-groups}. Since we will need one of the observations later, we state it separately as a lemma.

\begin{lemma} \label{lemma-in-the-middle}
If $Q$ is a height two prime ideal in $T[[\Gamma]]$ containing $\ker(\pi)$ and in the support of the $T[[\Gamma]]$-module $N$, then $T[[\Gamma]]_Q$ is a regular local ring.
\end{lemma}

To complete the proof of this Proposition using Proposition \ref{control-theorem-selmer-groups}, it only remains to show the following equality in the divisor group of $R[[\Gamma]]$:
$$\Div\bigg(\Tor^{T[[\Gamma]]}_1\left(R[[\Gamma]],N\right)_{\Gamma_p}\bigg)=0.$$
In fact, we will prove that $\Tor^{T[[\Gamma]]}_1\left(R[[\Gamma]],N\right)$ is a pseudo-null $R[[\Gamma]]$-module. To do so, let us consider a height two prime ideal $Q$ in $T[[\Gamma]]$ containing $\ker(\pi)$.  Let us assume for the sake of contradiction that $Q$ belongs to the support of $\Tor^{T[[\Gamma]]}_1\left(R[[\Gamma]],N\right)$. It must then belong to the support of $N$ too because we have the following isomorphism:
\begin{align*}
\Tor^{T[[\Gamma]]}_1\left(R[[\Gamma]],N\right)_Q \cong \Tor^{T[[\Gamma]]_Q}_1\left(R[[\Gamma]]_Q,N_Q\right).
\end{align*}

Note that this also forces $h(\gamma_0)$ to belong to $Q$. As a result, $Q \cap T$ must equal the height one prime ideal given by the kernel of the map $\pi_{F,F} : T \rightarrow R$.  The extension $T_{\ker(\pi_{F,F})} \rightarrow T[[\Gamma]]_Q$ is flat. By flat base-change theorems for Tor (Proposition 3.2.9 in \cite{weibel1995introduction}), we have the following isomorphisms:
{\small \begin{align}\label{base-change-control-p}
\Tor^{T[[\Gamma]]_Q}_1\left(R[[\Gamma]]_Q,N_Q\right) & \cong \Tor^{T[[\Gamma]]_Q}_1\left(\frac{T[[\Gamma]]_Q}{\ker(\pi)_Q},N_Q\right)
\\ \notag & \cong \Tor^{T_{\ker(\pi_{F,F})} \otimes_{T_{\ker(\pi_{F,F})} }T[[\Gamma]]_Q}_1\left(\frac{T_{\ker(\pi_{F,F})}}{\ker(\pi_{F,F})} \otimes_{T_{\ker(\pi_{F,F})}} T[[\Gamma]]_Q,N_Q\right) \\ \notag & \cong \Tor^{T_{\ker(\pi_{F,F})}}_1\left(\frac{T_{\ker(\pi_{F,F})}}{\ker(\pi_{F,F})},N_Q\right).
\end{align}}
The modules in equation (\ref{base-change-control-p}) above are zero if $N_Q$ is a flat $T_{\ker(\pi_{F,F})}$-module. To obtain our contradiction, and hence complete the proof of this proposition, it suffices to show that $N_Q$ is a flat $T_{\ker(\pi_{F,F})}$-module.
\begin{lemma} \label{lemma-in-the-middle-of-proof}
$N_Q$ is a flat $T_{\ker(\pi_{F,F})}$-module.
\end{lemma}
By $N_Q$, we mean the localization of $N$ at the multiplicative set $T[[\Gamma]] \setminus Q$. By $N_{\ker(\pi_{F,F})}$, we will mean the localization of $N$ at the multiplicative set $T \setminus \ker(\pi_{F,F})$. We have an inclusion of sets $T \setminus \ker(\pi_{F,F}) \subset T[[\Gamma]] \setminus Q$. If we show that  $N_{\ker(\pi_{F,F})}$ is a flat $T_{\ker(\pi_{F,F})}$-module, then $N_Q$ will turn out to be a flat $T_{\ker(\pi_{F,F})}$-module as well.  We shall show that $N_{\ker(\pi_{F,F})}$ is a free $T_{ \ker(\pi_{F,F})}$-module in the remaining part of the proof; this will complete the proof of the Proposition.

\begin{lemma}
$N_{\ker(\pi_{F,F})}$ is a free $T_{ \ker(\pi_{F,F})}$-module
\end{lemma}

Let $\eta_p$ be the unique prime above $p$ in $\Q_\infty$ and $I_{\eta_p}$ be the corresponding inertia subgroup. Proposition \ref{greenberg-cyc} gives us the following isomorphism of $T[[\Gamma]]$-modules:
{\small \begin{align*}
N = H^0\left(I_p,\frac{D_{ \rho_{\pmb{4},3}}}{\Fil^+D_{ \rho_{\pmb{4},3}}}\right)^\vee \cong H^0\left(I_{\eta_p},\frac{D_{  \rho_{F,F}(\chi)}}{\Fil^+D_{ \rho_{F,F}(\chi)}}\right)^\vee.
\end{align*}}
Note that the discrete modules $D_{  \rho_{F,F}(\chi)}$ and $\Fil^+D_{ \rho_{F,F}(\chi)}$, associated to the Galois representation $\rho_{F,F}(\chi)$, are defined just as the discrete modules $D_{ \rho_{\pmb{4},3}}$ and $\Fil^+D_{ \rho_{\pmb{4},3}}$, associated to the Galois representation $\rho_{\pmb{4},3}$, are defined. The description of these discrete modules given in Section \ref{section2-dasgupta-factorization}, along with the observation that the restrictions to $I_{\eta_p}$ of the characters $\kappa $ and $\epsilon_F$ are trivial, let us obtain the following $I_{\eta_p}$~-~equivariant isomorphism of $T$-modules:
{\small \begin{align*}
 \left(\frac{D_{\rho_{F,F}(\chi)}}{\Fil^+D_{ \rho_{F,F}(\chi)}}\right)^\vee \cong L_F \otimes_{i_1} T (\chi^{-1}),
 \end{align*}}
Using Pontryagin duality (Theorem 2.6.9 in \cite{neukirch2008cohomology}), we get the following isomorphism of $T$-modules:
 \begin{align*}
 N  \cong H_0\left(I_{\eta_p}, L_F \otimes_{i_1} T (\chi^{-1})\right).
\end{align*}
Here $H_0(I_{\eta_p}, L_F \otimes_{i_1} T (\chi^{-1}))$ denotes the maximal quotient of $L_F \otimes_{i_1} T(\chi^{-1})$ on which $I_{\eta_p}$ acts trivially. For the free $T$-module $L_F \otimes_{i_1} T (\chi^{-1})$, one can choose a basis $\{e_1,e_2\}$ such that the action of every element $g$ in $I_{\eta_p}$ is given by the $2\times 2$ matrix {\footnotesize $\left(\begin{array}{cc} \chi^{-1}(g) \det(\rho_F(g)) & d_g \\ 0 & \chi^{-1}(g)\end{array}\right)$}. Here, $d_g$ is an element of $i_1(R)$ for each element $g$ in $I_{\eta_p}$.
Observe that by (\ref{determinant-properties}), the restriction of the character $\det(\rho_{F})$ to $I_{\eta_p}$ is finite order (recall that $\eta_p$ is a prime above $p$ in the cyclotomic $\Z_p$ extension $\Q_\infty$). Observe also that the character $\chi$ is of finite order. So there exists a closed subgroup $\mathcal{I}$ inside $I_{\eta_p}$ of finite index such that for all $g \in \mathcal{I}$, the action of $g$ on $L_F \otimes_{i_1} T (\chi^{-1})$ (with respect to the basis $\{e_1,e_2\}$) is given the $2 \times 2$ matrix $\left( \begin{array}{cc} 1 & d_g \\ 0 & 1 \end{array}\right)$.

Note that $p\notin \ker(\pi_{F,F})$ and hence $p$ is invertible in $T_{\ker(\pi_{F,F})}$. The index $[I_{\eta_p}:\mathcal{I}]$ is invertible in $T_{\ker(\pi_{F,F})}$. By Lemma \ref{interesting-profinite-group-theory} to show that  $N_{\ker(\pi_{F,F})}$ is a free $T_{\ker(\pi_{F,F})}$-module, it is enough to show that the localization of  $H_0(\mathcal{I},L_F \otimes_{i_1}T(\chi^{-1}))$ at the prime ideal $\ker(\pi_{F,F})$ is a free $T_{\ker(\pi_{F,F})}$-module. If the action of $\mathcal{I}$ on $L_F$ is decomposable, then the localization of  $H_0(\mathcal{I},L_F \otimes_{i_1}T(\chi^{-1}))$ at the prime ideal $\ker(\pi_{F,F})$ is a free $T_{\ker(\pi_{F,F})}$-module of rank $2$. Otherwise, the localization of $H_0(\mathcal{I},L_F \otimes_{i_1}T(\chi^{-1}))$ at the prime ideal $\ker(\pi_{F,F})$ is a free $T_{\ker(\pi_{F,F})}$-module of rank $1$. This previous statement used the fact that every non-zero element of $i_1(R)$ is invertible in $T_{\ker(\pi_{F,F})}$. The proposition follows.
\end{proof}

We shall now use the various lemmas stated in the proof of Proposition \ref{control-theorem-selmer-group-dasgupta} to deduce results concerning the pseudo-null submodules of $\Sel_{\rho_{\pmb{4},3}}(\Q)^\vee$. Let us first consider the following implication (which uses Pontryagin duality and the fact that the group $\Gamma_p$ is topologically generated by $\Frob_p$):
{\small\begin{align*}
N = H^0\left(I_p,\frac{D_{\rho_{\pmb{4},3}}}{\Fil^+D_{\rho_{\pmb{4},3}}}\right)^\vee  \implies N [\Frob_p-1] \cong  H^1\left(\Gamma_p,H^0\left(I_p,\frac{D_{\rho_{\pmb{4},3}}}{\Fil^+D_{\rho_{\pmb{4},3}}}\right)\right)^\vee.
\end{align*}}
Let $Q$ be a height two prime ideal in the support of $N[\Frob_p-1]$. We shall now show that the projective dimension of $\left(N[\Frob_p-1]\right)_Q$ over $T[[\Gamma]]_Q$ is less than or equal to one. Lemma \ref{lemma-in-the-middle-of-proof} shows us that $N_Q$ is a flat $T_{\ker(\pi_{F,F})}$-module. Consequently, it is also torsion-free over $T_{\ker(\pi_{F,F})}$. Since $T_{\ker(\pi_{F,F})}$ is integrally closed, the $1$-dimensional local ring $T_{\ker(\pi_{F,F})}$ is a discrete valuation ring. We have $\left(N [\Frob_p-1]\right)_Q \subset N_Q$. So, the $T_{\ker(\pi_{F,F})}$-module $\left(N[\Frob_p-1]\right)_Q$ is also torsion-free. In particular, none of the non-zero elements of $\left(N[\Frob_p-1]\right)_Q$ are annihilated by $\ker(\pi_{F,F})$. Thus, the maximal ideal generated by $Q$ (which contains $\ker(\pi_{F,F})$) inside the $2$-dimensional local ring $T[[\Gamma]]_Q$ cannot be an associated prime ideal for the module $\left(N [\Frob_p-1]\right)_Q$. As a result, $$ \depth_{T[[\Gamma]]_Q}\left(N[\Frob_p-1]\right)_Q \geq 1.$$

Lemma \ref{lemma-in-the-middle} tells us that since $Q$ is in the support of $N$ (as it is in the support of $N[\Frob_p-1]$), the local ring $T[[\Gamma]]_Q$ is regular. We can use the Auslander-Buchsbaum formula over the regular local ring $T[[\Gamma]]_Q$.
\begin{align*}
\text{pd}_{T[[\Gamma]]_Q}\left(N[\Frob_p-1]\right)_Q + \depth_{T[[\Gamma]]_Q}\left(N[\Frob_p-1]\right)_Q &= \depth_{T[[\Gamma]]_Q} T[[\Gamma]]_Q = 2. \\
\implies \text{pd}_{T[[\Gamma]]_Q}\left(N[\Frob_p-1]\right)_Q \leq 1.
\end{align*}

Now let us suppose that $\Sel_{\rho_{\pmb{4},3}}(\Q)^\vee$ is a torsion $T[[\Gamma]]$-module. What we have shown is that if the height two prime ideal $Q$ containing $\ker(\pi)$ is in the support of $N[\Frob_p-1]$, then the local ring $T[[\Gamma]]_Q$ is regular. Our arguments have also shown that in this case, we have $\pd_{T[[\Gamma]]_Q} \left(N[\Frob_p-1]\right)_Q\leq 1$. See Exercise 4.1.2 in \cite{weibel1995introduction} on how projective dimensions behave in short exact sequences.  Suppose $\Sel_{\rho_{\pmb{4},3}}(\Q)^\vee$ is a torsion $T[[\Gamma]]$-module. Note that the Galois representation $\rho_{\pmb{4},3}$ satisfies the (\ref{p-critical}) hypothesis. We have
\begin{flalign*}
\pd_{T[[\Gamma]]_Q} \left(N[\Frob_p-1]\right)_Q\leq 1, \quad  &   \underbrace{\pd_{T[[\Gamma]]_Q}\left(\S^{\Sigma_0,str}_{\rho_{\pmb{4},3}}(\Q)^\vee\right)_Q \leq 1}_{\text{by Proposition \ref{reg-dim-2-no-pn} and Proposition \ref{strict-pseudo}}}  \\
\implies  \underbrace{\pd_{T[[\Gamma]]_Q} \left(\Sel_{\rho_{\pmb{4},3}}(\Q)^\vee\right)_Q\leq 1}_{ \text{ by Proposition \ref{strict-difference}}} &
\end{flalign*} Now using Proposition \ref{reg-dim-2-no-pn}, we can conclude that the $T[[\Gamma]]_Q$-module $\left(\Sel_{\rho_{\pmb{4},3}}(\Q)^\vee\right)_Q$  has no non-trivial pseudo-null submodules. \\

If the height two prime ideal $Q$ containing $\ker(\pi)$ is not in the support of $N[\Frob_p-1]$, we have the following isomorphism (by Proposition \ref{strict-difference}):
\begin{align*}
\left(\Sel_{\rho_{\pmb{4},3}}(\Q)^\vee \right)_Q \cong \left(\S^{\Sigma_0,str}_{\rho_{\pmb{4},3}}(\Q)^\vee\right)_Q.
\end{align*}
In this case too, by Proposition \ref{strict-pseudo}, the $T[[\Gamma]]_Q$-module $\left(\Sel_{\rho_{\pmb{4},3}}(\Q)^\vee\right)_Q$ has no non-trivial pseudo-null submodules.  We have proved the following proposition:

\begin{proposition}\label{Sel-No-PN}
Suppose $\Sel_{\rho_{\pmb{4},3}}(\Q)^\vee$ is a torsion $T[[\Gamma]]$-module. For every height two prime ideal $Q$ containing $\ker(\pi)$, the $T[[\Gamma]]_Q$-module $\left(\Sel_{\rho_{\pmb{4},3}}(\Q)^\vee\right)_Q$~has~no~non-trivial~pseudo-null~submodules.
\end{proposition}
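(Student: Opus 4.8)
The plan is to fix a height two prime ideal $Q$ of $T[[\Gamma]]$ containing $\ker(\pi)$ and to argue by cases according to whether $Q$ lies in the support of $N[\Frob_p-1]$, where $N = H^0\!\left(I_p,\frac{D_{\rho_{\pmb{4},3}}}{\Fil^+D_{\rho_{\pmb{4},3}}}\right)^\vee$. First I would note that, since $\Gamma_p$ is topologically generated by $\Frob_p$, Pontryagin duality identifies $N[\Frob_p-1]$ with the Pontryagin dual of the group $H^1\!\left(\Gamma_p, H^0\!\left(I_p,\frac{D_{\rho_{\pmb{4},3}}}{\Fil^+D_{\rho_{\pmb{4},3}}}\right)\right)$ appearing in the short exact sequence of Proposition \ref{strict-difference}. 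Since $\Sel_{\rho_{\pmb{4},3}}(\Q)^\vee$ is assumed $T[[\Gamma]]$-torsion, its quotient $\S_{\rho_{\pmb{4},3}}(\Q)^\vee$ is torsion too, so \ref{Tor} holds for $\rho_{\pmb{4},3}$; the hypothesis \ref{p-critical} also holds for $\rho_{\pmb{4},3}$, so Propositions \ref{strict-difference} and \ref{strict-pseudo} are at our disposal.

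In the case where $Q$ is \emph{not} in the support of $N[\Frob_p-1]$, localizing the Pontryagin dual of the exact sequence of Proposition \ref{strict-difference} at $Q$ yields an isomorphism $\left(\Sel_{\rho_{\pmb{4},3}}(\Q)^\vee\right)_Q \cong \left(\S^{\Sigma_0,str}_{\rho_{\pmb{4},3}}(\Q)^\vee\right)_Q$, and Proposition \ref{strict-pseudo} guarantees that $\S^{\Sigma_0,str}_{\rho_{\pmb{4},3}}(\Q)^\vee$, hence also its localization, has no non-trivial pseudo-null submodules. This handles that case cleanly.

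The substantive case is when $Q$ lies in the support of $N[\Frob_p-1]$, hence in the support of $N$. Here I would invoke Lemma \ref{lemma-in-the-middle} to conclude that $T[[\Gamma]]_Q$ is a two-dimensional regular local ring, and Lemma \ref{lemma-in-the-middle-of-proof} to conclude that $N_Q$ is flat, hence torsion-free, over the discrete valuation ring $T_{\ker(\pi_{F,F})}$. Since $\left(N[\Frob_p-1]\right)_Q \subseteq N_Q$ is then torsion-free over $T_{\ker(\pi_{F,F})}$, no non-zero element of it is killed by $\ker(\pi_{F,F})$; therefore the maximal ideal of $T[[\Gamma]]_Q$ is not an associated prime of $\left(N[\Frob_p-1]\right)_Q$ and $\depth_{T[[\Gamma]]_Q}\left(N[\Frob_p-1]\right)_Q \geq 1$. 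The Auslander--Buchsbaum formula over the regular local ring $T[[\Gamma]]_Q$ (which has depth $2$) then forces $\pd_{T[[\Gamma]]_Q}\left(N[\Frob_p-1]\right)_Q \leq 1$. Combining this with $\pd_{T[[\Gamma]]_Q}\left(\S^{\Sigma_0,str}_{\rho_{\pmb{4},3}}(\Q)^\vee\right)_Q \leq 1$, which follows from Proposition \ref{strict-pseudo} together with Proposition \ref{reg-dim-2-no-pn}, and the localized, Pontryagin-dualized short exact sequence
\begin{align*}
0 \rightarrow \left(N[\Frob_p-1]\right)_Q \rightarrow \left(\Sel_{\rho_{\pmb{4},3}}(\Q)^\vee\right)_Q \rightarrow \left(\S^{\Sigma_0,str}_{\rho_{\pmb{4},3}}(\Q)^\vee\right)_Q \rightarrow 0
\end{align*}
coming from Proposition \ref{strict-difference}, the standard behaviour of projective dimension in short exact sequences yields $\pd_{T[[\Gamma]]_Q}\left(\Sel_{\rho_{\pmb{4},3}}(\Q)^\vee\right)_Q \leq 1$. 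A final application of Proposition \ref{reg-dim-2-no-pn} then shows that $\left(\Sel_{\rho_{\pmb{4},3}}(\Q)^\vee\right)_Q$ has no non-trivial pseudo-null submodules.

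I expect the main obstacle to lie in the second case, specifically in securing the regularity of $T[[\Gamma]]_Q$ and the torsion-freeness of $N_Q$ over $T_{\ker(\pi_{F,F})}$: these are precisely the conclusions of Lemma \ref{lemma-in-the-middle} and Lemma \ref{lemma-in-the-middle-of-proof} inside the proof of Proposition \ref{control-theorem-selmer-group-dasgupta}, which themselves rest on the explicit description of the local Galois module $\frac{D_{\rho_{F,F}(\chi)}}{\Fil^+D_{\rho_{F,F}(\chi)}}$, the finiteness of $\det(\rho_F)\big|_{I_{\eta_p}}$, and Lemma \ref{reg-2}. Granting those inputs, the remainder is routine Auslander--Buchsbaum and projective-dimension bookkeeping over a two-dimensional regular local ring.
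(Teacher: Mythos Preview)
Your proposal is correct and follows essentially the same argument as the paper: the same case split on whether $Q$ lies in the support of $N[\Frob_p-1]$, the same use of Lemmas \ref{lemma-in-the-middle} and \ref{lemma-in-the-middle-of-proof} to obtain regularity of $T[[\Gamma]]_Q$ and torsion-freeness of $N_Q$ over $T_{\ker(\pi_{F,F})}$, the same depth and Auslander--Buchsbaum bookkeeping, and the same appeal to Propositions \ref{strict-pseudo}, \ref{strict-difference}, and \ref{reg-dim-2-no-pn}. Your identification of the main obstacle is also exactly what the paper relies on.
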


\subsection{Control theorem for local factors away from $p$}

\begin{proposition} \label{local-control}
Let $\nu \in \Sigma_0$. We have the following equality in the~divisor~group~of~$R[[\Gamma]]$:
\begin{align*}
\Div\left( \Loc(\nu,\pi \circ \rho_{\pmb{4},3})^\vee \right) = \Div\left( \Loc(\nu,\rho_{\pmb{4},3})^\vee \otimes_{T[[\Gamma]]} R[[\Gamma]]\right).
\end{align*}
\end{proposition}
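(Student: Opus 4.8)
\textbf{Proof proposal for Proposition \ref{local-control}.}

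The plan is to reduce everything to an explicit computation of the divisors of $\Loc(\nu,\rho_{\pmb{4},3})^\vee$ and $\Loc(\nu,\pi\circ\rho_{\pmb{4},3})^\vee$ via Proposition \ref{divisor-local-factors}, and then to compare the two sides using the decomposition of Galois representations furnished by $\pi$. First I would recall from Proposition \ref{local-cohomology-not-p} that for $\nu \in \Sigma_0$ we have a canonical identification $\Loc(\nu,\rho_{\pmb{4},3}) \cong \Ind^{\Gamma}_{\Delta_{\eta_\nu}} H^1(G_{\eta_\nu}, D_{\rho_{F,F}(\chi)})$, and that the corresponding statement holds for $\pi\circ\rho_{\pmb{4},3}$ over $R[[\Gamma]]$ with $\rho_{F,F}(\chi)$ replaced by $\pi_{F,F}\circ\rho_{F,F}(\chi)$. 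Since induction from a finite-index subgroup does not change the divisor computation in an essential way (the divisor is controlled by the eigenvalues of $\Frob_\nu$ acting on the relevant vector space, cf. the setup preceding Proposition \ref{divisor-local-factors}), it suffices to work with $H^1(I_\nu, D_{\rho_{F,F}(\chi)})^\vee$ over $T$ and with $H^1(I_\nu, D_{\pi_{F,F}\circ\rho_{F,F}(\chi)})^\vee$ over $R$. By Proposition \ref{torsion-local-factors}, hypothesis \ref{1-ev} holds in both cases, so Proposition \ref{divisor-local-factors} applies.

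Next I would set $\mathcal{M} = H^1(I_\nu, D_{\rho_{F,F}(\chi)})^\vee$, a torsion-free $T$-module by Proposition 3.3 of \cite{MR2290593}, and $\mathcal{V} = \mathcal{M}\otimes_T \Frac(T)$ with $\Frob_\nu$ acting via a matrix $\mathcal{B}$; the key point is that, because $\ker(\pi_{F,F})$ is a height one prime of $T$ disjoint from the support considerations at primes $\nu \ne p$ and because $\mathcal{M}$ is torsion-free over $T$, localizing at $\ker(\pi_{F,F})$ and then passing to the residue field is compatible with the formation of $\Loc$. Concretely, I would show that $H^1(I_\nu, D_{\pi_{F,F}\circ\rho_{F,F}(\chi)})^\vee \cong \mathcal{M}\otimes_{\pi_{F,F}} R$ up to pseudo-null error, using the right-exactness of coinvariants together with the flatness of inertia coinvariants that was already established in the proof of Proposition \ref{control-theorem-selmer-group-dasgupta} (specifically the arguments showing $H_0(I_{\eta_p},\cdot)$ is well-behaved under base change; the analogous and in fact easier statement holds at $\nu \ne p$ where the relevant inertia acts through a finite quotient up to a $\Z_p$-part, so that $H^1(I_\nu,-)$ is essentially a coinvariants functor as in the proof of Corollary \ref{H1-torsion-local-factors}). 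Then the eigenvalues $e_i$ of $\Frob_\nu$ on $\mathcal{V}$ specialize under $\pi_{F,F}$ to the eigenvalues $\pi_{F,F}(e_i)$ of $\Frob_\nu$ on the corresponding space for $\pi_{F,F}\circ\rho_{F,F}(\chi)$, and Proposition \ref{divisor-local-factors} gives
\[
\Div\big(\Loc(\nu,\rho_{\pmb{4},3})^\vee\big) = \Div\Big(\prod_i (e_i - 1)\Big), \qquad
\Div\big(\Loc(\nu,\pi\circ\rho_{\pmb{4},3})^\vee\big) = \Div\Big(\prod_i (\pi_{F,F}(e_i) - 1)\Big).
\]
Applying $\Div(\,\cdot \otimes_{\pi_{F,F}} R)$ to the first identity and matching with the second then yields the claim, since $\Div(T/(e_i-1) \otimes_{\pi_{F,F}} R) = \Div(R/(\pi_{F,F}(e_i)-1))$ once one knows $e_i - 1 \notin \ker(\pi_{F,F})$.

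The main obstacle I anticipate is precisely controlling the base-change map $H^1(I_\nu, D_{\rho_{F,F}(\chi)})^\vee \otimes_{\pi_{F,F}} R \to H^1(I_\nu, D_{\pi_{F,F}\circ\rho_{F,F}(\chi)})^\vee$: a priori this map has both a kernel ($\Tor_1$ against the inertia coinvariants) and a cokernel, and I need both to be pseudo-null over $R$ so that they do not contribute to divisors. The cleanest route is to mimic the local analysis at $p$ from the proof of Proposition \ref{control-theorem-selmer-group-dasgupta}: choose a finite-index subgroup of $I_\nu$ on which the action is unipotent (or even trivial, after accounting for the prime-to-$p$ and $\det$ contributions), reduce to computing coinvariants of a free module under a unipotent action, and check that the localization at $\ker(\pi_{F,F})$ is free of the expected rank over the DVR $T_{\ker(\pi_{F,F})}$; freeness forces the $\Tor_1$ term to vanish after localization at any height one prime of $R$ pulling back to $\ker(\pi_{F,F})$, hence the error terms are pseudo-null. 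An alternative, perhaps simpler, is to invoke directly the comparison already present in Proposition \ref{divisor-local-factors}'s proof strategy (which follows Proposition 2.4 of \cite{greenberg2000iwasawa}) and observe that $\mathcal{B}$ and its $\pi_{F,F}$-specialization have characteristic polynomials related by applying $\pi_{F,F}$ to the coefficients, so that no genuine work beyond bookkeeping is needed once the torsion-freeness of $\mathcal{M}$ is in hand.
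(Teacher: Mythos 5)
Your main route has the right skeleton and is essentially the paper's: reduce via Proposition \ref{local-cohomology-not-p} to comparing $H^1(G_{\eta_\nu},D_{\rho_{F,F}(\chi)})^\vee\otimes_T R$ with $H^1(G_{\eta_\nu},D_{\pi_{F,F}\circ\rho_{F,F}(\chi)})^\vee$, control the kernel and cokernel of the base-change map by $\Tor_i^T$ against the $H^0$ (equivalently, coinvariants) module, and kill those Tor terms by showing that module is free after localizing at $S=T\setminus\ker(\pi_{F,F})$, which one does by passing to a finite-index subgroup of $I_\nu$ acting unipotently and computing coinvariants explicitly. (The detour through Proposition \ref{divisor-local-factors} and the eigenvalues of $\Frob_\nu$ is unnecessary: once the kernel and cokernel of the base-change map are pseudo-null, the two divisors agree with no further computation.)

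The genuine gap is the assertion that such a finite-index subgroup of $I_\nu$ with unipotent (or trivial) action exists. For a rank-two lattice over the large ring $T$ this is not a formal fact and is not covered by anything you cite; it is where most of the paper's proof is spent. The paper splits into cases according to whether $\rho_F(I_\nu)$ is finite or infinite, and in the infinite case uses the structure $\mathcal{I}\cong\Delta_0\rtimes P_\nu$ with $P_\nu\cong\Z_p$ together with the local class field theory relation $\delta_0\Upsilon_\nu^{l_\nu}\delta_0^{-1}=\tilde{\Frob}_\nu\Upsilon_\nu\tilde{\Frob}_\nu^{-1}$ to show the two eigenvalues of $\Upsilon_\nu$ on $L_F$ must coincide and be a root of unity; only then does a power of $\Upsilon_\nu$ act unipotently, and only then does the explicit $4\times4$ computation (using that $i_1(d)$, $i_2(d)$ are invertible in $S^{-1}T$ because $\ker(\pi_{F,F})$ meets $i_1(R)$ and $i_2(R)$ trivially) give freeness of the localized coinvariants. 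Separately, your ``alternative, perhaps simpler'' route is circular: the claim that the eigenvalues $e_i$ of $\Frob_\nu$ on $\mathcal{V}$ specialize under $\pi_{F,F}$ to the eigenvalues for $\pi\circ\rho_{\pmb{4},3}$ (and even the assertion that $\pi_{F,F}(e_i)$ makes sense, since the $e_i$ live only in an extension of $\Frac(T)$) presupposes exactly the base-change compatibility of $\Loc(\nu,-)$ that the proposition is asserting, so no proof can proceed by ``bookkeeping'' alone.
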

\begin{proof}
Let $\eta_\nu$ be a prime above $\nu$ in $\Q_\infty$. Note that $\Gamma$ equals $\Gal{\Q_\infty}{\Q}$. Let $G_{\eta_\nu}$ be a decomposition group inside $\Gal{\overline{\Q}}{\Q_\infty}$ corresponding to the prime $\eta_\nu$. Also, we let $\Delta_{\eta_\nu}$ denote the decomposition group inside $\Gal{\Q_\infty}{\Q}$ corresponding to the prime $\eta_\nu$ lying above $\nu$. Note that $\Delta_{\eta_\nu}$ is isomorphic to $\Z_p$ as a topological group and that the index $[\Gamma : \Delta_{\eta_\nu}]$ is finite. We have the following isomorphisms due to Proposition \ref{local-cohomology-not-p}:
\begin{align*}
\Loc(\nu,\pi \circ \rho_{\pmb{4},3}) \cong \Ind^\Gamma_{\Delta_{\eta_\nu}} H^1(G_{\eta_\nu},D_{\pi_{F,F} \circ \rho_{F,F}(\chi)}), \qquad \Loc(\nu, \rho_{\pmb{4},3}) \cong \Ind^\Gamma_{\Delta_{\eta_\nu}} H^1(G_{\eta_\nu},D_{\rho_{F,F}(\chi)}).
\end{align*}
Furthermore, we have a natural map
\begin{align*}
\alpha: H^1\left(G_{\eta_\nu},D_{\rho_{F,F}(\chi)}\right)^\vee \otimes_{T[[\Gamma]]} R[[\Gamma]] \rightarrow H^1\left(G_{\eta_\nu},D_{\pi_{F,F} \circ \rho_{F,F}(\chi)}\right)^\vee,
\end{align*}
that gives us the following map
\begin{align*}
\tilde{\alpha} : \underbrace{\Ind_{\Delta_{\eta_\nu}}^{\Gamma} H^1\left(G_{\eta_\nu},D_{\rho_{F,F}(\chi)}\right)^\vee \otimes_{T[[\Gamma]]} R[[\Gamma]]}_{\Loc(\nu, \rho_{\pmb{4},3})^\vee \otimes_{T[[\Gamma]]}R[[\Gamma]]} \rightarrow \underbrace{\Ind_{\Delta_{\eta_\nu}}^{\Gamma} H^1\left(G_{\eta_\nu},D_{\pi_{F,F} \circ \rho_{F,F}(\chi)}\right)^\vee}_{\Loc(\nu, \pi \circ \rho_{\pmb{4},3})^\vee}.
\end{align*}
To prove the proposition, we will show that the $R[[\Gamma]]$-modules $\ker(\tilde{\alpha})$ and $\coker(\tilde{\alpha})$ are pseudo-null. Since $\Delta_{\eta_\nu}$ is of finite index in $\Gamma$, the ring $R[[\Delta_{\eta_\nu}]]$ is a finite integral extension of $R[[\Gamma]]$. These observations tell us that to prove the proposition, it will be sufficient to prove that $\ker(\alpha)$ and $\coker(\alpha)$ are pseudo-null $R[[\Delta_{\eta_\nu}]]$-modules. From Corollary \ref{H1-torsion-local-factors}, we can deduce that there exists a monic polynomial $h(s)$ in $R[s]$ such that $h(\gamma_{\eta_\nu})$ annihilates both $H^1\left(G_{\eta_\nu},D_{\rho_{F,F}(\chi)}\right)^\vee \otimes_{T[[\Gamma]]} R[[\Gamma]] $ and $H^1\left(G_{\eta_\nu},D_{\pi_{F,F} \circ \rho_{F,F}(\chi)}\right)^\vee$. Hence, $h(\gamma_{\eta_\nu})$ annihilates both $\ker(\alpha)$ and $\coker(\alpha)$ too. Here, $\gamma_{\eta_\nu}$ is some chosen topological generator for the pro-cyclic group $\Delta_{\eta_\nu}$. A prime ideal $Q$ in $R[[\Delta_{\eta_\nu}]]$ that contains a non-zero element of $R$ and $h(\gamma_{\eta_\nu})$ has height at least two. So, to prove the proposition, it will now be sufficient to prove that there exists a non-zero element of $R$ that annihilates both $\ker(\alpha)$ and $\coker(\alpha)$. From Corollary \ref{H1-torsion-local-factors}, one can easily deduce that both $\ker(\alpha)$ and $\coker(\alpha)$ are finitely generated $R$-modules. It will thus be sufficient to prove that both $\ker(\alpha)$ and $\coker(\alpha)$ are torsion $R$-modules. Let $S$ denote the multiplicative set $T \setminus \ker(\pi_{F,F})$. Recall that the map  $\pi_{F,F} : T \rightarrow R$ was obtained by sending an elementary tensor $x \otimes y$ in $T$ to $xy$. Henceforth, we shall consider $\ker(\alpha)$ and $\coker(\alpha)$ as modules over $T$ instead; it will suffice to show that their localizations at the multiplicative set $S$ equals zero. Also since $R[[\Gamma]] \cong T[[\Gamma]] \otimes_T R$, we have the natural isomorphism of $R$-modules:
\begin{align*} H^1\left(G_{\eta_\nu},D_{\rho_{F,F}(\chi)}\right)^\vee \otimes_T R \cong H^1\left(G_{\eta_\nu},D_{\rho_{F,F}(\chi)}\right)^\vee \otimes_{T[[\Gamma]]} R[[\Gamma]]
\end{align*}
The $T$-module $L_{F,F}$ is free. We have the following exact sequence due to Proposition \ref{control-theorem}:
{\small \begin{align*}
\Tor_2^{T}\left(R,\ H^0(G_{\eta_\nu},D_{\rho_{F,F}(\chi)})^\vee \right) & \rightarrow  H^1\left(G_{\eta_\nu},D_{\rho_{F,F}(\chi)}\right)^\vee \otimes_T R \xrightarrow {\alpha} \\ & \xrightarrow {\alpha} H^1\left(G_{\eta_\nu},D_{\pi_{F,F} \circ \rho_{F,F}(\chi)}\right)^\vee  \rightarrow  \Tor_1^{T}\left(R,H^0(G_{\eta_\nu},D_{\rho_{F,F}(\chi)})^\vee\right) \rightarrow 0.
\end{align*}
}
The proposition would follow if we show that the localization of $ \Tor_i^{T}\left(R,H^0(G_{\eta_\nu},D_{\rho_{F,F}})^\vee\right)$ at the multiplicative set $S$ equals zero, for each $i \geq 1$.  Since localization commutes with Tor (Proposition 3.2.9 in \cite{weibel1995introduction}), it will be sufficient to show that the localization of $H^0(G_{\eta_\nu},D_{\rho_{F,F}})^\vee$ at the multiplicative set $S$ is a free $S^{-1}T$-module. \\

Note that the extension $\Q_\infty/\Q$ is unramified at $\nu$. So, $I_\nu \subset G_{\eta_\nu}$. What we will now show is that the localization of $H^0(I_\nu,D_{\rho_{F,F}(\chi)})^\vee$ at the multiplicative set $S$ is a free $S^{-1}T$-module. This is sufficient for our purposes. To see why, let us suppose that the localization of  $H^0(I_\nu,D_{\rho_{F,F}(\chi)})^\vee$ at the multiplicative set $S$ is a free $S^{-1}T$-module. The group $G_{\eta_\nu}/I_\nu$, which is isomorphic to $\Gal{\Q_\nu^{ur}}{\Q_{\nu,\infty}}$, is of profinite order prime to $p$. Here, $\Q_{\nu,\infty}$ denotes the cyclotomic $\Z_p$ extension of $\Q_\nu$.  The group of $T$-linear automorphisms $\Aut_T\left(H^0(I_\nu,D_{\rho_{F,F}(\chi)})^\vee\right)$ has an open pro-$p$ subgroup (see Lemma 4.5.5 in \cite{ribes2000profinite}). Also the image of  $\Gal{\Q_\nu^{ur}}{\Q_{\nu,\infty}}$ inside the automorphism group $\Aut_T\bigg(S^{-1}\left(H^0(I_\nu,D_{\rho_{F,F}(\chi)})^\vee\right)\bigg)$ factors through its image inside the automorphism group $\Aut_T\left(H^0(I_\nu,D_{\rho_{F,F}(\chi)})^\vee \right)$. These observations
indicate that the action of $\Gal{\Q_\nu^{ur}}{\Q_{\nu,\infty}}$ on $S^{-1}\left(H^0(I_\nu,D_{\pi_{F,F}(\chi)})^\vee\right)$ factors through a finite group (say $\digamma$) that is of order prime to $p$. Note that one can use Lemma 3.2.2 in \cite{Webb_2016} (which one can view as an analog of Maschke's theorem over group rings, where the order of the finite group is invertible). This finally lets us conclude that whenever the localization of  $H^0(I_\nu,D_{\rho_{F,F}(\chi)})^\vee$ at the multiplicative set $S$ is a free $S^{-1}T$-module, then so is the localization of  $H^0(G_{\eta_\nu},D_{\rho_{F,F}(\chi)})^\vee$ at the multiplicative set $S$. It thus remains to show that the localization of  $H^0(I_\nu,D_{\rho_{F,F}(\chi)})^\vee$ at the multiplicative set $S$ is a free $S^{-1}T$-module. \\

Observe also that $p$ is invertible in the localization $S^{-1}T$ since $p \notin \ker(\pi_{F,F})$.  In fact, for any finite group $\digamma'$, the order of $\digamma'$ is invertible in $S^{-1}T$. So, one can still use Lemma 3.2.2 in \cite{Webb_2016}. Our arguments above can be modified to show that whenever $I_\nu$ acts on $L_F$ via a finite group (and hence on $D^\vee_{\rho_{F,F}(\chi)}$ too), the localization of  $H^0(I_\nu, D_{\rho_{F,F}(\chi)})^\vee$  at the multiplicative set $S$ is a free $S^{-1}T$-module. We shall thus assume that $I_\nu$ acts on $L_F$ via an infinite group, say $\mathcal{I}$. We keep the following diagram in mind:
\begin{center}
\begin{tikzpicture}[node distance = 1.2cm, auto]
      \node (Qnu) {$\Q_\nu$};
      \node (Qnuur) [above of=Qnu] {$\Q_{\nu}^{ur}$};
      \node (Qnuaut) [ above of=Qnuur] {$\overline{\Q}_\nu^{\ker(\rho_F\mid_{I_\nu})}$ };
       \node (GQ) [right of = Qnuaut, node distance=7cm] {$\mathcal{I} \cong  \Delta_0 \rtimes P_\nu$, \qquad $P_\nu$ denotes the $p$-sylow subgroup of $\mathcal{I}$};
    \node (tildegamma) [ right of = Kinfty, node distance=7cm]       {};
     \node (gamma) [ right of = Qnuur, node distance=7cm]       {By local class field theory, $P_\nu \cong \Z_p$};
\node (characters) [right of = Q, node distance = 7cm] {$\Delta_0$ : finite and of order prime to $p$};
            \draw[-] (Qnu) to node {$\Gamma_{\nu}$} (Qnuur);
      \draw[-] (Qnuur) to node  {$\mathcal{I}$} (Qnuaut);
    \end{tikzpicture}
  \end{center}
Let $\Upsilon_\nu$ be a topological generator for $P_\nu$. We will first argue that the two eigenvalues given by the action of $\Upsilon_\nu$ on $L_F$ are the same.  For the sake of contradiction, suppose that the eigenvalues were different (say $a$ and $b$). Working over a quadratic extension $K$ of $\Frac(R)$, if necessary, we shall assume that we can find a basis $\{e_1,e_2\}$ over $K$ such that the action of $\Upsilon_\nu$ is given by a diagonal matrix (with diagonal entries $a$ and $b$). Let $\tilde{\Frob}_\nu$ be a lift in $\Gal{\overline{\Q}_\nu^{\ker(\rho_F\mid_{I_\nu})}}{\Q_\nu}$ of $\Frob_\nu$. If we let $l_\nu$ denote the  characteristic of the residue field, local class field theory provides us the following equality, for some element $\delta_0$ in $\Delta_0$:
\begin{align*}
\delta_0 \Upsilon_\nu^{l_\nu} \delta_0^{-1}= \tilde{\Frob_\nu} \Upsilon_\nu \tilde{\Frob_\nu}^{-1}
\end{align*}
The element $\rho_F(\delta_0^{-1} \tilde{\Frob}_\nu)$ belongs to the normalizer of the group of invertible diagonal matrices and hence must act on the set $\{e_1,e_2\}$.  It cannot act trivially on the set $\{e_1,e_2\}$, for otherwise $a$ and $b$ would be roots of unity (since we would have $a=a^{l_\nu}$ and $b=b^{l_\nu}$) and the action of $I_\nu$ on $L_F$ would then have to factor through a finite group. If  the element $\rho_F(\delta_0^{-1} \tilde{\Frob}_\nu)$ permutes the elements of set $\{e_1,e_2\}$, then $a^{l_\nu}=b$ and $b^{l_\nu}=a$. This once again forces $a$ and $b$ to be (two distinct) roots of unity. This also contradicts the fact that the action of $I_\nu$ on $L_F$ factors through an infinite group. What we have thus shown is that the two eigenvalues for the action of $\Upsilon_\nu$ on $L_F$ are equal (to $c$, say), if the action of $I_\nu$ on $L_F$ factors through an infinite group. \\

We will choose a basis $\{e_1,e_2\}$ over $\Frac(R)$ so that the action of $\Upsilon_\nu$ on $L_F \otimes_R \Frac(R)$ can be given by an upper triangle matrix (whose diagonal entries are both equal to $c$). We have the following isomorphism of free $T$-modules (of rank $4$) that is $I_\nu$-equivariant:
$$ D_{\rho_{F,F}(\chi)}^\vee \cong \Hom_{T}\left(L_F \otimes_{i_2} T, L_F \otimes_{i_1} T \right)(\chi^{-1}).$$
Recall that $T$ is an integral extension of $\O[[x_1,x_2]]$, where $x_1$, $x_2$ denote the weight variables. Note that $\ker(\pi_{F,F}) \cap \O[[x_1]]=\{0\}$ and $\ker(\pi_{F,F}) \cap \O[[x_2]]=\{0\}$.  So, the fraction fields of $i_1(R)$ and $i_2(R)$ should be contained inside the multiplicative set $S$. This allows us to choose a basis $\{\sigma_{e_1,e_1}, \sigma_{e_2,e_1}, \sigma_{e_1,e_2}, \sigma_{e_2,e_2}\}$ for $S^{-1}\left(D_{\rho_{F,F}}^\vee\right)$ over $S^{-1}T$, where the elements $\sigma_{e_1,e_1}$, $\sigma_{e_2,e_1}$, $\sigma_{e_1,e_2}$  and $\sigma_{e_2,e_2}$ in $\Hom_{T}\left(L_F \otimes_{i_2} T, L_F \otimes_{i_1} T \right)$ are described below.
\begin{align*}
\sigma_{e_1,e_1} : & i_2(e_1) \rightarrow i_1(e_1),   \quad \  \sigma_{e_2,e_1} : && i_2(e_1) \rightarrow 0, \quad \ \sigma_{e_1,e_2} : && i_2(e_1) \rightarrow i_1(e_2), \quad \ \sigma_{e_2,e_2}:&& i_2(e_1) \rightarrow 0, \\
& i_2(e_2) \rightarrow 0. \quad \ && i_2(e_2) \rightarrow i_1(e_1). \quad \ && i_2(e_2) \rightarrow 0. \quad \ && i_2(e_2) \rightarrow i_1(e_2).
\end{align*}
Note that since the action of $I_\nu$ on $L_F$ factors through an infinite group, the action of $\Upsilon_\nu$ on $L_F \otimes_R \Frac(R)$ is reducible but indecomposable. Since $\det(\rho_F\mid_{I_\nu})$ is finite, the eigenvalue $c$ has to be a root of unity. Also, $\chi$ is of finite order. This allows us to find a positive integer $n$ such that the action of $\Upsilon_\nu^n$ on $L_F \otimes_R \Frac(R)$ (with respect to the basis $\{e_1,e_2\}$) is given by the $2 \times 2$ matrix $\left(\begin{array}{cc} 1 & d \\ 0 & 1\end{array}\right)$, for some non-zero element $d$ in $\Frac(R)$; and such that $\chi(\Upsilon_\nu^n)=1$.  The actions of $\Upsilon_\nu^n$ and $\Upsilon^n_\nu - \text{Id}$ on  $S^{-1}\left(D_{\rho_{F,F}(\chi)}^\vee\right)$ (with respect to the basis described above) are then given by the $4 \times 4$ matrices
{\footnotesize
\begin{align*}
\left(\begin{array}{cccc}
1 & 0 & i_1(d)& 0 \\
-i_2(d) & 1 & -i_1(d)i_2(d) &i_1(d)\\
0 & 0 & 1 &0 \\
0 & 0 &  -i_2(d) & 1
\end{array}\right), \quad
\left(\begin{array}{cccc}
0 & 0 & i_1(d)& 0 \\
-i_2(d) & 0 & -i_1(d)i_2(d) &i_1(d)\\
0 & 0 & 0 &0 \\
0 & 0 &  -i_2(d) & 0
\end{array}\right)
\end{align*}
}
respectively. The elements $i_1(d)$, $i_2(d)$ and $i_1(d)i_2(d)$ belong to the multiplicative set $S$, hence they are invertible in $S^{-1}T$. This lets us conclude that the localization of  $H^0(P_\nu^n,D_{\rho_{F,F}(\chi)})^\vee$ at $S$ is a free $S^{-1}T$-module of rank $2$. By Lemma \ref{interesting-profinite-group-theory}, the localization of  $H^0(I_\nu,D_{\rho_{F,F}(\chi)})^\vee$ at the multiplicative set $S$ is also a free $S^{-1}T$-module. The proposition follows.

\end{proof}

\subsection{Proof of Theorem \ref{specialization-result}}

The proof follows entirely by recalling results from earlier sections. Proposition \ref{control-theorem-selmer-group-dasgupta} tells us that $\ker(\pi)$ does not belong to the support of the $T[[\Gamma]]$-module $\Sel_{\rho_{\pmb{4},3}}(\Q)^\vee$ if and only if the $R[[\Gamma]]$-module $\Sel_{\pi \circ \rho_{\pmb{4},3}}(\Q)^\vee$ is torsion. Let us now assume that the $R[[\Gamma]]$-module $\Sel_{\pi \circ \rho_{\pmb{4},3}}(\Q)^\vee$ is torsion. Let $\theta_{\pmb{4},3}$ equal $\frac{n}{d}$, for two non-zero elements $n$ and $d$ in $T[[\Gamma]]$. Without loss of generality, we shall assume that $n$, $d$ and $\pi(n)$ are non-zero. Following the notations of Proposition \ref{specialization}, we let
{\small
\begin{align*}
Y_1 = \frac{T[[\Gamma]]}{(n)} \bigoplus \left(\oplus_{ \nu \in \Sigma_0} \Loc(\nu,\rho_{\pmb{4},3})^\vee\right), \quad Y_2=\frac{\T[[\Gamma]]}{(d)} \oplus \Sel_{\rho_{\pmb{4},3}}(\Q)^\vee  , \quad M=H^0(G_\Sigma, D_{\rho_{\pmb{4},3}})^\vee.
\end{align*}
}
Let us now verify the hypotheses given in Proposition \ref{specialization}. We will need to show that $Y_1$ and $Y_2$ satisfy \ref{No-PN} and \ref{Fin-Proj}. The $T[[\Gamma]]$-modules $\frac{T[[\Gamma]]}{(n)}$ and $\frac{T[[\Gamma]]}{(d)}$  clearly satisfy \ref{No-PN} and \ref{Fin-Proj}. For every prime $\nu \in \Sigma_0$, Proposition \ref{loc-No-PN} asserts that $\Loc(\nu,\rho_{\pmb{4},3})^\vee$ satisfies \ref{No-PN}. Corollary \ref{H1-torsion-local-factors} and Lemma \ref{reg-2} assert that $\Loc(\nu,\rho_{\pmb{4},3})^\vee$ satisfies \ref{Fin-Proj}. Proposition \ref{Sel-No-PN} asserts that the  $\Sel_{\rho_{\pmb{4},3}}(\Q)^\vee$  satisfies \ref{No-PN}. Proposition \ref{Sel-Fin-Proj} asserts that $\Sel_{\rho_{\pmb{4},3}}(\Q)^\vee$ satisfies \ref{Fin-Proj}. These observations show that $Y_1$ and $Y_2$ satisfy \ref{No-PN} and \ref{Fin-Proj}. As for the remaining hypotheses we have that for every height two prime ideal $Q$ containing $\ker(\pi)$ and in the support of $H^0(G_\Sigma, D_{\rho_{\pmb{4},3}})^\vee$, by Proposition \ref{greenberg-cyc} and Lemma \ref{reg-2}, the local ring $T[[\Gamma]]_Q$ is regular. Note also that the map $\pi :T[[\Gamma]] \rightarrow R[[\Gamma]]$ is surjective. \\

Suppose \ref{euler-inequality} holds. It can be rewritten as the following inequality of divisors in $T[[\Gamma]]$:
\begin{align*}
\Div\left(Y_1\right) \geq \Div(Y_2) - \Div(M).
\end{align*}
Proposition \ref{specialization} then gives us the following inequality of divisors in $R[[\Gamma]]$:
{\footnotesize
\begin{align} \label{specialization-final-result}
&&\Div\left(\frac{R[[\Gamma]]}{\pi(n)}\right)  +   \sum \limits_{\nu \in \Sigma_0} \Div\left(\Loc(\nu,\rho_{\pmb{4},3})^\vee \otimes_{T[[\Gamma]]} R[[\Gamma]] \right) \\ && \notag \geq \Div\left(\frac{R[[\Gamma]]}{\pi(d)}\right) +  \Div\bigg(\Sel_{\rho_{\pmb{4},3}}(\Q)^\vee \otimes_{T[[\Gamma]]}R[[\Gamma]]\bigg) + & \Div\bigg(\Tor_1^{T[[\Gamma]]}\left(R[[\Gamma]], H^0(G_\Sigma,D_{\rho_{\pmb{4},3}})^\vee\right)\bigg) \\ \notag && &-  \Div\left(H^0(G_\Sigma,D_{\pi \circ \rho_{\pmb{4},3}})^\vee\right).
\end{align}}
By Proposition \ref{control-theorem-selmer-group-dasgupta}, we have the following equality in the divisor group of $R[[\Gamma]]$:
{\footnotesize \begin{align} \label{control-the-selmer}
\Div\bigg(\Sel_{\rho_{\pmb{4},3}}(\Q)^\vee \otimes_{T[[\Gamma]]}R[[\Gamma]]\bigg) + \Div\bigg(\Tor_1^{T[[\Gamma]]}\left(R[[\Gamma]], H^0(G_\Sigma,D_{\rho_{\pmb{4},3}})^\vee\right)\bigg) = \Div\bigg(\Sel_{\pi \circ \rho_{\pmb{4},3}}(\Q)^\vee \bigg).
\end{align}}
For all $\nu \in \Sigma_0$, Proposition \ref{local-control} gives us the following equality in the divisor group of $R[[\Gamma]]$:
{\small
\begin{align} \label{control-the-locals}
\Div\left(\Loc(\nu,\rho_{\pmb{4},3})^\vee \otimes_{T[[\Gamma]]} R[[\Gamma]] \right) = \Div\left(\Loc(\nu,\pi \circ \rho_{ \pmb{4},3})^\vee \right)
\end{align}}
Equation (\ref{first-main-inequality}) in Theorem \ref{specialization-result} now follows by combining equations (\ref{specialization-final-result}), (\ref{control-the-selmer}) and  (\ref{control-the-locals}).

Now suppose that the divisor $\Div\left(\Sel_{\rho_{\pmb{4},3}}(\Q)^\vee\right)-\Div(M)$ generates a torsion element in $T[[\Gamma]]$. This implies that $\Div\left(Y_2\right)-\Div(M)$ also generates a torsion element in $T[[\Gamma]]$. Recall that, for each $\nu \in \Sigma_0$, the divisor $\Div(\Loc(\nu,\rho_{\pmb{4},3})^\vee)$ is  principal (see Proposition \ref{divisor-local-factors}). So, the divisor $\Div(Y_1)$ generates the trivial element in the divisor class group of $T[[\Gamma]]$. Proposition \ref{specialization-strict} and Remark \ref{equality-specialization-div} now give us the last part of the theorem. This completes the proof.

\appendix

\section{Results from Commutative Algebra} \label{comm-algebra-appendix}

Let $\RRR$ be an integrally closed local domain that is a finite integral extension of $\Z_p[[u_1,\dotsc,u_m]]$. We shall say that a sequence $ \M_1 \rightarrow  \dotsc \rightarrow  \M_n$ of finitely generated $\RRR$-modules is \textit{exact in dimension one} if for every height one prime ideal $\p$ of $\RRR$, the following sequence of $\RRR_\p$-modules~is~exact:
$$(\M_1)_\p \rightarrow  \dotsb \rightarrow (\M_n)_\p$$
Suppose we have an exact sequence $\C_1 \xrightarrow {g_{1,2}} \C_2 \xrightarrow {g_{2,3}} \C_3 \xrightarrow {g_{3,4}} \C_4 \rightarrow 0$ of finitely generated $\RRR$-modules  along with the following commutative diagram:
{\small
\begin{align*}
\xymatrix{
\C_1 \ar[d]_{u_1} \ar[r]^{g_{1,2}}&  \C_2 \ar[d]_{u_2} \ar[r]^{g_{2,3}} & \C_3 \ar[d]_{u_3} \ar[r]^{g_{3,4}}&  \C_4  \ar[r] \ar[d]_{u_4}& 0 \\
\C_1  \ar[r]^{g_{1,2}}&  \C_2 \ar[r]^{g_{2,3}} & \C_3  \ar[r]^{g_{3,4}}&  \C_4  \ar[r]& 0
}
\end{align*}}
It is possible to split the diagram above into three commutative diagrams given below.
{\footnotesize
\begin{align*}
\xymatrix{
0 \ar[r] & \ker(g_{1,2}) \ar[d] \ar[r]& \C_1  \ar[d] \ar[r]& \text{Im}(g_{1,2}) \ar[d] \ar[r]& 0, &  0 \ar[r] & \text{Im}(g_{1,2}) \ar[d] \ar[r]& \C_2  \ar[d] \ar[r]&  \ker(g_{2,3}) \ar[d]\ar[r]& 0\\ 0 \ar[r] & \ker(g_{1,2}) \ar[r]& \C_1  \ar[r]& \text{Im}(g_{1,2}) \ar[r]& 0 &  0 \ar[r] & \text{Im}(g_{1,2}) \ar[r]& \C_2  \ar[r]&  \ker(g_{2,3}) \ar[r]& 0
}
\end{align*}
\begin{flalign*}
\xymatrix{
 0 \ar[r] & \ker(g_{2,3}) \ar[r]\ar[d]& \C_3 \ar[d] \ar[r]&  \C_4 \ar[r]\ar[d] & 0 \\
 0 \ar[r] & \ker(g_{2,3}) \ar[r]& \C_3  \ar[r]&  \C_4 \ar[r]& 0,
}
\end{flalign*}
}
If $\C_1$ is a torsion $\RRR$-module, then $\ker(u_1)$ is a pseudo-null $\RRR$-module if and only if $\coker(u_1)$ is pseudo-null. A simple application of Nakayama's Lemma tells us that a surjective endomorphism of a finitely generated module over a Noetherian ring is in fact an isomorphism (see Proposition 1.2 in \cite{MR0238839}). Using these observations and by a careful diagram-chasing argument, we obtain the following lemma:

\begin{lemma} \label{almost-compact-exactness}
Suppose $\C_1$ is a torsion $\RRR$-module. Suppose also that either $\ker(u_1)$ or $\coker(u_1)$ is pseudo-null. Then, the following sequence is exact in dimension one:
$$0 \rightarrow \ker(u_2) \xrightarrow {g_{2,3}} \ker(u_3) \xrightarrow {g_{3,4}}  \ker(u_4) \rightarrow \coker(u_2) \xrightarrow {g_{2,3}} \coker(u_3) \xrightarrow {g_{3,4}}  \coker(u_4) \rightarrow 0$$
\end{lemma}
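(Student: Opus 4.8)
\textbf{Proof proposal for Lemma \ref{almost-compact-exactness}.}

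The plan is to reduce the statement to a pure diagram chase after localizing at an arbitrary height one prime $\p$ of $\RRR$, using the hypothesis to control the finitely many ``bad'' primes. First I would observe that the hypothesis $\C_1$ is torsion together with the dichotomy ``$\ker(u_1)$ or $\coker(u_1)$ is pseudo-null'' forces \emph{both} $\ker(u_1)$ and $\coker(u_1)$ to be pseudo-null (this is exactly the remark recorded just before the lemma: a torsion module's sub/quotient pair under an endomorphism of its ambient exact data has equal-length localizations at all height one primes). Hence for every height one prime $\p$, localizing at $\p$ yields $\ker(u_1)_\p = \coker(u_1)_\p = 0$, i.e. $(u_1)_\p : (\C_1)_\p \to (\C_1)_\p$ is an isomorphism. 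By Nakayama/``surjective endomorphism is an isomorphism'' (Proposition 1.2 in \cite{MR0238839}), one need only know surjectivity; but pseudo-nullity of $\coker(u_1)$ gives surjectivity after localization directly. So after localizing, $u_1$ becomes an isomorphism and the problem becomes: given a commutative diagram of $\RRR_\p$-modules with exact rows $\C_1 \to \C_2 \to \C_3 \to \C_4 \to 0$ and vertical maps $u_1$ (iso), $u_2, u_3, u_4$, show the six-term sequence of kernels and cokernels is exact.

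The main work is then the three-column splitting already displayed in the excerpt. I would apply the snake lemma to the short exact sequences
$0 \to \ker(g_{1,2}) \to \C_1 \to \mathrm{Im}(g_{1,2}) \to 0$,
$0 \to \mathrm{Im}(g_{1,2}) \to \C_2 \to \ker(g_{2,3}) \to 0$, and
$0 \to \ker(g_{2,3}) \to \C_3 \to \C_4 \to 0$,
each compatible with the vertical endomorphisms. From the first, since $(u_1)_\p$ is an isomorphism, the snake sequence gives $\ker(u_1)_\p \to \ker(u_{\mathrm{Im}})_\p$ and $\coker$ terms vanishing appropriately, showing $\ker$ and $\coker$ of the induced map on $\ker(g_{1,2})$ and on $\mathrm{Im}(g_{1,2})$ agree up to the (now zero) kernel/cokernel of $u_1$; concretely $\ker(u_1)_\p = 0$ feeds in to make the connecting maps behave. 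Splicing the three snake sequences along the common terms $\mathrm{Im}(g_{1,2})$ and $\ker(g_{2,3})$ — and using that the map induced by $u_2$ on $\mathrm{Im}(g_{1,2})$ has the same kernel and cokernel, after localization, as $(u_1)_\p$ (hence zero) — collapses the long exact sequences into the desired six-term sequence
$$0 \to \ker(u_2)_\p \to \ker(u_3)_\p \to \ker(u_4)_\p \to \coker(u_2)_\p \to \coker(u_3)_\p \to \coker(u_4)_\p \to 0,$$
with the last term surjected onto because $g_{3,4}$ is surjective on the bottom row. Since $\p$ was an arbitrary height one prime, this is precisely the assertion that the six-term sequence is exact in dimension one.

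I expect the only genuine obstacle to be bookkeeping: keeping track of which connecting homomorphisms survive the splicing and verifying that the hypothesis ``$\ker(u_1)$ or $\coker(u_1)$ pseudo-null'' is strong enough — one must check that pseudo-nullity of just \emph{one} of them, combined with $\C_1$ torsion, really does kill both after localization at height one primes (this is where the additivity of $\Div$ in short exact sequences and the fact that $0 \to \ker(u_1) \to \C_1 \xrightarrow{u_1} \C_1 \to \coker(u_1) \to 0$ forces $\Div(\ker u_1) = \Div(\coker u_1)$ is used, so that one pseudo-null implies the other pseudo-null, hence both localize to zero). Everything else is the standard snake-lemma splicing, done one height one prime at a time; no issues of projective dimension or regularity enter here, so the argument is entirely formal once the localization reduction is in place.
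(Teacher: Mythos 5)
Your proposal is correct and follows essentially the same route as the paper: the paper's (sketched) argument rests on exactly the same three-way splitting of the four-term exact sequence into short exact sequences through $\ker(g_{1,2})$, $\mathrm{Im}(g_{1,2})$ and $\ker(g_{2,3})$, the same observation that for torsion $\C_1$ the additivity of $\Div$ along $0 \to \ker(u_1) \to \C_1 \to \C_1 \to \coker(u_1) \to 0$ makes pseudo-nullity of one of $\ker(u_1)$, $\coker(u_1)$ equivalent to that of the other, and the same use of ``a surjective endomorphism of a finitely generated module is an isomorphism'' to upgrade the induced maps on the intermediate modules to isomorphisms after localizing at a height one prime. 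Your snake-lemma splicing is just a more explicit rendering of the paper's ``careful diagram-chasing argument,'' so there is nothing to flag.
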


We shall say that a  sequence $ \D_n \rightarrow  \dotsb  \rightarrow \D_1$ of discrete $\RRR$-modules (whose Pontryagin duals are finitely generated $\RRR$-modules) is \textit{coexact in dimension one} if the sequence $\D_1^\vee \rightarrow \dotsb \rightarrow  \D_n^\vee$ of finitely generated $\RRR$-modules is exact in dimension one. Using an argument similar to the one given above, we have the following lemma for discrete modules:

\begin{lemma} \label{almost-exactness}
Suppose we have an exact sequence $\D_1 \xrightarrow {g_{1,2}} \D_2 \xrightarrow {g_{2,3}} \D_3 \xrightarrow {g_{3,4}} \D_4 \rightarrow 0$ of discrete $\RRR$-modules (whose Pontryagin duals are finitely generated as $\RRR$-modules) along with the following commutative diagram:
{\small
\begin{align*}
\xymatrix{
\D_1 \ar[d]_{u_1} \ar[r]^{g_{1,2}}&  \D_2 \ar[d]_{u_2} \ar[r]^{g_{2,3}} & \D_3 \ar[d]_{u_3} \ar[r]^{g_{3,4}}&  \D_4  \ar[r] \ar[d]_{u_4}& 0 \\
\D_1  \ar[r]^{g_{1,2}}&  \D_2 \ar[r]^{g_{2,3}} & \D_3  \ar[r]^{g_{3,4}}&  \D_4  \ar[r]& 0
}
\end{align*}
}
Suppose further that the Pontryagin dual of $\D_1$ is a torsion $\RRR$-module. Assume also that either the Pontryagin dual of $\ker(u_1)$ or the Pontryagin dual of $\coker(u_1)$ is pseudo-null. Then, the following sequence is coexact in dimension one:
\begin{align*}
0 \rightarrow \ker(u_2) \xrightarrow {g_{2,3}} \ker(u_3) \xrightarrow {g_{3,4}} \ker(u_4) \rightarrow \coker(u_2) \xrightarrow {g_{2,3}} \coker(u_3) \xrightarrow {g_{3,4}} \coker(u_4) \rightarrow 0
\end{align*}
\end{lemma}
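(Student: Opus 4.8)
The plan is to deduce Lemma \ref{almost-exactness} from Lemma \ref{almost-compact-exactness} by Pontryagin duality, the two statements being formally dual to one another once the directions of the arrows are tracked. Write $\mathcal{C}_i := \D_i^\vee$; these are finitely generated $\RRR$-modules by hypothesis. Applying the exact contravariant functor $M \mapsto M^\vee$ (Theorem 1.1.11 in \cite{neukirch2008cohomology}) to $\D_1 \xrightarrow{g_{1,2}} \D_2 \xrightarrow{g_{2,3}} \D_3 \xrightarrow{g_{3,4}} \D_4 \to 0$ yields an exact sequence $0 \to \mathcal{C}_4 \xrightarrow{g_{3,4}^\vee} \mathcal{C}_3 \xrightarrow{g_{2,3}^\vee} \mathcal{C}_2 \xrightarrow{g_{1,2}^\vee} \mathcal{C}_1$, and the commuting squares dualize to commuting squares with vertical maps $u_i^\vee \colon \mathcal{C}_i \to \mathcal{C}_i$. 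Set $\mathcal{C}_1' := \mathrm{im}(g_{1,2}^\vee)$ and $\mathcal{C}_0 := \coker(g_{1,2}^\vee) \cong (\ker g_{1,2})^\vee$. Then $0 \to \mathcal{C}_4 \to \mathcal{C}_3 \to \mathcal{C}_2 \to \mathcal{C}_1' \to 0$ is exact and carries a self-map given by $u_4^\vee, u_3^\vee, u_2^\vee$ on the first three terms and by the induced endomorphism $b$ of $\mathcal{C}_1'$ on the last (well-defined since $u_1^\vee \circ g_{1,2}^\vee = g_{1,2}^\vee \circ u_2^\vee$, so $u_1^\vee$ preserves $\mathcal{C}_1'$), and $0 \to \mathcal{C}_1' \to \mathcal{C}_1 \to \mathcal{C}_0 \to 0$ is a short exact sequence with compatible endomorphisms $b, u_1^\vee, a$. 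Since $(\coker u_i)^\vee = \ker(u_i^\vee)$ and $(\ker u_i)^\vee = \coker(u_i^\vee)$, the Pontryagin dual of the sequence we must prove coexact in dimension one is exactly $0 \to \ker(u_4^\vee) \to \ker(u_3^\vee) \to \ker(u_2^\vee) \to \coker(u_4^\vee) \to \coker(u_3^\vee) \to \coker(u_2^\vee) \to 0$, so it suffices to show \emph{this} is exact in dimension one.

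Next I would fix a height one prime $\p$ of $\RRR$ and show that $b_\p$ is an isomorphism, granting for the moment that $\ker b$ and $\coker b$ are pseudo-null. To exploit the four-term exact sequence I split it at the middle into $0 \to \mathcal{C}_4 \to \mathcal{C}_3 \to Z \to 0$ and $0 \to Z \to \mathcal{C}_2 \to \mathcal{C}_1' \to 0$ with $Z := \mathrm{im}(g_{2,3}^\vee)$ and induced endomorphism $c$, and apply the snake lemma to each. If $\ker b$ and $\coker b$ are pseudo-null then $(\ker b)_\p = (\coker b)_\p = 0$, so $b_\p$ is injective and surjective, hence an isomorphism; the snake sequence of the localization of $0 \to Z \to \mathcal{C}_2 \to \mathcal{C}_1' \to 0$ then degenerates to isomorphisms $(\ker c)_\p \cong (\ker u_2^\vee)_\p$ and $(\coker c)_\p \cong (\coker u_2^\vee)_\p$. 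Substituting these into the (localized) snake sequence $0 \to \ker u_4^\vee \to \ker u_3^\vee \to \ker c \to \coker u_4^\vee \to \coker u_3^\vee \to \coker c \to 0$ of $0 \to \mathcal{C}_4 \to \mathcal{C}_3 \to Z \to 0$ produces precisely the desired six-term exact sequence of $\RRR_\p$-modules. Since $\p$ was arbitrary, this gives exactness in dimension one.

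Finally I would verify that $\ker b$ and $\coker b$ are pseudo-null using the hypotheses. As $\D_1^\vee = \mathcal{C}_1$ is torsion, so are $\mathcal{C}_1'$ and $\mathcal{C}_0$. The assumption that $\ker(u_1)^\vee = \coker(u_1^\vee)$ or $\coker(u_1)^\vee = \ker(u_1^\vee)$ is pseudo-null, together with the observation recorded just before Lemma \ref{almost-compact-exactness} (for an endomorphism of a finitely generated torsion $\RRR$-module, kernel is pseudo-null if and only if cokernel is --- localize at a height one prime, where $\RRR_\p$ is a discrete valuation ring, the modules have finite length, and lengths are additive), forces \emph{both} $\ker(u_1^\vee)$ and $\coker(u_1^\vee)$ to be pseudo-null; the same observation applied to the endomorphism $a$ of the torsion module $\mathcal{C}_0$, using that $\coker a$ is a quotient of $\coker(u_1^\vee)$, makes $\ker a$ and $\coker a$ pseudo-null as well. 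Chasing the snake sequence $0 \to \ker b \to \ker(u_1^\vee) \to \ker a \to \coker b \to \coker(u_1^\vee) \to \coker a \to 0$ of $0 \to \mathcal{C}_1' \to \mathcal{C}_1 \to \mathcal{C}_0 \to 0$ then shows $\ker b \subseteq \ker(u_1^\vee)$ is pseudo-null, while $\coker b$ is an extension of a submodule of $\coker(u_1^\vee)$ by a quotient of $\ker a$, hence pseudo-null. Dualizing back, the original sequence is coexact in dimension one. (A surjective endomorphism of a finitely generated module over a Noetherian ring being an isomorphism, Proposition 1.2 in \cite{MR0238839}, is used here exactly as in the proof of Lemma \ref{almost-compact-exactness} to identify the remaining connecting maps.) The step I expect to be the main obstacle is the bookkeeping in the splice: keeping the two snake sequences and the identifications $\ker c \cong \ker u_2^\vee$, $\coker c \cong \coker u_2^\vee$ straight and confirming that every auxiliary module that must vanish is indeed pseudo-null --- this is the ``careful diagram-chasing argument'' for which Lemma \ref{almost-compact-exactness} was itself only sketched.
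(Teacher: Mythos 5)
Your proof is correct and follows essentially the same route as the paper, which disposes of Lemma \ref{almost-exactness} in one line by saying it follows from "an argument similar to" the (itself only sketched) diagram chase proving Lemma \ref{almost-compact-exactness}: dualize, splice the four-term exact sequence at its images, apply the snake lemma twice after localizing at a height one prime, and use the kernel/cokernel pseudo-nullity equivalence for endomorphisms of torsion modules. Your write-up supplies the details the paper omits, and they check out.
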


\section{A group theory lemma}

For this final part of the appendix, we shall let $\RRR$ be a Noetherian domain. The ring $\RRR$ is not assumed to be complete. Suppose $\G$ is a profinite group. Let $\M$ be a free $\RRR$-module with an  $\RRR$-linear action of $\G$. For the purposes of brevity, we shall let $\M_\G$ denote the maximal quotient $\RRR$-module of $\M$ on which $\G$ acts trivially.  Let $\mathcal{H}$ be  an open subgroup of $\G$.  Let $\SSS$ be a multiplicative set in $\RRR$.

\begin{lemma} \label{interesting-profinite-group-theory}
Suppose that $ \Q \subset \SSS^{-1} \RRR$. If $\SSS^{-1}(\M_\mathcal{H})$ is a free $\SSS^{-1} \RRR$-module, then so is $\SSS^{-1}(\M_\mathcal{G})$.
\end{lemma}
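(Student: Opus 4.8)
The plan is to reduce the statement, after localizing at $\SSS$, to an elementary averaging (norm) argument over the finite set $\mathcal{H}\backslash\G$. First I would note that forming $\mathcal{H}$-coinvariants is right exact: $\M_{\mathcal{H}}$ is the cokernel of the map $\bigoplus_{h\in\mathcal{H}}\M\to\M$ sending the $h$-th summand via $h-1$. Since $\SSS^{-1}(\text{---})$ is exact it commutes with cokernels, so $\SSS^{-1}(\M_{\mathcal{H}})\cong(\SSS^{-1}\M)_{\mathcal{H}}$ and likewise $\SSS^{-1}(\M_{\G})\cong(\SSS^{-1}\M)_{\G}$. Hence I may replace $\RRR$ by $A:=\SSS^{-1}\RRR$ and $\M$ by $\SSS^{-1}\M$, so that now $\Q\subset A$, $\M$ is $A$-free, $\M_{\mathcal{H}}$ is $A$-free, and we want $\M_{\G}$ to be $A$-free. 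Because $\mathcal{H}$ is open in the profinite group $\G$, the index $m:=[\G:\mathcal{H}]$ is finite, and since $\Q\subset A$ the integer $m$ is a unit of $A$.

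Next I would construct the splitting of the tautological surjection $s:\M_{\mathcal{H}}\twoheadrightarrow\M_{\G}$. Fix right coset representatives $g_1,\dots,g_m$ of $\mathcal{H}\backslash\G$ and define a norm map $\nu:\M_{\G}\to\M_{\mathcal{H}}$ by $\nu(\overline{x})=\sum_{i=1}^{m}\overline{g_i x}$, where $x\in\M$ is any lift of $\overline{x}\in\M_{\G}$. The step that requires care is well-definedness of $\nu$. If $x=(g-1)y$ with $g\in\G$, then right multiplication by $g$ permutes the right cosets $\mathcal{H}g_i$, so $g_i g=h_i g_{\sigma(i)}$ for some $h_i\in\mathcal{H}$ and some permutation $\sigma$; since $\mathcal{H}$ acts trivially on $\M_{\mathcal{H}}$, this gives $\overline{g_i g\,y}=\overline{g_{\sigma(i)}y}$ in $\M_{\mathcal{H}}$, and therefore $\sum_i\overline{g_i(g-1)y}=\sum_i\overline{g_{\sigma(i)}y}-\sum_i\overline{g_i y}=0$. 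Thus $\nu$ is a well-defined $A$-linear map. A direct computation then shows $s\circ\nu=m\cdot\mathrm{id}_{\M_{\G}}$, because $\overline{g_i x}=\overline{x}$ already in $\M_{\G}$.

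Consequently $\tfrac{1}{m}\,\nu$ is a section of $s$, so $\M_{\G}$ is a direct summand of $\M_{\mathcal{H}}$; as $\M_{\mathcal{H}}$ is a free $A$-module, $\M_{\G}$ is a projective $A$-module. Finally, in every situation where the lemma is applied the ring $A=\SSS^{-1}\RRR$ is local — it is the localization of the integrally closed domain $\RRR$ at the complement of a height-one prime, hence a discrete valuation ring — and over a local ring every projective module is free (Kaplansky's theorem). This gives the assertion. The only real obstacle in the argument is the verification that $\nu$ does not depend on the choice of coset representatives, which is the formal point isolated above; the passage from ``projective'' to ``free'' at the end is closed off by the locality of $A$ in the relevant setting.
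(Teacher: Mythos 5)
Your argument is correct, and it reaches the conclusion by a genuinely different and more direct route than the paper. The paper introduces the induced module $\mathcal{N}=\Ind^{\G}_{\mathcal{H}}\left(\Res^{\G}_{\mathcal{H}}\M\right)$, uses Frobenius reciprocity together with the Hom--tensor adjunction to exhibit $\M$ as an $\RRR[\G]$-module direct summand of $\mathcal{N}$, and then invokes a Maschke-type lemma over $\RRR[\Delta]$ (for the finite quotient $\Delta$ through which $\G$ acts on $\mathcal{N}_{\mathcal{H}}$) to show that $\mathcal{N}_{\G}$ is free, so that $\M_{\G}$ is a summand of a free module. You instead build the transfer (corestriction on $H_0$) by hand: the norm map $\nu:\M_{\G}\to\M_{\mathcal{H}}$ satisfying $s\circ\nu=[\G:\mathcal{H}]\cdot\mathrm{id}$, which exhibits $\M_{\G}$ as a direct summand of $\M_{\mathcal{H}}$ itself, with no auxiliary module and no external reference; the well-definedness check you isolate (right translation permuting the cosets $\mathcal{H}g_i$) is precisely the content that the adjunction formalism packages for the paper. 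Both arguments, strictly as written, produce only that $\M_{\G}$ is \emph{projective} over $\SSS^{-1}\RRR$ --- a direct summand of a free module over a general Noetherian domain need not be free --- and the lemma is stated for a general Noetherian domain. You are explicit about closing this gap via the locality of $\SSS^{-1}\RRR$ in the two places the lemma is applied (both times $\SSS^{-1}\RRR$ is $T_{\ker(\pi_{F,F})}$, a discrete valuation ring), whereas the paper passes from ``direct summand of a free module'' to ``free'' without comment; so your proof is, if anything, more careful on the one step where care is genuinely needed.
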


\begin{proof}
The action of $\G$ on $\M$ extends naturally to one on $\SSS^{-1}\M$ since the action is $\RRR$-linear. Let $\M_0$ denote the $\RRR$-submodule of $\M$ generated by the elements of the form $(g-1)m$, as $g$ varies over all the elements of group $\G$ and $m$ varies over all the elements of the module $\M$.
We observe that we have the following isomorphisms:
\begin{align*}
\M_\G \cong \frac{\M}{\M_0}, \quad  \SSS^{-1} (\M_\G) \cong \frac{\SSS^{-1}\M}{\SSS^{-1}\M_0} \cong (\SSS^{-1}\M)_\G
\end{align*}
This observation will allow us to assume, without loss of generality, that the set $\SSS$ equals the set of units in $\RRR$ and that $[\G: \mathcal{H}]$ is a unit in $\RRR$. What we will now need to show is that if $\M_\mathcal{H}$ is a free $\RRR$-module, then so is $\M_\mathcal{G}$. Consider the $\G$-module $\Ind^\G_{\mathcal{H}}\left(\Res^\G_\mathcal{H}\M\right)$, which we will denote by $\mathcal{N}$.  We also have a natural $\G$-isomorphism $\mathcal{N}_{\mathcal{H}} \cong \Ind^\G_{\mathcal{H}}\left((\Res^\G_{\mathcal{H}}\M)_{\mathcal{H}}\right)$, which lets us obtain the isomorphism
\begin{align} \label{inv-iso}
\mathcal{N}_\G \cong \left(\mathcal{N}_\mathcal{H}\right)_\G \cong \bigg(\Ind^\G_{\mathcal{H}}\left(\Res^\G_{\mathcal{H}}(\M)_{\mathcal{H}}\right)\bigg)_{\G}.
\end{align}
The kernel of the natural map $\G \rightarrow \Aut(\mathcal{N}_\mathcal{H})$ contains $\mathcal{H}$. The action of $\G$ on $\N_\mathcal{H}$  would thus factor through a finite group, say $\Delta$, of order dividing $[\G:\H]$. The hypothesis that $\Res^\G_{\mathcal{H}}(\M)_\mathcal{H}$ is a free $\RRR$-module combined with  Lemma 3.2.2 in \cite{Webb_2016} (which we view as an analog of Maschke's theorem) for the ring $\RRR[\Delta]$ and equation (\ref{inv-iso}) lets us conclude that $\mathcal{N}_\G$ is a free $\RRR$-module.

We shall now show that $\M$ is a direct summand of $\mathcal{N}$ as an $\RRR[\G]$-module. This tells us that  $\M_\G$ is a free $\RRR$-module whenever $\mathcal{N}_\G$ is. This would complete the proof of the lemma. First observe that the restriction functor $\Res^\G_\mathcal{H}$ is a left-adjoint to the Induction functor $\Ind^\G_\mathcal{H}$ (by Frobenius reciprocity). Since the index $[\G:\mathcal{H}]$ is finite, the induction of a module is also non-canonically isomorphic to its co-induction. So one can view the restriction functor $\Res^\G_\mathcal{H}$ as a right-adjoint to the Induction functor $\Ind^\G_\mathcal{H}$(by Hom-Tensor adjunction). This gives us the following isomorphisms:

\begin{align*}
\Hom_{\G}\left( \M,\mathcal{N}\right) \underbrace{\cong}_{\substack{\text{Frobenius} \\ \text{Reciprocity}}} \Hom_{\mathcal{H}}\left(\Res^G_{\mathcal{H}}\M,Res^G_{\mathcal{H}}\M\right) \underbrace{\cong}_{\substack{\text{Hom-Tensor} \\ \text{ adjunction}}}  \Hom_{\G}\left(\mathcal{N},\M\right).
\end{align*}

The identity map in $\Hom_{\mathcal{H}}\left(\Res^G_{\mathcal{H}}\M,Res^G_{\mathcal{H}}\M\right)$ can be pulled back to a map $i: \M \rightarrow \mathcal{N}$ in $\Hom_{\G}\left(\M,\mathcal{N}\right)$ along with a ``splitting'' map $s: \mathcal{N} \rightarrow M$ in $\Hom_{\G}\left(\mathcal{N},\M\right)$ . Tracing the isomorphisms, one gets that the composition $s \circ i$ equals the identity map. Thus, $\M$ is a direct summand of $\mathcal{N}$ as an $\RRR[\G]$-module.  The Lemma follows.

\end{proof}

{
\bibliographystyle{abbrv}
\bibliography{biblio}}
\end{document}